\theoremstyle{definition}
\newtheorem{lemma}{Lemma}[section]
\newtheorem{definition}[lemma]{Definition}
\newtheorem{theorem}[lemma]{Theorem}
\newtheorem{example}[lemma]{Example}
\newtheorem{proposition}[lemma]{Proposition}
\newtheorem{corollary}[lemma]{Corollary}
\newtheorem{notation}[lemma]{Notation}
\newtheorem{remark}[lemma]{Remark}
\newtheorem{conjecture}[lemma]{Conjecture}
\def\address#1#2{\begingroup
\noindent\parbox[t]{7.8cm}{%
\small{\scshape\ignorespaces#1}\par\vskip1ex
\noindent\small{\itshape E-mail address}%
\/: #2\par\vskip4ex}\hfill%
\endgroup}%
\title{\uppercase{\bf On the boundary components of central streams
in the two slopes case}} 
\author{
%
\\
\textsc{Nobuhiro Higuchi} 
}
\date{} 
\begin{document}

\maketitle

\begin{abstract}
In 2004 Oort studied the foliation
on the space of $p$-divisible groups.
In his theory, special leaves called central streams play an important role.
It is still meaningful to investigate central streams, for example,
there remain a lot of unknown things  
on the boundaries of central streams.
In this paper, we classify
the boundary components of the central stream
for a Newton polygon consisting of two segments,
where one slope is less than $1/2$
and the other slope is greater than $1/2$.
Moreover we determine the generic Newton polygon
of each boundary component using this classification.
\end{abstract}

\footnote[0]{2010 Mathematics Subject Classification : Primary:14L15 Group schemes; 
Secondary:14L05 formal groups,
$p$-divisible group; 14K10 algebraic moduli, classification.}
\footnote[0]{ 
\textit{Key words and phrases}.
$p$-divisible group; deformation space; Newton polygons.
}

\section{Introduction} 
\label{Intro}

In \cite[p.\,1023]{oortMinimal},
Oort introduced the notion of 
minimal $p$-divisible groups,
where $p$-divisible groups are often called Barsotti-Tate groups.
Let $k$ be an algebraically closed field of characteristic $p$.
Oort showed in \cite[1.2]{oortMinimal} that they have the following special property:
Let $X$ be a minimal $p$-divisible group over $k$.
Let $Y$ be a $p$-divisible group over $k$.
If $X[p]\simeq Y[p]$, then $X\simeq Y$,
where $X[p]$ is the kernel of $p$-multiplication $p : X \rightarrow X$.
For a Newton polygon $\xi$, we have a minimal $p$-divisible group $H(\xi)$.
See Section~\ref{p-divAndDieudonne} \eqref{DefOfMinimalp-div}
for the definition of $H(\xi)$.
A minimal $p$-divisible group is a $p$-divisible group which
is isomorphic to $H(\xi)$
for some $\xi$ over an algebraically closed field.

Let $X_0$ be a $p$-divisible group over $k$.
Let ${\rm Def}(X_0) = {\rm Spf} (\Gamma)$ be the deformation space 
of $X_0$.
Here,
the deformation space is 
the formal scheme pro-representing the functor ${\rm Art}_k \to {\rm Set}$ 
sending $R$ to the set of isomorphism classes of $p$-divisible groups $X$ over $R$
satisfying $X_k \cong X_0$,
where ${\rm Art}_k$ is the category of local Artinian rings with residue field $k$.
It was proved by de Jong in \cite[2.4.4]{deJongCrystalline} that
the category of $p$-divisible groups over ${\rm Spf}(\Gamma)$ is 
equivalent to the category of $p$-divisible groups over $\Delta := {\rm Spec}(\Gamma)$.
Let $\mathfrak X' \to {\rm Spf}(\Gamma)$ be the universal $p$-divisible groups,
and let $\mathfrak X$ be the $p$-divisible groups over $\Delta$
obtained from $\mathfrak X'$ by the equivalence above.

In \cite[2.1]{oortFoliations}, for a $p$-divisible group $Y$ over $k$
and a $k$-scheme ${\rm S}$,
Oort introduced a locally closed subset 
$\mathcal C_Y({\rm S})$ 
for a $p$-divisible group $\mathcal Y$ over ${\rm S}$
characterized by $s \in \mathcal C_Y({\rm S})$ if and only if
$\mathcal Y_s$ is isomorphic to $Y$ over an algebraically closed field containing 
$k(s)$ and $k$.
He called $\mathcal C_Y({\rm S})$ 
the leaf associated with $Y$ in ${\rm S}$; 
see \eqref{DefOfLeaf} in Section~\ref{p-divAndDieudonne} for the details.
We are interested in the case that $(\mathcal Y, {\rm S}) = (\mathfrak X, \Delta)$.
In particular, if $Y$ is minimal, he called the leaf the {\it central stream}.
The notion of central streams is a ``central" tool in the theory of
foliations.

Let $X$ and $Y$ be $p$-divisible groups over $k$.
We say that $X$ appears as a {\it specialization} of $Y$
if there exists a family of $p$-divisible group $\mathfrak X \rightarrow {\rm Spec}(R)$
with discrete valuation ring $(R, \mathfrak m)$ in characteristic $p$ such that
$\mathfrak X_L$ is isomorphic to $Y$ 
over an algebraically closed field containing $L$ and $k$, and
$\mathfrak X_\kappa$ is isomorphic to $X$
over an algebraically closed field containing $\kappa$ and $k$,
where $L = {\rm frac}\, R$ is the field of fractions of $R$,
and $\kappa = R/\mathfrak m$ is the residue field of $R$.
We say that a specialization $X$ of $Y$ is {\it generic} if 
$\ell(X[p]) = \ell(Y[p]) - 1$ holds, where 
$\ell(X[p])$ is
the length of the element of the Weyl group corresponding to $X[p]$;
see the paragraph below Corollary~\ref{CoroOfXzeta}
for this correspondence between
elements of the Weyl group and $p$-kernels of $p$-divisible groups.


Let $\xi$ and $\zeta$ be Newton polygons.
See the paragraph containing \eqref{DefOfMinimalp-div} for the definition of
Newton polygons.
We say $\zeta \prec \xi$ if each point of $\zeta$ is above or on $\xi$.
Moreover, we say that $\zeta \prec \xi$ is {\it saturated} if
there exists no Newton polygon $\eta$ such that 
$\zeta \precneqq \eta \precneqq \xi$.

In this paper, we will give two main results.
The first result (Theorem~\ref{ThmOfCentralClassification})
classifies the boundary components of certain central streams. 
See Section~\ref{ClassBoundaryComp} 
for the statement as
we need some notation given in Section~\ref{ConSpeAbs}
to state the first result.
From this result, we expect that Conjecture~\ref{ConjOfArbitBC}
stated in Section~\ref{ClassBoundaryComp} is true.
This conjecture says that it suffices to deal with central streams
associated to Newton polygons consisting of two segments in order to classify 
boundary components of arbitrary central streams.
The second result is

\begin{theorem}\label{ThmOfp-divSpe}
Let $\xi$ be a Newton polygon consisting of two segments 
with slopes $\lambda$ and $\lambda'$ satisfying
$\lambda < 1/2 < \lambda'$.
Let $X$ be an arbitrary generic specialization of $H(\xi)$.
Then there exists a Newton polygon $\zeta$ satisfying that
\begin{itemize}
\item[(i)] $\zeta \prec \xi$ is saturated, and 
\item[(ii)] $H(\zeta)$ appears as a specialization of $X$.
\end{itemize}
\end{theorem}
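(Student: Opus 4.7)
The strategy is to use the classification of generic specializations provided by Theorem~\ref{ThmOfCentralClassification} and reduce the problem to a finite case-by-case analysis. A generic specialization $X$ of $H(\xi)$ is, up to isomorphism, the generic $p$-divisible group of a boundary component of the central stream $\mathcal{C}_{H(\xi)}$, so the first main theorem provides an explicit list of the possibilities for $X$. For each such $X$ one must produce a Newton polygon $\zeta$ with $\zeta \prec \xi$ saturated such that $H(\zeta)$ appears as a specialization of $X$.

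The argument splits according to the Newton polygon $N(X)$, which by Grothendieck specialization satisfies $N(X) \prec \xi$. When $N(X) \neq \xi$, one reads off from the combinatorial data in Theorem~\ref{ThmOfCentralClassification} that $N(X) \prec \xi$ is already saturated, so that $\zeta := N(X)$ satisfies condition (i). Condition (ii) is then obtained by deforming $X$ within the locus of $p$-divisible groups with Newton polygon $\zeta$, toward the central stream $\mathcal{C}_{H(\zeta)}$: one builds this deformation at the level of Dieudonn\'e modules using the explicit description of $X$ supplied by the classification, and uses Oort's minimality characterization \cite[1.2]{oortMinimal} to promote coincidence of $p$-kernels in the limit to an isomorphism of the closed fibre with $H(\zeta)$.

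The delicate case is $N(X) = \xi$. Here $X \not\simeq H(\xi)$, since $\ell(X[p]) = \ell(H(\xi)[p]) - 1$, but $X$ shares the Newton polygon of $H(\xi)$, so to reach some $H(\zeta)$ one must cross a Newton-polygon stratum boundary. The hypothesis $\lambda < 1/2 < \lambda'$ is essential here: the two slopes lying on opposite sides of $1/2$ constrain the saturated degenerations of $\xi$ tightly enough that, for each class of $X$ appearing in the classification, one can identify a suitable $\zeta$ and write down an explicit family of $p$-divisible groups over a discrete valuation ring whose generic fibre is $X$ and whose special fibre is $H(\zeta)$. This is where the principal technical obstacle lies: the family must be chosen so that the Newton polygon of the special fibre drops from $\xi$ precisely onto a saturated $\zeta$, and simultaneously so that the $p$-kernel of the special fibre matches $H(\zeta)[p]$, the final step again being handled by Oort's minimality characterization to upgrade this matching to an isomorphism.
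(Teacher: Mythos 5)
Your proposal does not actually carry out the proof; it sketches a strategy that, at the two places where real work is needed, either assumes the conclusion or hand-waves, and it misses the mechanism the paper actually uses.

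First, there is a circularity. You split on whether ${\rm NP}(X)=\xi$ or not, and in the latter case you assert that one can ``read off'' from Theorem~\ref{ThmOfCentralClassification} that ${\rm NP}(X)\prec\xi$ is already saturated, then take $\zeta={\rm NP}(X)$. But Theorem~\ref{ThmOfCentralClassification} classifies which exchanges of $0^A_i$ and $1^B_j$ produce generic specializations at the level of $p$-kernels (equivalently, ${\rm DM_1}$'s and elements of ${}^J W$); it says nothing directly about the Newton polygon of $X$. A $p$-kernel does not determine a Newton polygon, and the statement ``${\rm NP}(X)\prec\xi$ is saturated'' is precisely Corollary~\ref{CoroOfXzeta}, which the paper \emph{deduces from} Theorem~\ref{ThmOfp-divSpe} (via Grothendieck--Katz and the saturatedness of $\zeta\prec\xi$). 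You cannot use it as an input.

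Second, the case ${\rm NP}(X)=\xi$ that you treat as the ``delicate'' one is in fact vacuous, but you do not see this. Lemma~\ref{LemOfStrictlyMin}, proved via Oort's result that $\mathcal C_{X_0}({\rm S})\subset\mathcal W^0_\xi({\rm S})$ is closed (Theorem~\ref{FactOfW^0}), shows that any specialization of $H(\xi)$ not isomorphic to $H(\xi)$ has ${\rm NP}(X)\precneqq\xi$. You instead propose to ``write down an explicit family'' crossing a Newton stratum boundary, with no indication of how, and assert that the hypothesis $\lambda<1/2<\lambda'$ ``is essential here'' without using it. This is not an argument.

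Third, and most importantly, even in the case you do treat, the crucial step --- producing a specialization from $X$ down to $H(\zeta)$ --- is reduced to ``deforming $X$ within the locus of $p$-divisible groups with Newton polygon $\zeta$, toward the central stream,'' with Oort's minimality criterion invoked to finish. That is a description of what must happen, not a proof that it does. The paper's actual argument is structurally different: it proves Proposition~\ref{PropOfSpe}, which produces a further specialization $N_\xi^{--}$ of $N_\xi^-$ that splits as $N_{\xi'}^-\oplus N_\rho$ with $\xi'$ of strictly smaller height, and then proves Theorem~\ref{ThmOfzetaxi} by induction on the height of $\xi$, setting $\zeta=\zeta'+\rho$. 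This decomposition-and-induction mechanism, carried out via the arrowed-binary-sequence machinery of Sections~\ref{ConSpeAbs}--\ref{ConstGoodSpe}, is absent from your proposal, and it is exactly the content you have replaced by ``one builds this deformation at the level of Dieudonn\'e modules using the explicit description of $X$.''
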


With the same notation as Theorem~\ref{ThmOfp-divSpe},
by Grothendieck-Katz \cite[2.3.1]{katzcrystal},
we have
$$\zeta \prec {\rm NP}(X) \precneqq \xi,$$
where ${\rm NP}(X)$ is the Newton polygon of $X$.
Here we note that ${\rm NP}(X) \neq \xi$ 
will be proved in Lemma~\ref{LemOfStrictlyMin} below.
Then the saturatedness of $\zeta \prec \xi$ implies ${\rm NP}(X) = \zeta$
and therefore
\begin{corollary} \label{CoroOfXzeta}
Let $\xi$ and $X$ be as in Theorem~\ref{ThmOfp-divSpe}.
Then ${\rm NP}(X) \prec \xi$ is saturated.
\end{corollary}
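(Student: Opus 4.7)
The proof is essentially a deduction from Theorem~\ref{ThmOfp-divSpe} together with the Grothendieck--Katz specialization theorem and the forthcoming Lemma~\ref{LemOfStrictlyMin}; much of the argument is already implicit in the paragraph preceding the corollary, and my plan is just to assemble these inputs cleanly.

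First I would invoke Theorem~\ref{ThmOfp-divSpe} to produce a Newton polygon $\zeta$ such that $\zeta \prec \xi$ is saturated and $H(\zeta)$ appears as a specialization of $X$. Composing the two specializations, $H(\zeta)$ then appears as a specialization of $H(\xi)$ (indeed, we can concatenate the families $H(\zeta) \rightsquigarrow X$ and $X \rightsquigarrow H(\xi)$ over appropriate discrete valuation rings). By the Grothendieck--Katz specialization theorem \cite[2.3.1]{katzcrystal}, applied to each of the two specializations, the Newton polygons go up under specialization, so one obtains
\[
\zeta = {\rm NP}(H(\zeta)) \prec {\rm NP}(X) \prec {\rm NP}(H(\xi)) = \xi.
\]

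Next I would invoke Lemma~\ref{LemOfStrictlyMin} to upgrade the right-hand inequality to a strict one, namely ${\rm NP}(X) \precneqq \xi$. At this point the saturatedness hypothesis on $\zeta \prec \xi$ forces a trichotomy: any Newton polygon $\eta$ satisfying $\zeta \prec \eta \prec \xi$ must be equal to $\zeta$ or equal to $\xi$. Applying this to $\eta = {\rm NP}(X)$ and using ${\rm NP}(X) \neq \xi$ yields ${\rm NP}(X) = \zeta$, and hence ${\rm NP}(X) \prec \xi$ is saturated, which is the desired conclusion.

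The only potential subtlety, and the step I would double-check most carefully, is whether ``appears as a specialization'' composes correctly and whether Grothendieck--Katz applies in this degree of generality (families over arbitrary DVRs in characteristic $p$, with the base field extensions allowed in the definition). Both are standard, but since the definition given in the excerpt allows passage to algebraically closed extensions at the generic and special fibers, one should observe that this is harmless: the Newton polygon is invariant under base change to algebraically closed extensions, so the Grothendieck--Katz inequality transfers without issue. No genuine obstacle arises; the bulk of the work lies in Theorem~\ref{ThmOfp-divSpe} and Lemma~\ref{LemOfStrictlyMin}, both of which are granted here.
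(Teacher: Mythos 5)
Your proof is correct and follows essentially the same route as the paper: invoke Theorem~\ref{ThmOfp-divSpe} to obtain $\zeta$, apply Grothendieck--Katz to get $\zeta \prec {\rm NP}(X) \prec \xi$, use Lemma~\ref{LemOfStrictlyMin} for the strict inequality ${\rm NP}(X) \precneqq \xi$, and then conclude ${\rm NP}(X) = \zeta$ from saturatedness. The additional remarks on composing specializations and the invariance of Newton polygons under algebraically closed base extension are sound and simply make explicit what the paper leaves implicit.
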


For a finite flat commutative group scheme $G$ over some base scheme,
we say that 
$G$ is a {\it truncated Barsotti-Tate group of level one} (abbreviated as ${\rm BT_1}$) 
if Frobenius $\mathcal F$ and Verschiebung $\mathcal V$ on $G$ satisfy
that ${\rm Ker}\, \mathcal F = {\rm Im}\, \mathcal V$ 
and ${\rm Ker}\, \mathcal V = {\rm Im}\, \mathcal F$. 
The dimension of ${\rm BT_1}$ $G$ is defined by
$\log_p {\rm rk}({\rm Ker}\, \mathcal F)$.
Let $W$ be the Weyl group of 
the general linear group $GL_h$.
By Kraft \cite{kraftKom}, Oort \cite{oortstr} 
and Moonen-Wedhorn \cite{moonen-wedhornDis},
for each algebraically closed field $k$,
there exists a canonical one-to-one correspondence
$$\{{\rm BT_1}\text{'s over } k \text{ of height } h 
\text{ and dimension }d\}
\leftrightarrow {}^J W$$
for a subset ${}^J W$ of $W$ depending on $h$ and $d$;
see 
the paragraph below 
Example~\ref{ExOfDM1}
for the definition of ${}^J W$
and this correspondence.
Let $w$ and $w'$ be elements of ${}^J W$. 
We say $w' \subset w$
if there exists a discrete valuation ring $R$ of characteristic $p$
such that 
there exists a finite flat commutative group scheme $G \rightarrow {\rm Spec}(R)$
satisfying that $G_{\overline{\kappa}}$ is a ${\rm BT_1}$ of type $w'$,
and $G_{\overline{L}}$ is a ${\rm BT_1}$ of type $w$,
where $L$ is the fractional field of $R$, 
and $\kappa$ is the residue field $R/\mathfrak m$
with the maximal ideal $\mathfrak m$ of $R$.
We say that $w'$ is a {\it generic specialization} of $w$
if $w' \subset w$ and $\ell(w') = \ell(w) - 1$ holds.

The reason to use the word ``generic'' comes from the following fact.
For a $p$-divisible group $X_0$ of type $w$,
i.e., for a $p$-divisible group such that its $p$-kernel 
corresponds to $w \in {}^J W$,
let $\mathcal S_w(\Delta)$ be the reduced subscheme of 
$\Delta = {\rm Spec}(\Gamma)$ consisting of $s$
satisfying that $\mathfrak X_s[p]$ is of type $w$.
Then it is known that $\dim \mathcal S_w(\Delta) = \ell(w)$
and $\mathcal S_w(\Delta)$ is non-empty;
see \cite[6.10]{wedhorn} and \cite[3.1.6]{moonen}.
This justifies the terminology ``generic'' for elements $w$ of ${}^J W$.

For a Newton polygon $\xi$,
let $w_\xi$ be the element of ${}^J W$ corresponding to $H(\xi)[p]$.
Using notation of the Weyl group, 
by \cite[4.1]{HarashitaSupremum},
Theorem~\ref{ThmOfp-divSpe} is paraphrased as

\begin{theorem} \label{ThmOfzetaxi}
Let $\xi$ be a Newton polygon 
consisting of two segments,
where one slope is less than $1/2$
and the other is larger than $1/2$.
For an arbitrary generic specialization $w \in {}^J W$
of $w_\xi$, 
there exists a Newton polygon $\zeta$ such that
\begin{itemize}
\item[(i)] $\zeta \prec \xi$ is saturated, and
\item[(ii)] $w_\zeta \subset w$.
\end{itemize}
\end{theorem}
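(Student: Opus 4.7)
Since Theorem~\ref{ThmOfzetaxi} is by construction a direct Weyl-group paraphrase of Theorem~\ref{ThmOfp-divSpe} via \cite[4.1]{HarashitaSupremum}, the real content is to produce, for each generic specialization $w$ of $w_\xi$, a saturated $\zeta \prec \xi$ with $w_\zeta \subset w$. The plan is to attack this in three steps. First, I would make $w_\xi$ explicit by writing the Dieudonné module of $H(\xi)$ in a standard basis and reading off its Ekedahl--Oort type in one-line notation, then enumerate all covers $w$ of $w_\xi$ in the partial order $\subset$ on ${}^J W$. Since $\ell(w) = \ell(w_\xi) - 1$, each such $w$ is obtained from $w_\xi$ by a single admissible elementary reduction, so the covers form an explicit finite list; the hypothesis $\lambda < 1/2 < \lambda'$ cuts this list down further by rigidifying the block structure of $w_\xi$.

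Second, I would enumerate the candidate saturated Newton polygons $\zeta \prec \xi$ and compute $w_\zeta$ for each. In the two-slopes case such $\zeta$ are obtained combinatorially by inserting a single new breakpoint at a lattice point lying on one of the two segments of $\xi$, giving a finite family; by Theorem~\ref{ThmOfCentralClassification} these correspond bijectively to the boundary components of $\mathcal C_{H(\xi)}(\Delta)$. From the explicit cyclic-word description of $H(\zeta)[p]$ I would then read off $w_\zeta$ as an element of ${}^J W$ for each $\zeta$ on the list.

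Third, I would match the two lists: for every $w$ from the first step, exhibit some $\zeta$ from the second step with $w_\zeta \subset w$, by constructing over a discrete valuation ring $R$ a flat one-parameter family of ${\rm BT_1}$'s whose generic fiber has type $w$ and whose special fiber has type $w_\zeta$, using Dieudonné-module deformations adapted to the two-block structure of $w_\xi$. The main obstacle is the surjectivity of this matching: one must rule out the possibility that some generic specialization $w$ fails to degenerate to any minimal $p$-kernel $H(\zeta)[p]$ with $\zeta$ saturated, and instead only to non-minimal ${\rm BT_1}$'s. The hypothesis that the two slopes of $\xi$ straddle $1/2$ is what enforces the requisite duality between the two segments and ultimately guarantees that the matching is surjective, which is precisely why this hypothesis appears in the statement.
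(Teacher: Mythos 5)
Your plan diverges substantially from the paper's argument, and the crucial step is left as a hope rather than a construction. The paper does not enumerate covers of $w_\xi$ and match them against a list of candidate $\zeta$; it argues by induction on the height of $\xi$. Concretely, the proof first establishes Proposition~\ref{PropOfSpe}: for any generic specialization $N_\xi^-$ one can perform a \emph{further} specialization $N_\xi^{--}$ that decomposes as $N_{\xi'}^- \oplus N_\rho$, where $\xi'$ has strictly smaller height, $\rho$ is a simple segment, and the area cut off between $\xi$, $\xi'$ and $\rho$ is one. Then, by induction applied to the smaller $\xi'$, there is a saturated $\zeta' \prec \xi'$ with $N_{\zeta'}$ a specialization of $N_{\xi'}^-$, and setting $\zeta = \zeta' + \rho$ yields a saturated $\zeta \prec \xi$ with $N_\zeta = N_{\zeta'} \oplus N_\rho$ a specialization of $N_\xi^{--}$, hence of $N_\xi^-$. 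All of the combinatorial machinery of Sections~3--5 (ABS's, good exchanges, the sets $\mathcal A^{(n)}$, $\mathcal B^{(n)}$, $C$, $D$, the paths $P$, $Q$, $U$, $V$) exists to prove this decomposition.

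The gap in your proposal is precisely Step~3. You correctly identify that one must construct, for each cover $w$, an explicit degeneration to some $w_\zeta$ with $\zeta$ saturated, and you correctly flag this as the main obstacle; but you then wave it away by asserting that the hypothesis $\lambda < 1/2 < \lambda'$ ``ultimately guarantees that the matching is surjective.'' That is a restatement of the theorem, not an argument: the entire content of the proof is the construction of that degeneration, and your plan neither gives it nor reduces it to anything tractable. The paper avoids a direct matching argument exactly because it is hard; the inductive peeling-off of a single summand $N_\rho$ is what makes the construction feasible. Relatedly, Step~2 misreads Theorem~\ref{ThmOfCentralClassification}: that theorem classifies which exchanges of ABS elements yield \emph{generic specializations} $S'$ (i.e., $\ell(S') = \ell(S)-1$), it does not assert a bijection between saturated Newton polygons $\zeta \prec \xi$ and boundary components. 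Indeed several distinct generic specializations $w$ may share the same Newton polygon $\zeta$; the statement that $\mathrm{NP}(X) \prec \xi$ is saturated for each boundary component is Corollary~\ref{CoroOfXzeta}, which is a \emph{consequence} of the theorem you are trying to prove and cannot be invoked in its proof.
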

The results of this paper lead us to expect that
Theorem~\ref{ThmOfzetaxi} can be generalized to
the case that $\xi$ is an arbitrary Newton polygon:

\begin{conjecture} \label{ConjOfArbitzeta}
For an arbitrary Newton polygon $\xi$, let $w \in {}^J W$
be a generic specialization of $w_\xi$.
Then there exists a Newton polygon $\zeta$ such that
\begin{itemize}
\item[(i)] $\zeta \prec \xi$ is saturated, and
\item[(ii)] $w_\zeta \subset w$.
\end{itemize}
\end{conjecture}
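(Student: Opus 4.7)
The plan is to reduce Conjecture~\ref{ConjOfArbitzeta} to (a suitable extension of) Theorem~\ref{ThmOfzetaxi} by localizing the effect of a generic specialization to a single pair of slopes; this is essentially the reduction envisaged by Conjecture~\ref{ConjOfArbitBC} on the level of boundary components. Writing $\xi$ with distinct slopes $\lambda_1<\cdots<\lambda_r$ and multiplicities $m_1,\dots,m_r$, we have the isotypic decomposition $H(\xi)=\bigoplus_i H(\lambda_i)^{m_i}$, and $w_\xi$ decomposes compatibly into blocks of ${}^J W$ indexed by the slopes. Heuristically, since $\ell(w)=\ell(w_\xi)-1$, a length-one modification $w$ should perturb only one pair of blocks $(i,j)$ while leaving the others untouched, in which case $\xi$ and $w$ restrict to a Newton polygon $\xi_{ij}$ with slopes $\lambda_i,\lambda_j$ and a generic specialization $w_{ij}$ of $w_{\xi_{ij}}$.

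The first step is to make this localization rigorous: using the explicit formula for $w_\xi$ in terms of slope data (cf.~\cite{HarashitaSupremum}) and the Bruhat-order combinatorics on ${}^J W$, classify all generic specializations of $w_\xi$ and verify that each is supported on a single pair of blocks. The second step is to apply Theorem~\ref{ThmOfzetaxi} (or its same-side analogue; see below) to $(\xi_{ij},w_{ij})$ to produce a saturated $\zeta_{ij}\prec\xi_{ij}$ with $w_{\zeta_{ij}}\subset w_{ij}$, and then re-assemble $\zeta$ by replacing the $(i,j)$-part of $\xi$ by $\zeta_{ij}$ and leaving the rest of $\xi$ intact. Saturatedness of $\zeta\prec\xi$ would then follow because any Newton polygon strictly between them would have to agree with $\xi$ off the $(i,j)$-block and lie strictly between $\zeta_{ij}$ and $\xi_{ij}$ on that block, contradicting saturatedness of $\zeta_{ij}\prec\xi_{ij}$; the containment $w_\zeta\subset w$ would follow by ``padding with the identity'' on the untouched blocks, since $w_\zeta$ inherits the same block structure as $w_\xi$.

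The main obstacle is that Theorem~\ref{ThmOfzetaxi} is established only for the mixed pair $\lambda_i<1/2<\lambda_j$, whereas the reduction also requires the same-side cases (both slopes on one side of $1/2$, presumably accessible by adapting the methods of Sections~\ref{ConSpeAbs} and \ref{ClassBoundaryComp} of this paper, with one of the two cases reduced to the other by Serre duality) and, most delicately, the case where some slope equals $1/2$. The supersingular case is problematic both because $\mathrm{Aut}(H(1/2))$ is larger than in the ordinary slope case and because deformations involving slope $1/2$ may well violate the localization hypothesis by entangling three slope blocks simultaneously; ruling out such non-local generic specializations, or honestly handling them, is where I expect the real difficulty to lie. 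If they genuinely occur, the fallback is a direct combinatorial construction of $\zeta$ from the break points of $\xi$, imitating the boundary-component analysis performed in this paper for the two-segment central streams and then verifying the two conditions (i) and (ii) by hand.
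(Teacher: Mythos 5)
This statement is labeled a \emph{conjecture} in the paper, and the paper contains no proof of it: the authors only remark (just before Section~\ref{Prelim}) that it would follow from the combination of Conjecture~\ref{ConjOfArbitBC} with the two-slope result Theorem~\ref{ThmOfzetaxi}, and both of those leave the same-side and slope-$1/2$ cases untouched. You have read the situation correctly, and your sketch is essentially the reduction the paper itself envisions; your honest flagging of the obstructions is appropriate. But what you have written is a research plan, not a proof, and the unfinished steps are the substance of the conjecture rather than routine gaps.

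Concretely, the localization claim --- that every generic specialization $w$ of $w_\xi$ is supported on a single pair of segments --- is precisely Conjecture~\ref{ConjOfArbitBC}, and it is unproven. The heuristic that a length-one drop in $\ell$ should perturb only one pair of blocks does not follow from anything established: the relation $w'\subset w$ on ${}^JW$ is governed by the $\theta$-twisted Bruhat comparison of Viehmann--Wedhorn (recalled before Lemma~\ref{LemOfGeneSpew'w}), and its covering relations can a priori involve several blocks simultaneously. You also allow an arbitrary pair $(i,j)$, whereas Conjecture~\ref{ConjOfArbitBC} posits the \emph{adjacent} pair $N_r\oplus N_{r+1}$; this matters, since for a non-adjacent pair the slopes of the replaced piece $\zeta_{ij}$ fill the closed interval between $\lambda_j$ and $\lambda_i$, while any intervening $\lambda_k$ lies in its interior, so the reassembled $\zeta$ need not be lower convex. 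Even granting adjacency and the localization, Theorem~\ref{ThmOfzetaxi} treats only the mixed case $\lambda_{r+1}<1/2<\lambda_r$; the same-side two-slope cases and, most delicately, the case where a slope equals $1/2$ remain open, as you note. The one step you describe as ``padding with the identity'' is the most defensible, since the paper itself appeals to a direct-sum compatibility for specializations via \cite[Proposition~3.5]{HiguchiHarashita} in the proof of Theorem~\ref{ThmOfzetaxi}, but it too would need to be stated and checked in the generality you require.
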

If Conjecture~\ref{ConjOfArbitBC} stated in Section~\ref{ClassBoundaryComp} holds, 
then for Newton polygons $\xi$,
the two segments case is essential to show Conjecture~\ref{ConjOfArbitzeta}.

This paper is organized as follows:
In Section~\ref{Prelim}, we recall definitions of 
$p$-divisible groups, truncated Dieudonn\'e modules
of level one and related matters.
Moreover, we introduce a notion of ``arrowed binary sequences". 
The proofs of our main results are described using this notion.
In Section~\ref{ConSpeAbs}, 
to show the first result, 
we give explanations about tools
which are used to construct specializations combinatorially,
and show some properties of these tools. 
In Section~\ref{ClassBoundaryComp}, we give a proof of 
Theorem~\ref{ThmOfCentralClassification}.
This theorem classifies boundary components of central streams.
In Section~\ref{ConstGoodSpe}, 
we see the key proposition (Proposition~\ref{PropOfSpe})
which is used to prove the second result,
and we show Theorem~\ref{ThmOfzetaxi}.

\section{Preliminary} 
\label{Prelim}

In this section, we recall definitions of $p$-divisible groups,
central leaves, minimal $p$-divisible groups,
central streams and ${\rm DM_1}$'s.
Finally, we introduce the notion of arrowed binary sequences;
see Definition~\ref{DefOfSequences},
used in the proofs of main theorems.

\subsection{$p$-divisible groups and Dieudonn\'e modules}
\label{p-divAndDieudonne}

First, let us recall the definition of $p$-divisible groups.
Let $p$ be a prime number.
Let $h$ be a non-negative integer.
Let ${\rm S}$ be a scheme in characteristic $p$.
We say that $X$ is a {\it $p$-divisible group} (Barsotti-Tate group) 
of height $h$ over ${\rm S}$
if $X$ is an inductive system $X = (G_v, i_v)_{v \geq 1}$ for natural numbers $v$,
where $G_v$ is a finite locally free commutative group scheme 
over ${\rm S}$ of order $p^{vh}$,
and for each $v$, there exists the exact sequence 
of commutative group schemes
$$0 \rightarrow G_v \xrightarrow[]{i_v} 
G_{v+1} \xrightarrow[]{p^v}G_{v+1},$$
where $i_v$ is a canonical inclusion.
Let $X = (G_v, i_v)_{v \geq 1}$ be a $p$-divisible group over ${\rm S}$.
For an arbitrary scheme ${\rm T}$ over ${\rm S}$,
we have the $p$-divisible group $X_{\rm T}$ over $\rm T$
which is defined by $(G_v \times_{\rm S} {\rm T},\ i_v \times {\rm id})_{v \geq 1}$.
In particular, if $\rm T$ is a closed point $s$ over ${\rm S}$,
then the $p$-divisible group $X_s$ is called fiber of $X$ over $s$. 

Let $K$ be a perfect field of characteristic $p$.
We denote by $W(K)$ the ring of Witt-vectors with coefficients in $K$.
Let $\sigma$ be the Frobenius over $K$.
We denote by the same symbol $\sigma$ the Frobenius over $W(K)$ 
if no confusion can occur.
We say that $M$ is a {\it Dieudonn\'e module over $K$} if 
$M$ is a finite $W(K)$-module equipped with 
$\sigma$-linear homomorphism ${\rm F}:M \rightarrow M$
and $\sigma^{-1}$-linear homomorphism ${\rm V}:M \rightarrow M$
satisfying that ${\rm F}\circ {\rm V}$ and ${\rm V} \circ {\rm F}$
is equal to the multiplication by $p$.
We use the covariant Dieudonn\'e theory,
which says that 
there exists a canonical categorical equivalence $\mathbb D$
from the category of $p$-divisible groups 
(resp. finite commutative group schemes) over $K$
to that of Dieudonn\'e modules over $K$
which are free as $W(K)$-modules
(resp. are of finite length).

Here, let us recall the notion of minimal $p$-divisible groups.
We define a $p$-divisible group $H_{m, n}$ over $\mathbb F_p$ as follows:
$H_{m, n}$ is of dimension $n$, and its Serre-dual is of dimension $m$.
Moreover, the Dieudonn\'e module is obtained by
\begin{equation}
\mathbb D(H_{m,n}) = \bigoplus_{i = 1}^h \mathbb Z_p e_i,
\end{equation}
where $h = m+n$, and $\mathbb Z_p$ is the ring of $p$-adic integers.
For the basis $e_i$, operations ${\rm F}$ and ${\rm V}$ satisfy that 
${\rm F}e_i = e_{i-m}$, ${\rm V}e_i = e_{i-n}$ and $e_{i-h} = pe_i$.

Let $\{(m_i, n_i)\}_{i = 1, \dots, z}$ be a finite number of 
pairs of coprime non-negative integers
endowed with a non-increasing order of $\lambda_i := n_i/h_i$
with $h_i = m_i + n_i$,
i.e., we have $\lambda_1 \geq \lambda_2 \geq \cdots \geq \lambda_z$.
A {\it Newton polygon} $\xi = \sum_{i = 1}^z\, (m_i, n_i)$
is a lower convex polygon in $\mathbb{R}^2$,
breaking on integral coordinates,
consisting of slopes $\lambda_i$.
We call each coprime pair $(m_i, n_i)$ {\it segment}.

For a Newton polygon $\xi = \sum_i\, (m_i, n_i)$, 
we set a $p$-divisible group 
\begin{equation}\label{DefOfMinimalp-div}
H(\xi) = \bigoplus_i H_{m_i, n_i}.
\end{equation}
We say that a $p$-divisible group $X$ is {\it minimal} if 
there exists a Newton polygon $\xi$ such that 
$X$ is isomorphic to $H(\xi)$ over an algebraically closed field.
For a $p$-divisible group $Y$,
there exists an isogeny from $Y$ to $H(\xi)$ over an algebraically closed field
for some Newton polygon $\xi$.
This $\xi$ is called the {\it Newton polygon of $Y$},
which is denoted by ${\rm NP}(Y)$.

In \cite[2.1]{oortFoliations},
for a $p$-divisible group $\mathcal Y$ of height $h$ 
over a scheme ${\rm S}$ in characteristic $p$ 
and for a $p$-divisible group $Y$ over a field of characteristic $p$,
Oort gave the definition of a {\it leaf} by
\begin{equation} \label{DefOfLeaf}
\mathcal C_Y({\rm S}) = \{s \in {\rm S}
\mid \mathcal Y_s \text{ is isomorphic to } Y 
\text { over an algebraically closed field}\},
\end{equation}
which is considered as a locally closed subscheme of ${\rm S}$
by giving $\mathcal C_Y({\rm S})$ the induced reduced structure.

Let ${\rm K}$ be a field of characteristic $p$,
and let $X$ be a $p$-divisible group over ${\rm K}$.
Let $\mathcal Y \to {\rm S}$ be a $p$-divisible group of 
height $h$ and dimension $d$
over a noetherian scheme ${\rm S}$ over ${\rm K}$.
Let $\xi$ be a Newton polygon starting at $(0, 0)$, ending at $(h, d)$.
We write
\begin{eqnarray}
\mathcal W^0_\xi({\rm S}) = \{s \in {\rm S} \mid {\rm NP}(\mathcal Y_s) = \xi\}.
\end{eqnarray}
By Grothendieck-Katz \cite{katzcrystal},
we know that $\mathcal W^0_\xi({\rm S}) \subset {\rm S}$ is locally closed.
We recall Oort's result on leaves:
\begin{theorem}[\cite{oortFoliations}, Theorem 2.2] \label{FactOfW^0}
Let $\rm K$ be a field, and let $X_0 \to {\rm Spec}({\rm K})$
be a $p$-divisible group over ${\rm K}$.
Set $\xi = {\rm NP}(X_0)$.
Let ${\rm S} \to {\rm Spec}({\rm K})$ be an excellent scheme over ${\rm K}$.
For a $p$-divisible group $\mathcal Y \to {\rm S}$,
\begin{eqnarray}
\mathcal C_{X_0}({\rm S}) \subset \mathcal W^0_\xi ({\rm S})
\end{eqnarray}
is a closed subset.
\end{theorem}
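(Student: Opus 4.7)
The plan is to first reduce to the case where the ambient scheme has everywhere the same Newton polygon $\xi$, and then check closedness under specialization. The inclusion $\mathcal{C}_{X_0}({\rm S}) \subseteq \mathcal{W}^0_\xi({\rm S})$ is immediate from the definitions, since if $\mathcal Y_s$ is isomorphic to $X_0$ over an algebraically closed field, then $\mathrm{NP}(\mathcal Y_s) = \mathrm{NP}(X_0) = \xi$. Replacing ${\rm S}$ by $\mathcal W^0_\xi({\rm S})$ with its reduced induced structure (which is locally closed in ${\rm S}$ by Grothendieck--Katz), it suffices to show that $\mathcal C_{X_0}({\rm S})$ is closed in this new ${\rm S}$.

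Next I would show that $\mathcal C_{X_0}({\rm S})$ is a constructible subset. The point is that for each $N \geq 1$ the locus $\{s \in {\rm S} \mid \mathcal Y_s[p^N] \simeq X_0[p^N]\}$ is constructible, since on a noetherian base the isomorphism type of a truncated Barsotti--Tate group is constant on locally closed strata (one can argue via the finite flat group scheme $\underline{\mathrm{Isom}}(\mathcal Y[p^N], X_0[p^N]_{\rm S})$, whose image in ${\rm S}$ is constructible). A standard argument shows that the intersection over all $N$ stabilizes locally on ${\rm S}$ (Noetherian induction), so $\mathcal C_{X_0}({\rm S})$ is constructible.

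Having constructibility, closedness reduces via the valuative criterion (valid since ${\rm S}$ is excellent) to the following trait statement: for every complete discrete valuation ring $R$ of characteristic $p$ with algebraically closed residue field $\kappa$ and fraction field $L$, and every $p$-divisible group $\mathcal Y \to \mathrm{Spec}(R)$ with constant Newton polygon $\xi$, if $\mathcal Y_L \simeq (X_0)_L$ then $\mathcal Y_\kappa \simeq (X_0)_\kappa$. Here one may further assume $R = W(\kappa)[[t]]/(\text{something})$ or a complete DVR with perfect residue field, which allows the use of covariant Dieudonn\'e theory ($\mathbb D$) over the trait.

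The main obstacle is this trait statement. My approach would invoke Zink's slope filtration theorem: since $\mathcal Y$ has constant Newton polygon over $\mathrm{Spec}(R)$, after a finite extension of $R$ it admits a filtration whose graded pieces are isoclinic, and the same holds for $(X_0)_R$. The given generic isomorphism $\mathcal Y_L \simeq (X_0)_L$ respects the slope filtrations (they are unique). One then compares graded pieces: for an isoclinic $p$-divisible group of slope $\lambda$, the functor to its associated \'etale $F$-isocrystal on the trait is fully faithful on isomorphism classes, so the generic isomorphism extends to an isomorphism of graded pieces. Finally, assembling the graded isomorphisms back into an isomorphism of the filtered objects requires lifting certain $\mathrm{Ext}$-classes; the delicate point is showing these lift from $L$ to $R$ under the constant Newton polygon assumption. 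This last step is really the technical heart, and is handled by the rigidity results for crystals with $F$-structure over a trait, exactly as in \cite{deJongCrystalline} combined with Zink's display calculus.
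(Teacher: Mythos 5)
The paper does not prove this result: it is cited verbatim from Oort, \emph{Foliations in moduli spaces of abelian varieties}, J.\ Amer.\ Math.\ Soc.\ \textbf{17} (2004), Theorem~2.2, and is used as a black box. So there is no proof in the present paper against which to compare your argument; what follows is an assessment of the sketch on its own terms.

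Your high-level plan (restrict to $\mathcal W^0_\xi(\mathrm S)$, show constructibility, then check stability under specialization along a trait) is indeed the shape of Oort's argument, but several of the individual steps are not correct as written. First, the statement that ``for an isoclinic $p$-divisible group of slope $\lambda$, the functor to its associated \'etale $F$-isocrystal on the trait is fully faithful on isomorphism classes'' is false: an isoclinic $p$-divisible group of slope $\lambda$ with $0<\lambda<1$ is neither \'etale nor of multiplicative type, and its $F$-(iso)crystal is not \'etale. What Oort and Zink actually exploit is that, after replacing $\mathcal Y$ by an isogenous \emph{completely slope divisible} $p$-divisible group, each isoclinic subquotient of slope $m/(m+n)$ becomes, after a Frobenius/Verschiebung twist, an object controlled by \'etale data; undoing the isogeny and reassembling is the real content and is not a formal consequence of rigidity of crystals. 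Second, the ``Noetherian induction'' you invoke to stabilize $\bigcap_N\{s : \mathcal Y_s[p^N]\simeq X_0[p^N]\}$ is not a proof: descending chains of constructible subsets need not stabilize. What makes the intersection finite is a genuine boundedness theorem (Oort, loc.\ cit., around 1.7, building on \emph{Minimal $p$-divisible groups}): for fixed $X_0$ there is an explicit $N_0$ such that $Y[p^{N_0}]\simeq X_0[p^{N_0}]$ already forces $Y\simeq X_0$ geometrically, and it is precisely in proving such a bound that the constancy of the Newton polygon enters. Finally, the trait statement should be phrased with a geometric isomorphism at the generic point (i.e.\ over $\bar L$, not over $L$), and you need to explain why it suffices to pass to a finite extension $L'/L$ and the corresponding DVR $R'$ with the same algebraically closed residue field before applying de Jong's rigidity. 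As written, the sketch conflates isomorphism over $L$ with isomorphism over $\bar L$ and leaves the genuinely hard steps (boundedness and the slope-filtration gluing) to hand-waving.
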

Using Theorem~\ref{FactOfW^0}, we show
\begin{lemma} \label{LemOfStrictlyMin}
Let $X$ be a specialization of the minimal $p$-divisible group $H(\xi)$
for a Newton polygon $\xi$.
Assume that $X$ is not isomorphic to $H(\xi)$ over an algebraically closed field.
Then ${\rm NP}(X) \precneqq \xi$.
\end{lemma}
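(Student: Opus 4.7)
The plan is to argue by contradiction: assume that $\mathrm{NP}(X) = \xi$ (recall that the reverse inequality $\mathrm{NP}(X) \succeq \xi$ is forced by Grothendieck--Katz \cite{katzcrystal}), and deduce that $X$ must in fact be isomorphic to $H(\xi)$ over an algebraically closed field, contradicting the hypothesis.

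Let $\mathfrak{X} \to S := \mathrm{Spec}(R)$ be the family over a DVR $(R,\mathfrak{m})$ witnessing the specialization, with generic point $\eta$ and closed point $s$. First I would replace $R$ by its $\mathfrak{m}$-adic completion $\widehat{R}$; this preserves the closed fiber and only enlarges $L$ to $\widehat{L} := \mathrm{Frac}(\widehat{R})$, and since Newton polygons and isomorphism classes of $p$-divisible groups over algebraically closed fields are insensitive to such a scalar extension, the base change $\mathfrak{X}_{\widehat{R}}$ still witnesses the same specialization. The advantage is that $\widehat{R}$ is a complete DVR, hence excellent, so that Theorem~\ref{FactOfW^0} applies to $S = \mathrm{Spec}(\widehat{R})$.

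Under the assumption $\mathrm{NP}(X) = \xi$, the fibers of $\mathfrak{X}$ at both $\eta$ and $s$ have Newton polygon $\xi$, so $\eta, s \in \mathcal{W}^0_\xi(S)$. Since $|S| = \{\eta, s\}$, this forces $\mathcal{W}^0_\xi(S) = S$ as a subset. Next, apply Theorem~\ref{FactOfW^0} to the family $\mathfrak{X} \to S$ and to $X_0 = H(\xi)$: the central stream $\mathcal{C}_{H(\xi)}(S)$ is a closed subset of $\mathcal{W}^0_\xi(S) = S$. By definition of the specialization, the generic fiber $\mathfrak{X}_L$ is isomorphic to $H(\xi)$ over an algebraically closed field containing $L$, which (base changing to any sufficiently large algebraic closure of $\widehat{L}$) gives $\eta \in \mathcal{C}_{H(\xi)}(S)$. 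Closedness then yields $s \in \overline{\{\eta\}} \subseteq \mathcal{C}_{H(\xi)}(S)$, so $X = \mathfrak{X}_\kappa$ is isomorphic to $H(\xi)$ over an algebraically closed field, contradicting the hypothesis.

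The main obstacle is the excellence hypothesis in Theorem~\ref{FactOfW^0}, which is why the reduction to the complete DVR is crucial; one has to check that the reformulated family $\mathfrak{X}_{\widehat{R}}$ still realizes the generic fiber as (geometrically) isomorphic to $H(\xi)$, which follows from the fact that any algebraically closed field containing $\widehat{L}$ contains a copy of an algebraic closure of $L$, so the isomorphism $\mathfrak{X}_L \otimes \overline{L} \cong H(\xi) \otimes \overline{L}$ spreads to $\mathfrak{X}_{\widehat{L}} \otimes \Omega \cong H(\xi) \otimes \Omega$ over a suitable $\Omega$. Apart from that, the argument is purely formal, driven by Theorem~\ref{FactOfW^0}.
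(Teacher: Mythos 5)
Your proof is correct and follows essentially the same strategy as the paper's: assume ${\rm NP}(X)=\xi$, observe that then $\mathcal W^0_\xi$ is all of ${\rm Spec}(R)$, invoke Theorem~\ref{FactOfW^0} to get that the central leaf $\mathcal C_{H(\xi)}$ is closed in it and contains the generic point, and conclude that the closed fiber is also geometrically isomorphic to $H(\xi)$, a contradiction. The one point where you are more careful than the paper's own write-up is the excellence hypothesis in Theorem~\ref{FactOfW^0}: you pass to the $\mathfrak m$-adic completion of $R$ to guarantee it, whereas the paper silently takes $R$ to be a DVR over $k$ without addressing this; your refinement is legitimate and the spreading-out of the geometric isomorphism under the faithfully flat extension $R\to\widehat R$ works as you indicate.
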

\begin{proof}
It suffices to show the case that $X$ is a $p$-divisible group over
an algebraically closed field $k$ of characteristic $p$.
Let $R$ be a discrete valuation ring over $k$.
Let $L$ denote the fractional field of $R$.
Let $\mathfrak X \to {\rm Spec}(R)$ be a $p$-divisible group satisfying that
$\mathfrak X_k \simeq X$ and $\mathfrak X_{\overline L} \simeq H(\xi)_{\overline L}$.
Suppose that ${\rm NP}(X) = \xi$ were true.
By applying Theorem~\ref{FactOfW^0} to 
$(\mathcal Y \to {\rm S}) = (\mathfrak X \to {\rm Spec}(R))$
and $X_0 = H(\xi)_k$,
we have $X \simeq H(\xi)_k$.
This is a contradiction.
\end{proof}

For a $p$-divisible group $X$, the kernel of $p$-multiplication $p : X \to X$
is called the {\it $p$-kernel} of $X$, denoted by $X[p]$.
The Dieudonn\'e module of $X[p]$ makes a truncated Dieudonn\'e module of level one,
defined below:
\begin{definition}
Let $N$ be a $K$-vector space of finite dimension.
Let ${\rm F}$ and ${\rm V}$ be a $\sigma$-linear map
and a $\sigma^{-1}$-linear map respectively
from $N$ to itself.
A triple $(N, {\rm F}, {\rm V})$ is a
{\it truncated Dieudonn\'e modules of level one} over $K$
(abbreviated as ${\rm DM_1}$) if 
the above ${\rm F}$ and ${\rm V}$ satisfy that 
${\rm Ker}\, {\rm F} = {\rm Im}\, {\rm V}$ and 
${\rm Im}\, {\rm F} = {\rm Ker}\, {\rm V}$. 
We say that a ${\rm DM_1}$ $(N, {\rm F}, {\rm V})$ is of height $h$ and
dimension $d$ if
$\dim_K N = h$ and $\dim_K N/{\rm V}N = d$. 
\end{definition}
Let us recall the notion of specializations.
Let $R$ be a commutative ring of characteristic $p > 0$.
Let $\sigma:R \rightarrow R$ be the Frobenius endomorphism 
defined by $\sigma(a)=a^p$.
Then the definition of ${\rm DM_1}$'s over the ring $R$ is given as follows:
\begin{definition}\label{DefFamilyDM1}

A ${\rm DM_1}$ over $R$ of height $h$ is a quintuple 
${\cal N}=({\cal N}, C, D, F, V^{-1})$ 
satisfying
\begin{enumerate}
\item[(i)] ${\cal N}$ is a locally free $R$-module of rank $h$,
\item[(ii)]$C$ and $D$ are submodules of ${\cal N}$
which are locally direct summands of ${\cal N}$,
\item[(iii)]
$F:({\cal N}/C) \otimes_{R,\sigma} R\rightarrow D$ and 
$V^{-1}: C\otimes_{R,\sigma} R\rightarrow {\cal N}/D$
are $R$-linear isomorphisms. 
\end{enumerate}
\end{definition}
Put $R = K[\![t]\!]$.
Let $\mathcal N$ be an arbitrary ${\rm DM_1}$ over $R$.
We set $\mathcal N_K = \mathcal N \otimes_R K$,
and we have a ${\rm DM_1}$ over $K$.
From this we obtain a map, called a {\it specialization},
$$\{{\rm DM_1} \text{ over }R\} \rightarrow \{{\rm DM_1} \text{ over }K\}$$
which maps $\mathcal N$ to $\mathcal N_K$.

Let $\xi = \sum_i (m_i, n_i)$ be a Newton polygon.
We denote by $N_{\xi}$ the ${\rm DM_1}$ 
associated to the $p$-kernel of $H(\xi)$.
Equivalently, $N_\xi$ is described as
\begin{equation}\label{DefOfMinDM1}
N_{\xi} = \bigoplus N_{m_i, n_i},
\end{equation}
where $N_{m,n}$ is the ${\rm DM_1}$ associated to
the $p$-kernel of $H_{m,n}$.
In this paper, we mainly treat ${\rm DM_1}$'s $N_{\xi}$
with $\xi = (m_1, n_1) + (m_2, n_2)$ satisfying $\lambda_2 < 1/2 < \lambda_1$.

Let $k$ be an algebraically closed field of characteristic $p$.
Then the following classification of ${\rm DM_1}$'s over $k$ is given by
Kraft \cite{kraftKom}, Oort \cite{oortstr} 
and Moonen-Wedhorn \cite{moonen-wedhornDis}.

\begin{theorem}\label{ThmOfDMAndSeq}
There exists a bijection:    
$$\{0,1\}^h\leftrightarrow \{{\rm DM_1}\ 
{\rm over}\ k\ {\rm of\ height}\ h\}/\cong.$$
\end{theorem}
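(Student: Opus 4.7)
The plan is to establish the bijection by constructing an explicit ``standard model'' ${\rm DM_1}$ for each binary word, and then showing that every ${\rm DM_1}$ over $k$ of height $h$ is isomorphic to exactly one such model.

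First I would construct the forward map $\{0,1\}^h \to \{{\rm DM_1}\text{'s}\}/\!\cong$. Given $w=(w_1,\ldots,w_h)$, set $I_F=\{i : w_i=1\}$ and $I_V=\{i : w_i=0\}$, and let $N_w$ be the $k$-vector space with basis $e_1,\ldots,e_h$. Define ${\rm F}$ and ${\rm V}$ on the basis so that $\{e_i : i\in I_F\}$ is a basis for ${\rm Im}\,{\rm F}$ and $\{e_i : i\in I_V\}$ is a basis for ${\rm Im}\,{\rm V}$, pairing sources with targets in increasing order of index. Then ${\rm Ker}\,{\rm F}={\rm Im}\,{\rm V}$ and ${\rm Ker}\,{\rm V}={\rm Im}\,{\rm F}$ hold by construction, so $N_w$ is a ${\rm DM_1}$ of height $h$.

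Next, for an arbitrary ${\rm DM_1}$ $(N,{\rm F},{\rm V})$ over $k$, I would show $N\cong N_w$ for some $w$ by decomposing $N$ into ``cyclic pieces''. Arguing by induction on $h$, I would pick a nonzero $v\in N$ and form the smallest subspace $M$ stable under ${\rm F}$ and ${\rm V}$ generated by $v$; the defining relations force $M$ to have a simple combinatorial description (its $\{{\rm F},{\rm V}\}$-orbit structure yields a short word $w'$). The key step is producing a $\{{\rm F},{\rm V}\}$-stable complement of $M$ in $N$, after which the induction hypothesis applied to the complement yields the global word $w$ as a concatenation of such pieces.

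Finally I would prove uniqueness: distinct words give non-isomorphic ${\rm DM_1}$'s. This can be done by extracting $w$ intrinsically from $(N,{\rm F},{\rm V})$ via the tuple of numerical invariants $\dim({\rm F}^{a}N\cap{\rm Ker}\,{\rm V}^{b})$ for $a,b\geq 0$, which depends only on the isomorphism class and which can be computed from the cyclic decomposition to recover $w$.

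The main obstacle is the second step, namely producing the cyclic decomposition. Since ${\rm F}$ is $\sigma$-semilinear and ${\rm V}$ is $\sigma^{-1}$-semilinear rather than $k$-linear, splitting off a direct summand amounts to solving Artin--Schreier-type equations $x^p-\alpha x=\beta$ for coefficients arising from the chosen basis, and it is precisely here that algebraic closedness of $k$ enters: it guarantees that such equations always have roots, which is what allows the inductive peeling to go through.
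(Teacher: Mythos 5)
The paper does not prove Theorem~\ref{ThmOfDMAndSeq}: it attributes the classification to Kraft, Oort, and Moonen--Wedhorn (\cite{kraftKom}, \cite{oortstr}, \cite{moonen-wedhornDis}) and only spells out the construction of the forward map $w \mapsto (N,{\rm F},{\rm V})$. So your proposal is not competing with an in-paper argument but with the proofs in those references. Your step 1 matches the paper's recipe for the standard models. Your overall plan for the hard directions---split a given ${\rm DM_1}$ into cyclic pieces using algebraic closedness, then separate isomorphism classes by numerical data---is broadly in the spirit of Kraft's original argument, but as written it has genuine gaps at exactly the places where the known proofs do real work.

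Concretely: the minimal $(F,V)$-stable subspace generated by an \emph{arbitrary} nonzero $v$ need not be ``cyclic'' in the sense you need, and choosing $v$ well is the crux. The standard arguments first build the canonical filtration by iteratively taking $F$-images and $V^{-1}$-preimages and then pick a generator adapted to it; it is that choice which reduces the splitting problem to the solvable Artin--Schreier equations you invoke, and nothing in your sketch guarantees it. Second, the word attached to a direct sum $M\oplus M'$ is not a \emph{concatenation} of the words for $M$ and $M'$: it is the interleaving governed by the binary expansions under $\pi$ (precisely the ``direct sum of ABS's'' the paper defines later), and without specifying this merge you have not exhibited $w$ with $N\cong N_w$, so surjectivity is incomplete. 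Third, injectivity is asserted rather than proved: it is not clear that the double-flag numbers $\dim({\rm F}^aN\cap\ker {\rm V}^b)$ separate all $2^h$ isomorphism classes, since the complete invariant used in the literature---the type of the canonical filtration, equivalently the element of ${}^J W$---is in general strictly finer than the mutual position of the two partial flags $\{{\rm F}^aN\}$ and $\{\ker {\rm V}^b\}$. You would either have to prove that the coarser data already suffices, or argue with the full canonical filtration.
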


Let us review the construction of the bijection above.
We often identify an element $S$ of $\{0, 1\}^h$ with
the pair of a totally ordered set $\tilde S = \{t_1 < \dots < t_h\}$
and a map $\delta : \tilde S \rightarrow \{0, 1\}$, 
so that the $i$-th coordinate of $S$ is $\delta(t_i)$. 
We write this identification as an equality $S = (\tilde S, \delta)$. 
The bijection of Theorem~\ref{ThmOfDMAndSeq} is obtained in the following way.
Let $S = (\tilde S, \delta)$ be as above.
To $S$, we associate a ${\rm DM_1}$ $(N, {\rm F}, {\rm V})$ as follows.
Let $N =ke_1 \oplus \cdots \oplus ke_h$.
We define a map ${\rm F}$ by 
$${\rm F}e_i =
\begin{cases}
e_j,\ j = \#\{ l \mid \delta(t_l) = 0,\ l \leq i\} & {\rm for}\ \delta(t_i) = 0,\\
0 & {\rm otherwise}.
\end{cases}
$$
Let $t_{j_1}, \dots, t_{j_c}$,
with $j_1 < \dots < j_c$, be the
elements of $\tilde S$ satisfying $\delta(t_{j_l}) = 1$.
Put $d = h - c$.
Then a map ${\rm V}$ is defined by
$$
{\rm V}e_i =
\begin{cases}
e_{j_l},\ l = i - d & {\rm for}\ i > d,\\
0 & {\rm otherwise}.
\end{cases}
$$
We call $\{e_1, \dots, e_h\}$ a {\it standard basis} of $(N, {\rm F}, {\rm V})$.

\begin{example}\label{ExOfDM1}
Let us see an example of ${\rm DM_1}$'s.
Let $S = (1,1,0,1,0)$ be an element of $\{0, 1\}^5$.
Then the ${\rm DM_1}$ $(N, {\rm F}, {\rm V})$, with
$N = ke_1\oplus \dots \oplus ke_5$, corresponding to $S$
is given by the following diagram.
$$\xymatrix{1 &1
&0\ar@/_20pt/[ll]_{\rm F} \ar@/^20pt/[ll]_{\rm V} 
&1 \ar@/^20pt/[ll]_{\rm V} 
&0\ar@/_20pt/[lll]_{\rm F} \ar@/^20pt/[l]_{\rm V}\\
e_1 &e_2 &e_3 &e_4 &e_5}$$
For the above diagram, if there exists no vector of ${\rm F}$ (resp. ${\rm V}$)
from $e_i$, 
then we regard ${\rm F}$ (resp. ${\rm V}$) maps $e_i$ to zero.
One can check that the above satisfies the condition of ${\rm DM_1}$'s.
\end{example}

Let $h$ and $c$ be non-negative integers.
Put $d = h - c$.
Let $W$ be the Weyl group of the general linear group $GL_h$.
We identify $W$ with the symmetric group $\mathfrak S_h$.
Here, we associate ${\rm DM_1}$'s over $k$ 
with elements of the Weyl group. 
Let $s_i \in W$ be the simple reflection $(i, i+1)$ for $i = 1, \dots, h-1$.
Let $\Omega$ be the standard generator of $\mathfrak S_h$,
namely $\Omega = \{s_1, \dots, s_{h-1}\}$.
Set $J_c = \Omega \setminus \{s_c\}$.
Let $W_J$ be the subgroup of $W$ generated by $J = J_c$.
Let ${}^J W$ be the set consisting 
of elements of minimal length 
in $W_J \backslash W$, 
i.e., the shortest representatives of $W_J \backslash W$.
Let $w$ be an element of ${}^J W$.
We associate $S = (\tilde S, \delta)$ to $w$ by the pair of
a totally ordered set $\tilde S = \{t_1 < \cdots < t_h\}$ and
the map $\delta : \tilde S \rightarrow \{0, 1\}$ defined by
$\delta(t_i) = 0$ if and only if $w(i) > c$.
Here the property of the minimal length of $w$ is used.
We regard this pair $(\tilde S, \delta)$ as the element of $\{0, 1\}^h$.
One can check that this gives a one-to-one correspondence
between ${}^J W$ and the set of elements $S$ of $\{0, 1\}^h$
satisfying $\# \{t \in \tilde S \mid \delta(t) = 0\} = d$.
Thus there exists a bijection between ${}^J W$ and
the set of isomorphism classes of ${\rm DM_1}$'s over $k$ 
of height $h$ and dimension $d$.
We denote by $w_\xi$ the element of ${}^J W$
associated to $N_\xi$.

In the rest of this subsection,
we show a lemma used for the construction of generic specializations.
Let $W$ and $J = J_c$ be as above.
Set $d = h - c$.
We define $x \in W$ to be 
$x(i) = i+d$ if $i \leq c$ and $x(i) = i-c$ otherwise.
We define $\theta : W \to W$ by $u \mapsto xux^{-1}$.
It follows from \cite[4.10]{VWEOShimura} by Viehmann-Wedhorn
that $w' \subset w$ if and only if 
there exists an element $u$ of $W_J$ such that
$u^{-1}w'\theta(u) \leq w$, where 
$\leq$ denotes the Bruhat order. 

\begin{lemma} \label{LemOfGeneSpew'w}
Let $w'$ and $w$ be elements of ${}^J W$ with $w' \subset w$.
If $\ell(w') = \ell(w) - 1$, then
there exist $v \in W$ and $u \in W_J$ such that
\begin{itemize}
\item[(i)] $v = ws$ for a transposition $s$,
\item[(ii)] $\ell(v) = \ell(w) - 1$,
\item[(iii)] $w' = u v \theta (u^{-1})$.
\end{itemize} 
\end{lemma}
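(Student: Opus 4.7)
The plan is to deduce the lemma from the Viehmann--Wedhorn criterion recalled in the paragraph just above it, by carefully choosing the element of $W_J$ so that the conjugate lands exactly one length below $w$. By \cite[4.10]{VWEOShimura}, the relation $w' \subset w$ gives some $u_0 \in W_J$ with $v_0 := u_0^{-1} w' \theta(u_0) \leq w$ in the Bruhat order. The key observation is that once we find $u \in W_J$ for which $v := u^{-1} w' \theta(u)$ is a Bruhat cover of $w$, i.e.\ $v \leq w$ and $\ell(v) = \ell(w) - 1$, the entire conclusion follows automatically: by the standard characterization of cover relations in the Bruhat order of a Coxeter group, $v$ is of the form $v = ws$ for some reflection $s$, and every reflection of $\mathfrak S_h$ is a transposition, which gives (i); (ii) is the length identity itself; and (iii) is the rearrangement $w' = uv\theta(u^{-1})$.

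To control lengths as $u$ varies, I would use two ingredients. First, since $\theta$ is conjugation by the explicit $x$ with $x s_i x^{-1} = s_{i+d}$ for $i < c$ and $x s_i x^{-1} = s_{i-c}$ for $i > c$, it permutes the generating set $J$ of $W_J$, inducing a length-preserving automorphism of $W_J$; in particular $\ell(\theta(u)) = \ell(u)$ for $u \in W_J$. Second, the minimality property of ${}^J W$ yields $\ell(uy) = \ell(u) + \ell(y)$ whenever $u \in W_J$ and $y \in {}^J W$. Consequently, the lengths of the $\theta$-twisted $W_J$-conjugates of $w'$ remain in a controlled window around $\ell(w') = \ell(w) - 1$.

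The main step is to choose $u \in W_J$ with $v_u := u^{-1} w' \theta(u) \leq w$ maximizing $\ell(v_u)$, and to show that this maximum equals $\ell(w) - 1$. The case $\ell(v_u) = \ell(w)$---which would force $v_u = w$ and hence $w' = uw\theta(u^{-1})$---is ruled out by a parity argument: the sign homomorphism $W \to \{\pm 1\}$ satisfies $\mathrm{sgn}(uw\theta(u^{-1})) = \mathrm{sgn}(w)$ since $\theta$ is conjugation and so preserves sign, so $\ell(w')$ and $\ell(w)$ would be forced to have the same parity, contradicting $\ell(w') = \ell(w) - 1$. The hard case, and the principal obstacle, is $\ell(v_u) \leq \ell(w) - 2$: here I would invoke the chain condition of the Bruhat order to locate $z \in [v_u, w]$ with $\ell(z) = \ell(w) - 1$, and then, by an exchange-lemma-style manipulation on a reduced expression of $u$ combined with the explicit formula for $\theta$ on $J$, construct $u' \in W_J$ with $v_{u'} \leq w$ and $\ell(v_{u'}) > \ell(v_u)$, contradicting maximality. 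The essential technical difficulty is that such a replacement must keep $v_{u'}$ inside the $\theta$-twisted $W_J$-orbit of $w'$ while simultaneously staying below $w$ in the Bruhat interval.
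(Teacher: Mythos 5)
You have the right framework (the Viehmann--Wedhorn criterion, the characterization of Bruhat covers, the two length identities), and your parity argument for ruling out $v_u = w$ is a nice, rigorous detail that the paper itself glosses over. But the proposal has a genuine gap at exactly the place you flag as the ``principal obstacle'': the case $\ell(v_u) \le \ell(w) - 2$. Your plan to fix it by an exchange-lemma manipulation is vague and is not actually carried out; as written it is a hope, not a proof.

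The point you are missing is that the two ingredients you already listed, combined, give a \emph{lower} bound that makes the hard case vacuous and renders the whole maximization unnecessary. For any $u \in W_J$, writing $v = u^{-1} w' \theta(u)$, the sub-additivity of length gives
$\ell(v) \ge \ell(u^{-1} w') - \ell(\theta(u))$;
since $w' \in {}^J W$ and $u^{-1} \in W_J$, minimality gives $\ell(u^{-1} w') = \ell(u) + \ell(w')$; and since $\theta$ permutes $J$, $\ell(\theta(u)) = \ell(u)$. Hence $\ell(v) \ge \ell(w')= \ell(w) - 1$ for \emph{every} choice of $u$, not just an optimal one. Combined with $v \le w$ (hence $\ell(v) \le \ell(w)$) and your parity observation (which forces $\ell(v) \ne \ell(w)$), this pins down $\ell(v) = \ell(w) - 1$ immediately. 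This is exactly the paper's argument: take any $u$ from the Viehmann--Wedhorn criterion with $v < w$, derive $\ell(v) \ge \ell(w')$ as above, conclude $\ell(v) = \ell(w) - 1$, and then read $v = ws$ off from the strong exchange property applied to a reduced word for $w$. Once you see the lower bound, your ``hard case'' cannot occur, and no chain-condition or maximality argument is needed.
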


\begin{proof}
Let $w \in {}^J W$.
Let $w' \in {}^J W$ satisfying that $w' \subset w$ and 
$\ell(w') = \ell(w) - 1$.
Choose an element $u$ of $W_J$ satisfying that
$u^{-1} w' \theta(u) < w$.
Set $v = u^{-1} w' \theta(u)$.
We show (ii). 
Since $w'$ is an element of ${}^J W$,
we have
$\ell(v) 
\geq
\ell(u^{-1}w') - \ell(\theta(u)^{-1})
= \ell(u) + \ell(w') - \ell(\theta(u))$.
Moreover, $\ell(u) + \ell(w') - \ell(\theta(u)) = \ell(w')$
since for all element $u'$ of $W_J$ we have $\ell(u') = \ell(\theta(u'))$
by the definition of $\theta$.
As $v < w$, we have $\ell(v) < \ell(w)$.
These prove (ii).
Let $w = s_{i_1} s_{i_2} \dots s_{i_l}$ be a reduced expression of $w$
with $v = s_{i_1} \dots s_{i_{q-1}} s_{i_{q+1}} \dots s_{i_l}$.
Set $s = (s_{i_l} \dots s_{i_{q+1}})s_{i_q}(s_{i_{q+1}} \dots s_{i_l})$.
Then $s$ is a transposition,
and this $s$ satisfies $v = ws$. 
\end{proof}

\subsection{Definition of arrowed binary sequences}
\label{ABS}

To show our main results, we introduce arrowed binary sequences.
This object can be regarded as a generalization of ${\rm DM_1}$'s.

\begin{definition}\label{DefOfSequences}
An {\it arrowed binary sequence} (we often abbreviate as ABS) 
is the triple $(\tilde S, \delta, \pi)$
consisting of a totally ordered set
$\tilde S = \{t_1 < \cdots < t_h\}$,
a map $\delta: \tilde S \rightarrow \{0, 1\}$
and a bijection $\pi : \tilde S \rightarrow \tilde S$.
We denote by $\mathcal H$ the set of all arrowed binary sequences.
\end{definition}

\begin{definition}\label{DefOfPi}
Let $N = (N, {\rm F}, {\rm V})$ be a ${\rm DM_1}$ of height $h$.
Let $e_1, \dots, e_h$ be a standard basis of $N$.
Let $S = (\tilde S, \delta)$ be the element of $\{0, 1\}^h$ corresponding to $N$
with $\tilde S = \{t_1, \dots, t_h\}$.
We define a bijective map $\pi: \tilde S \rightarrow \tilde S$ by
$\pi(t_i) = t_j$, where $j$ is uniquely determined by
\begin{equation}\label{EqOfPi}
\begin{cases}
{\rm F}(e_i) = e_j & \text{if }\delta(t_i) = 0, \\
{\rm V}(e_j) = e_i & \text{otherwise}.
\end{cases}
\end{equation}
Note that $(\tilde S, \delta, \pi)$ is an ABS.
We say that an ABS $S = (\tilde S, \delta, \pi)$ is {\it admissible} if
there exists a ${\rm DM_1}$ $N = (N, {\rm F}, {\rm V})$ such that
$(\tilde S, \delta)$ corresponds to $N$ by Theorem~\ref{ThmOfDMAndSeq},
and $\pi$ is constructed by \eqref{EqOfPi} from $N$.
The admissible ABS $S$ obtained from a ${\rm DM_1}$ $N$ is called
{\it the ABS associated to $N$}.
We denote by $\mathcal H'$ the set of all admissible ABS's.
\end{definition}


For an arrowed binary sequence $S = (\tilde S, \delta, \pi)$,
as seen in Example~\ref{ExampleOfABS},
we obtain a diagram of the ABS using elements of $\tilde S$ and arrows
$$\xymatrix{\bullet & \bullet \ar@/_15pt/[l]_\pi \ ,
& \bullet \ar@/_15pt/[r]^\pi &\bullet}.$$

\begin{example}\label{ExampleOfABS}
The diagram of the ABS
corresponding to $(1,1,0,1,0)$ is
$$\xymatrix{1\ar@/_20pt/[rr]^\pi &1\ar@/_20pt/[rr]^\pi
&0\ar@/_20pt/[ll]_\pi &1\ar@/_20pt/[r]^\pi &0\ar@/_20pt/[lll]_\pi}.$$
From Example~\ref{ExOfDM1} and this diagram,
one can check that the admissible ABS $(\tilde S, \delta, \pi)$
is obtained by a ${\rm DM_1}$.
\end{example}


Now we show a property of admissible ABS's.
Let $S = (\tilde S, \delta, \pi)$ be an ABS.
For each element $t$ of $\tilde S$,
we define the {\it binary expansion} $b(t)$ by
$b(t) = 0.b_1b_2\cdots$,
where $b_i = 0$ if $\delta(\pi^{-i}(t)) = 0$,
and $b_i = 1$ otherwise.
\begin{lemma} \label{LemOftt'BinExp}
Let $S = (\tilde S, \delta, \pi)$ be an admissible ABS.
Let $t$ and $t'$ be elements of $\tilde S$.
Then the following holds.
\begin{itemize}
\item[(i)] Suppose $\delta(t) = \delta(t')$. 
Then $t < t'$ if and only if $\pi(t) < \pi(t')$,
\item[(ii)] Suppose $b(t) \neq b(t')$. 
Then $t < t'$ if and only if $b(t) < b(t')$.
\end{itemize}
\end{lemma}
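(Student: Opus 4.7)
The plan is to first unpack the bijection $\pi$ from the explicit formulas for $F$ and $V$ in the construction preceding Example~\ref{ExOfDM1}. Writing $d = h - c$ where $c = \#\{t \in \tilde S : \delta(t) = 1\}$, the formula $F(e_i) = e_j$ with $j = \#\{l \leq i : \delta(t_l) = 0\}$ translates into: $\pi$ restricted to the $0$-positions is the (unique) order-preserving bijection onto $\{t_1, \dots, t_d\}$. Similarly, the formula $V(e_{d+l}) = e_{j_l}$ (where $t_{j_1} < \cdots < t_{j_c}$ are the $1$-positions) translates into: $\pi$ restricted to the $1$-positions is the order-preserving bijection onto $\{t_{d+1}, \dots, t_h\}$. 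Part (i) is then immediate, since if $\delta(t) = \delta(t')$, then $\pi(t)$ and $\pi(t')$ lie in the same block of consecutive positions (either the first $d$ or the last $c$), and on that block $\pi$ preserves order.

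For part (ii), I would reformulate the key observation of Step 1 as: $\pi^{-1}$ sends $\{t_1, \dots, t_d\}$ onto the $0$-positions and $\{t_{d+1}, \dots, t_h\}$ onto the $1$-positions. Consequently,
\[
b_1(t) = \delta(\pi^{-1}(t)) = 0 \iff t \in \{t_1, \dots, t_d\}, \qquad b_1(t) = 1 \iff t \in \{t_{d+1}, \dots, t_h\}.
\]
So whenever $b_1(t) \neq b_1(t')$, the element with first binary digit $0$ lies strictly below the element with first binary digit $1$, and both $t < t'$ and $b(t) < b(t')$ hold simultaneously.

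Now suppose $b(t) \neq b(t')$, and let $i \geq 1$ be the smallest index with $b_i(t) \neq b_i(t')$; assume without loss of generality $b_i(t) = 0 < 1 = b_i(t')$, so that $b(t) < b(t')$. Set $u_j = \pi^{-(i-1-j)}(t)$ and $u'_j = \pi^{-(i-1-j)}(t')$ for $j = 0, 1, \dots, i-1$, so $u_0 = \pi^{-(i-1)}(t)$, $u'_0 = \pi^{-(i-1)}(t')$, $u_{i-1} = t$, $u'_{i-1} = t'$, and $u_j = \pi(u_{j-1})$. By the previous paragraph applied to $u_0$ and $u'_0$ (whose first binary digits are $b_i(t)$ and $b_i(t')$ respectively), we get $u_0 < u'_0$. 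Since the identity $b_{i+1}(t) = b_i(\pi^{-1}(t))$ gives $\delta(u_{j-1}) = b_{i-j}(t) = b_{i-j}(t') = \delta(u'_{j-1})$ by minimality of $i$, part (i) yields $u_{j-1} < u'_{j-1} \iff u_j < u'_j$ for each $j = 1, \dots, i-1$. Iterating gives $t = u_{i-1} < u'_{i-1} = t'$. This proves $b(t) < b(t') \Rightarrow t < t'$; the converse follows by symmetry.

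The work is essentially bookkeeping, and the only potential obstacle is to state the reduction cleanly: once one recognises that (i) and (ii) both hinge on the single structural fact that $\pi$ sends $0$-positions and $1$-positions order-preservingly onto the initial and final blocks, everything else is a straightforward induction on the index of the first differing binary digit.
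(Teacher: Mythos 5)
Your proposal is correct and follows essentially the same approach as the paper: part (i) from the fact that $\pi$ sends $0$-positions order-preservingly to $\{t_1,\dots,t_d\}$ and $1$-positions order-preservingly to $\{t_{d+1},\dots,t_h\}$ (the paper compresses this to ``follows from the construction of $\pi$''), and part (ii) by locating the first differing binary digit, using the block structure there, and then iterating (i). The only cosmetic difference is that you argue the forward direction of (ii) directly whereas the paper does it by contradiction.
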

\begin{proof}
(i) follows from the construction of $\pi$ defined in Definition~\ref{DefOfPi}.

Let us see (ii).
To show ``only if'' part, we suppose $b(t) > b(t')$.
It implies that there exists a natural number $v$ such that
$\delta(\pi^{-u}(t)) = \delta(\pi^{-u}(t'))$ for all $u$ with $u < v$
and $\delta(\pi^{-v}(t)) > \delta(\pi^{-v}(t'))$.
Then we clearly have $\delta(\pi^{-v}(t)) = 1$ and $\delta(\pi^{-v}(t')) = 0$.
By the construction of $\pi$, we have $\pi^{-v+1}(t) > \pi^{-v+1}(t')$,
and this contradicts with (i).
Let us see ``if'' part.
If $b(t) < b(t')$,
then there exists a natural number $v$ such that
$\delta(\pi^{-u}(t)) = \delta(\pi^{-u}(t'))$ for $u < v$
and $\delta(\pi^{-v}(t)) < \delta(\pi^{-v}(t'))$.
Then we have $\delta(\pi^{-v}(t)) = 0$ and $\delta(\pi^{-v}(t')) = 1$,
and $\pi^{-v+1}(t) < \pi^{-v+1}(t')$ holds.
Using (i) repeatedly, we see $t < t'$.
\end{proof} 

Let us define special ${\rm DM_1}$'s 
which correspond to special $p$-divisible groups $H_{m,n}$.

\begin{definition}
We say a ${\rm DM_1}$ $N$ is {\it $DM_1$-simple}
if there exist coprime natural numbers $h$ and $m$
such that $N$ is associated to the ABS 
$(\{t_1, \dots, t_h\}, \delta, \pi)$,
where $\delta(t_i) = 1$ if $i \leq m$
and $\delta(t_i) = 0$ otherwise,
with the map $\pi$ defined as Definition~\ref{DefOfPi}.
In other words, we have $\pi(t_i) = t_{i-m \bmod h}$.
That kind of ${\rm DM_1}$ corresponds to $N_{m,n}$
with $n = h - m$,
and we call this ${\rm DM_1}$ {\it simple $DM_1$} 
if no confusion can occur.
\end{definition}

Here, let us recall the direct sum of admissible ABS's,
which corresponds to the direct sum of associated Dieudonn\'e modules.
We define the direct sum $S = (\tilde S, \delta, \pi)$ of elements
$A = (\tilde A, \delta_A, \pi_A)$ and 
$B = (\tilde B, \delta_B, \pi_B)$ of $\mathcal H'$ as follows.
We define a set $\tilde S$ by $\tilde S =\tilde A \sqcup \tilde B$. 
We define a map $\delta : \tilde S \rightarrow \{0, 1\}$ to be 
$\delta|_{\tilde A} = \delta_A$ and $\delta|_{\tilde B} = \delta_B$,
and let $\pi: \tilde S \rightarrow \tilde S$ be a map satisfying that 
$\pi|_{\tilde A} = \pi_A$ and $\pi|_{\tilde B} = \pi_B$. 
We define an order $<$ in $\tilde S$ so that 
\begin{itemize}
\item[(i)] for elements $t, t' \in \tilde S$, 
if $b(t) \leq b(t')$, then $t < t'$, and
\item[(ii)] there is no elements $t, t'$ of $\tilde S$ such that
$t < t'$ and $\pi(t') < \pi(t)$,
\end{itemize}
where $b(t)$ is the binary expansion of $t \in \tilde S$ determined by
$\pi$.
For instance, if $A = B$ with $\tilde A = \{t_1, \dots, t_h\}$
and $\tilde B = \{t'_1, \dots, t'_h\}$,
then $\tilde S = \{t_1, t'_1, \dots, t_h, t'_h\}$.
Thus we get the ABS $A \oplus B = (\tilde S, \delta, \pi)$
which also belongs to $\mathcal H'$.
Let $M$ and $N$ be ${\rm DM_1}$'s,
and let $A$ (resp. $B$) be the ABS
corresponding to $M$ (resp. $N$).
Then the ABS $A \oplus B$ corresponds to 
the direct sum of ${\rm DM_1}$'s $M \oplus N$.
In this paper, we consider the case that 
$M = N_{m_1,n_1}$ and $N = N_{m_2,n_2}$,
with pairs of coprime non-negative integers $(m_1,n_1)$ and $(m_2,n_2)$.

\begin{example}\label{ExOfDirectSum}
Let $M = N_{2,7}$ and $N = N_{5,3}$.
Let $A$ and $B$ be ABS's corresponding to 
$M$ and $N$ respectively.
Then diagrams of these ABS's are given by the following:\\
\\
$$A = \xymatrix@=10pt{\underline{1}\ar@/_20pt/[rrrrrrr] &1\ar@/_20pt/[rrrrrrr]
&0\ar@/_20pt/[ll] &0\ar@/_20pt/[ll] &0\ar@/_20pt/[ll] 
&0\ar@/_20pt/[ll] &0\ar@/_20pt/[ll] &0\ar@/_20pt/[ll] 
&0\, ,\ar@/_20pt/[ll]}\ \ \ 
B = \xymatrix@=10pt{1\ar@/_20pt/[rrr] &1\ar@/_20pt/[rrr] &1\ar@/_20pt/[rrr]
&1\ar@/_20pt/[rrr] &1\ar@/_20pt/[rrr] 
&0\ar@/_20pt/[lllll] &0\ar@/_20pt/[lllll] &0\ar@/_20pt/[lllll]}.$$
\\
We write $t_1$ for the element of $\tilde A$ 
corresponding to the underlined element in the diagram.
Then the binary expansion $b(t_1)$ is given by 
$b(t_1) = 0.000010001\cdots$.
In the same way, we obtain the binary expansions of
all elements of $\tilde A$ and $\tilde B$.
We sort all elements by the binary expansions, 
and the direct sum $A \oplus B$ is given by the following:\\
\\
$$\xymatrix@=2pt{1^A_1\ar@/_20pt/[rrrrrrrrrr] & 1^A_2\ar@/_20pt/[rrrrrrrrrr] &
0^A_3\ar@/_20pt/[ll] & 0^A_4\ar@/_20pt/[ll] & 0^A_5\ar@/_20pt/[ll] &
0^A_6\ar@/_20pt/[ll] & 0^A_7\ar@/_20pt/[ll] &
1^B_1\ar@/_20pt/[rrrrr] & 1^B_2\ar@/_20pt/[rrrrr] &
1^B_3\ar@/_20pt/[rrrrr] & 0^A_8\ar@/_20pt/[lllll] &
0^A_9\ar@/_20pt/[lllll] &1^B_4\ar@/_20pt/[rrr] &
1^B_5\ar@/_20pt/[rrr] & 0^B_6\ar@/_20pt/[lllllll] &
0^B_7\ar@/_20pt/[lllllll] & 0^B_8\ar@/_20pt/[lllllll]},$$
\\
\\
where $\tau^A_i$ (resp. $\tau^B_i$), with $\tau = 0$ or $1$,
is the $i$-th element of $A$ (resp. $B$).
\end{example}

For certain ${\rm DM_1}$'s, 
we have the following:

\begin{lemma} \label{LemOf1/2SepABS}
Let $\xi = (m_1, n_1) + (m_2, n_2)$ be a Newton polygon satisfying 
$\lambda_2 < 1/2 < \lambda_1$.
Set $h_1 = m_1 + n_1$ and $h_2 = m_2 + n_2$.
Let $N_{\xi}$ be the minimal ${\rm DM_1}$ of $\xi$.
For the above notation, 
the ABS $S$ associated to $N_\xi$ is obtained by the following:
$$\underbrace{1^A_1\cdots 1^A_{m_1}}_{m_1}
\underbrace{0^A_{m_1+1}\cdots 0^A_{n_1}}_{n_1-m_1}
\underbrace{1^B_1\cdots 1^B_{n_2}}_{n_2}
\underbrace{0^A_{n_1+1}\cdots 0^A_{h_1}}_{m_1}
\underbrace{1^B_{n_2+1}\cdots 1^B_{m_2}}_{m_2-n_2}
\underbrace{0^B_{m_2+1}\cdots 0^B_{h_2}}_{n_2}.$$
\end{lemma}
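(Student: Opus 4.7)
The plan is to apply Lemma~\ref{LemOftt'BinExp}(ii), which reduces the claim to showing that the binary expansions $b(t)$ strictly increase along the listed sequence. Because the bit sequence of any $t \in \tilde A$ depends only on $\pi_A$ and $\delta|_{\tilde A}$ (and similarly on the $B$-side), the internal orderings $t^A_1 < \cdots < t^A_{h_1}$ and $t^B_1 < \cdots < t^B_{h_2}$ already present in the simple ABS's $A$ and $B$ persist in $A \oplus B$; only the interleaving between $\tilde A$ and $\tilde B$ needs to be determined.

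The first step is a first-bit analysis. Using $\pi_A^{-1}(t^A_j) = t^A_{j+m_1 \bmod h_1}$ together with $m_1 < n_1$ (coming from $\lambda_1 > 1/2$), one checks that $\delta(\pi_A^{-1}(t^A_j)) = 0$ iff $j \leq n_1$; symmetrically, $m_2 > n_2$ gives $\delta(\pi_B^{-1}(t^B_k)) = 0$ iff $k \leq n_2$. By Lemma~\ref{LemOftt'BinExp}(ii) this partitions $\tilde A \sqcup \tilde B$ into a \emph{small block} $\{t^A_j : j \leq n_1\} \cup \{t^B_k : k \leq n_2\}$ (where $b(t) < 1/2$) and a \emph{large block} consisting of the remaining elements (where $b(t) \geq 1/2$), with the small block entirely preceding the large block. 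Combined with the internal orderings, the $\tilde A$- and $\tilde B$-elements in each block form two monotone runs, and only the relative order of those two runs is left.

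The heart of the proof is the two cross-comparisons $b(t^A_{n_1}) < b(t^B_1)$ (small block) and $b(t^A_{h_1}) < b(t^B_{n_2+1})$ (large block). Both small-block expansions begin $0.01\ldots$, so one continues bit by bit: the further bits of $b(t^A_{n_1})$ form a Sturmian sequence with rotation angle $\alpha = m_1/h_1 < 1/2$, while those of $b(t^B_1)$ form a Sturmian sequence with $\beta = m_2/h_2 > 1/2$. A nested case analysis, whose branches correspond to the continued-fraction reductions of $\alpha$ and $\beta$, locates the first disagreeing bit; at that position the $A$-bit is $0$ and the $B$-bit is $1$. The large-block inequality follows by a symmetric argument, or alternatively via the involution $(\delta, b(t)) \mapsto (1-\delta, 1-b(t))$, which swaps the two blocks and reduces it to the small-block case.

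The main obstacle is precisely this cross-comparison: two eventually-periodic binary expansions with distinct periods $h_1$ and $h_2$ cannot be separated from any a priori bounded prefix, so the case analysis must be run until the (terminating) Euclidean reduction on the pair $(\alpha, \beta)$ closes. The slope hypothesis $\lambda_2 < 1/2 < \lambda_1$, equivalently $\alpha < 1/2 < \beta$, is the essential input that forces the first disagreement to go in the required direction.
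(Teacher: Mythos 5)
Your strategy is a genuinely different route from the paper, which dispatches this lemma with a single citation (\cite{HarashitaCon}, Proposition~4.20) rather than giving a proof. The reduction to monotonicity of the binary expansions via Lemma~\ref{LemOftt'BinExp}(ii) is sound (and one does need to check, as you implicitly do, that $b$-values of $A$-elements and $B$-elements never coincide; this holds since the two expansions have minimal periods $h_1$ and $h_2$ with $m_1/h_1 \neq m_2/h_2$). The first-bit computation, $\pi_A^{-1}(t^A_j) = t^A_{j+m_1 \bmod h_1}$ with $m_1 < n_1$ and $\pi_B^{-1}(t^B_k) = t^B_{k+m_2 \bmod h_2}$ with $m_2 > n_2$, correctly partitions $\tilde A \sqcup \tilde B$ into the small and large blocks, and you are right that, granted internal monotonicity of $A$ and $B$, the whole lemma collapses to the two cross-comparisons $b(0^A_{n_1}) < b(1^B_1)$ and $b(0^A_{h_1}) < b(1^B_{n_2+1})$, with the second reducible to the first by duality.

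However, those two cross-comparisons are precisely the nontrivial content of the lemma, and you do not prove them. You assert that a nested case analysis, whose branches correspond to continued-fraction reductions of $\alpha = m_1/h_1$ and $\beta = m_2/h_2$, locates the first disagreeing bit and that ``at that position the $A$-bit is $0$ and the $B$-bit is $1$,'' but that final sentence \emph{is} the claim, not a proof of it; no branch of the case analysis is written down, no invariant preserved under a Euclidean reduction step is identified, and no terminating recursion is exhibited. As you yourself note, the two expansions already agree on the prefix $0.01$, and the index of the first disagreement cannot be bounded independently of $(m_1,n_1,m_2,n_2)$, so nothing short of a genuine induction on the slopes will close the argument. (For comparison, a natural candidate invariant such as ``the number of $1$'s among the first $k$ bits of $b(0^A_{n_1})$ never exceeds that of $b(1^B_1)$'' is plausible but itself requires proof.) As written, your proposal carries out the elementary bookkeeping correctly and leaves the single substantive step as an unproved assertion. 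Either carry out the Sturmian/Christoffel comparison in full, or cite \cite{HarashitaCon}, Proposition~4.20 as the paper does.
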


\begin{proof}
See \cite{HarashitaCon}, Proposition~4.20.
\end{proof}

\section{Constructing a specialization using arrowed binary sequences} 
\label{ConSpeAbs}

In this section, 
we give a method to construct a specialization of a ${\rm DM_1}$
using arrowed binary sequences.
We introduce some sets which help us 
to investigate properties of the specialization 
obtained by this method.
These properties are useful for classification of
boundary components of central streams, given in Section~\ref{ClassBoundaryComp}.

\subsection{Some notation for specializations} 
\label{NotaSpe}

Here, we prepare some notation for describing a specialization of a ${\rm DM_1}$
in arrowed binary sequences.
First, we define the lengths of ABS's.
We use some notation of Section~\ref{ABS}.

\begin{definition}
Let $S = (\tilde S, \delta, \pi)$ be an ABS.
We define the {\it length of $S$} by 
$$\ell(S) = \# \{(t', t) \mid \delta(t') = 0 \text{ and } 
\delta(t) = 1 \text{ with } t' < t \}.$$
\end{definition}

\begin{remark}
If an ABS $S$ corresponds to a ${\rm DM_1}$ $N$,
then the value $\ell(S)$ is equal to the length $\ell(w)$
for the element $w$ of ${}^J W$, which corresponds to $N$.
In other words, the length of $w$ can be calculated 
by the above $\ell(S)$.
\end{remark}

\begin{example}
For the ABS $A \oplus B$
associated to $N_{2,7}\oplus N_{5,3}$,
which is constructed in Example~\ref{ExOfDirectSum},
we have $\ell(A \oplus B) = 29$.
In general, for a Newton polygon $\xi = (m_1, n_1) + (m_2, n_2)$ of two segments
with $\lambda_2 < 1/2 < \lambda_1$,
the length $\ell(S)$ of the ABS $S$ corresponding to $N_{\xi}$
is equal to $m_2n_1 - m_1n_2$. 
\end{example}

Let $\mathcal H'(h, d)$ denote the set of admissible ABS's
whose corresponding ${\rm DM_1}$'s are of height $h$ and dimension $d$.
By translating the ordering $\subset$ on ${}^J W$
via the bijection from ${}^J W$ to $\mathcal H'(h, d)$,
we have an ordering on $\mathcal H'(h, d)$ as well as
the notion of specialization of admissible ABS's.

Here we give a method to construct a type of the specializations of ABS's.
Those will turn out to correspond to specializations of the form
$w' \subset w$ with $v = ws < w$ and $w' = uv\theta(u^{-1})$,
where $s$ is a transposition and $u \in W_J$.
See the paragraph before Lemma~\ref{LemOfGeneSpew'w}
for this specialization.
Starting with $S = (\tilde S, \delta, \pi_0) \in \mathcal H'(h, d)$,
we construct a new admissible ABS $S' = (\tilde S', \delta, \pi)$.
First we consider a ``small modification'' of $\pi_0$:

\begin{definition}\label{DefOfExchanging}
Let $S = (\tilde S, \delta, \pi_0)$ be an ABS
with $\tilde S = \{t_1 < \cdots < t_h\}$.
Choose elements $t_i$ and $t_j$ of $\tilde S$.
We define a map $\pi$ on $\tilde S$ 
by $\pi(\pi^{-1}_0(t_i)) = t_j$, $\pi(\pi^{-1}_0(t_j)) = t_i$ and $\pi(t) = \pi_0(t)$
for the other elements $t$ of $\tilde S$.
We call this map $\pi$
a {\it small modification by $t_i$ and $t_j$ of $\pi_0$}.
\end{definition}

We require $\delta(t_i) = 0$, $\delta(t_j) = 1$ and $t_i < t_j$,
when we consider specializations.
Let $\pi : \tilde S \to \tilde S$ be the 
small modification by elements $t_i$ and $t_j$ of $\pi_0$.
Let $\tilde S' = \tilde S$ as sets.
There uniquely exists an ordering $<'$ on $\tilde S'$ so that
\begin{itemize}
\item[(i)] If $t <' t'$, then $b(t) \leq b(t')$.
\item[(ii)] Suppose $\delta(t) = \delta(t')$.  
Then $t <' t'$ if and only if $\pi(t) <' \pi(t')$,
\end{itemize}
where $b(t)$ denotes the binary expansion determined by $\delta$ and $\pi$.
Then we get an admissible ABS $S' = (\tilde S', \delta, \pi)$;
see Remark~\ref{RemOfww'}.
We call this $S'$ the {\it specialization of $S$
obtained by exchanging $t_i$ and $t_j$}.
We say {\it an exchange of $t_i$ and $t_j$ is good}
if the specialization $S'$ obtained by exchanging $t_i$ and $t_j$
satisfies $\ell(S') = \ell(S) - 1$.
We often write $S^-$ for $S'$
when $S'$ is the specialization obtained by a good exchange.
We call $S^-$ a {\it generic specialization} of $S$.
Any good exchange produces a generic specialization, and conversely
every generic specialization is obtained by a good exchange.
This follows from Lemma~\ref{LemOfGeneSpew'w} and Remark~\ref{RemOfww'} below:

\begin{remark} \label{RemOfww'}
We use the same notation as Section~\ref{p-divAndDieudonne}.
Let $S = (\tilde S, \delta, \pi_0) \in \mathcal H'(h, d)$
with $\tilde S = \{t_1 < \dots < t_h\}$.
Let $S' = (\tilde S', \delta, \pi)$ be the specialization of $S$
obtained by exchanging $t_i$ and $t_j$.
Let $w$ be the element of ${}^J W$ corresponding to $S$.
Set $s = (i, j)$ the transposition.
We regard $\pi_0$ and $\pi$ as elements of $W$.
Then $\pi_0 = xw$. 
Put $\tilde S' = \{t'_1 <' \dots <' t'_h\}$.
Set $\tilde S^{(0)} = \{t_1^{(0)} <'' \dots <'' t_h^{(0)}\}$ to be
$t_{s(z)} = t_z^{(0)}$.
We define an element $\varepsilon$ of $W$ to be
$t_{\varepsilon (z)}^{(0)}= t'_z$.
Note that $\varepsilon$ stabilizes $\{1, 2, \dots, d\}$
(and therefore $\{d + 1, \dots, h\}$),
since $b(t_z^{(0)}) < 0.1$ for $z \leq d$ and $b(t_z^{(0)}) > 0.1$ for $z > d$,
where binary expansions are determined by $\delta$ and $\pi$.
Put $v = ws$.
Then $w' = uv\theta(u^{-1})$ corresponds to $S'$
with $u = x^{-1} \varepsilon^{-1} x \in W_J$.
The map $\pi$ is obtained by $\varepsilon^{-1} \pi_0 s \varepsilon$. 
In terminologies of ABS's,
multiplying $\pi_0$ by $s$ corresponds to constructing the small modification.
By the coordinate transformation by $\varepsilon$,
we obtain the map $\pi$ on $\tilde S'$.
The main purpose of Section~\ref{CombSpe} is
to give the construction of this $\varepsilon$ combinatorially.
\end{remark}

\begin{example}
Let us see an example of constructing a specialization of an ABS.
Let $S = (\tilde S, \delta, \pi_0)$ be the admissible ABS 
associated to $(1, 0, 0, 1, 0, 1, 0, 0)$.  
This $S$ is described as the following diagram.
\\
$$\xymatrix@=6pt{1_1\ar@/_20pt/[rrrrr] & 0_2\ar@/_20pt/[l]
& 0_3\ar@/_20pt/[l] & 1_4\ar@/_20pt/[rrr] 
& 0_5\ar@/_20pt/[ll]
& 1_6\ar@/_20pt/[rr] & 0_7\ar@/_20pt/[lll] & 0_8\ar@/_20pt/[lll]}.$$
\\
Let $\pi$ be the small modification map by $0_2$ and $1_4$.
Then the ABS $S' = (\tilde S', \delta, \pi)$ is described as\\ \\
$$\xymatrix@=6pt
{
1_{ 4 } \ar@/_20pt/[rrrrr] &
1_{ 1 } \ar@/_20pt/[rrrrr] &
0_{ 3 } \ar@/_20pt/[ll] &
0_{ 2 } \ar@/_20pt/[ll] &
0_{ 5 } \ar@/_20pt/[ll] &
0_{ 7 } \ar@/_20pt/[ll] &
1_{ 6 } \ar@/_20pt/[r] &
0_{ 8 } \ar@/_20pt/[lll]
}.$$
\vspace{1mm}
\\
One can check that this specialization is generic by
calculating lengths of $S$ and $S'$.
\end{example}

Let $\xi = (m_1, n_1) + (m_2, n_2)$ 
be a Newton polygon satisfying that $\lambda_2 < 1/2 < \lambda_1$.
Let $S$ denote the ABS of $N_\xi$.
To classify
generic specializations of $S$,
in Section~\ref{CombSpe}
we introduce a method to construct specializations $S'$ combinatorially.
The method gives an explicit construction of 
permutations $\varepsilon$ as in Remark~\ref{RemOfww'}.
Moreover, we show some properties
of generic specializations; 
see Section~\ref{SomePropOfBoundaryComp}.
These properties are useful for 
to show Proposition~\ref{PropOfSpe}
which is a key step of the proof of Theorem~\ref{ThmOfzetaxi}.
Let $A$ and $B$ be ABS's 
corresponding to $N_{m_1,n_1}$ and $N_{m_2,n_2}$ respectively.
Then $S$ is obtained by $A \oplus B$.
We often denote by $\tau^A_i$ (resp. $\tau^B_i$),
with $\tau = 0$ or $1$,
the $i$-th symbol of $A$ (resp. $B$).
By Lemma~\ref{LemOf1/2SepABS},
there exist only three cases as the choice of exchange of $0^A_i$ and $1^B_j$: 
\begin{enumerate}
\item[(1)] $m_1 < i \leq n_1$ and $1 \leq j \leq n_2$;
\item[(2)] $m_1 < i \leq n_1$ and $n_2 < j \leq m_2$;
\item[(3)] $n_1 < i \leq h_1$ and $n_2 < j \leq m_2$. 
\end{enumerate}
We assume that $m_1+1 < n_1$ or $n_2+1 < m_2$ in the case (2).
For $\xi = (m_1, m_1 + 1) + (n_2 + 1, n_2)$
with $m_1 > 0$ or $n_2 > 0$,
the case (2) is treated separately;
see below the proof of Proposition~\ref{PropOfEmptyB}.
Since the case (3) can be regarded as the dual of (1),
it suffices to deal with the case (1) and (2).
From now on, we assume $m_1 < i \leq n_1$.
We often treat the cases (1) and (2) simultaneously,
but for example, the proof of Proposition~\ref{PropOfEmptyB}
is divided into the cases (1) and (2).

\subsection{Constructing a specialization}
\label{CombSpe}

Let $\xi = (m_1, n_1) + (m_2, n_2)$ be a Newton polygon of two segments
satisfying $\lambda_2 < 1/2 < \lambda_1$.
Let $S = (\tilde S, \delta, \pi_0)$ be the ABS 
associated to the minimal ${\rm DM_1}$ $N_\xi$.
In this section, 
we introduce a combinatorial method to construct a specialization $S'$ of $S$ 
in order to classify generic specializations.
Concretely, for the small modification $\pi$ by $t_i$ and $t_j$,
we shall construct the ordered set $\tilde S'$, which coincides with $\tilde S$
as sets, satisfying that 
$t < t'$ if and only if $\pi(t) < \pi(t')$
for elements $t$ and $t'$ of $\tilde S'$ with $\delta(t) = \delta(t')$.
For this ordered set, 
using Theorem~\ref{ThmOfDMAndSeq},
we get the specialization $N'_\xi$ of $N_\xi$.
The method gives a useful characterization to ABS's $S'$
satisfying $\ell(S') = \ell(S) - 1$.
The main purpose of this section is 
to show that this combinatorial operation is well-defined.

Let $A$ (resp. $B$) be the ABS corresponding to $N_{m_1, n_1}$ (resp. $N_{m_2, n_2}$).
Then $S = A \oplus B$.
We denote by $\tau^A_i$ (resp. $\tau^B_j$) the $i$-th element of $A$ 
(resp. the $j$-th element of $B$) with $\tau = 0$ or $1$.
Fix natural numbers $i$ and $j$ satisfying that $m_1 < i \leq n_1$ and $1 \leq j \leq m_2$.
Let $\pi$ be the small modification by $0^A_i$ and $1^B_j$.
For the ordered set 
$$\tilde S = \{1^A_1 < \cdots < 0^A_{i-1} < 0^A_i < 0^A_{i+1} < \cdots
< 1^B_{j-1} < 1^B_j < 1^B_{j+1} < \cdots < 0^B_{h_2}\},$$
we define an ordered set $S^{(0)}$ by $S^{(0)} = \tilde S$ as sets with the ordering
$$S^{(0)} = \{1^A_1 < \cdots < 0^A_{i-1} < 1^B_j < 0^A_{i+1} < \cdots
< 1^B_{j-1} < 0^A_i < 1^B_{j+1} < \cdots < 0^B_{h_2}\}.$$
In Definition~\ref{DefOfTAn} and Definition~\ref{DefOfTBn},
we construct a sequence $(S^{(0)}, \delta, \pi), (S^{(1)}, \delta, \pi), \dots$ of ABS's.  
We will show that 
there exists a non-negative integer $n$ such that
the ABS $(S^{(n)}, \delta, \pi)$ coincides with the specialization of $S$
obtained by exchanging $0^A_i$ and $1^B_j$.
 
\begin{definition} \label{DefOfTAn}
For the ABS $(S^{(0)}, \delta, \pi)$,
we define a set $\mathcal A^{(0)}$ by
$$\mathcal A^{(0)} = 
\{t \in S_A^{(0)} \mid t < 0^A_i \text{ and } \pi(0^A_i) < \pi(t)
\text{ in } S^{(0)} \text{ with } \delta(t) = 0\},$$
where $S_A^{(0)}$ is a subset of $S^{(0)}$
consisting of all elements of $A$.
For non-negative integers $n$, 
put $\alpha_n = \pi^n(0^A_i)$.
For a natural number $n$,
we construct an ordered set $S^{(n)}$ and a set $\mathcal A^{(n)}$ 
from the ABS $(S^{(n-1)}, \delta, \pi)$ and the set $\mathcal A^{(n-1)}$
if $\mathcal A^{(n-1)}$ is not empty.
Let $S^{(n)} = S^{(n-1)}$ as sets.
We define the order on $S^{(n)}$ to be
for $t < t'$ in $S^{(n-1)}$,
we have $t > t'$ in $S^{(n)}$ if and only if 
$\alpha_n < t' \leq \pi(t_{\rm max})$ in $S^{(n-1)}$ and $t = \alpha_n$.
We can regard $\pi$ as the map on $S^{(n)}$.
Thus we obtain an ABS $(S^{(n)}, \delta, \pi)$.
In other words, we define the ABS $(S^{(n)}, \delta, \pi)$ by
moving the element $\alpha_n$ to the right of $\pi(t_{\text{max}})$ in $S^{(n-1)}$.
We define 
$$\mathcal A^{(n)} = 
\{t \in S_A^{(n)} \mid t < \alpha_n \text{ and } \alpha_{n+1} < \pi(t)
\text{ in } S^{(n)} \text{ with } \delta(t) = \delta(\alpha_n)\},$$
where $S_A^{(n)}$ is the subset of $S^{(n)}$
consisting of all elements of $A$.
\end{definition}

We will see that there exists a non-negative integer $a$
satisfying $\mathcal A^{(a)} = \emptyset$ in Proposition~\ref{PropOfEmptyA}.
For this number $a$, we introduce a definition of sets $\mathcal B^{(n)}$ as follows.

\begin{definition} \label{DefOfTBn}
We define the set 
$$\mathcal B^{(0)} = \{t \in S^{(a)} \mid 1^B_j < t \text{ and } \pi(t) < \pi(1^B_j) 
\text{ in } S^{(a)} \text{ with } \delta(t) = 1\},$$
and let $I$ be the subset of $\mathcal B^{(0)}$ consisting of elements 
$t$ which are of the form $t  =1^A_x$ with a natural number $x$.
Note that for an element $t$ of $I$,
there exists a non-negative integer $n$ with $n < a$ such that $t = \alpha_n$.
We have $1^B_j < 1^B_z$ and $\pi(1^B_z) < \pi(1^B_j)$
for natural numbers $z$ with $z < j$ in $S^{(a)}$.
Thus the set $\mathcal B^{(0)}$ is determined as $I \cup \{1^B_1, \dots, 1^B_{j-1}\}$.
Set $\beta_n = \pi^n(1^B_j)$ for non-negative integers $n$.
We construct the ordered set $S^{(a+n)}$ for natural numbers $n$ inductively.
For the ABS $(S^{(a+n-1)}, \delta ,\pi)$ 
and the set $\mathcal B^{(n-1)}$,
if $\mathcal B^{(n-1)}$ is not empty, then
let $S^{(a+n)} = S^{(a+n-1)}$ as sets.
The ordering of $S^{(a + n)}$ is given so that
for $t < t'$ in $S^{(a+n-1)}$, 
we have $t > t'$ in $S^{(a+n)}$ if and only if 
$\pi(t_{\rm min}) \leq t < \beta_n$ in $S^{(a+n-1)}$
and $t' = \beta_n$.
We can regard $\pi$ as the map on $S^{(a+n)}$.
Thus we obtain an ABS $(S^{(a+n)}, \delta, \pi)$.
In other words, we obtain the ABS $(S^{(a+n)}, \delta, \pi)$ by
moving the element $\beta_n$ to the left of $\pi(t_{\rm min})$ in $S^{(a+n-1)}$.
We define 
$$\mathcal B^{(n)} = \{t \in S^{(a+n)} \mid \beta_n < t \text{ and } 
\pi(t) < \beta_{n+1} 
\text{ in } S^{(a+n)} \text{ with } \delta(t) = \delta(\beta_n)\}.$$
\end{definition}

If there exists a non-negative integer $b$ such that
$\mathcal B^{(b)} = \emptyset$, then
we call the ABS $(S^{(a + b)}, \delta, \pi)$ 
the {\it full modification by $0^A_i$ and $1^B_j$}.
If there exists the full modification by $0^A_i$ and $1^B_j$, 
then the specialization of $S$ obtained by exchanging $0^A_i$ and $1^B_j$
is given by this full modification.
We show that there exists a non-negative integer $b$
such that $\mathcal B^{(b)} = \emptyset$ 
in Proposition~\ref{PropOfEmptyB}.

\begin{example}\label{ExOfExchange}
Let us see an example of constructing $S'$ from $S$.
Let $N = N_{2,7} \oplus N_{5,3}$ be the ${\rm DM_1}$,
and let $S = (\tilde S, \delta, \pi)$ denote the ABS associated to $N$.
In Example~\ref{ExOfDirectSum}, we obtain 
the diagram of $S$.
Construct the small modification by $0^A_6$ and $1^B_3$.
Then we obtain the ABS $(S^{(0)}, \delta, \pi)$, and
the diagram of this ABS is described as
$$(S^{(0)}, \delta, \pi) :
\xymatrix@=1pt{1^A_1\ar@/_20pt/[rrrrrrrrrr] & 1^A_2\ar@/_20pt/[rrrrrrrrrr] &
0^A_3\ar@/_20pt/[ll] & 0^A_4\ar@/_20pt/[ll] & 0^A_5\ar@/_20pt/[ll] &
1^B_3\ar@/_30pt/[rrrrrrrrr]|\circ & 
0^A_7\ar@/_20pt/[ll]|\times &
1^B_1\ar@/_20pt/[rrrrr]|\times & 1^B_2\ar@/_20pt/[rrrrr]|\times &
0^A_6\ar@/_30pt/[llllll]|\circ & 0^A_8\ar@/_20pt/[lllll] &
0^A_9\ar@/_20pt/[lllll] &1^B_4\ar@/_20pt/[rrr] &
1^B_5\ar@/_20pt/[rrr] & 0^B_6\ar@/_20pt/[lllllll] &
0^B_7\ar@/_20pt/[lllllll] & 0^B_8\ar@/_20pt/[lllllll]}.$$
\\
First, let us construct sets $\mathcal A^{(n)}$.
We have the set $\mathcal A^{(0)} = \{0^A_7\}$.
To construct the ordered set $S^{(1)}$,
we move the element $0^A_4$ to the right of $0^A_5$,
and we obtain the following diagram of $(S^{(1)}, \delta, \pi)$:\\
$$(S^{(1)}, \delta, \pi) :
\xymatrix@=1pt{1^A_1\ar@/_20pt/[rrrrrrrrrr] & 1^A_2\ar@/_20pt/[rrrrrrrrrr] &
0^A_3\ar@/_20pt/[ll] & 0^A_5\ar@/_20pt/[l]|\times & 
0^A_4\ar@/_30pt/[lll]|\circ &
1^B_3\ar@/_30pt/[rrrrrrrrr]|\circ & 
0^A_7\ar@/_20pt/[lll] &
1^B_1\ar@/_20pt/[rrrrr]|\times & 
1^B_2\ar@/_20pt/[rrrrr]|\times &
0^A_6\ar@/_20pt/[lllll] & 0^A_8\ar@/_20pt/[lllll] &
0^A_9\ar@/_20pt/[lllll] &1^B_4\ar@/_20pt/[rrr] &
1^B_5\ar@/_20pt/[rrr] & 0^B_6\ar@/_20pt/[lllllll] &
0^B_7\ar@/_20pt/[lllllll] & 0^B_8\ar@/_20pt/[lllllll]}.$$
\\
We have the set $\mathcal A^{(1)} = \{0^A_5\}$.
In the same way, we get $S^{(2)}$
by moving the element $1^A_2$ to the right of $0^A_3$,
and we see that $\mathcal A^{(2)}$ is empty.
Hence we obtain $a = 2$.
Next, let us construct sets $\mathcal B^{(n)}$.
We have $\mathcal B^{(0)} = \{1^B_1, 1^B_2\}$.
We move the element $0^B_6$ to the left of $1^B_4$
which is the minimum element of $\pi(\mathcal B^{(0)})$
to construct $S^{(3)}$.
Then $\mathcal B^{(1)} = \emptyset$.
Hence we get an ABS $S'$ by $(S^{(3)}, \delta, \pi)$.
Thus the diagram of the specialization $S'$ is described as follows.\\
\\
$$S' : \xymatrix@=1pt{1^A_1\ar@/_20pt/[rrrrrrrrrr] & 0^A_3\ar@/_20pt/[l] &
1^A_2\ar@/_20pt/[rrrrrrrrr] & 0^A_5\ar@/_20pt/[ll] & 
0^A_4\ar@/_20pt/[ll] &
1^B_3\ar@/_20pt/[rrrrrrr] & 
0^A_7\ar@/_20pt/[lll] &
1^B_1\ar@/_20pt/[rrrrrr] & 
1^B_2\ar@/_20pt/[rrrrrr] &
0^A_6\ar@/_20pt/[lllll] & 0^A_8\ar@/_20pt/[lllll] &
0^A_9\ar@/_20pt/[lllll] &0^B_6\ar@/_20pt/[lllll] &
1^B_4\ar@/_20pt/[rr] & 1^B_5\ar@/_20pt/[rr] &
0^B_7\ar@/_20pt/[lllllll] & 0^B_8\ar@/_20pt/[lllllll]}.$$
\\
\\
One can check that these admissible ABS's $S'$ and $S$ 
satisfy that $\ell(S') = \ell(S) - 1$.
Moreover, one can see that for the same notation as Remark~\ref{RemOfww'},
the permutation $\varepsilon$ satisfies
$\varepsilon^{-1} = (13, 14, 15)(2, 3)(4, 5)$ in this case.
\end{example}


To construct full modifications of $S$,
we show some properties of ordered sets $S^{(n)}$
and sets $\mathcal A^{(n)}$
in Lemma~\ref{LemOfUniA}, Proposition~\ref{PropOfT},
Corollary~\ref{CoroOfT} and Proposition~\ref{PropOf1}.
For a Newton polygon $\xi = (m_1, n_1) + (m_2, n_2)$ consisting of 
two segments satisfying $\lambda_2 < 1/2 < \lambda_1$,
we denote by $S = (\tilde S, \delta, \pi_0)$ the ABS 
associated to the ${\rm DM_1}$ $N_\xi$.
Let $A$ (resp. $B$) be the ABS 
corresponding to the ${\rm DM_1}$ $N_{m_1, n_1}$ (resp. $N_{m_2, n_2}$).
We denote by $\tilde A$ (resp. $\tilde B$) the ordered set of $A$ (resp. $B$). 
Let $\pi$ denote the small modification by elements 
$0^A_i$ and $1^B_j$ of $\tilde S$ with $m_1 < i \leq n_1$.
Then we obtain sets $\mathcal A^{(n)}$ 
for non-negative integers $n$.
We will show that there exists a non-negative integer $n$
such that $\mathcal A^{(n)} = \emptyset$ in Proposition~\ref{PropOfEmptyA}.

\begin{lemma}\label{LemOfUniA}
For every non-negative integer $n$, 
let
$\Sigma_{(n)}$ be the set consisting of elements $t$ of $\tilde A$
satisfying that
$t' < t$ and $\pi(t) < \pi(t')$ in $S^{(n)}$
for some elements $t'$ of $S^{(n)}$ with $\delta(t) = \delta(t')$.
If $\mathcal A^{(n)}$ is not empty, then $\Sigma_{(n)} = \{\alpha_n\}$,
where $\alpha_n = \pi^n(0^A_i)$.
Moreover, if $\mathcal A^{(n)} = \emptyset$, then $\Sigma_{(n)} = \emptyset$.
\end{lemma}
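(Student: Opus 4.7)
The plan is to proceed by induction on $n$, leveraging the admissibility of the initial ABS $\tilde{S}$ (Lemma~\ref{LemOftt'BinExp}(i)). For the base case $n = 0$, $S^{(0)}$ differs from $\tilde{S}$ only in that $0^A_i$ and $1^B_j$ exchange positions, while $\pi$ differs from $\pi_0$ only at $\pi_0^{-1}(0^A_i)\in\tilde{A}$ and $\pi_0^{-1}(1^B_j)\in\tilde{B}$. Consequently, for any pair $t, t''\in\tilde{A}\setminus\{\alpha_0\}$ with $\delta(t)=\delta(t'')$, both the order of $(t,t'')$ and the order of $(\pi(t),\pi(t''))$ in $S^{(0)}$ are inherited from $\tilde{S}$, so the admissibility of $\tilde{S}$ forbids $t$ from lying in $\Sigma_{(0)}$ via a witness in $\tilde{A}$. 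To rule out witnesses from $\tilde{B}$, I would invoke the explicit layout of Lemma~\ref{LemOf1/2SepABS}: the $\delta = 0$ elements of $\tilde{B}$ sit strictly after every $\tilde{A}$ element in $S^{(0)}$, and the $\delta = 1$ elements of $\tilde{B}$ sit strictly after every $1^A$, so no $\tilde{B}$ element can serve as a smaller-position, same-$\delta$ witness of an element of $\tilde{A}$. This leaves only $\alpha_0$ as a candidate for $\Sigma_{(0)}$, and its witnesses from $\tilde{A}$ are by definition precisely $\mathcal A^{(0)}$.

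For the inductive step with $\mathcal A^{(n-1)} \neq \emptyset$, the transition $S^{(n-1)} \to S^{(n)}$ moves only $\alpha_n$, and only past elements of the interval $(\alpha_n, \pi(t_{\max})]$ in $S^{(n-1)}$, where $t_{\max} = \max\mathcal A^{(n-1)}$; the map $\pi$ is unchanged. The inductive hypothesis applied to stage $n-1$ gives that $\mathcal A^{(n-1)}\subset\tilde{A}\setminus\{\alpha_{n-1}\}$ lies outside $\Sigma_{(n-1)}$, so its elements are totally ordered consistently with their $\pi$-images, making $t_{\max}$ well-defined and $\pi(t_{\max}) = \max\pi(\mathcal A^{(n-1)})$. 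From this I would deduce that (i) every inversion between $\alpha_{n-1}$ and an element of $\mathcal A^{(n-1)}$ is resolved in $S^{(n)}$; (ii) pairs not involving $\alpha_n$ retain their inversion status from $S^{(n-1)}$, so the inductive hypothesis propagates to all of $\tilde{A}\setminus\{\alpha_n\}$; and (iii) the only $\tilde{A}$ element that can acquire a new witness is $\alpha_n$, whose witnesses in $\tilde{A}$ are exactly the elements of $\mathcal A^{(n)}$.

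The main obstacle is the second half of the lemma, namely that $\mathcal A^{(n)} = \emptyset$ forces $\Sigma_{(n)} = \emptyset$: this requires excluding witnesses of $\alpha_n$ that lie in $\tilde{B}$, something not captured by $\mathcal A^{(n)}$. I would control this by tracking the binary expansion of $\alpha_n$ across iterations via Lemma~\ref{LemOftt'BinExp}(ii), and using the constraints $\lambda_2 < 1/2 < \lambda_1$ together with the case restrictions on $(i,j)$ listed in Section~\ref{NotaSpe} to bound where $\tilde{B}$ elements can lie relative to the iterates $\alpha_n$ and their $\pi$-images in $S^{(n)}$. Once these structural bounds are in place, the remainder of the induction is routine accounting: a single element moves per stage, so the combinatorial tracking of inversions essentially reduces to checking one pair at a time.
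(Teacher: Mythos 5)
You follow the same inductive strategy as the paper's (terse) proof --- tracking $\alpha_n$ as the unique possible inversion through the sequence of moves --- and your base case via admissibility plus the layout from Lemma~\ref{LemOf1/2SepABS} is a welcome explicitation of what the paper dismisses as ``obvious.'' You also correctly flag a subtlety that the paper's own proof leaves entirely implicit: $\Sigma_{(n)}$ permits witnesses anywhere in $S^{(n)}$, while $\mathcal A^{(n)}$ records only $\tilde A$-witnesses, so the ``moreover'' clause (and even the claim that $\alpha_{n-1}$ drops out of $\Sigma_{(n)}$ once its $\tilde A$-witnesses are resolved) genuinely requires excluding $\tilde B$-witnesses; your instinct to single this out is sound.

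However, the proposal does not actually close that gap. The final paragraph is a plan (``I would control this by tracking the binary expansion\ldots'') rather than an argument: the structural bounds on where $\tilde B$-elements can sit relative to the iterates $\alpha_n$ in $S^{(n)}$ are never derived, so the implication $\mathcal A^{(n)} = \emptyset \Rightarrow \Sigma_{(n)} = \emptyset$ remains unproved. There is also a smaller imprecision in item~(ii) of your inductive step: a pair not involving $\alpha_n$ as an \emph{element} can still change inversion status if its \emph{witness} is $\alpha_{n-1}$, because $\pi(\alpha_{n-1}) = \alpha_n$ is exactly the element being relocated. One needs the extra observation that no $t \in \tilde A \setminus \{\alpha_n\}$ can newly acquire $\alpha_{n-1}$ as a witness --- this does follow from the inductive hypothesis together with bijectivity of $\pi$ (such a $t$ would have $\alpha_n < \pi(t) \leq \pi(t_{\max})$ in $S^{(n-1)}$, forcing $t_{\max}$ to already witness $t$ there), but it should be stated rather than folded into a blanket ``pairs not involving $\alpha_n$ are unchanged.''
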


\begin{proof}
Let us show the assertion by induction on $n$.
The case $n = 0$ is obvious.
For a natural number $n$,
suppose that $\Sigma_{(n-1)} = \{\alpha_{n-1}\}$.
The statement follows from the construction of the set $S^{(n)}$.
In fact, by the construction of $S^{(n)}$,
we have $\pi(t_{\rm max}) < \alpha_n$ in $S^{(n)}$,
where $t_{\rm max}$ is the maximum element of $\mathcal A^{(n-1)}$.
It implies that $\pi(t) < \alpha_{n}$ holds 
for all elements $t$ of $\mathcal A^{(n-1)}$ in $S^{(n)}$.
Then $\alpha_{n-1}$ does not belong to $\Sigma_{(n)}$.
Thus we see that elements $t''$ of $\tilde A \setminus \{\alpha_n\}$
do not belong to $\Sigma_{(n)}$,
and we obtain $\Sigma_{(n)} = \{\alpha_n\}$.
\end{proof}

\begin{proposition}\label{PropOfT}
For all non-negative integers $m$,
set $\alpha_m = \pi^m(0^A_i)$.
Let $n$ be a natural number.
Then $\mathcal A^{(n)}$ is obtained by
$$\mathcal A^{(n)} = \{\pi(t) \in \tilde A \mid t \in \mathcal A^{(n-1)}\ {\rm and}\ 
\delta(\pi(t)) = \delta(\alpha_n)\}.$$
\end{proposition}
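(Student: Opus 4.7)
The plan is to prove the equality by double inclusion, exploiting Lemma~\ref{LemOfUniA} (which gives $\Sigma_{(n)} = \{\alpha_n\}$ whenever $\mathcal{A}^{(n)}$ is nonempty) together with the explicit description of the reordering passing from $S^{(n-1)}$ to $S^{(n)}$: only $\alpha_n$ moves, and it slides past exactly the elements $t'$ with $\alpha_n < t' \le \pi(t_{\max})$ in $S^{(n-1)}$.

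For the inclusion $\supseteq$, I would fix $t' \in \mathcal{A}^{(n-1)}$ with $\pi(t') \in \tilde A$ and $\delta(\pi(t')) = \delta(\alpha_n)$, and set $u = \pi(t')$. The memberships $u \in S_A^{(n)}$ and $\delta(u) = \delta(\alpha_n)$ are immediate. For $u < \alpha_n$ in $S^{(n)}$: since $t' \le t_{\max}$ with $\delta(t') = \delta(t_{\max}) = \delta(\alpha_{n-1})$ and $t_{\max} \notin \Sigma_{(n-1)}$ by Lemma~\ref{LemOfUniA}, one obtains $\pi(t') \le \pi(t_{\max})$ in $S^{(n-1)}$; combined with $\alpha_n < \pi(t')$ from $t' \in \mathcal{A}^{(n-1)}$, this places $\pi(t')$ in the interval $(\alpha_n, \pi(t_{\max})]$ over which $\alpha_n$ is moved, so $\pi(t') < \alpha_n$ in $S^{(n)}$. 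For $\alpha_{n+1} < \pi(u)$ in $S^{(n)}$: when $\pi(t') \ne \alpha_{n-1}$, Lemma~\ref{LemOfUniA} gives $\pi(t') \notin \Sigma_{(n-1)}$, so from $\alpha_n < \pi(t')$ and $\delta(\alpha_n) = \delta(\pi(t'))$ one infers $\alpha_{n+1} = \pi(\alpha_n) < \pi^2(t')$ in $S^{(n-1)}$, an inequality that survives the reordering since only $\alpha_n$ is relocated. The exceptional subcase $\pi(t') = \alpha_{n-1}$ (equivalently $t' = \alpha_{n-2}$, $\pi^2(t') = \alpha_n$) has to be handled separately by directly comparing the new position of $\alpha_n$ with that of $\alpha_{n+1}$ in $S^{(n)}$.

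For the inclusion $\subseteq$, take $u \in \mathcal{A}^{(n)}$ and set $t' = \pi^{-1}(u)$. First show $t' \in \tilde A$: since $\pi$ agrees with $\pi_0$ outside the two arrows swapped by the small modification and $\pi_0$ preserves $\tilde A$, the only potentially problematic case is $u = \alpha_0 = 0^A_i$, which is excluded by a side argument establishing $\alpha_0 \notin \mathcal{A}^{(n)}$ for $n \ge 1$ (by tracking positions through the successive moves, one sees that $\alpha_n$ never overtakes $\alpha_0$). Then establish $\delta(t') = \delta(\alpha_{n-1})$, invoking that in $A = N_{m_1, n_1}$ with $n_1 > m_1$ the map $\pi$ sends every element of $\delta$-value $1$ to an element of $\delta$-value $0$; combined with $\delta(u) = \delta(\alpha_n)$, this pins down $\delta(t')$. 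Finally, $t' < \alpha_{n-1}$ and $\alpha_n < \pi(t')$ in $S^{(n-1)}$ follow by reversing the positional arguments used in the forward direction.

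The main obstacle is the case analysis: both the exceptional subcase $\pi(t') = \alpha_{n-1}$ in the forward direction and the verification that the candidate pre-image $t' = \pi^{-1}(u)$ really lies in $\tilde A$ with the correct $\delta$-value in the reverse direction require exploiting the specific cyclic combinatorics of $N_{m_1, n_1}$ rather than formal manipulations alone.
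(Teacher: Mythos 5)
Your proof follows the same double-inclusion strategy as the paper's, relying on Lemma~\ref{LemOfUniA} and the explicit reordering from $S^{(n-1)}$ to $S^{(n)}$ in exactly the same way; the membership inequalities for $\mathcal A^{(n)}$ and $\mathcal A^{(n-1)}$ are transferred across the single shift of $\alpha_n$, just as in the paper. If anything you are more scrupulous than the published proof in flagging the boundary subcases (the possibility $\pi(t')=\alpha_{n-1}$, the check that $\pi^{-1}(u)\in\tilde A$, and the matching of $\delta$-values), which the paper passes over silently.
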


\begin{proof}
We fix a natural number $n$.
Choose an element $t$ of $\mathcal A^{(n-1)}$
satisfying that $\delta(\pi(t)) = \delta(\alpha_n)$.
Let us see that if $\pi(t)$ belongs to $\tilde A$,
then the set $\mathcal A^{(n)}$ contains $\pi(t)$.
Since $\alpha_n < \pi(t)$ in $S^{(n-1)}$,
we have $\alpha_{n+1} < \pi(\pi(t))$ in $S^{(n-1)}$ and $S^{(n)}$.
By the construction of $S^{(n)}$, we have $\pi(t) < \alpha_n$ in $S^{(n)}$.
Hence the set $\mathcal A^{(n)}$ contains $\pi(t)$.
Conversely, let $t'$ be an element of $\mathcal A^{(n)}$.
We denote by $t$ the element of $S^{(n)}$ satisfying that $\pi(t) = t'$.
We have to see that $t$ belongs to $\mathcal A^{(n-1)}$.
In the sets $S^{(n)}$ and $S^{(n-1)}$,
we have $t < \alpha_{n-1}$.
Moreover, since $\alpha_{n+1} < \pi(\pi(t))$ holds
in $S^{(n)}$ and $S^{(n-1)}$,
we get $\alpha_n < \pi(t)$ in $S^{(n-1)}$.
Hence $t$ belongs to $\mathcal A^{(n-1)}$.
\end{proof}

Applying Proposition~\ref{PropOfT} repeatedly,
we obtain the following:

\begin{corollary}\label{CoroOfT}
Let $n$ be a natural number.
Then $\mathcal A^{(n)}$ is described as follows:
$$\mathcal A^{(n)} = \{\pi^n(t) \mid t \in \mathcal A^{(0)},\ 
\delta(\pi^m(t)) = \delta(\alpha_m) \text{ and } \pi^m(t) \in \tilde A
\text{ for all } m \text{ with } 0 \leq m \leq n\}.$$
\end{corollary}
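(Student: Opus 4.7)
The plan is to prove the corollary by a straightforward induction on $n$, using Proposition~\ref{PropOfT} as the one-step recursion. Since Proposition~\ref{PropOfT} already describes $\mathcal A^{(n)}$ in terms of $\mathcal A^{(n-1)}$, iterating this relation $n$ times should immediately yield the closed-form description in the corollary; there is no new geometric content to be extracted, only bookkeeping of the conditions.

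For the base case $n = 1$, Proposition~\ref{PropOfT} gives
$$\mathcal A^{(1)} = \{\pi(t) \in \tilde A \mid t \in \mathcal A^{(0)}\ \text{and}\ \delta(\pi(t)) = \delta(\alpha_1)\}.$$
This matches the corollary's formula for $n = 1$: the condition at $m = 0$ reads $\pi^0(t) = t \in \tilde A$ with $\delta(t) = \delta(\alpha_0) = \delta(0^A_i) = 0$, both of which are automatic from $t \in \mathcal A^{(0)} \subset S^{(0)}_A$.

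For the inductive step, assume the formula holds for $n-1$. Apply Proposition~\ref{PropOfT} to write
$$\mathcal A^{(n)} = \{\pi(s) \in \tilde A \mid s \in \mathcal A^{(n-1)}\ \text{and}\ \delta(\pi(s)) = \delta(\alpha_n)\},$$
and substitute each $s \in \mathcal A^{(n-1)}$ as $s = \pi^{n-1}(t)$ with $t \in \mathcal A^{(0)}$ subject to $\delta(\pi^m(t)) = \delta(\alpha_m)$ and $\pi^m(t) \in \tilde A$ for $0 \leq m \leq n-1$. Then $\pi(s) = \pi^n(t)$, the requirement $\pi(s) \in \tilde A$ becomes $\pi^n(t) \in \tilde A$, and the requirement $\delta(\pi(s)) = \delta(\alpha_n)$ becomes $\delta(\pi^n(t)) = \delta(\alpha_n)$. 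Combined with the inductive conditions, this is exactly the set of constraints for $0 \leq m \leq n$, proving the claim.

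There is essentially no obstacle here, as all of the combinatorial work is packaged inside Proposition~\ref{PropOfT}; the only point to be careful about is verifying that the $m = 0$ condition is vacuous (so that the formula glues correctly at the start of the induction) and that nothing is lost when translating between the one-step characterization and the iterated one.
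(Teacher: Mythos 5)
Your proof is correct and takes exactly the approach the paper intends: the paper gives no explicit proof for this corollary, prefacing it only with ``Applying Proposition~\ref{PropOfT} repeatedly, we obtain the following,'' and your induction is precisely that iteration spelled out, with the right check that the $m=0$ conditions are vacuous because $\mathcal A^{(0)} \subset \tilde A$ consists of elements with $\delta = 0$.
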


\begin{proposition}\label{PropOf1}
Let $n$ be a non-negative integer.
Then the set $\mathcal A^{(n)}$ does not contain
an element $t$
which is of the form $t = \pi^m(0^A_i)$
with a non-negative integer $m$
for $m \leq n$.
\end{proposition}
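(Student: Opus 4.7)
My plan is to argue by contradiction using the explicit description of $\mathcal{A}^{(n)}$ provided by Corollary~\ref{CoroOfT}. Suppose some $\alpha_m = \pi^m(0^A_i)$ with $m \leq n$ belongs to $\mathcal{A}^{(n)}$. Then by Corollary~\ref{CoroOfT} there is an element $t_0 \in \mathcal{A}^{(0)}$ with $\pi^n(t_0) = \alpha_m$ such that $\pi^j(t_0) \in \tilde A$ for every $0 \leq j \leq n$. Since $\pi$ is a bijection, $t_0$ lies in the $\pi$-orbit of $0^A_i$, so the first step is to describe this orbit precisely.

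By Definition~\ref{DefOfExchanging}, $\pi$ agrees with $\pi_0$ except at the two points $\pi_0^{-1}(0^A_i)$ and $\pi_0^{-1}(1^B_j)$. Because $N_\xi = N_{m_1,n_1} \oplus N_{m_2,n_2}$ splits as a direct sum, $\pi_0$ cyclically permutes $\tilde A$ with period $h_1$ and $\tilde B$ with period $h_2$. Tracing the iterates of $0^A_i$ under $\pi$, one sees that its orbit has length $h_1+h_2$, with $\alpha_0, \ldots, \alpha_{h_1-1} \in \tilde A$ followed by $\alpha_{h_1}, \ldots, \alpha_{h_1+h_2-1} \in \tilde B$, and $\alpha_{h_1+h_2} = \alpha_0$.

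Write $t_0 = \alpha_p$ with $p \in \{0, \ldots, h_1 - 1\}$. The equality $\pi^n(t_0) = \alpha_m$ forces $p + n \equiv m \pmod{h_1+h_2}$, and the condition $\pi^j(t_0) \in \tilde A$ for all $0 \leq j \leq n$ forces $p + n \leq h_1 - 1$, for otherwise the iterate at $j = h_1 - p$ would equal $\alpha_{h_1} \in \tilde B$. Since $-n \leq m - n \leq 0$, the only possibilities for such $p$ are $p = m - n$ and $p = m - n + (h_1+h_2)$. The first forces $m = n$ and $p = 0$, so $t_0 = 0^A_i$, which contradicts the inequality $t_0 < 0^A_i$ built into the definition of $\mathcal{A}^{(0)}$. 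The second gives $p + n = m + h_1 + h_2$, whose bound $p + n \leq h_1 - 1$ reduces to $m + h_2 \leq -1$, impossible because $m \geq 0$ and $h_2 \geq 1$.

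The main obstacle I anticipate is pinning down the $\pi$-orbit structure in the second step, which requires carefully unwinding the small modification to see that the orbit of $0^A_i$ merges the two $\pi_0$-cycles on $\tilde A$ and $\tilde B$ into a single cycle of length $h_1+h_2$; once this is in hand, the remainder is a short numerical case analysis on the position of $t_0$ within the cycle.
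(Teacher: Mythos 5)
Your proof is correct, and it takes a genuinely different route from the paper's. The paper argues by a short induction on $n$: it first observes that $\pi^{-1}(0^A_i)\in\tilde B$ while $\mathcal A^{(n)}\subset\tilde A$, and then, in the inductive step, if $\pi^m(0^A_i)\in\mathcal A^{(n)}$ with $m\le n$, Proposition~\ref{PropOfT} forces the $\pi$-preimage $\pi^{m-1}(0^A_i)$ to lie in $\mathcal A^{(n-1)}$, contradicting the induction hypothesis (and the observation handles the boundary case $m=0$). You instead jump straight to the unrolled form, Corollary~\ref{CoroOfT}, which pins down a witness $t_0\in\mathcal A^{(0)}$ whose forward iterates $\pi^j(t_0)$ stay in $\tilde A$ for $0\le j\le n$, and you then analyze the single $(h_1+h_2)$-cycle of $\pi$ through $0^A_i$ --- the $h_1$ $\tilde A$-points $\alpha_0,\dots,\alpha_{h_1-1}$ followed by the $h_2$ $\tilde B$-points --- to reach a numerical contradiction. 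Your orbit description is correct (since $\pi$ agrees with $\pi_0$ except at $\pi_0^{-1}(0^A_i)$ and $\pi_0^{-1}(1^B_j)$, the two $\pi_0$-cycles on $\tilde A$ and $\tilde B$ merge into one cycle, with the jump $\tilde A\to\tilde B$ occurring at index $h_1$), though it is the one step you would want to spell out in a final write-up. The paper's induction is shorter and hides the orbit structure; yours makes explicit exactly where the constraint $\pi^j(t_0)\in\tilde A$ fails, at the cost of the extra cycle-structure lemma. Both are valid.
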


\begin{proof}
For every non-negative integer $n$,
the set $\mathcal A^{(n)}$ consists of some elements of $\tilde A$.
Hence sets $\mathcal A^{(n)}$ do not contain the element $\pi^{-1}(0^A_i)$
which is an element of $\tilde B$.
We show the assertion by induction on $n$.
The case $n = 0$ is obvious.
For a natural number $n$,
assume that the set $\mathcal A^{(n-1)}$ contain no element
$\pi^m(0^A_i)$ with $m \leq n-1$.
We consider the set $\mathcal A^{(n)}$.
Suppose that $\mathcal A^{(n)}$ contains an element $\pi^m(0^A_i)$ for $m \leq n$.
Then by Proposition~\ref{PropOfT}, 
$\pi^{m-1}(0^A_i)$ belongs to $\mathcal A^{(n-1)}$.
This contradicts with the hypothesis of induction.
\end{proof}

\begin{proposition}\label{PropOfEmptyA}
For every small modification by $0^A_i$ and $1^B_j$
with $m_1 < i \leq n_1$,
there exists a non-negative integer $a = a(i, j)$ such that
$\mathcal A^{(a)} = \emptyset$.
\end{proposition}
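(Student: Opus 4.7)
The plan is to argue via the orbit $\alpha_n = \pi^n(0^A_i)$ and to invoke Lemma~\ref{LemOfUniA}, whose contrapositive states: if $\alpha_n \notin \tilde A$, then $\Sigma_{(n)} \neq \{\alpha_n\}$ (since by definition $\Sigma_{(n)} \subset \tilde A$), and hence $\mathcal A^{(n)}$ must be empty. So it suffices to exhibit some non-negative integer $a$ for which $\alpha_a$ lands in $\tilde B$.

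First I would unpack the small modification $\pi$: it agrees with $\pi_0$ except at the two inputs $\pi_0^{-1}(0^A_i)$ and $\pi_0^{-1}(1^B_j)$, which $\pi$ sends to $1^B_j \in \tilde B$ and to $0^A_i \in \tilde A$, respectively. Because $\pi_0$ preserves the decomposition $\tilde S = \tilde A \sqcup \tilde B$, the element $\pi_0^{-1}(1^B_j)$ lies in $\tilde B$, so this second exception never affects an iterate that is currently inside $\tilde A$. Hence: so long as $\alpha_k \in \tilde A$ and $\alpha_k \neq \pi_0^{-1}(0^A_i)$, one has $\alpha_{k+1} = \pi_0(\alpha_k) \in \tilde A$; and the moment $\alpha_k = \pi_0^{-1}(0^A_i)$, the next iterate $\alpha_{k+1}$ equals $1^B_j \in \tilde B$.

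Next I would invoke the simple-${\rm DM_1}$ structure of $A = N_{m_1, n_1}$: since $\gcd(m_1, n_1) = 1$, the restriction $\pi_0|_{\tilde A}$ is a single cyclic permutation of $\tilde A$ of length $h_1 = m_1 + n_1$. Starting from $\alpha_0 = 0^A_i$, the iterates therefore traverse the $\pi_0$-cycle of $\tilde A$ until the unique step $k_0 \leq h_1 - 1$ at which $\alpha_{k_0} = \pi_0^{-1}(0^A_i)$, and then $\alpha_{k_0 + 1} = 1^B_j \in \tilde B$ by the observation above. Setting $a = k_0 + 1 \leq h_1$ we get $\alpha_a \notin \tilde A$, whence $\mathcal A^{(a)} = \emptyset$ by Lemma~\ref{LemOfUniA}. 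If some earlier $\mathcal A^{(n)}$ with $n < a$ happens to already be empty, we simply take that $n$ instead.

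The only subtlety (rather than a genuine obstacle) is to confirm that the $\pi$-orbit of $\alpha_0$ really does coincide with the $\pi_0$-orbit on $\tilde A$ up to step $k_0$, i.e.\ that $\pi_0^{-1}(0^A_i)$ is not encountered prematurely. This is immediate from the cyclic length of $\pi_0|_{\tilde A}$ being exactly $h_1$, so the inverse image $\pi_0^{-1}(0^A_i) = \pi_0^{h_1 - 1}(0^A_i)$ is reached only at step $h_1 - 1$ of the $\pi_0$-iteration. The remainder of the argument is just bookkeeping with the definitions in Section~\ref{CombSpe}.
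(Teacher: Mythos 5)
Your proof is correct, but it takes a genuinely different route from the paper's. The paper picks out the minimum $m$ with $\pi^m(0^A_i) = 1^A_{m_1}$ (the maximal ``$1$''-labelled element of $A$) and uses Proposition~\ref{PropOf1} together with the $\delta$-structure of the orbit to show directly that $\mathcal A^{(m)} = \emptyset$ (after first disposing of the possibility that $\mathcal A^{(n-1)} = \{0^A_{i+m_1}\}$ for some $n < m$). You instead read Lemma~\ref{LemOfUniA} in the contrapositive: since $\Sigma_{(n)}$ is by definition a subset of $\tilde A$, the equality $\Sigma_{(n)} = \{\alpha_n\}$ forces $\alpha_n \in \tilde A$, so termination follows once the $\pi$-orbit of $0^A_i$ exits $\tilde A$; and it must exit, because $\pi_0|_{\tilde A}$ is a single $h_1$-cycle and the small modification reroutes $\pi_0^{-1}(0^A_i) = 0^A_{i+m_1}$ to $1^B_j \in \tilde B$. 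Your argument is shorter, handles the corner case $i = n_1$ without a separate clause (it is absorbed into ``some earlier $\mathcal A^{(n)}$ is already empty''), and relies only on the cyclic structure of $A$ rather than on which $\delta$-values appear along the orbit; it does, however, only give the bound $a \le h_1$, whereas the paper's argument yields the sharper $a \le m < h_1 - 1$, which is closer to the equality $\pi^a(0^A_i) = 1^A_{m_1}$ the paper repeatedly uses in later sections. Both proofs use only results stated before Proposition~\ref{PropOfEmptyA}, so there is no circularity.
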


\begin{proof}
In the case of $i = n_1$ and $1 \leq j \leq n_2$,
we immediately have $\mathcal A^{(0)} = \emptyset$.
Let us see the other cases.
By the hypothesis $m_1 < i \leq n_1$,
the map $\pi$ sends $0^A_{i+m_1}$ to $1^B_j$
in $S^{(n)}$ for all non-negative integers $n$.
Let $m$ be the minimum natural number satisfying 
$\pi^m(0^A_i) = 1^A_{m_1}$.
If there exists a natural number $n$
such that $\mathcal A^{(n-1)} = \{0^A_{i+m_1}\}$ for $n < m$,
we have then $\mathcal A^{(n)} = \emptyset$ by definition of $\mathcal A^{(n)}$.
Assume that sets $\mathcal A^{(0)}, \dots, \mathcal A^{(m-1)}$ are not empty.
Let us consider the set $\mathcal A^{(m)}$.
Note that $1^A_{m_1}$ is the maximum element of the set
$\{t \in A \mid \delta(t) = 1\}$.
Proposition~\ref{PropOf1} induces that 
every element $t$ of $\mathcal A^{(m-1)}$ satisfies that 
$\delta(\pi(t)) = 0$ or $\pi(t) = 1^B_j$,
whence we see $\mathcal A^{(m)} = \emptyset$.
\end{proof}

Thus by a small modification $\pi$ by $0^A_i$ and $1^B_j$,
we obtain the non-negative integer $a = a(i, j)$ satisfying that 
$\mathcal A^{(a)} = \emptyset$.
By the ABS $(S^{(a)}, \delta, \pi)$, we obtain  
the set $\mathcal B^{(0)} = \{1^B_1, \dots, 1^B_{j-1}\} \cup I$,
where $I$ is the subset of $S^{(a)}$ consisting of elements 
$\alpha_n =\pi^n(0^A_i)$
satisfying that $1^B_j < \alpha_n$ and $\alpha_{n+1} < \pi(1^B_j)$.
Here, we show some properties of sets $\mathcal B^{(0)}, \mathcal B^{(1)}, \dots$
in Lemma~\ref{LemOfUniB}, Proposition~\ref{PropOfInducB},
Proposition~\ref{PropOf0}, Lemma~\ref{LemOfI}
and Lemma~\ref{LemOfSubB}
to see that
there exists a non-negative integer $n$
such that $\mathcal B^{(n)} = \emptyset$
in Proposition~\ref{PropOfEmptyB}.
We suppose that if $n_1 = m_1 + 1$ and $m_2 = n_2 + 1$,
then $(i, j) \neq (n_1, m_2)$.

\begin{lemma} \label{LemOfUniB}
Let $n'$ be a non-negative integer with $n' \geq  a$.
Set $n = n' - a$.
Let $\Sigma'_{(n)}$ be the set
consisting of elements $t$ of $\tilde S$ satisfying that
$t < t'$ and $\pi(t') < \pi(t)$ in $S^{(n')}$
for some elements $t'$ of $S^{(n')}$ with $\delta(t) = \delta(t')$.
If $\mathcal B^{(n)}$ is not empty,
we have then $\Sigma'_{(n)} = \{\beta_n\}$,
where $\beta_n = \pi^n(1^B_j)$.
Moreover, if $\mathcal B^{(n)} = \emptyset$, then $\Sigma'_{(n)} = \emptyset$.
\end{lemma}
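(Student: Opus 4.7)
The plan is to prove the lemma by induction on $n$, in the same spirit as the proof of Lemma~\ref{LemOfUniA}, with the difference that now we are tracking the \emph{lower} endpoint of an inversion rather than the upper one.

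For the base case $n=0$, I must analyze the inversions of same-$\delta$ pairs in the ABS $S^{(a)}$. The first observation is that Lemma~\ref{LemOfUniA} applied with $n=a$ gives $\Sigma_{(a)}=\emptyset$, so no element of $\tilde A$ can be the upper endpoint of an inversion in $S^{(a)}$; equivalently, every inversion witnessed in $S^{(a)}$ must have its upper endpoint in $\tilde B$. It then remains to identify the possible lower endpoints. Since the $\mathcal A$-moves only permute elements of $\tilde A$, the mutual order of any two elements of $\tilde B$, and the values of $\pi$ on them (except at the one edge outgoing from $r:=\pi_0^{-1}(1^B_j)$, which was redirected by the small modification), are inherited from the original ABS $\tilde S$. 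Combined with Lemma~\ref{LemOftt'BinExp}~(i) applied to $S$, this rules out inversions whose two endpoints both lie in $\tilde B\setminus\{1^B_j\}$. An inversion whose lower endpoint sits in $\tilde A$ and whose upper endpoint sits in $\tilde B\setminus\{1^B_j\}$ would contradict the design of the $\mathcal A$-moves, which (by Proposition~\ref{PropOfT} together with Corollary~\ref{CoroOfT} and Proposition~\ref{PropOf1}) were precisely chosen to remove such misalignments. Hence the only surviving candidate for a lower endpoint is $\beta_0=1^B_j$, and it genuinely belongs to $\Sigma'_{(0)}$ exactly when a witness $t'>1^B_j$ with $\pi(t')<\pi(1^B_j)$ exists, i.e.\ exactly when $\mathcal B^{(0)}\neq\emptyset$.

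For the inductive step, assume $\Sigma'_{(n-1)}=\{\beta_{n-1}\}$ and $\mathcal B^{(n-1)}\neq\emptyset$. By the construction of $S^{(a+n)}$, the element $\beta_n$ is moved to the immediate left of $\pi(t_{\min})$, where $t_{\min}=\min\mathcal B^{(n-1)}$. This forces $\beta_n<\pi(t)$ in $S^{(a+n)}$ for every $t\in\mathcal B^{(n-1)}$, so that the $\Sigma'$-inversions previously witnessed by $\beta_{n-1}$ all disappear and $\beta_{n-1}\notin\Sigma'_{(n)}$. Every element other than $\beta_n$ retains its relative position and $\pi$-image from $S^{(a+n-1)}$, so the only possible new lower endpoint of an inversion is $\beta_n$ itself; by definition of $\mathcal B^{(n)}$ it contributes to $\Sigma'_{(n)}$ precisely when $\mathcal B^{(n)}\neq\emptyset$. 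In the empty case the same reasoning yields $\Sigma'_{(n)}=\emptyset$.

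The main obstacle I anticipate is the base case. Unlike Lemma~\ref{LemOfUniA}, where the $n=0$ situation can be read off directly from the shape of $S^{(0)}$, here one must synthesize the cumulative effect of all $a$ preceding $\mathcal A$-moves and verify that, when the full range $\tilde S$ (not just $\tilde B$) is allowed for the lower endpoint, only $1^B_j$ can play that role. Once this bookkeeping is carried out by invoking the structural results of Section~\ref{CombSpe}, the inductive step is a direct transcription of the argument for $\Sigma_{(n)}$ into the ``mirror'' setting of $\Sigma'_{(n)}$.
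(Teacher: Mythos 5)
Your plan follows the paper's proof exactly in structure: the paper's own proof of Lemma~\ref{LemOfUniB} just says ``given by the same way as Lemma~\ref{LemOfUniA},'' and Lemma~\ref{LemOfUniA} is proved by induction on $n$ with a base case declared ``obvious.'' Your inductive step is a correct mirror of the paper's inductive step for $\Sigma_{(n)}$, and you are right to single out the base case as the place where the analogy is weakest: $S^{(a)}$ is not a single swap away from an admissible ABS, so ``obvious'' does not transfer.

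However, the base case as you have sketched it still has a genuine gap, precisely at the point you hedge. You first rule out inversions with upper endpoint in $\tilde A$ via $\Sigma_{(a)}=\emptyset$, and then inversions with both endpoints in $\tilde B\setminus\{1^B_j\}$ using the fact that positions and $\pi$-values of those elements are untouched by the $\mathcal A$-moves (this is essentially right, though you must also remember that $1^B_j$ itself was repositioned in the step $S\to S^{(0)}$ and that $\pi(0^B_{j+m_2})$ was redirected, so ``inherited from $\tilde S$'' needs a sentence of justification rather than none). The remaining case — lower endpoint $t\in\tilde A$ with upper endpoint $t'\in\tilde B\setminus\{1^B_j\}$ and $\delta(t)=\delta(t')$ — is dispatched only by saying it ``would contradict the design of the $\mathcal A$-moves.'' That is not an argument: Lemma~\ref{LemOfUniA}, Proposition~\ref{PropOfT}, Corollary~\ref{CoroOfT} and Proposition~\ref{PropOf1} all concern $\tilde A$-elements as \emph{upper} endpoints (or membership in $\mathcal A^{(n)}\subseteq\tilde A$); none of them asserts that an $\tilde A$-element cannot become the \emph{lower} endpoint of an inversion against a $\tilde B$-element after $\alpha_1,\dots,\alpha_{a-1}$ have been moved rightwards, potentially past $\tilde B$-elements. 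To close this you must track what $\alpha_m$ actually crosses when it moves to the right of $\pi(t_{\max})$, and check that no crossing of a same-$\delta$ element of $\tilde B$ can produce $\pi(t')<\pi(\alpha_m)$; equivalently, one should verify that any unmoved $t\in\tilde A$ has an unmoved image $\pi(t)$ (otherwise $t$ would be some $\alpha_{m-1}$), so admissibility of $S$ protects such pairs, and separately control the moved $\alpha_m$'s. Supplying that check is exactly the ``bookkeeping'' you postpone; without it the base case is not established.
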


\begin{proof}
A proof is given by the same way as Lemma~\ref{LemOfUniA}.
\end{proof}

\begin{proposition} \label{PropOfInducB}
Set $\beta_m = \pi^m(1^B_j)$ for all non-negative integers $m$.
Let $n$ be a natural number.
The set $\mathcal B^{(n)}$ is obtained by
$$\mathcal B^{(n)} = \{\pi(t) \in S^{(a+n)} \mid t \in \mathcal B^{(n-1)}\ {\rm and}\ 
\delta(\pi(t)) = \delta(\beta_n)\}.$$
Moreover, this set is described as
$$\mathcal B^{(n)} = \{\pi^n(t) \mid t \in \mathcal B^{(0)}\ {\rm and}\ 
\delta(\pi^m(t)) = \delta(\beta_m) \text{ for all }m \text{ with }  
0 \leq m \leq n\}.$$
\end{proposition}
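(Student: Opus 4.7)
My plan is to mirror the proof of Proposition~\ref{PropOfT} and its iteration Corollary~\ref{CoroOfT}, exploiting the duality between the $\mathcal{A}$-process, in which $\alpha_n$ is moved strictly rightward in passing from $S^{(n-1)}$ to $S^{(n)}$, and the $\mathcal{B}$-process, in which $\beta_n$ is moved strictly leftward in passing from $S^{(a+n-1)}$ to $S^{(a+n)}$. Where the proof of Proposition~\ref{PropOfT} invoked Lemma~\ref{LemOfUniA} to certify that $\alpha_{n-1}$ is the unique same-$\delta$ order-violator of $\pi$ in $S^{(n-1)}$, I will invoke Lemma~\ref{LemOfUniB} here to certify that $\beta_{n-1}$ is the unique same-$\delta$ order-violator in $S^{(a+n-1)}$.

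To prove the first identity, I would argue both inclusions. For the forward inclusion, fix $t \in \mathcal{B}^{(n-1)}$ with $\delta(\pi(t)) = \delta(\beta_n)$. By definition of $\mathcal{B}^{(n-1)}$ we have $\beta_{n-1} < t$ and $\pi(t) < \beta_n$ in $S^{(a+n-1)}$; applying $\pi$ and using Lemma~\ref{LemOfUniB} to rule out the violator $\beta_{n-1}$ yields $\beta_n < \pi(t)$ and $\pi(\pi(t)) < \beta_{n+1}$ in $S^{(a+n-1)}$. Since the transition $S^{(a+n-1)} \to S^{(a+n)}$ only relocates $\beta_n$, strictly leftward, both inequalities persist in $S^{(a+n)}$, giving $\pi(t) \in \mathcal{B}^{(n)}$. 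For the converse, given $t' \in \mathcal{B}^{(n)}$ let $t$ be the unique element with $\pi(t) = t'$; the defining inequalities of $\mathcal{B}^{(n)}$ in $S^{(a+n)}$ lift back to the defining inequalities of $\mathcal{B}^{(n-1)}$ in $S^{(a+n-1)}$ by running the same order-preservation argument in reverse, so $t \in \mathcal{B}^{(n-1)}$.

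The second identity then follows by induction on $n$: the first identity gives $\mathcal{B}^{(n)} = \pi(\mathcal{B}^{(n-1)}) \cap \{t : \delta(t) = \delta(\beta_n)\}$, and applying the inductive hypothesis to $\mathcal{B}^{(n-1)}$ accumulates the conditions $\delta(\pi^m(t)) = \delta(\beta_m)$ for $0 \le m \le n$ and collapses the iterated $\pi$ applications into a single $\pi^n$.

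The main obstacle I expect is the careful positional bookkeeping in the converse inclusion: I need to confirm that the image $\pi(t)$ of an element of $\mathcal{B}^{(n-1)}$ lies strictly to the right of the block on which $\beta_n$ crosses in the relocation, so that none of the required inequalities between $\pi(t)$ and $\beta_n$ (respectively between $\pi(\pi(t))$ and $\beta_{n+1}$) is one disturbed by the relocation; and I need to check that $\pi(t)$ itself is never the exceptional violator $\beta_{n-1}$, which is the only point at which Lemma~\ref{LemOfUniB} could otherwise fail to transport the inequality. Once these positional checks are pinned down, the argument parallels Proposition~\ref{PropOfT} step by step.
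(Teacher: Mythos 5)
Your overall plan---mirror the proof of Proposition~\ref{PropOfT} and Corollary~\ref{CoroOfT}, with Lemma~\ref{LemOfUniB} replacing Lemma~\ref{LemOfUniA}---is exactly the route the paper takes; the paper's proof of Proposition~\ref{PropOfInducB} is literally ``A proof is given by the same way as Proposition~\ref{PropOfT}.''

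However, the forward inclusion as you wrote it contains a concrete contradiction. You first record (correctly) that $\pi(t) < \beta_n$ in $S^{(a+n-1)}$, and then claim that ``applying $\pi$ and using Lemma~\ref{LemOfUniB} to rule out the violator $\beta_{n-1}$'' yields the reversed inequality $\beta_n < \pi(t)$ still in $S^{(a+n-1)}$. This cannot be: you cannot rule out $\beta_{n-1}$ as a violator, because Lemma~\ref{LemOfUniB} says that $\beta_{n-1}$ \emph{is} the unique violator, and the very pair $(\beta_{n-1}, t)$ witnesses it, since $\beta_{n-1} < t$ while $\pi(t) < \beta_n = \pi(\beta_{n-1})$. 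So in $S^{(a+n-1)}$ the order between $\beta_n$ and $\pi(t)$ remains $\pi(t) < \beta_n$. The inequality $\beta_n < \pi(t)$ holds only in $S^{(a+n)}$, and its source is not an application of $\pi$ but the definition of the ordering on $S^{(a+n)}$: $\beta_n$ is moved to the left of $\pi(t_{\rm min})$, and since $t_{\rm min} \leq t$, both have $\delta$ equal to $\delta(\beta_{n-1})$, and neither equals $\beta_{n-1}$, order preservation gives $\pi(t_{\rm min}) \leq \pi(t)$ and hence $\beta_n < \pi(t)$ in $S^{(a+n)}$. This is the mirror of the step ``By the construction of $S^{(n)}$, we have $\pi(t) < \alpha_n$ in $S^{(n)}$'' in Proposition~\ref{PropOfT}. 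Your second inequality, $\pi(\pi(t)) < \beta_{n+1}$, is the one that is legitimately obtained by applying $\pi$ to $\pi(t) < \beta_n$ together with Lemma~\ref{LemOfUniB} (checking $\pi(t) \neq \beta_{n-1}$), and it does survive the relocation. Once this bookkeeping is fixed, your argument is the paper's.
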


\begin{proof}
A proof is given by the same way as Proposition~\ref{PropOfT}.
\end{proof}

\begin{proposition}\label{PropOf0}
Let $n$ be a non-negative integer.
If the set $\mathcal B^{(n)}$ is a subset of $B$,
then $\mathcal B^{(n)}$ does not contain
an element $t$
which is of the form $t = \pi^m(1^B_j)$
with a non-negative integer $m$
for $m \leq n$.
\end{proposition}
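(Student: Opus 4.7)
My plan is to proceed by induction on $n$, mirroring the proof of Proposition~\ref{PropOf1} with Proposition~\ref{PropOfInducB} substituting for Proposition~\ref{PropOfT}. For the base case $n = 0$, recall from the paragraph preceding the statement that $\mathcal B^{(0)} = I \cup \{1^B_1, \ldots, 1^B_{j-1}\}$, and by its definition $I$ consists of elements of the form $1^A_x$, so $I \subset \tilde A$. The hypothesis $\mathcal B^{(0)} \subset B$ then forces $I = \emptyset$, giving $\mathcal B^{(0)} = \{1^B_1, \ldots, 1^B_{j-1}\}$, which manifestly does not contain $1^B_j = \pi^0(1^B_j)$.

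For the inductive step at stage $n \geq 1$, assume the proposition at level $n-1$, suppose $\mathcal B^{(n)} \subset B$, and assume toward contradiction that $\beta_m := \pi^m(1^B_j) \in \mathcal B^{(n)}$ for some $0 \leq m \leq n$. If $m = n$, the defining condition $\beta_n < t$ for every $t \in \mathcal B^{(n)}$ immediately excludes $\beta_n$. If $m < n$, one applies Proposition~\ref{PropOfInducB}: when $m \geq 1$ this forces $\beta_{m-1} \in \mathcal B^{(n-1)}$, while the subcase $m = 0$ forces $\pi^{-1}(1^B_j) = \pi_0^{-1}(0^A_i) \in \mathcal B^{(n-1)}$, an element of $\tilde A$. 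Invoking the inductive hypothesis on $\mathcal B^{(n-1)}$ then produces the sought contradiction, provided we have secured $\mathcal B^{(n-1)} \subset B$.

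I expect the main obstacle to be precisely this propagation of the hypothesis from $\mathcal B^{(n)}$ to $\mathcal B^{(n-1)}$. My approach is to leverage the structural fact that $\pi$ coincides with $\pi_0$ outside of $\{\pi_0^{-1}(0^A_i), \pi_0^{-1}(1^B_j)\}$, and that $\pi_0$ preserves the partition $\tilde S = \tilde A \sqcup \tilde B$; in particular, the only element of $\tilde A$ whose $\pi$-image lies in $\tilde B$ is $\pi_0^{-1}(0^A_i)$, with image $1^B_j = \beta_0$. Hence a hypothetical $\tilde A$-element of $\mathcal B^{(n-1)}$ either passes under $\pi$ to a $\tilde A$-element of $\mathcal B^{(n)}$, directly contradicting $\mathcal B^{(n)} \subset B$, or coincides with $\pi_0^{-1}(0^A_i)$ and maps to $\beta_0$; in this latter case one obtains $\beta_0 \in \mathcal B^{(n)}$, and Lemma~\ref{LemOfUniB} together with a direct inspection of the ordering in $S^{(a+n)}$ (placing $\beta_n$ strictly to the left of $\beta_0$ and checking the $\pi$-monotonicity condition $\pi(\beta_0) = \beta_1 < \beta_{n+1}$) rules this out. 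Either alternative yields $\mathcal B^{(n-1)} \subset B$, closing the induction.
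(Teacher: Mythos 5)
Your overall strategy—induction on $n$ via Proposition~\ref{PropOfInducB}, mirroring the proof of Proposition~\ref{PropOf1}—is exactly what the paper intends (its own proof just reads ``same way as Proposition~\ref{PropOf1}''), and you are right to flag that the analogy is imperfect: in Proposition~\ref{PropOf1} every $\mathcal A^{(n)}$ is a subset of $\tilde A$ by construction, whereas $\mathcal B^{(n)}$ can contain $\tilde A$-elements coming from $I$, so the inductive step here genuinely requires propagating the hypothesis $\mathcal B^{(n)}\subset B$ down to $\mathcal B^{(n-1)}$. Your base case and the reduction of the inductive step to that propagation are correct.

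The gap is in the propagation argument itself. You assert that a $\tilde A$-element $t'$ of $\mathcal B^{(n-1)}$ either ``passes under $\pi$ to a $\tilde A$-element of $\mathcal B^{(n)}$'' or equals $\pi_0^{-1}(0^A_i)$. But Proposition~\ref{PropOfInducB} only places $\pi(t')$ in $\mathcal B^{(n)}$ when $\delta(\pi(t'))=\delta(\beta_n)$; if $\delta(\pi(t'))\neq\delta(\beta_n)$, then $t'$ simply drops out of the chain, $\pi(t')\notin\mathcal B^{(n)}$, and neither of your two alternatives applies, so no contradiction with $\mathcal B^{(n)}\subset B$ is produced. In fact the implication ``$\mathcal B^{(n)}\subset B \Rightarrow \mathcal B^{(n-1)}\subset B$'' is false in general: when $I\neq\emptyset$ one has $\mathcal B^{(0)}\not\subset B$, yet Lemma~\ref{LemOfSubB} forces $\mathcal B^{(n)}\subset B$ for all $n>1$. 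The paper evades this because Proposition~\ref{PropOf0} is only ever invoked in the good-exchange setting of Proposition~\ref{PropOfD1D2}, where $I=\emptyset$ and all $\mathcal B^{(n)}$ lie in $\tilde B$, so propagation is free; proving the statement as literally written for arbitrary small modifications requires controlling the low-$n$ levels directly via Lemma~\ref{LemOfI} and Lemma~\ref{LemOfSubB} rather than by downward propagation of the hypothesis. Finally, the closing appeal to Lemma~\ref{LemOfUniB} does not give a contradiction: $\beta_n<\beta_0$ with $\pi(\beta_0)<\pi(\beta_n)$ only exhibits $\beta_n\in\Sigma'_{(n)}$, which is precisely what that lemma asserts rather than something it forbids.
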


\begin{proof}
A proof is given by the same way as Proposition~\ref{PropOf1}.
\end{proof}

\begin{lemma}\label{LemOfI}
If $1 \leq j \leq n_2$, 
then all elements $t$ of $I$ satisfy that $\delta(\pi^2(t)) = 0$.
\end{lemma}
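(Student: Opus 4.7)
The plan is to rule out, case by case, the two possibilities where $\delta(\pi^2(t)) = 1$ for $t = 1^A_x \in I$. Since $\delta(t) = 1 \neq \delta(0^A_{i+m_1})$, the small modification does not alter $\pi$ at $t$, so $\pi(t) = \pi_A(1^A_x) = 0^A_{x+n_1}$. Then $\pi^2(t) = \pi(0^A_{x+n_1})$ has $\delta = 1$ in exactly two subcases: (a) $x + n_1 = i + m_1$, giving $\pi^2(t) = 1^B_j$ via the modification; or (b) $x + n_1 \neq i + m_1$ with $x \leq 2m_1 - n_1$, giving $\pi^2(t) = 1^A_{x+n_1-m_1}$. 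I will show both subcases contradict $t \in I$.

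For case (a), the identity $x = i+m_1-n_1$ means $t = \alpha_{h_1-2}$ in the $\pi$-orbit of $0^A_i$ (since $\alpha_{h_1-1} = 0^A_{i+m_1}$). By the argument in the proof of Proposition~\ref{PropOfEmptyA}, $a \leq m$ where $\alpha_m = 1^A_{m_1}$, and $m \leq h_1 - 2$ because neither $\alpha_{h_1-1}$ nor $\alpha_{h_1}$ equals $1^A_{m_1}$. The equality $m = h_1 - 2$ forces $\alpha_{h_1-2} = 1^A_{m_1}$, whence $i = n_1$. A direct check of the defining conditions shows $\mathcal A^{(0)} = \emptyset$ when $i = n_1$: for every $y$ with $m_1 < y < n_1$ the inequality $\pi(0^A_{n_1}) < \pi(0^A_y)$ fails in $S^{(0)}$, regardless of whether $n_1 \leq 2m_1$ or $n_1 > 2m_1$. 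Hence $a < h_1 - 2$ in all scenarios, so $t = \alpha_{h_1-2}$ is never moved during the $\mathcal A$-phase. Since moves of other $\alpha_k$'s preserve the relative order of $t$ and $1^B_j$ (only the moved element $\alpha_k$ changes its relative order with anything), the element $t$ lies before $1^B_j$ in $S^{(a)}$, contradicting $t \in \mathcal B^{(0)}$.

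For case (b), the constraint $x \leq 2m_1 - n_1$ forces $n_1 < 2m_1$, placing us in a regime where $\pi_A$ sends every element of $\{0^A_{m_1+1},\ldots,0^A_{n_1}\}$ into $\{1^A_1,\ldots,1^A_{n_1-m_1}\}$. Writing $t = \alpha_n$, by Corollary~\ref{CoroOfT} the elements of $\mathcal A^{(n-1)}$ are $\pi$-iterates of $\mathcal A^{(0)} \subseteq \{0^A_{m_1+1},\ldots,0^A_{n_1}\}\setminus\{0^A_i\}$. The hypothesis $j \leq n_2$ pins the $\tilde A$-elements lying past $1^B_j$ in $S^{(0)}$ to $\{0^A_i, 0^A_{i+1}, \ldots, 0^A_{h_1}\}$, and combining this with the orbit restriction imposed by $x \leq 2m_1 - n_1$ forces $\pi(t_{\max,n-1})$ to remain in $\tilde A$ strictly before $1^B_j$ throughout the process (in particular $t_{\max,n-1} \neq 0^A_{i+m_1}$). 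Consequently the move of $\alpha_n = t$ does not carry $t$ past $1^B_j$, again contradicting $t \in I$.

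The hard part will be the combinatorial verification in case (b): carefully tracking the positions of the $\pi$-iterates of elements of $\mathcal A^{(0)}$ through the successive $S^{(k)}$'s and showing that $t_{\max,n-1}$ indeed satisfies the stated position constraint under the hypotheses $x \leq 2m_1 - n_1$ and $j \leq n_2$. Case (a), by contrast, reduces cleanly to the orbit bound $a \leq m \leq h_1 - 2$ together with the explicit verification of $\mathcal A^{(0)} = \emptyset$ at the boundary $i = n_1$.
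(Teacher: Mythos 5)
Your two-case decomposition is reasonable, and your case (a) argument --- where $\pi(t)=0^A_{i+m_1}$, so $\pi^2(t)=1^B_j$ --- is correct and is actually a genuine tightening of the paper's write-up. The paper, in the regime $n_1-m_1>m_1$, simply asserts that $\delta(\pi^2(t))=0$ for every $t\in\tilde A$ with $\delta(t)=1$, overlooking the one exceptional $t$ whose $\pi$-image is $0^A_{i+m_1}$. Your observation that such a $t$ equals $\alpha_{h_1-2}$, combined with $a\le m\le h_1-2$ (where $m=h_1-2$ forces $i=n_1$ and hence $\mathcal A^{(0)}=\emptyset$, so $a=0$), shows $t$ is never moved during the $\mathcal A$-phase and therefore cannot lie in $I$.

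Case (b), however, is where the substance of the lemma lies, and you have not proved it. You assert that the hypotheses ``force $\pi(t_{\max,n-1})$ to remain in $\tilde A$ strictly before $1^B_j$ throughout the process'' and then explicitly defer ``the hard part \dots carefully tracking the positions of the $\pi$-iterates \dots and showing that $t_{\max,n-1}$ indeed satisfies the stated position constraint.'' That is precisely the nontrivial content of the lemma; it cannot be left as a placeholder, and it is not obviously true in the form you state it (if $0^A_{i+m_1}\in\mathcal A^{(n-1)}$ one must argue about the maximum of the whole set, not just exclude one element). The paper handles the analogous regime $n_1-m_1<m_1$ by a counting argument of a different flavor: assuming $t=\alpha_n\in I$ with $\pi^2(t)=1^A_{r'}$ and $r\le 2m_1-n_1$, it notes that $t\in I$ would force $\mathcal A^{(n-1)}$ to contain the preimage $0^A_{i+m_1}$ of $1^B_j$, and then compares the bound $|\mathcal A^{(m)}|\le|\mathcal A^{(0)}|=n_1-i<n_1-m_1$ against the number of elements separating $1^A_r$ from $1^B_j$ in $S^{(n-1)}$ to reach a contradiction. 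You would need to either carry your positional claim through rigorously or adopt a counting argument of this kind; as written, the core case is missing.
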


\begin{proof}
If $m_1$ and $n_1$ satisfy $n_1-m_1 > m_1$,
then $\delta(\pi^2(t)) = 0$ holds
for all elements $t$ of $A$ with $\delta(t) = 1$.
It suffices to see the case of $n_1 -m _1 < m_1$.
Assume that there exists an element $t$ of $I$
satisfying $\delta(\pi^2(t)) = 1$.
We recall that there exists a natural number $n$ such that 
$t = \pi^n(0^A_i)$.
On the other hand, 
we can denote by $t = 1^A_r$ this element $t$ with a natural number $r$.
Put $r' = r+n_1-m_1$.
We have then $\pi^2(1^A_r) = 1^A_{r'}$.
By the definition of $I$, 
the set $\mathcal A^{(n-1)}$ contains the inverse image of $1^B_j$.
By construction, we have $|\mathcal A^{(m)}| < n_1 - m_1$ for all $m$.
In $S^{(n-1)}$, the number of elements between $1^A_r$ and $1^B_j$
is greater than $|\mathcal A^{(n-1)}|$,
whence we have $1^A_r < 1^B_j$ in $S^{(n)}$.
This contradict with $1^A_r \in I$.
%
\end{proof}

\begin{lemma}\label{LemOfSubB}
Let $n$ be a natural number with $n > 1$.
Suppose $1 \leq j \leq n_2$.
Then $\mathcal B^{(n)}$ is a subset of $B$.
\end{lemma}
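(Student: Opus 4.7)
The plan is to induct on $n \geq 2$, with Proposition~\ref{PropOfInducB} as the main tool: elements of $\mathcal B^{(n)}$ are exactly the $\pi$-images of elements of $\mathcal B^{(n-1)}$ whose $\delta$-history matches $(\delta(\beta_m))$ up to time $n$. Because $\pi$ agrees with $\pi_0$ off the pair $\{0^A_{i+m_1},\,0^B_{j+m_2}\}$ and $\pi_0|_{\tilde B}$ preserves $B$, the only element $t\in B$ with $\pi(t)\in A$ is $0^B_{j+m_2}$, and then $\pi(0^B_{j+m_2})=0^A_i$. So the whole lemma reduces to showing, at each step, that either $0^B_{j+m_2}\notin\mathcal B^{(n-1)}$ or $\delta(\beta_n)\neq\delta(0^A_i)=0$.

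For the base case $n=2$, I will analyse $\mathcal B^{(2)}$ via the description $\mathcal B^{(0)}=I\cup\{1^B_1,\dots,1^B_{j-1}\}$. Elements $t=1^B_z$ with $z<j$ satisfy $\pi(1^B_z)=t_{z+n_2}$, which cannot equal $0^B_{j+m_2}$ because $z+n_2=j+m_2$ would force $z>j$; hence $\pi^2(1^B_z)\in B$. For $t\in I$ I split on the value of $\delta(\beta_1)$. When $\delta(\beta_1)=1$, the identity $\pi(1^A_x)=0^A_{x+n_1}$ yields $\delta(\pi(t))=0$, so $t$ is already filtered out of $\mathcal B^{(1)}$. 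When $\delta(\beta_1)=0$ one has $j+n_2>m_2$, and a direct computation gives $\beta_2=1^B_{j+n_2-m_2}$, hence $\delta(\beta_2)=1$; combined with $\delta(\pi^2(t))=0$ from Lemma~\ref{LemOfI}, this filters $t$ out of $\mathcal B^{(2)}$.

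For the inductive step, assume $\mathcal B^{(n-1)}\subset B$ with $n\geq 3$. When $n\geq h_2$, Proposition~\ref{PropOf0} applied to $\mathcal B^{(n-1)}$ forbids $\beta_{h_2-1}=0^B_{j+m_2}$ from lying in $\mathcal B^{(n-1)}$, so $\pi(\mathcal B^{(n-1)})\subset B$. The main obstacle is the range $3\leq n<h_2$, where Proposition~\ref{PropOf0} is too weak. There I assume $0^B_{j+m_2}\in\mathcal B^{(n-1)}$ and write $0^B_{j+m_2}=\pi^{n-1}(s)$ for some $s\in\mathcal B^{(0)}$ via Proposition~\ref{PropOfInducB}. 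Tracking the merged $\pi$-orbit shows that an $s\in I$ would require the $\alpha$-index $p=h-n$, which for $n<h_2$ lies outside the admissible range $[0,h_1-1]$ occupied by $I$. Hence $s=1^B_z$ for some $z<j$, with $1^B_z=\beta_{h_2-n}$ in the $\pi_B$-orbit. The $\delta$-matching condition at $m=n-1$ then forces $\delta(\beta_{n-1})=\delta(\beta_{h_2-1})=0$, so $\beta_{n-1}$ is a $0^B$-element; since $n-1<h_2-1$ it is distinct from $0^B_{j+m_2}$, and applying $\pi_B$ to it produces a $1^B$-element, forcing $\delta(\beta_n)=1$. This contradicts the requirement $\delta(\beta_n)=0$ for $0^A_i$ to enter $\mathcal B^{(n)}$, finishing the induction.
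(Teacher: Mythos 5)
Your proof is correct and follows essentially the same strategy as the paper's: reduce to excluding $0^A_i$ from $\mathcal B^{(n)}$, eliminate the $I$-part of $\mathcal B^{(0)}$ by level two via Lemma~\ref{LemOfI} and the dichotomy on $\delta(\beta_1)$, and use $\lambda_2 < 1/2$ to force $\delta(\beta_n) = 1$ whenever $0^B_{j+m_2} \in \mathcal B^{(n-1)}$. Your write-up is somewhat more careful than the paper's in justifying that $\beta_{n-1}$ is a $0^B$-element distinct from $0^B_{j+m_2}$ (the paper asserts the resulting $\delta(\beta_n)=1$ tacitly), though the intermediate step ruling out $s \in I$ is superfluous, since the $\delta$-matching condition of Proposition~\ref{PropOfInducB} already yields $\delta(\beta_{n-1}) = \delta(\pi^{n-1}(s)) = \delta(0^B_{j+m_2}) = 0$ for any $s$.
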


\begin{proof}
First, let us see that sets $\mathcal B^{(n)}$ do not contain $0^A_i$ for all $n$.
Assume that a set $\mathcal B^{(n-1)}$ contains 
the inverse image of $0^A_i$.
Note that the inverse image of $0^A_i$ is obtained by 
$0^B_{j+m_2}$ in this hypothesis.  
We have then $\delta(\pi^{n-1}(1^B_j)) = 0$ by the definition of $\mathcal B^{(n-1)}$.
The condition $\lambda_2 < 1/2$ 
induces that $\delta(\pi^n(1^B_j)) = 1$.
It implies that the element $0^A_i$ does not belong to $\mathcal B^{(n)}$.

To show the statement of this lemma,
let us consider two cases depending on the value of $\delta(\pi(1^B_j))$.
On the one hand, suppose that $\delta(\pi(1^B_j)) = 1$,
and let us see that $\mathcal B^{(n)}$ consists of some elements of $B$ 
for $n \geq 1$.
We have 
$\mathcal B^{(1)} = \{\pi(t) \mid t \in \mathcal B^{(0)}\ {\rm with}\ \delta(\pi(t)) = 1\}$
in this assumption.
For elements $t$ of $I$, 
all elements $\pi(t)$ do not belong to $\mathcal B^{(1)}$
since $\delta(\pi(t)) = 0$ holds
for all elements $t$ of $\tilde A$ with $\delta(t) = 1$.
It induces that $\mathcal B^{(1)}$ is a subset of $B$,
whence we see that $\mathcal B^{(n)} \subset B$ for all natural numbers in this case.

On the other hand, suppose $\delta(\pi(1^B_j)) = 0$.
Let $\Lambda$ be the subset of $\mathcal B^{(1)}$ consisting of elements $t$ 
which belong to $\tilde B$.
Then $\mathcal B^{(1)}$ is obtained by the union of 
$\Lambda$ and $\pi(I)$.
By the condition of $m_2$ and $n_2$, for all elements $t$ of $\Lambda$, 
we have $\delta(\pi(t)) = 1$.
Moreover, by Lemma~\ref{LemOfI}, we have $\delta(\pi(t)) = 0$
for all elements $t$ of $\pi(I)$.
Thus we see that $\mathcal B^{(2)}$ is a subset of $B$ 
since $\delta(\pi^2(1^B_j)) = 1$ in this hypothesis.
Hence $\mathcal B^{(n)}$ is a subset of $\tilde B$
for every natural number $n$ with $n > 1$.
\end{proof}

By the above properties, we show Proposition~\ref{PropOfEmptyB}.
By this proposition, we see that specializations $S'$ of $S$
are obtained by the combinatorial method.

\begin{proposition}\label{PropOfEmptyB}
For the small modification by $0^A_i$ and $1^B_j$
with $m_1 < i \leq n_1$,
there exists a non-negative integer $b= b(i, j)$ such that 
$\mathcal B^{(b)} = \emptyset$.
\end{proposition}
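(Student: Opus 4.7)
The plan is to mirror the structure of Proposition~\ref{PropOfEmptyA}, exploiting Proposition~\ref{PropOfInducB}. By that proposition, $\mathcal{B}^{(n+1)}$ consists of the $\pi$-images of those $t \in \mathcal{B}^{(n)}$ satisfying $\delta(\pi(t)) = \delta(\beta_{n+1})$, so $|\mathcal{B}^{(n)}|$ is monotonically non-increasing. It therefore suffices to show that the sequence strictly decreases infinitely often, or equivalently, that no $t \in \mathcal{B}^{(0)}$ can satisfy $\delta(\pi^m(t)) = \delta(\beta_m)$ for all $m \geq 0$.

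In the subcase $1 \leq j \leq n_2$, Lemma~\ref{LemOfSubB} ensures that $\mathcal{B}^{(n)} \subset B$ for $n > 1$. The map $\pi|_B$ agrees with the cyclic permutation $\pi_0|_B$ except at the single element $\pi_0^{-1}(1^B_j)$, and the $\delta$-pattern on $B$ has primitive period $h_2$ because $\gcd(m_2, n_2) = 1$. Together with Lemma~\ref{LemOfI}, which controls the fate of elements of $I$ after the first step, this forces any candidate $t$ whose forward $\delta$-sequence persistently matches that of $\beta_m$ to coincide with some $\beta_k$; but Proposition~\ref{PropOf0} rules out $t = \beta_k$ for $k \leq n$, so $\mathcal{B}^{(n)} = \emptyset$ for $n$ sufficiently large.

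In the subcase $n_2 < j \leq m_2$, Lemma~\ref{LemOfSubB} is not available, and I would instead follow the combinatorial template of Proposition~\ref{PropOfEmptyA} directly: locate the smallest $m$ for which $\beta_m$ is an extremal element of $B$ playing the role that $1^A_{m_1}$ plays for $A$ (for example, the minimum $0$-element $0^B_{m_2+1}$ or the maximum $1$-element $1^B_{m_2}$, according to the parity of $\delta(\beta_m)$). Using Proposition~\ref{PropOf0} together with this extremality, one argues that every element of $\mathcal{B}^{(m-1)}$ has $\pi$-image of the wrong $\delta$-value, whence $\mathcal{B}^{(m)} = \emptyset$. The excluded tuple $\xi = (m_1, m_1+1) + (n_2+1, n_2)$ with $(i,j) = (n_1, m_2)$ is precisely the degenerate instance in which this extremal element already coincides with $\beta_0 = 1^B_j$, so the induction cannot even start; as indicated in the excerpt, it will be handled separately after the main proof.

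The main obstacle is the second subcase: without the clean reduction to $B$ provided by Lemma~\ref{LemOfSubB}, one must simultaneously track elements of $\mathcal{B}^{(n)}$ lying in $A$ and in $B$ and confirm that the swap points of $\pi$ do not permit an elusive orbit to persist indefinitely. Identifying the correct bottleneck iteration, analogous to the choice $\pi^m(0^A_i) = 1^A_{m_1}$ in Proposition~\ref{PropOfEmptyA}, and ruling out all accidental coincidences arising from the two cyclic $\pi_0$-orbits interacting through the swap, is where I expect the bulk of the bookkeeping to lie.
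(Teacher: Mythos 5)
Your first subcase ($1 \leq j \leq n_2$) is broadly aligned with what the paper does, and the periodicity idea can be made to work, though the appeal to Proposition~\ref{PropOf0} is not quite the right tool: the primitivity argument, once carried out, forces the hypothetical persistent $t$ to equal $\beta_0 = 1^B_j$, and that element is excluded from $\mathcal B^{(0)}$ by its very definition, not by Proposition~\ref{PropOf0} (which requires $\mathcal B^{(n)} \subset B$ and so is unavailable at $n = 0$ whenever $I \neq \emptyset$). Also be aware that the paper's own route for this subcase does \emph{not} use periodicity; it locates the smallest $m$ with $\pi^m(1^B_j) = 0^B_{m_2+1}$ (the minimum $0$-element of $B$) and exploits that extremality directly, exactly in the style of Proposition~\ref{PropOfEmptyA}.

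The genuine gap is the second subcase $n_2 < j \leq m_2$, which you explicitly leave as a sketch. Your proposed bottleneck elements ($0^B_{m_2+1}$ or $1^B_{m_2}$, chosen by the parity of $\delta(\beta_m)$) are not the ones that make the argument close. In this range of $j$, the swap point $\pi_0^{-1}(1^B_j) = 1^B_{j-n_2}$ is a $1$-element of $B$, and the sets $\mathcal B^{(n)}$ persistently carry $A$-elements (namely the elements coming from $I$, plus $0^A_i$ itself near the end), so one cannot reduce to a clean periodicity statement inside $B$. The paper's proof splits on whether $I = \emptyset$: if $I = \emptyset$, the terminating iterate is the $n$ with $\pi^n(1^B_j) = 1^B_1$, and the key point is that $\mathcal B^{(n-1)} = \{0^A_i\}$, with $\delta(\pi(0^A_i)) = 0$ forced precisely by $I = \emptyset$; if $I \neq \emptyset$ and $j > n_2 + 1$, the terminating iterate is the $n$ with $\pi^n(1^B_j) = 1^B_{n_2+1}$, where $\mathcal B^{(n-1)}$ consists entirely of $A$-elements (because $\pi^{n-1}(1^B_j) = 1^B_1$ is the minimum of $\tilde B$) and each has $\pi$-image with $\delta = 0$; and the boundary case $j = n_2 + 1$ needs yet another target, $1^B_{n_2+2}$. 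None of these are the extremal elements of $B$ you name, and the crucial role of $A$-elements in $\mathcal B^{(n)}$ and of the dichotomy on $I$ is exactly the bookkeeping you acknowledge you have not done. As written, the second subcase is not proved.
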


\begin{proof}
First, let us consider the case $1 \leq j \leq n_2$.
If $j = 1$, then we have $\mathcal B^{(0)} = I$ and $\mathcal B^{(1)} = \emptyset$.
For $j > 1$, we have $\mathcal B^{(0)} = 
\{1^B_1, \dots, 1^B_{j-1}\} \cup I$.
Let $m$ be the minimum natural number satisfying 
$\pi^m(1^B_j) = 0^B_{m_2+1}$.
If $m = 1$, then $\mathcal B^{(1)} = \pi(I)$.
Since $\delta(\pi^2(1^B_j)) = 1$ in this case and Lemma~\ref{LemOfI},
we have $\mathcal B^{(2)} = \emptyset$.
Let us see the case $m > 1$.
Suppose that there exists a natural number $n$,
with $n < m$,
such that $\mathcal B^{(n)} = \{0^B_{j+m_2}\}$,
i.e., the set $\mathcal B^{(n)}$ consists of an element the inverse image of $0^A_i$.
By the proof of Lemma~\ref{LemOfSubB}, the element $0^A_i$
does not belong to $\mathcal B^{(n+1)}$, whence
we have $\mathcal B^{(n+1)} = \emptyset$.
Assume that $\mathcal B^{(0)}, \dots, \mathcal B^{(m-1)}$ are non-empty sets.
Note that $0^B_{m_2+1}$ is the minimum element of 
all elements $t$ of $\tilde B$ satisfying $\delta(t) = 0$.
For an element $t$ of $\mathcal B^{(m-1)}$,
if $\pi(t)$ belongs to $\tilde B$, then $\delta(\pi(t)) = 1$ holds.
Moreover,  it follows from Lemma~\ref{LemOfSubB} 
that $\mathcal B^{(m)}$ does not contain an element of $\tilde A$.
Therefore we have $\mathcal B^{(m)} = \emptyset$.

Next, let us see the case $n_2 < j \leq m_2$.
We divide the proof into two cases depending on
whether $I$ is empty.
First, suppose $I = \emptyset$.
For the minimum non-negative integer $n$ satisfying that $\pi^n(1^B_j) = 1^B_1$,
we have $\mathcal B^{(n)} = \emptyset$.
In fact, if the set $\mathcal B^{(n-1)}$ is not empty,
then $\mathcal B^{(n-1)} = \{0^A_i\}$ holds
since $\pi^{n-1}(1^B_j) = 0^B_{m_2+1}$.
If $\delta(\pi(0^A_i)) = 1$,
then $I$ contains $\pi(0^A_i)$
since $\mathcal A^{(0)} = \{0^A_{i+1}, \dots, 0^A_{m_1+n_1}\}$
contains the inverse image of $1^B_j$.
This contradicts with the assumption. 
Hence we have $\delta(\pi(0^A_i)) = 0$, and then $\mathcal B^{(n)} = \emptyset$ holds.

Next, suppose $I \neq \emptyset$.
If $j > n_2+1$, then the non-negative integer $b$ is obtained by
the number $n$ satisfying that $\pi^n(1^B_j) = 1^B_{n_2+1}$.
In fact, $\mathcal B^{(n-1)}$ consists of some elements $t$ of $\tilde A$
since $\pi^{n-1}(1^B_j) = 1^B_1$ is the minimum element of $\tilde B$.
It is clear that $\delta(\pi(t)) = 0$ for all elements $t$ of $\mathcal B^{(n-1)}$,
and we have then $\mathcal B^{(n)} = \emptyset$.
If $j = n_2 + 1$,
then the non-negative integer $b$ is obtained by
the number $n$ satisfying $\pi^n(1^B_j) = 1^B_{n_2+2}$.
\end{proof}


For the Newton polygon $\xi$, we now suppose
$n_1 = m_1+1$ and $m_2 = n_2+1$
with $m_1 > 0$ or $n_2 > 0$.
If $m_1 < i \leq n_1$ and $n_2 < j \leq m_2$,
then $i$ and $j$ must be $i = m_1+1$ and $j = m_2$.
We construct the specialization by a well-known method
using binary expansions for such ABS's.
For the ABS $S = (\tilde S, \delta, \pi_0)$ corresponding to $N_\xi$,
we have then the following diagram of $S$:\\
\\
$$\xymatrix@=5pt{1^A & \cdots &1^A
&\underline{0^A_{m_1+1}}\ar@/_20pt/[lll] & 1^B\ar@/_20pt/[rrrrrr] & \cdots &1^B
&0^A & \cdots &0^A\ar@/_20pt/[llllll]
&\underline{1^B_{m_2}}\ar@/_20pt/[rrr] & 0^B
& \cdots & 0^B}.$$
\\
Note that the diagram satisfies that $\delta(t) \neq \delta(\pi_0^{-1}(t))$
for all elements $t$ except for these two elements.
Let $\pi$ be the small modification by $0^A_{m_1+1}$ and $1^B_{m_2}$,
and we obtain the admissible ABS $S' = (\tilde S', \delta, \pi)$.
By the construction of $S'$,
we have $\delta(t) \neq \delta(\pi(t))$ for all elements of $\tilde S'$.
Hence we obtain binary expansions of elements by
$b(t) = 0.1010\cdots$ if $\delta(t) = 0$
and $b(t) = 0.0101\cdots$ otherwise.
Therefore, the ABS $S'$ is associated to a ${\rm DM_1}$ $mN_{1,1}$
with $m = m_1+m_2$.
Note that this ABS satisfies that $\ell(S') < \ell(S) - 1$.
In this case, by the small modification, we have 
$\mathcal B^{(0)} = \{1^A_1, \dots, 1^A_{m_1}, 1^B_1, \dots, 1^B_{m_2}\}$.
For two elements $t$ and $t'$ of $\mathcal B^{(0)}$
and for every non-negative integer $n$,
we have $\delta(\pi^n(t)) = \delta(\pi^n(t'))$.
Hence there exists no non-negative integer $n$ 
satisfying $\mathcal B^{(n)} = \emptyset$.

Let $S$ be the ABS associated to $N_\xi$
with a Newton polygon $\xi = (m_1, n_1) + (m_2, n_2)$
satisfying that $\lambda_2 < 1/2 < \lambda_1$.
For the case $\xi = (m_1, m_1+1) + (m_2, m_2-1)$,
we have seen that the specialization of $S$ obtained by 
exchanging $0^A_{m_1+1}$ and $1^B_{m_2}$
is associated to the ${\rm DM_1}$ $(m_1 + m_2)N_{1, 1}$.
Moreover, for the other cases, 
we obtain specializations by full modifications.
From now on, for a small modification $\pi$ by $0^A_i$ and $1^B_j$,
we denote by $a$ and $b$
the minimum non-negative integers satisfying that $\mathcal A^{(a)} = \emptyset$
and $\mathcal B^{(b)} = \emptyset$.
The specialization $S'$ obtained by exchanging $0^A_i$ and $1^B_j$
is equal to the ABS $(S^{(a + b)}, \delta, \pi)$.

\subsection{Some examples}
\label{SomeExamples}

In Example~\ref{ExOfExchange}, we introduced an example of 
full modifications by $0^A_i$ and $1^B_j$ with
$m_1 < i \leq n_1$ and $1 \leq j \leq n_2$.
Here, let us see examples of the case $n_2 < j \leq m_2$,
which are examples of the latter part of the proof of Proposition~\ref{PropOfEmptyB}.

\begin{example}
Let $N = N_{2,7} \oplus N_{5,3}$.
For the ABS $S = (\tilde S, \delta, \pi_0)$ associated to $N$,
we construct the small modification $\pi$ by $0^A_6$ and $1^B_5$.
Then we get $\mathcal A^{(0)} = \{0^A_7, 0^A_8, 0^A_9\}$ and $a = 2$.
The ABS $(S^{(2)}, \delta, \pi)$ is described as\\
\\
$$(S^{(2)}, \delta, \pi) : \xymatrix@=1pt{1^A_1\ar@/_20pt/[rrrrrrrrrr] & 0^A_3\ar@/_20pt/[l]
& 0^A_5\ar@/_20pt/[l] & 1^A_2\ar@/_20pt/[rrrrrrrr]
& 1^B_5\ar@/_35pt/[rrrrrrrrrrrr]|\circ & 0^A_7\ar@/_20pt/[lll]
& 0^A_4\ar@/_20pt/[lll] & 1^B_1\ar@/_20pt/[rrrrr]|\times
& 1^B_2\ar@/_20pt/[rrrrr]|\times & 1^B_3\ar@/_20pt/[rrrrr]|\times
& 0^A_8\ar@/_20pt/[llllll] & 0^A_9\ar@/_20pt/[llllll] 
& 1^B_4\ar@/_20pt/[rrr]|\times & 0^A_6\ar@/_20pt/[lllllll]
& 0^B_6\ar@/_20pt/[lllllll] & 0^B_7\ar@/_20pt/[lllllll]
& 0^B_8\ar@/_20pt/[lllllll]}.$$
Then we have $I = \emptyset$, and sets $\mathcal B^{(n)}$ are given by
$$\mathcal B^{(0)} = \{1^B_1, 1^B_2, 1^B_3, 1^B_4\}, 
\ \ \mathcal B^{(1)} = \{0^A_6, 0^B_6, 0^B_7\},
\ \ \mathcal B^{(2)} = \{1^B_1, 1^B_2\},
\ \ \mathcal B^{(3)} = \{0^A_6\},
\ \ \mathcal B^{(4)} = \emptyset.$$
The ABS $S'$ is obtained by the following:
$$S' = 1^A_1\ 0^A_3\ 0^A_5\ 1^A_2\ 1^B_5\ 0^A_7\ 1^B_3\ 
1^B_1\ 0^A_4\ 1^B_2\ 0^A_8\ 0^A_9\ 0^B_8\ 0^B_6\ 1^B_4\ 0^A_6\ 0^B_7.$$
\end{example}

\begin{example}
Let $N = N_{2,7} \oplus N_{5,3}$.
For the ABS $S = (\tilde S, \delta, \pi_0)$ associated to $N$,
we construct the small modification $\pi$ by $0^A_4$ and $1^B_4$.
Then we get $\mathcal A^{(0)} = \{0^A_5, 0^A_6, 0^A_7, 0^A_8, 0^A_9\}$
and $a = 1$.
The ABS $(S^{(1)}, \delta, \pi)$ is described as follows:\\
\\
$$(S^{(1)}, \delta, \pi) : \xymatrix@=1pt{1^A_1\ar@/_20pt/[rrrrrrrrrr]
& 0^A_3\ar@/_20pt/[l] & 1^B_4\ar@/_35pt/[rrrrrrrrrrrrr]|\circ
& 0^A_5\ar@/_20pt/[ll] & 0^A_6\ar@/_20pt/[ll]
& 0^A_7\ar@/_20pt/[ll] & 1^A_2\ar@/_20pt/[rrrrr]|\times
& 1^B_1\ar@/_20pt/[rrrrr]|\times & 1^B_2\ar@/_20pt/[rrrrr]|\times
& 1^B_3\ar@/_20pt/[rrrrr]|\times & 0^A_8\ar@/_20pt/[llllll] 
& 0^A_9\ar@/_20pt/[llllll] & 0^A_4\ar@/_20pt/[llllll]
& 1^B_5\ar@/_20pt/[rrr] & 0^B_6\ar@/_20pt/[lllllll]
& 0^B_7\ar@/_20pt/[lllllll] & 0^B_8\ar@/_20pt/[lllllll]}.$$
Then $I = \{1^A_2\}$, and we have sets
$$\mathcal B^{(0)} = \{1^A_2, 1^B_1, 1^B_2, 1^B_3\},\ \ 
\mathcal B^{(1)} = \{0^A_9, 0^A_4, 0^B_6\},\ \ 
\mathcal B^{(2)} = \{1^A_2, 1^B_1\},\ \ \mathcal B^{(3)} = \emptyset.$$
The ABS $S'$ is obtained by the following:
$$S' = 1^A_1\ 0^A_3\ 1^B_4\ 0^A_5\ 0^A_6\ 
1^B_2\ 0^A_7\ 1^A_2\ 1^B_1\ 1^B_3\ 0^A_8\ 0^B_7\ 
1^B_5\ 0^A_9\ 0^A_4\ 0^B_6\ 0^B_8.$$
\end{example}

\section{Classification of boundary components}
\label{ClassBoundaryComp}

Let $\xi$ be a Newton polygon consisting of two segments satisfying 
$\lambda_2 < 1/2 < \lambda_1$.
In this section, for the arrowed binary sequence $S$ 
associated to the minimal ${\rm DM_1}$ $N_\xi$,
we characterize specializations $S'$ of $S$
satisfying $\ell(S') = \ell(S) - 1$,
i.e., we classify boundary components of central streams. 
Moreover, in Section~\ref{SomePropOfBoundaryComp},
we show some properties of generic specializations.
We will give a proof of Theorem~\ref{ThmOfzetaxi} in Section~\ref{ConstGoodSpe}
using these properties.

\subsection{Criterion of boundary components}
\label{CriterionOfBoundaryComp}

Now, we state the first result (Theorem~\ref{ThmOfCentralClassification}).
Let $\xi = (m_1, n_1) + (m_2, n_2)$ be a Newton polygon
with $\lambda_2< 1/2 < \lambda_1$,
and let $N_\xi$ be the minimal ${\rm DM_1}$ associated to $\xi$.
Let $S = A \oplus B$ be the ABS
corresponding to $N_\xi$,
where $A$ (resp. $B$) is the ABS
corresponding to $N_{m_1,n_1}$ (resp. $N_{m_2,n_2}$).
Put $(\tilde S, \delta, \pi_0) = S$.
Set $h_1 = m_1+ n_1$ and $h_2 = m_2 + n_2$.
We construct the small modification $\pi$ 
by $0^A_i$ with $m_1 < i \leq n_1$ and $1^B_j$ with $1 \leq j \leq m_2$.
Then for the notation of Section~\ref{ConSpeAbs}
we obtain sets $\mathcal A^{(n)}$ for $0 \leq n \leq a$, 
sets $\mathcal B^{(n)}$ for $0 \leq n \leq b$
and ABS's $(S^{(n)}, \delta, \pi)$ for $0 \leq n \leq a + b$,
where $a$ and $b$ are the smallest non-negative integers satisfying that
$\mathcal A^{(a)} = \emptyset$ and $\mathcal B^{(b)} = \emptyset$.
The main result of this section is

\begin{theorem}\label{ThmOfCentralClassification}
Let $\xi = (m_1, n_1) + (m_2, n_2)$ be a Newton polygon 
with $\lambda_2 < 1/2 < \lambda_1$.
Let $S = (\tilde S, \delta, \pi_0)$ be the ABS associated to the ${\rm DM_1}$ $N_\xi$.
Let $S' = (\tilde S', \delta, \pi)$ be the ABS obtained by exchanging $0^A_i$ and $1^B_j$.
Then $\ell(S') = \ell(S) - 1$ holds if and only if
there exists no non-negative integer $n$ such that
$\mathcal A^{(n)}$ contains the inverse image of $1^B_j$ or
$\mathcal B^{(n)}$ contains the inverse image of $0^A_i$ for the small modification $\pi$.
\end{theorem}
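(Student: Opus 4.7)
The plan is to express the total length change $\ell(S)-\ell(S')$ as a telescoping sum over the construction steps
\[
\ell(S)-\ell(S') = \bigl(\ell(S)-\ell(S^{(0)})\bigr) + \sum_{n=1}^{a}\bigl(\ell(S^{(n-1)})-\ell(S^{(n)})\bigr) + \sum_{n=1}^{b}\bigl(\ell(S^{(a+n-1)})-\ell(S^{(a+n)})\bigr),
\]
and to match the combinatorial ``bad events'' in the theorem with the exact terms that force this sum to exceed $1$.

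First I would handle the initial swap. Let $p$ (respectively $q$) denote the number of elements $t$ strictly between $0^A_i$ and $1^B_j$ in $\tilde S$ with $\delta(t)=0$ (resp.\ $\delta(t)=1$). A direct count of the $(0,1)$-inversions that involve $0^A_i$ or $1^B_j$ shows
\[
\ell(S)-\ell(S^{(0)}) = 1 + p + q,
\]
since every zero between becomes right of $1^B_j$ (losing $p$ inversions), every one between becomes left of $0^A_i$ (losing $q$), and the pair $(0^A_i,1^B_j)$ itself is lost.

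Next, for a single move $S^{(n-1)}\to S^{(n)}$ I would count how $\alpha_n$ crossing the set $M_n:=\{t : \alpha_n < t \leq \pi(t_{\max})\}$ affects inversions. By Lemma~\ref{LemOfUniA} all ``violations'' in $S^{(n-1)}$ involve $\alpha_{n-1}$, so the elements in $M_n$ with $\delta$ equal to $\delta(\alpha_n)$ are precisely $\pi(\mathcal A^{(n-1)})\cap M_n$; these contribute $0$ to the length change. The other elements of $M_n$ carry the opposite value of $\delta$; since $\alpha_n$ moves rightwards, each such crossing changes the $(0,1)$-inversion count by exactly $\pm 1$ in a uniform direction determined by $\delta(\alpha_n)$. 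Using Proposition~\ref{PropOfT} (so $|\mathcal A^{(n)}|$ or $|\mathcal A^{(n-1)}|-1$ of those elements are of the ``safe'' type), the step contribution can be rewritten as a telescoping difference $|\mathcal A^{(n-1)}|-|\mathcal A^{(n)}|$ (with analogous analysis for the $\mathcal B$-steps, using Lemma~\ref{LemOfUniB} and Proposition~\ref{PropOfInducB}). Summing telescopically over $n$ gives a contribution $|\mathcal A^{(0)}|+|\mathcal B^{(0)}|$, modulo the corrections to be isolated next.

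The decisive point is the behavior when $\pi^{-1}(1^B_j)$ first enters some $\mathcal A^{(n)}$, or when $\pi^{-1}(0^A_i)$ first enters some $\mathcal B^{(n)}$. Outside these events the ``sign of the crossing'' between $\alpha_n$ and the elements that are neither in $A$ nor of the same $\delta$ is consistent with the count $p+q$ in the initial swap, and one checks that $|\mathcal A^{(0)}|+|\mathcal B^{(0)}|$ matches $p+q$ exactly, yielding $\ell(S)-\ell(S')=1$. If however $\pi^{-1}(1^B_j)\in\mathcal A^{(n)}$ for some $n$, then at that step the element $1^B_j$ itself is dragged further to the right than it would have been by the tidy telescoping, producing an extra unit of length loss (and likewise for $\pi^{-1}(0^A_i)\in\mathcal B^{(n)}$), which forces $\ell(S)-\ell(S')\geq 2$. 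Conversely, I would argue that whenever $\ell(S)-\ell(S')\geq 2$, retracing the telescoping identifies precisely such a step as the source of the excess.

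The main obstacle I anticipate is the bookkeeping in the second paragraph above: pinning down that the crossings of $\alpha_n$ (resp.\ $\beta_n$) past the elements of $M_n$ contribute the specific amount $|\mathcal A^{(n-1)}|-|\mathcal A^{(n)}|$ (resp.\ $|\mathcal B^{(n-1)}|-|\mathcal B^{(n)}|$) to $\ell(S^{(n-1)})-\ell(S^{(n)})$, and that the ``extra'' contribution at a bad event is exactly $+1$. This requires a careful case analysis using Lemma~\ref{LemOftt'BinExp} to control the $\delta$-pattern along each $\pi$-orbit (especially distinguishing elements of $A$ and $B$ inside the interval $M_n$), together with Lemma~\ref{LemOfSubB} to ensure that once the $\mathcal B$-phase begins the $A$-indices reappear at most in a controlled way. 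Once this single bookkeeping identity is established, the equivalence in Theorem~\ref{ThmOfCentralClassification} follows immediately.
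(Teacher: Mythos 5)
Your plan is in essence the paper's own strategy: express $\ell(S)-\ell(S')$ as the initial-swap contribution $1+p+q$ minus the telescoping sums over the $\mathcal A$-steps and $\mathcal B$-steps, show each step contributes exactly the cardinality drop $d_A(n)$ (resp.\ $d_B(n)$) when no bad event occurs (this is Lemma~\ref{LemOfLengthCal} and Propositions~\ref{PropOfldA}, \ref{PropOfldB}), and identify the bad events with strict losses. So far so good; the telescoping identity $\sum_n d_A(n)=|\mathcal A^{(0)}|$, $\sum_n d_B(n)=|\mathcal B^{(0)}|$ matches Lemmas~\ref{LemOfSumd} and \ref{LemOfSumd2} and your initial-swap count $1+p+q = n_1 - i + j$ agrees with the paper's $\ell(S^{(0)})-\ell(S) = -(n_1-i+j)$.

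However, there is one genuine gap that your bookkeeping does not yet close, and it sits exactly where you say ``one checks that $|\mathcal A^{(0)}|+|\mathcal B^{(0)}|$ matches $p+q$ exactly.'' This is not automatic: by Definition~\ref{DefOfTBn} we have $|\mathcal B^{(0)}| = |I| + j - 1$, where $I$ is the set of elements of $\tilde A$ (of the form $\alpha_n = \pi^n(0^A_i)$) that land inside $\mathcal B^{(0)}$ after the $\mathcal A$-phase. The identity $|\mathcal B^{(0)}|=q=j-1$ thus requires $I=\emptyset$, and that must itself be deduced from the hypothesis that $\mathcal A^{(n)}$ never contains $\pi^{-1}(1^B_j)$. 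The paper observes this explicitly at the start of the proof of Proposition~\ref{PropOfCentralClassification}(i). Moreover, when $I\neq\emptyset$ your ``extra unit of length loss'' heuristic is too coarse: the elements of $I$ are reprocessed in the $\mathcal B$-phase, and the strict-loss argument requires both $\Delta\ell_A\leq n_1-i-|I|$ and the existence of an index $m$ with $\Delta\ell_B(m)=-|I|$ (see the proof of Proposition~\ref{PropOfldB} and the $I\neq\emptyset$ branch in the proof of Proposition~\ref{PropOfCentralClassification}(i)). Without controlling $I$, the single telescoping identity breaks into two interacting phases and the inequality does not follow from a naive step-by-step count.

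A second, smaller omission: the paper treats the case $S'\in\mathcal H_3(S)$ (namely $n_1<i\leq h_1$) not by a fresh telescoping computation but by passing to the dual ${\rm DM_1}$ $N_\xi^D = N_{n_2,m_2}\oplus N_{n_1,m_1}$, which swaps the roles of $A$ and $B$ and carries the claim back to the $\mathcal H_1$ case already done (Proposition~\ref{PropOfCentralClassification}(iii)). Your outline does not mention this reduction, so as written it would have to redo the whole argument on the other side of the midpoint $1/2$, which, while possible in principle, is not the route the paper takes and adds avoidable work.

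Finally a notational slip: in your second paragraph you write the step contribution as $|\mathcal A^{(n-1)}|-|\mathcal A^{(n)}|$, but with your chosen orientation $\ell(S^{(n-1)})-\ell(S^{(n)})=-\Delta\ell_A(n-1)$, which in the good case equals $-\bigl(|\mathcal A^{(n-1)}|-|\mathcal A^{(n)}|\bigr)$. Your final arithmetic is consistent with the correct sign, so this is only a slip of presentation rather than of substance, but it should be repaired before the argument is written up.
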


This theorem gives a classification of generic specializations of $H(\xi)$.
To show the above, we divide the problem into three cases
depending on conditions of $i$ and $j$ as follows.

\begin{definition} \label{DefOfH123}
For the ABS $S = A \oplus B$ associated to the minimal ${\rm DM_1}$ $N_\xi$
with the Newton polygon $\xi = (m_1, n_1) + (m_2, n_2)$,
we denote by $S' = S'(i, j)$ the specialization
obtained by exchanging $0^A_i$ and $1^B_j$.
We define sets
\begin{eqnarray*}
\mathcal H_1(S) &=& 
\{S'(i, j) \mid m_1 < i \leq n_1 \text{ and }1 \leq j \leq n_2\}, \\
\mathcal H_2(S) &=& 
\{S'(i, j) \mid m_1 < i \leq n_1 \text{ and }n_2 < j \leq m_2\}, \\
\mathcal H_3(S) &=&
\{S'(i, j) \mid n_1 < i \leq h_1 \text{ and }n_2 < j \leq m_2\}.
\end{eqnarray*}
\end{definition}

Let $T$ be the ABS associated to a ${\rm DM_1}$ $N$,
and let $T_D$ be the ABS associated to the dual $N^D$ of $N$.
We call this ABS {\it dual ABS of $T$}.
For the above notation, we will give 
a concrete condition of $i$ and $j$ 
satisfying that the exchange of $0^A_i$ and $1^B_j$ is a good.
Theorem~\ref{ThmOfCentralClassification} follows from this proposition:

\begin{proposition}\label{PropOfCentralClassification}
The following holds:
\begin{itemize}
\item[(i)] For $S' \in \mathcal H_1(S)$, 
the formula $\ell(S') = \ell(S) - 1$ holds if and only if
there exists no non-negative integer $n$ satisfying that 
$\mathcal A^{(n)}$ contains $0^A_{i+m_1}$ or $\mathcal B^{(n)}$ contains $0^B_{j+m_2}$.
\item[(ii)] For $S' \in \mathcal H_2(S)$,
we have $\ell(S') < \ell(S) - 1$.
\item[(iii)]For $S' \in \mathcal H_3(S)$,
the formula $\ell(S') = \ell(S) - 1$ holds if and only if
the dual ABS's $S_D$ and $S_D'$ satisfy $\ell(S_D') = \ell(S_D) - 1$. 
\end{itemize}
\end{proposition}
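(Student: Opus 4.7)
The plan is to treat the three parts separately: reduce (iii) to (i) by ${\rm DM_1}$-duality, dispatch (ii) by a direct length estimate already at step $S^{(0)}$, and attack (i), the main case, by inductively tracking $\ell$ through the full modification procedure of Section~\ref{CombSpe}.

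For (iii), I would exploit the duality $N\mapsto N^D$ on ${\rm DM_1}$'s, which interchanges $0$'s and $1$'s in the associated ABS and swaps each segment $(m_\nu,n_\nu)$ with $(n_\nu,m_\nu)$. A direct unpacking shows that the range $n_1<i\le h_1$, $n_2<j\le m_2$ of part (iii) is carried onto the range $m_1^D<i^D\le n_1^D$, $1\le j^D\le n_2^D$ of part (i) applied to the dual ABS $S_D$, and that the sets $\mathcal A^{(n)},\mathcal B^{(n)}$ for the exchange on $S$ correspond to the analogous sets for the dual exchange on $S_D$. Since $\ell$ is invariant under ABS-duality, (iii) then follows from (i).

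For (ii), I would compute $\ell(S^{(0)})-\ell(S)$ directly from the explicit layout in Lemma~\ref{LemOf1/2SepABS}. Here $0^A_i$ lies in the second block and $1^B_j$ in the fifth, so swapping the two positions simultaneously destroys the inversion $(0^A_i,1^B_j)$, the $j-1$ inversions $(0^A_i,1^B_k)$ with $k<j$, and the inversions $(0^A_k,1^B_j)$ for all $0^A_k$ in the second and fourth blocks with $k>i$. A routine count yields $\ell(S^{(0)})\le\ell(S)-2$. Since each subsequent step moves a $0$-element rightward past only $1$-elements (in the $\mathcal A$-phase) or a $1$-element leftward past only $0$-elements (in the $\mathcal B$-phase), no later move can raise $\ell$, and hence $\ell(S')<\ell(S)-1$ for every $S'\in\mathcal H_2(S)$.

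For (i), I would track the length change at each step $S^{(n-1)}\to S^{(n)}$. The initial swap contributes an explicit drop; then moving $\alpha_n$ to the right of $\pi(t_{\max})$ changes $\ell$ by an amount counted by the $1$-elements swept past, and by Corollary~\ref{CoroOfT} these are parametrized by the $\pi$-orbit describing $\mathcal A^{(n-1)}$. The symmetric statement in the $\mathcal B$-phase uses Proposition~\ref{PropOfInducB}. The key combinatorial claim is that the telescoped total equals exactly $-1$ iff neither $\pi^{-1}(1^B_j)=0^A_{i+m_1}$ ever enters some $\mathcal A^{(n)}$ nor $\pi^{-1}(0^A_i)=0^B_{j+m_2}$ ever enters some $\mathcal B^{(n)}$; each such occurrence forces an additional move whose net effect is to cross an extra $\delta$-opposite pair, giving a drop of at least $2$. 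The main obstacle is this precise bookkeeping across both phases; my strategy is to split the cumulative length change into an $A$-contribution and a $B$-contribution, match each move against a pair of inversions gained and lost, and show that the two binary conditions in the statement capture exactly when every single move is length-decreasing by one.
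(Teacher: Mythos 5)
Your reduction of (iii) to (i) by duality is exactly what the paper does and is fine. The problems are in (ii) and, to a lesser extent, in the bookkeeping you sketch for (i).

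\textbf{Part (ii) is based on a false claim.} You assert that the $\mathcal A$-phase always ``moves a $0$-element rightward past only $1$-elements'' and the $\mathcal B$-phase ``moves a $1$-element leftward past only $0$-elements,'' so that $\ell$ never increases after $S^{(0)}$. This is not what happens. The element moved at step $n$ of the $\mathcal A$-phase is $\alpha_n=\pi^n(0^A_i)$, and $\delta(\alpha_n)$ alternates as one traverses the $\pi$-orbit; when $\delta(\alpha_n)=1$ the move pushes a $1$-element rightward past $0$-elements, which strictly \emph{increases} $\ell$ (this is exactly the content of Lemma~\ref{LemOfLengthCal}: $\Delta\ell_A(n)\ge 0$ when $\delta(\alpha_{n+1})=1$). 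In fact the paper shows that in the good case the initial drop $\ell(S^{(0)})-\ell(S)=-(n_1-i+j)$ is almost entirely cancelled: $\Delta\ell_A+\Delta\ell_B=n_1-i+j-1$. So a bound of the form $\ell(S')\le\ell(S^{(0)})$ is wildly wrong. The paper's proof of (ii) instead goes through the inequalities $\Delta\ell_A(n)\le d_A(n)$ and $\Delta\ell_B(n)\le d_B(n)$ (Propositions~\ref{PropOfldA} and \ref{PropOfldB}) together with the telescoping sums $\sum d_A(n)=h_1-i$ and $\sum d_B(n)=|I|+j-1$ (Lemma~\ref{LemOfSumd2}), and then observes that $\mathcal A^{(0)}=\{0^A_{i+1},\dots,0^A_{h_1}\}$ \emph{always} contains $0^A_{i+m_1}$ when $m_1<i\le n_1$, forcing $\Delta\ell_A<h_1-i$, which alone already produces the strict deficit.

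\textbf{Part (i) misses the coupling through the set $I$.} You treat the $\mathcal A$-phase and the $\mathcal B$-phase as if each obstruction (``$0^A_{i+m_1}$ enters some $\mathcal A^{(n)}$'' or ``$0^B_{j+m_2}$ enters some $\mathcal B^{(n)}$'') independently costs a drop of at least $2$. But the two phases are not independent: when $\mathcal A^{(n)}$ contains $0^A_{i+m_1}=\pi^{-1}(1^B_j)$, elements of $\tilde A$ get pushed to the right of $1^B_j$ and consequently show up in $\mathcal B^{(0)}$ as the set $I$. It is not in general true that such an occurrence by itself forces $\Delta\ell_A<\sum d_A(n)$; rather, the paper's Proposition~\ref{PropOfldB} shows equality $\Delta\ell_B(n)=d_B(n)$ for all $n$ is equivalent to \emph{both} $\mathcal B^{(n)}\not\ni 0^B_{j+m_2}$ and $I=\emptyset$, and the proof of (i) then splits into the cases $I=\emptyset$ and $I\neq\emptyset$, using Lemma~\ref{LemOfI} and Lemma~\ref{LemOfSubB} to control what happens in the $\mathcal B$-phase when $I$ is nonempty. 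Without explicitly tracking $I$, the telescoping count you describe does not close: the $\mathcal B$-phase deficit in the case $I\neq\emptyset$ comes from $\Delta\ell_B(m)=-|I|$ for some $m$, not from a $0^B_{j+m_2}$-type obstruction, and your matching of ``each occurrence'' against a drop of $2$ does not account for it.
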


As an example, for an element $S'$ of $\mathcal H_1(S)$,
we immediately see that 
$0^A_{i+m_1}$ and $0^B_{j+m_2}$ are
inverse images of $1^B_j$ and $0^A_i$ respectively.

If the Newton polygon $\xi = (m_1, n_1) + (m_2, n_2)$ 
satisfies that $n_1 = m_1 + 1$ and $m_2 = n_2 + 1$ with $m_1 > 0$ or $n_2 > 0$,
then the specialization $S'$ obtained by exchanging $0^A_{n_1}$ and $1^B_{m_2}$
corresponds to the ${\rm DM_1}$ $m N_{1,1}$ with $m = m_1+m_2$.
Clearly this $S'$ satisfies $\ell(S') < \ell(S) - 1$.
In this case, the set $\mathcal A^{(0)}$ contains the inverse image of $1^B_{m_2}$
for the small modification $\pi$ by $0^A_{n_1}$ and $1^B_{m_2}$.
We may assume that every specialization $S'$ is obtained by 
the full modification $(S^{(a+b)}, \delta, \pi)$.

First, we will show (i) and (ii) of Proposition~\ref{PropOfCentralClassification}.
For the ABS $(\tilde S, \delta, \pi_0)$ associated to $N_\xi$,
fix elements $0^A_i$ and $1^B_j$ with $m_1 < i \leq n_1$.
Let $\pi$ be the small modification by $0^A_i$ and $1^B_j$.
Let $a$ (resp. $b$) denote the smallest non-negative integer such that
$\mathcal A^{(a)} = \emptyset$ (resp. $\mathcal B^{(b)} = \emptyset$).
We introduce some definitions to calculate the length of 
the specialization $S' = (S^{(a + b)}, \delta, \pi)$.
For simplicity, we often write $\ell(S^{(n)})$ for the length of the ABS $(S^{(n)}, \delta, \pi)$.

\begin{notation}
For non-negative integers $n$ with $n < a$,
we define $d_A(n)$ by
$$d_A(n) = |\mathcal A^{(n)}| - |\mathcal A^{(n+1)}|.$$
Moreover, we define 
$\Delta \ell_A(n)$ by
$$\Delta \ell_A(n) = \ell(S^{(n+1)}) - \ell(S^{(n)}).$$
Put $\Delta \ell_A = \sum_n \Delta \ell_A(n)$.
\end{notation}

\begin{notation}
Let $n'$ be a non-negative integer with $n' \geq a$.
Put $n = n' - a$.
We define $d_B(n)$ by
$$d_B(n) = |\mathcal B^{(n)}| - |\mathcal B^{(n+1)}|.$$
Moreover, we define 
$\Delta \ell_B(n)$ by
$$\Delta \ell_B(n) = \ell(S^{(n'+1)}) - \ell(S^{(n')}).$$
Put $\Delta \ell_B = \sum_n \Delta \ell_B(n)$.
\end{notation}

\begin{example}
Let $N = N_{2,7} \oplus N_{5,3}$.
In Example~\ref{ExOfExchange},
we constructed the small modification by elements $0^A_6$ and $1^B_3$.
By this small modification, the length of the ABS's decrease by four
from $S$ to $S^{(0)}$.
We have $\Delta \ell_A = 1$ and $\Delta \ell_B = 2$.
Thus the length is increased by three
from $S^{(0)}$ to $S'$,
and eventually we see $\ell(S') = \ell(S) - 1$.
\end{example}

For the above definition of $d_A(n)$ and $d_B(n)$,
we obtain 
Lemma~\ref{LemOfSumd} and Lemma~\ref{LemOfSumd2}
which are used for evaluating values $\Delta \ell_A$ and $\Delta \ell_B$.
Recall that $I$ is the subset of $\mathcal B^{(0)}$ consisting of elements 
$t$ of $\tilde A$.

\begin{lemma}\label{LemOfSumd}
Let $S' \in \mathcal H_1(S)$.
The following are true:
\begin{enumerate}
\item[(1)]$\sum_n d_A(n) = n_1 - i$,
\item[(2)]$\sum_n d_B(n) = |I| +j - 1$.
\end{enumerate}
\end{lemma}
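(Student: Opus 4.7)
The plan is to observe that both sums telescope, reducing the lemma to explicit computations of $|\mathcal{A}^{(0)}|$ and $|\mathcal{B}^{(0)}|$. From $d_A(n) = |\mathcal{A}^{(n)}| - |\mathcal{A}^{(n+1)}|$ together with $\mathcal{A}^{(a)} = \emptyset$ (Proposition~\ref{PropOfEmptyA}), one gets $\sum_n d_A(n) = |\mathcal{A}^{(0)}|$; analogously $\sum_n d_B(n) = |\mathcal{B}^{(0)}|$ via Proposition~\ref{PropOfEmptyB}. It therefore suffices to show $|\mathcal{A}^{(0)}| = n_1 - i$ and $|\mathcal{B}^{(0)}| = |I| + j - 1$.

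Part (2) is essentially a restatement of Definition~\ref{DefOfTBn}: that definition already records $\mathcal{B}^{(0)} = I \cup \{1^B_1, \ldots, 1^B_{j-1}\}$, and this union is disjoint since $I$ consists of elements of the form $1^A_x \in \tilde{A}$ while each $1^B_z$ lies in $\tilde{B}$. Hence $|\mathcal{B}^{(0)}| = |I| + (j-1)$.

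For part (1), I would identify $\mathcal{A}^{(0)}$ explicitly. By Lemma~\ref{LemOf1/2SepABS} and the construction of $S^{(0)}$ (which differs from $S$ only by swapping the positions of $0^A_i$ and $1^B_j$), the element $0^A_i$ sits at position $n_1 + j$ in $S^{(0)}$. Since $j \leq n_2$, the ``second block'' $0^A_{n_1+1}, \ldots, 0^A_{h_1}$ (occupying positions $\geq n_1 + n_2 + 1$) lies entirely to the right of $0^A_i$, so the set of $t \in \tilde{A}$ with $\delta(t) = 0$ and $t < 0^A_i$ in $S^{(0)}$ is precisely $\{0^A_{m_1+1}, \ldots, 0^A_{n_1}\} \setminus \{0^A_i\}$. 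For each such $t = 0^A_k$, I compare the positions of $\pi(0^A_i)$ and $\pi(t)$ in $S^{(0)}$. The map $\pi_0$ acts on $\tilde{A}$ as the shift $t_l \mapsto t_{l-m_1}$, so generically $\pi(0^A_i)$ lies at position $i - m_1$ and $\pi(0^A_k)$ at position $k - m_1$; the condition $\pi(0^A_i) < \pi(t)$ then reduces to $k > i$. The main bookkeeping subtlety is the exceptional element $t = 0^A_{i+m_1}$ (when $i + m_1 \leq n_1$), whose $\pi$-image is $1^B_j$ at position $i$ instead of $\pi_0(t) = 0^A_i$; a direct check shows the inequality $\pi(0^A_i) < \pi(t)$ continues to hold, consistently with the generic formula since $i + m_1 > i$. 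The hard part is exactly this case analysis, ensuring that the modified arrows do not produce spurious or missing contributions; once dispatched, we conclude $\mathcal{A}^{(0)} = \{0^A_{i+1}, \ldots, 0^A_{n_1}\}$, of cardinality $n_1 - i$, as required.
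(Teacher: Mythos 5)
Your proposal is correct and follows exactly the same route as the paper: both telescope $\sum_n d_A(n)$ and $\sum_n d_B(n)$ down to $|\mathcal A^{(0)}|$ and $|\mathcal B^{(0)}|$ (using $\mathcal A^{(a)}=\mathcal B^{(b)}=\emptyset$) and then read off the cardinalities. The only difference is that you spell out the verification that $\mathcal A^{(0)}=\{0^A_{i+1},\dots,0^A_{n_1}\}$, which the paper treats as immediate from the explicit form of $S^{(0)}$.
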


\begin{proof}
Clearly $\sum_n d_A(n)$ is given by $|\mathcal A^{(0)}| - |\mathcal A^{(a)}|$.
As $|\mathcal A^{(0)}| = n_1 - i$ and $|\mathcal A^{(a)}| = 0$,
we obtain the desired value.
Similarly, we obtain (2) 
since $|\mathcal B^{(0)}| = |I| + j - 1$ and $|\mathcal B^{(b)}| = 0$.
\end{proof}

\begin{lemma}\label{LemOfSumd2}
Let $S' \in \mathcal H_2(S)$.
The following are true:
\begin{enumerate}
\item[(1)]$\sum_n d_A(n) = h_1-i$,
\item[(2)]$\sum_n d_B(n) = |I|+ j - 1$.
\end{enumerate}
\end{lemma}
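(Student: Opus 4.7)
The plan is to follow the pattern of the proof of Lemma~\ref{LemOfSumd}. Since $d_A(n) = |\mathcal A^{(n)}| - |\mathcal A^{(n+1)}|$ and $d_B(n) = |\mathcal B^{(n)}| - |\mathcal B^{(n+1)}|$, both sums telescope, and because $|\mathcal A^{(a)}| = 0$ and $|\mathcal B^{(b)}| = 0$ (Propositions~\ref{PropOfEmptyA} and \ref{PropOfEmptyB}), they collapse to $|\mathcal A^{(0)}|$ and $|\mathcal B^{(0)}|$ respectively. So the task reduces to evaluating these two cardinalities in the regime $m_1 < i \leq n_1$ and $n_2 < j \leq m_2$ defining $\mathcal H_2(S)$.

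Part (2) is immediate: the description $\mathcal B^{(0)} = I \cup \{1^B_1, \ldots, 1^B_{j-1}\}$ established in the paragraph following Definition~\ref{DefOfTBn} uses only the order and the $\pi$-images of the $1^B$'s in $S^{(a)}$, which do not depend on whether $j \leq n_2$ or $j > n_2$. Therefore $|\mathcal B^{(0)}| = |I| + (j-1)$.

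For part (1), the essential new feature compared to Lemma~\ref{LemOfSumd}(1) is that $1^B_j$ now lies in the second $1^B$-block of $S$ (namely in $\{1^B_{n_2+1}, \ldots, 1^B_{m_2}\}$) according to Lemma~\ref{LemOf1/2SepABS}, so after exchanging $0^A_i$ with $1^B_j$ the element $0^A_i$ is moved past \emph{every} remaining zero of $A$. Consequently the cut-off $t < 0^A_i$ in $S^{(0)}$ no longer discards the tail $0^A_{n_1+1}, \ldots, 0^A_{h_1}$, and counting is done over all of $\{m_1+1, \ldots, h_1\} \setminus \{i\}$. Using the simple-${\rm DM_1}$ description $\pi_A(t_k) = t_{(k - m_1) \bmod h_1}$, one checks that $\pi(0^A_i)$ sits at position $i - m_1$ of $S^{(0)}$, that $\pi(0^A_k)$ sits at position $k - m_1$ for $k \neq i + m_1$, and that the small-modification rule sends $\pi(0^A_{i+m_1}) = 1^B_j$, which occupies position $i$ in $S^{(0)}$. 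The condition that the position of $\pi(0^A_k)$ exceed $i - m_1$ then selects precisely $k > i$ together with the exceptional index $k = i + m_1$, yielding $|\mathcal A^{(0)}| = h_1 - i$.

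The only delicate step is the position tracking of $0^A_i$ in $S^{(0)}$: recognizing that it now lands past the entire second $A$-zero block is what lifts the answer from the $n_1 - i$ of Lemma~\ref{LemOfSumd} to $h_1 - i$ here. Once this observation is in place, the remainder of the argument is formally parallel to Lemma~\ref{LemOfSumd}, and the two assertions follow.
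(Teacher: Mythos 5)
Your proof is correct and follows the same route the paper takes: telescope the sums to $|\mathcal A^{(0)}|$ and $|\mathcal B^{(0)}|$, then compute those cardinalities directly. The paper states $\mathcal A^{(0)} = \{0^A_{i+1}, \dots, 0^A_{h_1}\}$ and $\mathcal B^{(0)} = I \cup \{1^B_1, \dots, 1^B_{j-1}\}$ without detailed justification and refers back to Lemma~\ref{LemOfSumd}; you supply the position-tracking argument that the paper leaves implicit, which is a reasonable thing to fill in.

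One small point of phrasing: the index $k = i + m_1$ is not in addition to $\{k : k > i\}$; since $m_1 > 0$ forces $i + m_1 > i$, it is already inside the interval $\{i+1, \dots, h_1\}$. What distinguishes it is that its $\pi$-image is $1^B_j$ rather than $\tau^A_{k-m_1}$, so it needs a separate verification that $\pi(0^A_i) < 1^B_j$ in $S^{(0)}$ (which follows because $\pi(0^A_i) = \tau^A_{i-m_1}$ lies strictly to the left of the slot that $1^B_j$ now occupies). The end count $h_1 - i$ is unaffected, and the rest of your argument is sound.
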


\begin{proof}
Note that we have $\mathcal A^{(0)} = \{0^A_{i+1}, \dots, 0^A_{h_1}\}$
and $\mathcal B^{(0)} = I \cup \{1^B_1, \dots, 1^B_{j-1}\}$ in this case.
A proof is given by the same way as Lemma~\ref{LemOfSumd}.
\end{proof}

\begin{notation}
For an element $t$ of $S^{(n)}$ with $\delta(t) = 1$,
we define $\ell(t,n)$ by the number of elements $t'$
satisfying that $t' < t$ and $\delta(t') = 0$ in $S^{(n)}$.
For instance, the sum $\sum_t \ell(t, n)$
is equal to the length of the ABS $(S^{(n)}, \delta, \pi)$.
\end{notation}

We will give a criterion of generic specializations
in Proposition~\ref{PropOfCentralClassification} and 
Theorem~\ref{ThmOfCentralClassification} by 
comparing values of $d_A(n)$ and $\Delta \ell_A(n)$,
or $d_B(n)$ and $\Delta \ell_B(n)$ using Proposition~\ref{PropOfldA} 
and Proposition~\ref{PropOfldB} below.
 
\begin{lemma}\label{LemOfLengthCal}
Let $n$ be a non-negative integer.
Put $\alpha = \pi^{n+1}(0^A_i)$ and $\beta = \pi^{n+1}(1^B_j)$.
The following holds:
\begin{eqnarray}
|\Delta \ell_A(n)| &=&
\#\{t \in \mathcal A^{(n)} \mid \delta(\pi(t)) \neq \delta(\alpha)\}, \label{eqellTA}\\
|\Delta \ell_B(n)| &=&
\# \{t \in \mathcal B^{(n)} \mid \delta(\pi(t)) \neq \delta(\beta)\}. \label{eqellTB}
\end{eqnarray}
Concretely, if $\delta(\alpha) = 0$ (resp. $\delta(\alpha) = 1$),
we have then $\Delta \ell_A(n) \leq 0$ (resp. $\Delta \ell_A(n) \geq 0$),
and if $\delta(\beta) = 0$ (resp. $\delta(\beta) = 1$),
we have then $\Delta \ell_B(n) \geq 0$ (resp. $\Delta \ell_B(n) \leq 0$).
\end{lemma}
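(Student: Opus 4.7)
The plan is to interpret $\Delta \ell_A(n)=\ell(S^{(n+1)})-\ell(S^{(n)})$ as the cumulative effect of adjacent transpositions: by Definition~\ref{DefOfTAn} applied with index $n+1$, the ordered set $S^{(n+1)}$ is obtained from $S^{(n)}$ by moving the element $\alpha_{n+1}$ rightward past the block of elements strictly between it and $\pi(t'_{\max})$ in $S^{(n)}$, where $t'_{\max}=\max\mathcal A^{(n)}$. A single rightward transposition of $\alpha_{n+1}$ with a neighbour $t''$ changes $\ell$ by $-1$ if $(\delta(\alpha_{n+1}),\delta(t''))=(0,1)$ (an existing $(0,1)$-pair is destroyed), by $+1$ if the colours are $(1,0)$ (a new $(0,1)$-pair is created), and by $0$ if the colours coincide. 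Summing these contributions fixes the sign of $\Delta \ell_A(n)$ according to $\delta(\alpha_{n+1})$ as stated, and identifies $|\Delta \ell_A(n)|$ with the number of passed elements whose $\delta$-value differs from $\delta(\alpha_{n+1})$.

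The next step is to identify the set $U$ of passed elements with $\pi(\mathcal A^{(n)})$, so that the preceding count becomes $\#\{t\in\mathcal A^{(n)}:\delta(\pi(t))\neq \delta(\alpha_{n+1})\}$ via the bijection $\pi$. For the inclusion $\pi(\mathcal A^{(n)})\subseteq U$: every $t\in \mathcal A^{(n)}$ satisfies $\alpha_{n+1}<\pi(t)$ by definition, and $\pi(t)\leq \pi(t'_{\max})$ since $t\leq t'_{\max}$ share the colour $\delta(\alpha_n)$ and $\pi$ preserves the ordering of same-colour elements except at $\alpha_n$ itself, by Lemma~\ref{LemOfUniA}. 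For the reverse inclusion, fix $t''\in U$ and set $s=\pi^{-1}(t'')$; applying Lemma~\ref{LemOfUniA} once more one verifies that $s\in\tilde A$ with $\delta(s)=\delta(\alpha_n)$, $s<\alpha_n$, and $\alpha_{n+1}<\pi(s)$, so that $s\in \mathcal A^{(n)}$. This yields \eqref{eqellTA}.

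The proof of \eqref{eqellTB} proceeds in parallel, with two modifications dictated by Definition~\ref{DefOfTBn}: the element $\beta_{n+1}$ is moved \emph{leftward} past the block lying between $\pi(t_{\min})$ and $\beta_{n+1}$ (where $t_{\min}=\min \mathcal B^{(n)}$), and the signs of the individual transposition contributions are reversed (a leftward swap of a $0$ past a $1$ now creates a $(0,1)$-pair, while a leftward swap of a $1$ past a $0$ destroys one). This flips the sign rule to $\Delta \ell_B(n)\geq 0$ when $\delta(\beta_{n+1})=0$ and $\leq 0$ otherwise. Substituting Lemma~\ref{LemOfUniB} for Lemma~\ref{LemOfUniA} and Proposition~\ref{PropOfInducB} for Proposition~\ref{PropOfT} transports the $A$-argument verbatim to the $B$-side. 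The main obstacle is precisely the identification $U=\pi(\mathcal A^{(n)})$ and its $\beta$-analogue: ruling out spurious elements between $\alpha_{n+1}$ and $\pi(t'_{\max})$ relies essentially on the uniqueness statement $\Sigma_{(n)}=\{\alpha_n\}$, without which the inverse-image argument could not conclude that $s\in\mathcal A^{(n)}$.
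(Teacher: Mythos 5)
Your argument is correct and follows essentially the same approach as the paper: both interpret $\Delta\ell_A(n)$ (resp.\ $\Delta\ell_B(n)$) as the net effect of moving $\alpha_{n+1}$ (resp.\ $\beta_{n+1}$) past a block of elements, identify that block with $\pi(\mathcal A^{(n)})$ (resp.\ $\pi(\mathcal B^{(n)})$), and deduce the count and sign from the $\delta$-values encountered during the pass. The paper organizes the bookkeeping via the quantities $\ell(t,n)$ with an explicit case split on $\delta(\alpha)$, citing Proposition~\ref{PropOf1} where you cite Lemma~\ref{LemOfUniA}, but the underlying reasoning is the same as your transposition-by-transposition accounting.
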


\begin{proof}
We fix a non-negative integer $n$,
and let us show that the equation \eqref{eqellTA} holds.
In the same way, we can obtain the equation \eqref{eqellTB}.
We divide the proof into two cases depending on the value of $\delta(\alpha)$.
First, suppose $\delta(\alpha) = 0$.
If $\pi(\mathcal A^{(n)})$ does not contain $1^B_j$,
then it follows from Proposition~\ref{PropOf1} that
all elements $t$ of $\pi(\mathcal A^{(n)})$ satisfy $\delta(t) = 0$, and hence
we have $d_A(n) = 0$.
Moreover, in this case $\Delta \ell_A(n) = 0$ holds.
If $\pi(\mathcal A^{(n)})$ contains $1^B_j$,
then all elements $t$ of $\pi(\mathcal A^{(n)})$ satisfy $\delta(t) = 0$
except for $1^B_j$, whence we have $d_A(n) = 1$.
Then $\ell(1^B_j, n+1) = \ell(1^B_j, n) - 1$ holds.
Moreover, $\ell(t, n+1) = \ell(t, n)$ holds for the other elements,
whence we have $\Delta \ell_A(n) = -1$.
Next, let us see the case of $\delta(\alpha) = 1$.
By the construction of $S^{(n+1)}$, it is clear that
$\ell(\alpha, n+1) = \ell(\alpha, n) + r$, where
$r = \#\{t \in \mathcal A^{(n)} \mid \delta(\pi(t)) \neq \delta(\alpha)\}$.
Moreover, $\ell(t, n+1) = \ell(t, n)$ holds for the other elements $t$.
Clearly we have $d_B(n) = r$.
Hence we get desired equality for the case of $\delta(\alpha) = 1$.
This completes the proof.
\end{proof}

\begin{proposition}\label{PropOfldA}
For all non-negative integers $n$,
an inequality $\Delta \ell_A(n) \leq d_A(n)$ holds.
The equality holds for all $n$ if and only if $\mathcal A^{(n)}$ 
do not contain the inverse image of $1^B_j$ for all $n$.
\end{proposition}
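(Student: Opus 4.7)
The plan is to compare $\Delta\ell_A(n)$ and $d_A(n)$ term-by-term using Proposition~\ref{PropOfT} and Lemma~\ref{LemOfLengthCal}, and then to split on the value of $\delta(\alpha_{n+1})$. Set $t^{\ast}=\pi^{-1}(1^B_j)$ and let $X_n\in\{0,1\}$ denote the indicator that $t^{\ast}\in\mathcal A^{(n)}$. For $\varepsilon\in\{0,1\}$, let $Y_\varepsilon$ be the number of $t\in\mathcal A^{(n)}$ with $\pi(t)\in\tilde A$ and $\delta(\pi(t))=\varepsilon$. Since every $t\in\mathcal A^{(n)}\subset\tilde A$ satisfies either $\pi(t)=1^B_j$ (when $t=t^{\ast}$) or $\pi(t)\in\tilde A$, we have $|\mathcal A^{(n)}|=X_n+Y_0+Y_1$, and Proposition~\ref{PropOfT} gives $|\mathcal A^{(n+1)}|=Y_{\delta(\alpha_{n+1})}$, hence $d_A(n)=X_n+Y_{1-\delta(\alpha_{n+1})}$.

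Lemma~\ref{LemOfLengthCal} supplies $|\Delta\ell_A(n)|=\#\{t\in\mathcal A^{(n)}:\delta(\pi(t))\neq\delta(\alpha_{n+1})\}$ together with the sign of $\Delta\ell_A(n)$. When $\delta(\alpha_{n+1})=1$, since $\delta(1^B_j)=1$ the count equals $Y_0$, so $\Delta\ell_A(n)=Y_0$ while $d_A(n)=X_n+Y_0$, yielding $\Delta\ell_A(n)\leq d_A(n)$ with equality if and only if $X_n=0$. When $\delta(\alpha_{n+1})=0$, I invoke the structural fact supplied by the proof of Lemma~\ref{LemOfLengthCal} (via Proposition~\ref{PropOf1}) that $Y_1=0$: every element of $\pi(\mathcal A^{(n)})\setminus\{1^B_j\}$ has $\delta$-value $0$. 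Granting this, $|\Delta\ell_A(n)|=X_n$, so $\Delta\ell_A(n)=-X_n$, while $d_A(n)=X_n$; once again $\Delta\ell_A(n)\leq d_A(n)$ with equality if and only if $X_n=0$.

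In both cases, the pointwise equality $\Delta\ell_A(n)=d_A(n)$ reduces to $X_n=0$, i.e.\ $\mathcal A^{(n)}$ does not contain the inverse image of $1^B_j$, so equality holds for every $n$ precisely when $t^{\ast}\notin\mathcal A^{(n)}$ for all $n$. The principal obstacle is the structural claim $Y_1=0$ when $\delta(\alpha_{n+1})=0$; if the appeal to the proof of Lemma~\ref{LemOfLengthCal} is judged too indirect, I would re-derive it from scratch, using Proposition~\ref{PropOf1} (which excludes $\alpha_0,\ldots,\alpha_n$ from $\mathcal A^{(n)}$) together with Lemma~\ref{LemOfUniA} and the cyclic-shift description of $\pi_0$ on the simple summand $\tilde A$ to rule out any $\pi(t)\in\tilde A$ with $\delta$-value $1$ in $\pi(\mathcal A^{(n)})\setminus\{1^B_j\}$.
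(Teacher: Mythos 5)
Your proposal is correct and takes essentially the same approach as the paper's own proof: both compare $\Delta\ell_A(n)$ with $d_A(n)$ term-by-term via Proposition~\ref{PropOfT} and Lemma~\ref{LemOfLengthCal}, case-split on $\delta(\alpha_{n+1})$, and use the structural fact (your $Y_1=0$) that $\pi(\mathcal A^{(n)})\setminus\{1^B_j\}$ contains no $\delta$-value-$1$ element of $\tilde A$ when $\delta(\alpha_{n+1})=0$. The paper organizes the cases slightly differently (it first splits on whether $0^A_{i+m_1}\in\mathcal A^{(n)}$ and introduces ad hoc sets $\Xi,\Xi'$ in place of your cleaner $X_n,Y_0,Y_1$ bookkeeping), and your appeal to the proof of Lemma~\ref{LemOfLengthCal} for the $Y_1=0$ claim is precisely the justification the paper itself relies on (via Proposition~\ref{PropOf1}), so your cautionary remark flags a shared indirection in the paper rather than a gap in your argument.
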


\begin{proof}
By the condition $m_1 < i \leq n_1$, in ABS's $(S^{(n)}, \delta, \pi)$,
the inverse image of $1^B_j$ is obtained by $0^A_{i+m_1}$.
We fix a non-negative integer $n$ with $n < a$.
Put $\alpha = \pi^{n+1}(0^A_i)$.
In the ordered set $S^{(n+1)}$,
the element $\alpha$ is located in the right of elements of $\pi(\mathcal A^{(n)})$.
Let us suppose that $\mathcal A^{(n)}$ does not contain $0^A_{i+m_1}$.
This assumption implies that $\pi(\mathcal A^{(n)})$ contains
no element of $B$.
If $\delta(\alpha) = 0$, 
then all elements $t$ of $\mathcal A^{(n)}$ satisfy $\delta(\pi(t)) = 0$.
Hence we have $\Delta \ell_A(n) = 0$,
and then $d_A(n) = 0$ holds.
If $\delta(\alpha) = 1$,
by Proposition~\ref{PropOfT}, the set $\mathcal A^{(n+1)}$ is obtained by 
$\pi(\mathcal A^{(n)}) \setminus \Xi$, where 
$$\Xi = \{\pi(t) \mid t \in \mathcal A^{(n)} \text{ and } 
\delta(\pi(t)) \neq \delta(\alpha)\}.$$
We immediately obtain $|\Xi| = d_A(n)$.
Lemma~\ref{LemOfLengthCal}
concludes that $\Delta \ell_A(n) = d_A(n)$.

Assume that there exists a non-negative integer $n$
such that $\mathcal A^{(n)}$ contains $0^A_{i+m_1}$.
We divide the proof into two cases depending on the value of $\delta(\alpha)$.
First, if $\delta(\alpha) = 0$,
it follows from $\ell(1^B_j, n+1) = \ell(1^B_j, n) - 1$
that $\Delta \ell_A(n) = -1$.
As $\mathcal A^{(n+1)}$ is obtained by
$\mathcal A^{(n+1)} = \pi(\mathcal A^{(n)}) \setminus \{1^B_j\}$,
we have $d_A(n) = 1$.
Hence we get $\Delta \ell_A(n) < d_A(n)$.
Next, if $\delta(\alpha) = 1$,
we obtain the set $\mathcal A^{(n+1)}$ by $\pi(\mathcal A^{(n)}) \setminus \Xi'$,
where $$\Xi' = \{\pi(t) \mid t \in \mathcal A^{(n)}, \text{ with } 
\delta(\pi(t)) = 0\ {\rm or}\ \pi(t) \in \tilde B\}.$$
It is clear that $\Xi'$ contains $1^B_j$ in this hypothesis.
We have $d_A(n) = |\Xi'|$.
On the other hand, since $\delta(1^B_j) = \delta(\alpha)$,
we have $\ell(\alpha, n+1) = \ell(\alpha, n) + (|\Xi'| - 1)$, and 
it implies that $\Delta \ell_A(n) = |\Xi'| - 1$.
Hence we get $\Delta \ell_A(n) < d_A(n)$.
\end{proof}

\begin{proposition}\label{PropOfldB}
For all non-negative integers $n$,
an inequality $\Delta \ell_B(n) \leq d_B(n)$ holds.
Moreover, for $1 \leq j \leq n_2$,
the equality holds for all $n$ if and only if 
\begin{enumerate}
\item[(1)]$\mathcal B^{(n)}$ do not contain the inverse image of $0^A_i$
for all $n$, and
\item[(2)]$I = \emptyset$.
\end{enumerate}
\end{proposition}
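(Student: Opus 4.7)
The plan is to mimic the proof of Proposition~\ref{PropOfldA} on the $\mathcal B^{(n)}$ side. Writing $\beta = \beta_{n+1}$, the element $0^B_{j+m_2}$ here plays the role that $0^A_{i+m_1}$ played on the $\mathcal A$-side, since it is the inverse image of $0^A_i$ under the small modification when $1 \leq j \leq n_2$.

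For the inequality $\Delta \ell_B(n) \leq d_B(n)$, I would combine Proposition~\ref{PropOfInducB} with Lemma~\ref{LemOfLengthCal}. The former identifies $\mathcal B^{(n+1)}$ with the $\pi$-image of those $t \in \mathcal B^{(n)}$ satisfying $\delta(\pi(t)) = \delta(\beta_{n+1})$, yielding
\[
  d_B(n) = |\mathcal B^{(n)}| - |\mathcal B^{(n+1)}| = \#\{t \in \mathcal B^{(n)} \mid \delta(\pi(t)) \neq \delta(\beta_{n+1})\}.
\]
The latter shows this quantity equals $|\Delta \ell_B(n)|$, with $\Delta \ell_B(n) \geq 0$ iff $\delta(\beta_{n+1}) = 0$. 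Hence $\Delta \ell_B(n) \leq d_B(n)$, with strict inequality at step $n$ occurring precisely when $\delta(\beta_{n+1}) = 1$ and $d_B(n) > 0$.

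For the ``if'' direction of the equality characterization, assume (1) and (2). Then $\mathcal B^{(0)} = \{1^B_1, \ldots, 1^B_{j-1}\} \subseteq \tilde B$, and an induction via Proposition~\ref{PropOfInducB} (with (1) ruling out migration into $\tilde A$) gives $\mathcal B^{(n)} \subseteq \tilde B$ for every $n$. Using the explicit $\pi_B$-rule that sends $1^B_k$ to $1^B_{k+n_2}$ when $k + n_2 \leq m_2$ and to $0^B_{k+n_2}$ otherwise, I would prove inductively that each $\mathcal B^{(n)}$ is contained in $\{t \in \tilde B : \delta(t) = \delta(\beta_n) \text{ and the index of } t \text{ is less than that of } \beta_n\}$. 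The key check: if $\delta(\beta_{n+1}) = 1$, so $\beta_n = 1^B_k$ with $k + n_2 \leq m_2$, every $1^B_{k'} \in \mathcal B^{(n)}$ satisfies $k' < k \leq m_2 - n_2$, hence $k' + n_2 < m_2$ and $\pi(1^B_{k'}) = 1^B_{k'+n_2}$ has $\delta = 1 = \delta(\beta_{n+1})$; so $d_B(n) = 0$ and equality holds at step $n$.

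For the ``only if'' direction, suppose (1) fails with (2) holding, and pick the smallest $n^\ast$ with $0^B_{j+m_2} \in \mathcal B^{(n^\ast)}$. An induction as above shows $\mathcal B^{(n)} \subseteq \tilde B$ for $n \leq n^\ast$; then Proposition~\ref{PropOf0} combined with minimality forces $\beta_m \neq 0^B_{j+m_2}$ for every $m \leq n^\ast$, so the trajectory stays in $\tilde B$ and $\beta_{n^\ast} \in \tilde B$ is a 0-element distinct from $0^B_{j+m_2}$. Hence $\beta_{n^\ast+1} = \pi_B(\beta_{n^\ast})$ is a 1-element, giving $\delta(\beta_{n^\ast+1}) = 1$; but $\pi(0^B_{j+m_2}) = 0^A_i$ has $\delta = 0$, so $d_B(n^\ast) \geq 1$ and strict inequality occurs at $n^\ast$. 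When instead (2) fails, combining Lemma~\ref{LemOfI} (each $t \in I$ satisfies $\delta(\pi^2(t)) = 0$) with Lemma~\ref{LemOfSubB} locates a step at which $\delta(\beta_{n+1}) = 1$ while some element of $\mathcal B^{(n)}$ descended from $I$ maps under $\pi$ to an element with $\delta = 0$, again producing strict inequality. The main obstacle is the inductive inclusion of $\mathcal B^{(n)}$ inside the set of $\delta(\beta_n)$-elements of $\tilde B$ below $\beta_n$ in the ``if'' direction, because the iterative reshufflings of the orderings on $S^{(a+n)}$ by the moves of the $\alpha_m$'s and earlier $\beta_m$'s threaten to disrupt this inclusion; verifying its preservation is the combinatorial heart of the argument.
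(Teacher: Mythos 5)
Your proposal is correct and follows the paper's proof essentially verbatim: the inequality comes from Lemma~\ref{LemOfLengthCal} combined with Proposition~\ref{PropOfInducB}, the ``if'' direction mimics Proposition~\ref{PropOfldA} (the paper itself defers to that argument), and the ``only if'' direction splits into (1)~failing and (2)~failing, the latter subdivided by $\delta(\beta_1)$ using Lemmas~\ref{LemOfI} and~\ref{LemOfSubB}. One minor slip: the inference ``$\delta(\beta_{n+1})=1$, so $\beta_n = 1^B_k$ with $k + n_2 \leq m_2$'' overlooks the possibility $\delta(\beta_n)=0$, but that case is immediate since every $0$-element of $\tilde B$ maps under $\pi_B$ to a $1$-element, so the argument is unaffected.
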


\begin{proof}
For all $j$, the inequality follows from 
Lemma~\ref{LemOfLengthCal}.
To see the latter part, we treat the case of $1 \leq j \leq n_2$.
In this hypothesis, in ABS's $(S^{(n)}, \delta, \pi)$,
the inverse image of $0^A_i$ is obtained by $0^B_{j+m_2}$. 
If sets $\mathcal B^{(n)}$ do not contain $0^B_{j+m_2}$ for all $n$
and $I = \emptyset$,
then we can show that the equality $\Delta \ell_B(n) = d_B(n)$ holds
in the same way as Proposition~\ref{PropOfldA}.

Let us see the converse.
Put $\beta_n = \pi^n(1^B_j)$ for non-negative integers $n$.
We assume that $\mathcal B^{(n)}$ contains $0^B_{j+m_2}$
for a non-negative integer $n$.
By the condition $n_2/h_2 < 1/2$ with $h_2 = m_2 + n_2$, 
we have $\delta(t) = 0$ and 
$\delta(\pi(t)) = 1$
for all elements $t$ of $\mathcal B^{(n)}$
except for $0^B_{j+m_2}$.
Moreover, we have $\delta(\beta_n) = 0$ and
$\delta(\beta_{n+1}) = 1$.
In the ABS $(S^{(a+n+1)}, \delta, \pi)$
we have $\beta_{n+1} < \pi(t_{\rm min})$,
where $t_{\rm min}$ is the minimum element of $\mathcal B^{(n)}$.
We have then $\Delta \ell_B(n) = -1$
since $\ell(\beta_{n+1}, n+1) = \ell(\beta_{n+1}, n) - 1$
and $\ell(t, n+1) = \ell(t, n)$ for the other elements $t$ of $S^{(a+n+1)}$.
On the other hand, we have $d_B(n) = 1$.
In fact, $\mathcal B^{(n+1)}$ is given by 
$\mathcal B^{(n+1)} = \pi(\mathcal B^{(n)}) \setminus \{0^A_i\}$.

Next, assume $I \neq \emptyset$.
We divide the proof into two cases depending on values of $\delta(\beta_1)$.
If $\delta(\beta_1) = 0$, 
then the set $\mathcal B^{(1)}$ is the union of 
$\Lambda$ and $\pi(I)$,
where $\Lambda$ is the subset of $\mathcal B^{(1)}$ consisting of elements $t$ of $\mathcal B^{(1)}$
satisfying $t \in B$.
Note that $\delta(\beta_2) = 1$ in this hypothesis.
We have $\delta(\pi(t)) = 1$
for every element $t$ of $\Lambda$. 
Moreover, Lemma~\ref{LemOfI} implies that
$\delta(\pi(t)) = 0$ for every element $t$ of $\pi(I)$.
Hence we have $\ell(\beta_2, 2) = \ell(\beta_2, 1) - |I|$,
and it implies that $\Delta \ell_B(1) = -|I|$. 
On the other hand, we have $d_B(1) = |I|$,
whence $\Delta \ell_B(1) < d_B(1)$ holds.
Let us suppose $\delta(\beta_1) = 1$.
In this case,
we obtain $\Delta \ell_B(0) = -|I|$.
Since $d_B(0) = |I|$,
we have $\Delta \ell_B(0) < d_B(0)$.
\end{proof}

Thanks to the above propositions, 
we can prove Proposition~\ref{PropOfCentralClassification}.

\begin{proof}[Proof of Proposition~\ref{PropOfCentralClassification} (i)]
We have $\ell((S^{(0)}, \delta, \pi)) - \ell(S) = -(n_1-i+j)$.
Note that if there exists no non-negative integer $n$ such that
$\mathcal A^{(n)}$ contains the inverse image of $1^B_j$,
then $I = \emptyset$ holds.
Furthermore, if there exists no non-negative integer $n$ such that
$\mathcal B^{(n)}$ contains the inverse image of $0^A_i$, then
Lemma~\ref{LemOfSumd}, 
Proposition~\ref{PropOfldA} and Proposition~\ref{PropOfldB} imply
that $\Delta \ell_A = n_1 - i$ and $\Delta \ell_B = j - 1$.
Hence we have $\ell(S') - \ell((S^{(0)}, \delta, \pi)) = n_1 - i + j - 1$,
and $\ell(S') = \ell(S) - 1$ holds.

Suppose that 
there exists a non-negative integer $n$
such that $\mathcal A^{(n)}$ contains the inverse image of $1^B_j$ 
or $\mathcal B^{(n)}$ contains the inverse image of $0^A_i$.
If $I = \emptyset$, then we have 
$\Delta \ell_A < n_1 - i$ or $\Delta \ell_B < j - 1$.
Then we have $\ell(S') < \ell(S) - 1$.
On the other hand, if $I \neq \emptyset$,
then we have $\Delta \ell_A \leq n_1 - i - |I|$.
Moreover, by the proof of Proposition~\ref{PropOfldB},
as there exists a non-negative integer $m$ such that
$\Delta \ell_B(m) = -|I|$,
we have $\Delta \ell_B < j - 1$.
Hence we have $\ell(S') < \ell(S) - 1$.
\end{proof}

\begin{proof}[Proof of Proposition~\ref{PropOfCentralClassification} (ii)]
In this case, the set $\mathcal A^{(0)}$ is given by
$\mathcal A^{(0)} = \{0^A_{i+1}, \dots, 0^A_{h_1}\}$.
We have $\ell((S^{(0)}, \delta, \pi)) - \ell(S) = -(h_1-i+j)$.
By the condition of $i$,
the set $\mathcal A^{(0)}$ contains $0^A_{i+m_1}$
which is the inverse image of $1^B_j$.
Hence we have $\Delta \ell_A < h_1-i$
by Lemma~\ref{LemOfSumd2} and Proposition~\ref{PropOfldA}.
Moreover, we have $\Delta \ell_B \leq j - 1$
since if $\mathcal B^{(n)}$ contains the element $\pi^n(t)$ for $t \in I$,
then $\Delta \ell_B(n) < 0$.
and hence $\ell(S') < \ell(S) - 1$ holds.
In the case $n_1 = m_1 + 1$ and $m_2 = n_2 + 1$,
for the small modification $\pi$ by $0^A_{n_1}$ and $1^B_{m_2}$,
the set $\mathcal A^{(0)}$ contains the inverse image of $1^B_{m_2}$.
\end{proof}

Let us classify specializations satisfying 
$\ell(S')=\ell(S)-1$ for $S' \in \mathcal H_3(S)$.
We use the duality to consider this case.
Let $N = N_{\xi}$ be the ${\rm DM_1}$ 
with a Newton polygon $\xi = (m_1,n_1) + (m_2,n_2)$
satisfying $\lambda_2 < 1/2 < \lambda_1$.
Let $N^D$ be the dual of $N$.
Then we have $N^D=N_{n_2,m_2}\oplus N_{n_1,m_1}$.
Let $S_D$ be the ABS corresponding to $N^D$.
Note that $\ell(S) = \ell(S_D)$ follows from
the definition of the lengths of ABS's. 

\begin{proof}[Proof of Proposition~\ref{PropOfCentralClassification} (iii)]
Fix an ABS $S' \in \mathcal H_3(S)$.
For the same notation as above,
$S_D = B_D\oplus A_D$, where
$B_D$ and $A_D$ correspond to 
$N_{n_2,m_2}$ and $N_{n_1,m_1}$ respectively.
The elements $0^A_i$ and $1^B_j$ correspond to $1_{j'} = 1^{A_D}_{j'}$
and $0_{i'} = 0^{B_D}_{i'}$ by the duality,
with $i'=h_2-j+1$ and $j'=h_1-i+1$.
Then $S'_D = (S_D)'$ is the ABS obtained by the small modification by
$0_{i'}$ and $1_{j'}$ in $S_D$.
Since $S'_D$ belongs to $\mathcal H_1(S_D)$,
the exchange of $0_{i'}$ and $1_{j'}$ in $S_D$
is good if and only if the small modification by $0_{i'}$ and $1_{j'}$
satisfies the necessary and sufficient condition
of Proposition~\ref{PropOfCentralClassification} (i).
As the small modification by $0^A_i$ and $1^B_j$ in $S$ corresponds to 
the small modification by $0_{i'}$ and $1_{j'}$ in $S_D$,
the equality $\ell(S')=\ell(S)-1$ holds if and only if
$\ell(S'_D)=\ell(S_D)-1$holds.
\end{proof}

Finally, Proposition~\ref{PropOfCentralClassification} induces the main theorem 
of this section (Theorem~\ref{ThmOfCentralClassification}).

\begin{proof}[Proof of Theorem~\ref{ThmOfCentralClassification}]
Proposition~\ref{PropOfCentralClassification} (i) and (ii) imply
that the statement of Theorem~\ref{ThmOfCentralClassification} holds 
for ABS's $S'$ of $\mathcal H_1(S)$ and $\mathcal H_2(S)$. 
Moreover, by the duality,
Proposition~\ref{PropOfCentralClassification} (iii) concludes that
$\ell(S') = \ell(S) - 1$ holds if and only if 
there exists no non-negative integer $n$ such that
$\mathcal A^{(n)}$ (resp. $\mathcal B^{(n)}$) does not contain 
the inverse image of $1^B_j$ (resp. $0^A_i$) for $S' \in \mathcal H_3(S)$.
\end{proof}

We want to determine boundary components of $H(\xi)$ for
arbitrary Newton polygons $\xi$.
If Conjecture~\ref{ConjOfArbitBC} is true,
then the two segments case is essential for 
classification of boundary components of all central streams.

\begin{conjecture} \label{ConjOfArbitBC}
Let $\xi$ be a Newton polygon of $z$ segments.
Let $N_\xi$ be the minimal ${\rm DM_1}$ of $\xi$.
A specialization of $N_\xi$ is generic if and only if
it is the direct sum of 
a generic specialization of $N_r \oplus N_{r+1}$ and 
the minimal $p$-divisible group 
$N_1 \oplus \cdots \oplus N_{r-1} \oplus N_{r+2} \oplus \cdots \oplus N_z$
for a natural number $r$.
Here, $N_i$ is the simple ${\rm DM_1}$ associated with the $i$-th segment of $\xi$.
\end{conjecture}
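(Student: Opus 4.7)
The plan is to extend the combinatorial machinery of Section~\ref{ConSpeAbs} from the two-segment setting to $z$ segments. Write $S = A_1 \oplus \cdots \oplus A_z$ for the ABS of $N_\xi$, where $A_r$ is the ABS of the simple factor $N_{m_r, n_r}$, and consider a generic specialization $S'$ obtained by exchanging $0^{A_r}_i$ with $1^{A_s}_j$ for some indices with $r \leq s$ (a normalisation which may be assumed after invoking the duality as in Proposition~\ref{PropOfCentralClassification}(iii)). The heart of the conjecture is the claim that such an exchange is good only when $s = r+1$, in which case the factors $A_k$ with $k \neq r, r+1$ persist unchanged as direct summands of $S'$.

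For the ``if'' direction, let $M$ be a generic specialization of $N_r \oplus N_{r+1}$ and set $N' = M \oplus \bigoplus_{k \neq r, r+1} N_k$. By Lemma~\ref{LemOftt'BinExp} the direct-sum ordering is determined entirely by binary expansions, and the specialization from $N_r \oplus N_{r+1}$ to $M$ alters binary expansions only of elements in $A_r \oplus A_{r+1}$. Since the slopes of the other $N_k$ lie strictly outside the interval $[\lambda_{r+1}, \lambda_r]$, the binary expansions of their elements are separated by a uniform margin from those of $A_r \oplus A_{r+1}$, so their positions relative to those elements are preserved. A direct count of inversion pairs then yields $\ell(N') = \ell(N_\xi) - 1$, so $N'$ is a generic specialization of $N_\xi$.

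For the ``only if'' direction, I would analyse the three cases $s = r$, $s = r+1$ and $s > r+1$ in turn. When $s = r$ the exchange must lie inside a simple $A_r$, whose rigid cyclic pattern forces the length drop to exceed $1$ (or precludes a good exchange altogether). When $s = r+1$, the slope inequalities separate elements of $A_k$ with $k \neq r, r+1$ from those involved in the modification sequences $\mathcal{A}^{(n)}$ and $\mathcal{B}^{(n)}$; consequently these sequences behave exactly as in Section~\ref{CombSpe}, and Theorem~\ref{ThmOfCentralClassification}, together with its analogue for other slope ranges (obtained via duality and a parallel construction), classifies the good exchanges. When $s > r+1$, each intermediate segment $A_k$ with $r < k < s$ has binary expansions lying between those of $A_r$ and $A_s$, so its elements must be drawn into the modification sequences; I would then refine Propositions~\ref{PropOfldA} and~\ref{PropOfldB} to show that each such intrusion produces a strict inequality $\Delta\ell < d$, yielding a total length drop strictly greater than $1$.

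The main obstacle will be this last case: quantifying how intermediate segments interfere with a proposed long-range exchange. A tractable approach may be induction on $s - r$, whereby a bad exchange between $A_r$ and $A_s$ is decomposed into exchanges involving the middle segments, each contributing excess length decrease. Alternatively, a direct Farey-type combinatorial argument, using the strict interlacing of slopes $\lambda_r > \lambda_{r+1} > \cdots > \lambda_s$ and the binary-expansion criterion of Lemma~\ref{LemOftt'BinExp}, should exhibit explicit middle-segment elements inside each $\mathcal{A}^{(n)}$ and $\mathcal{B}^{(n)}$, making the total length accounting precise and ruling out cancellation.
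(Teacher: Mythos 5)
The statement you are attempting to prove is Conjecture~\ref{ConjOfArbitBC}, which the paper explicitly leaves open; there is no proof of it in the paper to compare against, and the results of Sections~\ref{ConSpeAbs}--\ref{ConstGoodSpe} only establish the two-segment case with $\lambda_2 < 1/2 < \lambda_1$. Judged on its own merits, your sketch is a programme rather than a proof, and a key claimed step is false: in the ``if'' direction you assert that because the slopes of the $N_k$ with $k \neq r, r+1$ lie outside $[\lambda_{r+1},\lambda_r]$, the binary expansions of their elements are ``separated by a uniform margin'' from those of $A_r \oplus A_{r+1}$, so that relative positions are preserved. This is not true. Already in the two-segment situation of Example~\ref{ExOfDirectSum} one sees that elements of different simple ABS's interleave thoroughly (for instance $1^B_1, 1^B_2, 1^B_3$ sit between $0^A_7$ and $0^A_8$, and $1^B_4, 1^B_5$ sit between $0^A_9$ and $0^B_6$); adding a third segment would interleave in the same way with the first two. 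The binary expansions of all elements are dense-looking periodic points in $(0,1)$ and there is no slope-determined gap. Hence you cannot conclude that the full modification leaves the inversion count against the unchanged segments fixed; you would need to show directly that the modification never pushes an element of $A_r\oplus A_{r+1}$ past an element of some other $A_k$, which is precisely the nontrivial content the conjecture encodes.

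The ``only if'' direction has two further gaps. First, you invoke an ``analogue of Theorem~\ref{ThmOfCentralClassification} for other slope ranges obtained via duality and a parallel construction,'' but the duality used in Proposition~\ref{PropOfCentralClassification}(iii) sends $(m_1,n_1)+(m_2,n_2)$ to $(n_2,m_2)+(n_1,m_1)$ and so stays within the regime $\lambda_2 < 1/2 < \lambda_1$; it does not produce a classification for adjacent segments both on the same side of $1/2$, which occurs as soon as $z\geq 3$, and no such classification is proved in the paper. Second, the case $s > r+1$ is addressed only by intention (``I would then refine \ldots'', ``should exhibit explicit middle-segment elements''); nothing is shown about how middle-segment elements enter $\mathcal A^{(n)}$ or $\mathcal B^{(n)}$, nor why each such intrusion forces strict inequality in $\Delta\ell \leq d$. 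The overall strategy --- reduce generic specializations to exchanges between a single adjacent pair of segments and show that longer-range exchanges drop the length by more than one --- is a sensible plan, but as written the proposal establishes neither direction and therefore does not prove the conjecture.
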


\subsection{Some properties of generic specializations}
\label{SomePropOfBoundaryComp}

Here, we introduce some notation 
and properties of generic specializations,
which are 
useful for showing Theorem~\ref{ThmOfzetaxi}.

Theorem~\ref{ThmOfCentralClassification} and 
Proposition~\ref{PropOfCentralClassification} imply that
it suffices to deal with ABS's of $\mathcal H_1(S)$
to study boundary components of central streams.
From now on, in this paper, for elements $0^A_i$ and $1^B_j$
used in the construction of the small modification $\pi$, 
we make the assumption
$m_1 < i \leq n_1$ and $1 \leq j \leq n_2$.
Note that for the good exchange of $0^A_i$ and $1^B_j$,
if $a$ and $b$ are positive,
then non-negative integers $a$ and $b$ satisfy that
$\pi^{a}(0^A_i) = 1^A_{m_1}$ and $\pi^{b}(1^B_j) = 0^B_{m_2+1}$.

By Theorem~\ref{PropOfCentralClassification},
if the exchange of $0^A_i$ and $1^B_j$ is good, 
then sets $\mathcal A^{(n)}$ (resp. $\mathcal B^{(n)}$)
are subsets of $\tilde A$ (resp. $\tilde B$) for all non-negative integers $n$, whence
for the small modification by $0^A_i$ and $1^B_j$, 
sets $\mathcal A^{(n)}$ (resp. $\mathcal B^{(n)}$) and the value $\Delta \ell_A$
(resp. $\Delta \ell_B$) do not depend on $j$ (resp. $i$). 
Thus we define the following sets.

\begin{definition} \label{DefOfC'D'}
Let $A = (\tilde A, \delta_A, \pi_A)$ and $B = (\tilde B, \delta_B, \pi_B)$
be the ABS of $N_{m_1, n_1}$ and $N_{m_2, n_2}$ respectively.
For the ABS $S = A \oplus B$,
which is associated to $N_\xi$,
let $G$ be the subset of $\tilde A \times \tilde B$ consisting of pairs $(0^A_i, 1^B_j)$
such that exchanges of $0^A_i$ and $1^B_j$ are good.
By the above, this set is described as $G = C' \times D'$
with $C' \subset \tilde A$ and $D' \subset \tilde B$.
Equivalently, we obtain a generic specialization of $S$
if and only if we construct a small modification by 
an element of $C'$ and an element of $D'$.
Moreover, let $C$ (resp. $D$) be the subset 
of $C'$ (resp. $D'$) consisting of elements 
satisfying that $\mathcal A^{(0)}$ (resp. $\mathcal B^{(0)}$) is not empty.
\end{definition}

From now on, we fix the notation of $C',\ C,\ D'$ and $D$.
We have 
$C' \setminus C = \{0^A_{n_1}\}$ and $D' \setminus D = \{1^B_1\}$.
We call the construction of a full modification using an 
element of $C'$ and an element of $D'$
good exchange.

\begin{example}
Let $N_\xi = N_{2,7} \oplus N_{5,3}$.
Then the set $G = C' \times D'$ is given by
$$C' \times D' = \{(0^A_6, 1^B_1),\ (0^A_6, 1^B_3),\ (0^A_7, 1^B_1),\ (0^A_7, 1^B_3)\}.$$
We have $C' = \{0^A_6, 0^A_7\}$ and $D' = \{1^B_1, 1^B_3\}$.
\end{example}

In Lemma~\ref{LemOfDecrease} and Proposition~\ref{PropOfNumbers},
we give some properties of $\{\mathcal A^{(n)}\}_{n = 0, \dots, a}$ and 
$\{\mathcal B^{(n)}\}_{n = 0, \dots, b}$
for generic specializations.
For the ABS $S$ associated to $N_\xi$,
let $(S^{(n)}, \delta, \pi)$ be ABS's 
obtained by constructing the full modification by $0^A_i \in C'$ and $1^B_j \in D'$
for $n = 0, \dots, a+b$.

\begin{lemma}\label{LemOfDecrease}
Let $n$ be a non-negative integer.
For a good exchange,
if $d_A(n) > 0$ (resp. $d_B(n) > 0$)
and $\mathcal A^{(n+1)}$ (resp. $\mathcal B^{(n+1)}$) is not empty,
then $\mathcal A^{(n+1)}$ (resp. $\mathcal B^{(n+1)}$)
has the maximum element $1^A_{m_1}$ 
(resp. the minimum element $0^B_{m_2+1}$).
\end{lemma}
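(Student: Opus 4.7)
The plan is to prove the $\mathcal{A}$-statement in full; the $\mathcal{B}$-statement follows by the dual argument interchanging $(\tilde A, m_1, n_1)$ with $(\tilde B, n_2, m_2)$ and reversing orderings. I first set up coordinates. For a good exchange, Theorem~\ref{ThmOfCentralClassification} guarantees $0^A_{i+m_1} \notin \mathcal{A}^{(n)}$ for every $n$, and since $0^A_{i+m_1}$ is the unique element of $\tilde A$ where $\pi$ deviates from the cyclic shift $\pi_0$ by $-m_1$, this gives $\pi|_{\mathcal{A}^{(n)}} = \pi_0|_{\mathcal{A}^{(n)}}$ and $\pi(\mathcal{A}^{(n)}) \subset \tilde A$. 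Combining this with Corollary~\ref{CoroOfT}, I would encode $\mathcal{A}^{(n)}$ as $\{t^A_{y_n + r} : r \in R_n\}$, where $y_m = (i - m m_1) \bmod h_1$ is the index of $\alpha_m$ and $R_n \subset \{1, \ldots, n_1 - i\}$ is the set of surviving offsets. The good-exchange bound $i > n_1 - m_1$ pushes the wrap-around branch of the $\delta$-preserving condition out of the range $\{1, \ldots, n_1 - i\}$ at every step, so the condition reduces to a single inequality $r \leq k_m$, with $k_m = m_1 - y_m$ if $y_m \leq m_1$ and $k_m = h_1 - y_m$ otherwise; hence $R_n$ is an initial segment $\{1, \ldots, r^{(n)}_{\max}\}$ with $r^{(n)}_{\max} = \min(n_1 - i, k_0, \ldots, k_n)$.

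The argument then proceeds by a case split on $\delta(\alpha_n)$ and $\delta(\alpha_{n+1})$. First I would rule out $\delta(\alpha_n) = 1$: in that case $\mathcal{A}^{(n)} \subset \{1^A_1, \ldots, 1^A_{m_1}\}$ and $\pi_0$ sends all such elements into $\{0^A_{n_1+1}, \ldots, 0^A_{h_1}\}$, which have $\delta = 0$; Proposition~\ref{PropOfT} then forces either $d_A(n) = 0$ (if $\delta(\alpha_{n+1}) = 0$) or $\mathcal{A}^{(n+1)} = \emptyset$ (if $\delta(\alpha_{n+1}) = 1$), contradicting one of the hypotheses. Next I would rule out $\delta(\alpha_{n+1}) = 0$: in that case $k_{n+1} = h_1 - y_{n+1}$, and $d_A(n) > 0$ forces $k_{n+1} < r^{(n)}_{\max} \leq n_1 - i$, whence $y_{n+1} > m_1 + i$. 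The good-exchange bound $i > n_1 - m_1$ gives $2m_1 + i > h_1$, so $y_{n+1} + m_1 > h_1$ and therefore $y_n = y_{n+1} + m_1 - h_1 = y_{n+1} - n_1 \leq m_1$; this yields $\delta(\alpha_n) = 1$, contradicting the first step.

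Therefore $\delta(\alpha_n) = 0$ and $\delta(\alpha_{n+1}) = 1$, so $k_{n+1} = m_1 - y_{n+1}$. Since $d_A(n) > 0$ makes $k_{n+1}$ the binding constraint, $r^{(n+1)}_{\max} = k_{n+1}$, and $\mathcal{A}^{(n+1)} \neq \emptyset$ gives $k_{n+1} \geq 1$; hence $\mathcal{A}^{(n+1)} = \{1^A_{y_{n+1}+1}, \ldots, 1^A_{m_1}\}$, whose maximum is $1^A_{m_1}$, as claimed. The main obstacle I expect is ruling out the case $\delta(\alpha_{n+1}) = 0$, which requires carefully combining the good-exchange bound with the modular-arithmetic structure of the $y_m$-trajectory; the remaining steps flow directly from the setup, Proposition~\ref{PropOfT}, and Corollary~\ref{CoroOfT}.
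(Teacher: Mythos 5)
Your proposal is correct and takes essentially the same route as the paper: reduce to the $\mathcal A$-case, use the good-exchange hypothesis (via Theorem~\ref{ThmOfCentralClassification}) to deduce that $\pi$ acts as the cyclic shift $\pi_0$ on $\mathcal A^{(n)} \subset \tilde A$, and then do a case analysis on $\delta(\alpha_n)$ and $\delta(\alpha_{n+1})$ showing that $d_A(n)>0$ with $\mathcal A^{(n+1)}\neq\emptyset$ forces $\delta(\alpha_n)=0$, $\delta(\alpha_{n+1})=1$, after which $\mathcal A^{(n+1)}$ is the tail $\{1^A_{y_{n+1}+1},\dots,1^A_{m_1}\}$. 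The difference is purely one of bookkeeping: the paper states the contiguity of $\pi(\mathcal A^{(n)})$ informally and argues directly that $\delta(\alpha_n)=0=\delta(\alpha_{n+1})$ or $\delta(\alpha_n)=1$ each give $d_A(n)=0$, whereas you introduce explicit offsets $y_m$, bounds $k_m$, and the surviving index set $R_n$, and rule out $\delta(\alpha_{n+1})=0$ somewhat more circuitously (by showing it would force $\delta(\alpha_n)=1$ via the wrap-around $y_n = y_{n+1}-n_1$, then invoking the first case). Your extra layer of coordinates makes the ``no wrap-around inside $\mathcal A^{(n)}$'' point, which the paper leaves implicit, fully rigorous; but the structural ideas — good exchange $\Rightarrow$ $\pi=\pi_0$ on $\mathcal A^{(n)}$, contiguity, boundary crossing at $m_1$ — are the same ones the paper uses.
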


\begin{proof}
Let us see the case $d_A(n) > 0$.
We can show the case $d_B(n) > 0$ in the same way.
Fix a non-negative integer $n$.
Put $\alpha = \pi^{n}(0^A_i)$.
Clearly, if $\delta(\alpha) = \delta(\pi(\alpha)) = 0$,
then $\delta(\pi(t)) = \delta(\pi(\alpha))$ holds for all elements $t$ of $\mathcal A^{(n)}$.
Moreover, if $\delta(\alpha) = 1$, we have then
$\delta(\pi(\alpha)) = \delta(\pi(t)) = 0$ for all elements $t$ of $\mathcal A^{(n)}$.
It implies that $d_A(n) = 0$ holds in these cases.
Hence it suffices to see the case that 
$\delta(\alpha) = 0$ and $\delta(\pi(\alpha)) = 1$.
Then we have
$\pi(\mathcal A^{(n)}) = \{1^A_x, \dots , 1^A_{m_1}, 0^A_{m_1+1}, \dots, 0^A_y\}$.
It induces that the maximum element of $\mathcal A^{(n+1)}$ is $1^A_{m_1}$.
\end{proof} 

\begin{proposition}\label{PropOfNumbers}
For a good exchange,
let $n$ be a non-negative integer satisfying $n < a$, and
let $c$ be the natural number satisfying that 
$\pi^n(0^A_i) = \tau^A_c$ with $\tau = 0$ or $1$.
The set $\mathcal A^{(n)}$ is obtained by
$$\mathcal A^{(n)} = \{\tau^A_{c+1},\ \tau^A_{c+2}, \dots, \tau^A_{c+u}\},$$
where $u = |\mathcal A^{(n)}|$.
Similarly, for a non-negative integer $n$ with $n < b$
and the natural number $c$ satisfying $\pi^n(1^B_j) = \tau^B_c$,
the set $\mathcal B^{(n)}$ is obtained by
$$\mathcal B^{(n)} = \{\tau^B_{c-v},\ \tau^B_{c-v+1}, \dots, \tau^B_{c-1}\},$$
where $v = |\mathcal B^{(n)}|$.
\end{proposition}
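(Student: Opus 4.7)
The plan is to prove both halves by induction on $n$, using Proposition~\ref{PropOfT} (respectively Proposition~\ref{PropOfInducB}) as the inductive engine. For the base case $n=0$, one reads off $\mathcal A^{(0)}$ and $\mathcal B^{(0)}$ directly from their defining conditions together with the explicit form of $\tilde S$ given by Lemma~\ref{LemOf1/2SepABS} and the construction of $S^{(0)}$ (in which $0^A_i$ and $1^B_j$ have been swapped). One obtains $\mathcal A^{(0)} = \{0^A_{i+1}, \dots, 0^A_{n_1}\}$, matching the claim with $c = i$, $\tau = 0$, $u = n_1 - i$. For $\mathcal B^{(0)}$, the good-exchange hypothesis forces $I = \emptyset$ (otherwise the proof of Proposition~\ref{PropOfldB} would give $\Delta\ell_B < j-1$, contradicting $\ell(S') = \ell(S) - 1$), so $\mathcal B^{(0)} = \{1^B_1, \dots, 1^B_{j-1}\}$, matching the claim with $c = j$, $\tau = 1$, $v = j-1$.

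For the inductive step on $\mathcal A^{(n)}$, Proposition~\ref{PropOfT} describes $\mathcal A^{(n+1)}$ as the image $\pi(\mathcal A^{(n)})$ filtered to the elements whose $\delta$-value equals $\delta(\alpha_{n+1})$. The good-exchange hypothesis, via Theorem~\ref{ThmOfCentralClassification}, rules out $0^A_{i+m_1} \in \mathcal A^{(n)}$, and that is the unique element of $\tilde A$ on which the small modification $\pi$ differs from the simple map $\pi_A$; hence $\pi$ restricted to $\mathcal A^{(n)}$ agrees with $\pi_A$. Since $\pi_A$ acts on $\tilde A$ as the cyclic shift $t^A_r \mapsto t^A_{r + n_1 \bmod h_1}$ (representative in $\{1, \dots, h_1\}$), the image of a block of consecutive indices is itself a block of consecutive indices, possibly wrapping once modulo $h_1$. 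A short case analysis on whether $\tau = \delta(\alpha_n)$ is $1$ (no wrap, all images are zeros) or $0$ (full wrap to indices $c+r-m_1$, and then further split on whether $c \le 2m_1$ or $c > 2m_1$) shows that after filtering by $\delta(\alpha_{n+1})$ the surviving set is precisely the consecutive block of $\tilde A$ starting immediately after $\alpha_{n+1} = t^A_{c + n_1 \bmod h_1}$; the inequality $n_1 > m_1$ coming from $\lambda_1 > 1/2$ is what makes these cases close up cleanly.

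The statement for $\mathcal B^{(n)}$ is obtained by an entirely symmetric argument, using Proposition~\ref{PropOfInducB}, the formula $\pi_B(t^B_r) = t^B_{r + n_2 \bmod h_2}$, the good-exchange condition that no $\mathcal B^{(n)}$ contains $0^B_{j+m_2}$ (which forces $\pi = \pi_B$ on $\mathcal B^{(n)}$), and the inequality $m_2 > n_2$ from $\lambda_2 < 1/2$; the only structural difference is that the block sits to the left of $\beta_n$ rather than to the right of $\alpha_n$. I expect the main obstacle to be the bookkeeping of the four sub-cases (two values of $\tau$, times $\mathcal A$ versus $\mathcal B$) and the correct identification of the endpoint of each block after wrap-around. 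Lemma~\ref{LemOfDecrease}, which pins the new extremum to $1^A_{m_1}$ or $0^B_{m_2+1}$ whenever the block shrinks, provides a convenient consistency check at each inductive step: the consecutivity formula must reproduce exactly these extremal indices.
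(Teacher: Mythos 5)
Your proposal is correct and follows essentially the same route as the paper's proof: induction on $n$, the base case read off from the construction of $S^{(0)}$, and the inductive step via Proposition~\ref{PropOfT} (resp.\ Proposition~\ref{PropOfInducB}) together with the observation that, for a good exchange, $\pi$ restricts to the cyclic shift $\pi_A$ (resp.\ $\pi_B$) on every $\mathcal A^{(n)}$ (resp.\ $\mathcal B^{(n)}$). The paper is terser, delegating the case where the block shrinks to Lemma~\ref{LemOfDecrease} (which pins the new maximum to $1^A_{m_1}$, resp.\ the new minimum to $0^B_{m_2+1}$), but the underlying computation is exactly your explicit wrap-around case analysis.
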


\begin{proof}
Let us see sets $\mathcal A^{(n)}$.
Note that we have $I = \emptyset$,
and we can see the latter part in the same way.
We use induction on $n$.
Clearly the statement holds for $n = 0$.
For a natural number $n$,
suppose that $\mathcal A^{(n-1)} = \{\tau^A_{c+1}, \dots, \tau^A_{c+u}\}$
with $u = |\mathcal A^{(n-1)}|$.
We have $\mathcal A^{(n)} = \pi(\mathcal A^{(n-1)}) \setminus T$, with
$$T = \{\pi(t) \mid \delta(\pi(t)) \neq \delta(\pi^n(0^A_i)) 
\text{ for } t\in \mathcal A^{(n-1)}\}.$$
If $T = \emptyset$, we clearly have the property for $\mathcal A^{(n)}$. 
Suppose that $T \neq \emptyset$.
By Lemma~\ref{LemOfDecrease}, we see
$\mathcal A^{(n)} = \{1^A_{c+1},\ 1^A_{c+2}, \dots, 1^A_{m_1-1}, 1^A_{m_1}\}$.
This completes the proof.
\end{proof}

\section{Determining Newton polygons of generic specializations}
\label{ConstGoodSpe}

The main purpose of this section is to show Theorem~\ref{ThmOfzetaxi}.
For a minimal ${\rm DM_1}$ $N_\xi$ and the ABS $S$ associated to $N_\xi$, 
we say a specialization $N'$ of $N_\xi$ is generic
if the specialization $S'$ associated to $N'$ satisfies $\ell(S') = \ell(S) - 1$,
and we denote by $N_\xi^-$ this generic specialization of $N_\xi$.
We will show Proposition~\ref{PropOfSpe}
which is a key step of constructing specializations
from a generic specialization of $N_{\xi}$ 
to a minimal ${\rm DM_1}$ $N_{\zeta}$ with $\zeta \prec \xi$ is saturated. 
The main result of this section is

\begin{proposition}\label{PropOfSpe}
Let $\xi = (m_1,n_1) + (m_2,n_2)$ be a Newton polygon
satisfying $\lambda_2< 1/2 < \lambda_1$, where
$\lambda_1 = n_1/(m_1+n_1)$ and $\lambda_2 = n_2/(m_2+n_2)$.
Assume that $\xi$ is not $(0, 1) + (1, 0)$.
Let $N_\xi$ be the ${\rm DM_1}$ of $\xi$.
For every generic specialization $N^-_{\xi}$, there exist $N^{--}_{\xi}$ and $N^-_{\xi'}$
satisfying
\begin{equation}\label{EqOfInduction}
N^{--}_{\xi}=N^-_{\xi'}\oplus N_{\rho},
\end{equation} 
where $\rho = (f, g)$ is a Newton polygon and 
$\rho$ is uniquely determined by $\xi$ so that 
the area of the region surrounded by $\xi$, $\xi'$ and $\rho$ is one.
The diagram of these Newton polygons is described as either of the 
following:
\\
\\
\includegraphics[width=120mm]{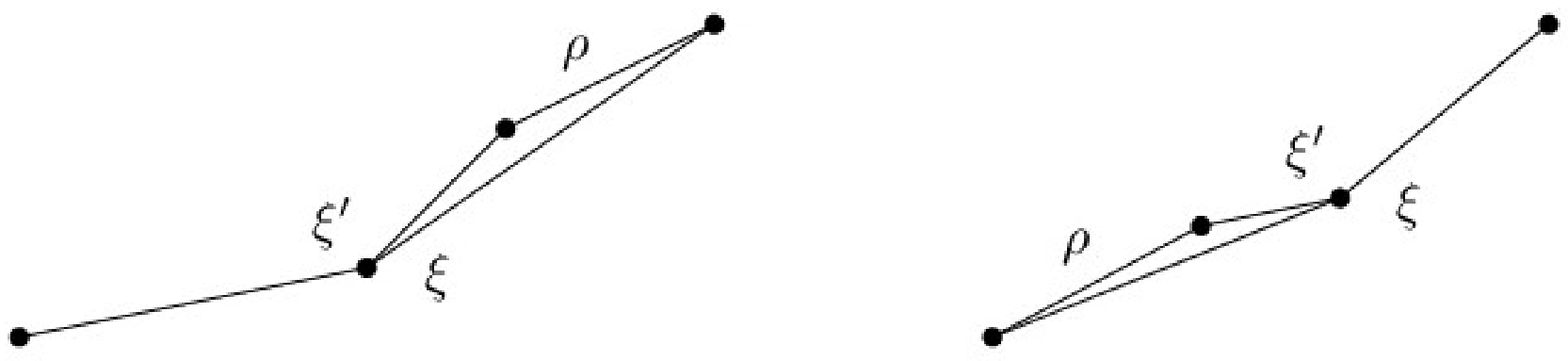}






\end{proposition}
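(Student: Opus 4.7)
The plan is to use the combinatorial description of generic specializations developed in Sections~\ref{ConSpeAbs} and~\ref{ClassBoundaryComp}. By Proposition~\ref{PropOfCentralClassification}, together with the duality sending $\mathcal H_3(S)$ to $\mathcal H_1(S)$, every generic specialization $N^-_\xi$ corresponds to a good exchange of a pair $(0^A_i, 1^B_j) \in C' \times D'$ in the ABS $S = A \oplus B$ attached to $N_\xi$, and we may assume $m_1 < i \leq n_1$ and $1 \leq j \leq n_2$. By Proposition~\ref{PropOfNumbers}, the intermediate sets $\mathcal A^{(n)}$ and $\mathcal B^{(n)}$ consist of consecutive labels, which gives an explicit description of the permutation $\pi$ and, crucially, identifies its single long cycle on $\tilde S^-$ of length $h_1 + h_2$ (obtained by fusing the two $\pi_0$-cycles on $\tilde A$ and $\tilde B$ along the swap).

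Starting from $S^- = S(N^-_\xi)$, I would exhibit a second good exchange whose full modification breaks this long $\pi$-cycle into two $\pi$-stable subcycles: a short one of length $f + g$ carrying $g$ many $1$'s on consecutive labels, and the complementary one. Because the short cycle has the cyclic labeling characteristic of a simple module (Definition just after Example~\ref{ExOfDirectSum}), it is the ABS of a ${\rm DM_1}$-simple $N_\rho$ with $\rho = (f, g)$ and $\gcd(f, g) = 1$. The remaining cycle is the ABS of a generic specialization $N^-_{\xi'}$ of a two-segment minimal module $N_{\xi'}$; one checks that this complementary ABS is itself obtained by a single good exchange from $S(N_{\xi'})$ by matching its structure against Definition~\ref{DefOfC'D'}. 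This produces the decomposition $N^{--}_\xi = N^-_{\xi'} \oplus N_\rho$ asserted in \eqref{EqOfInduction}.

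The Newton polygon $\xi'$ is then read off as the pair of slopes of the complementary cycle, and $\rho = (f, g)$ as the data of the short cycle. A direct lattice-point computation shows $\xi \prec \xi'$, and that the region bounded by $\xi$, $\xi'$ and $\rho$ has area exactly one, which—given $\xi$—uniquely pins down $\rho$ via the saturatedness of $\xi \prec \xi'$. The two pictures in the diagram correspond to the two possible attachment positions of $\rho$ along $\xi$, namely on the high-slope or the low-slope side, which in turn is determined by whether the first exchange primarily perturbed the $\tilde A$-block or the $\tilde B$-block of $S$.

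The principal difficulty will lie in the case analysis forced by the geometry of $(i, j)$. At boundary values such as $i = n_1$, $j = 1$, $j = n_2$, or $i = m_1 + 1$, one of the sets $\mathcal A^{(0)}$, $\mathcal B^{(0)}$ or $I$ degenerates, so the natural candidate for the second exchange must be modified and its goodness reverified via Theorem~\ref{ThmOfCentralClassification}. Verifying the area condition then becomes a routine but lengthy lattice-point check that must be carried out in each case. The exclusion $\xi \neq (0,1) + (1,0)$ reflects precisely the degenerate situation where $N_\xi$ admits no nontrivial $\xi' \succ \xi$ and the construction collapses.
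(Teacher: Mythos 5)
Your overall strategy matches the paper's: translate $N^-_\xi$ into a good exchange in the ABS, note that the small modification fuses the two $\pi_0$-cycles of $S = A\oplus B$ into one long $\pi$-cycle, and then look for a second exchange that splits that cycle into $S_\rho$ and a piece that is a generic specialization of some two-segment $N_{\xi'}$. The paper carries this out via Proposition~\ref{ThmOfCases} and Propositions~\ref{PropOfSaturated}, \ref{PropOfDivide}, \ref{PropOfD1D2}.

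However, the proposal has genuine gaps precisely where the real work lies. You say ``I would exhibit a second good exchange'' and later ``one checks that this complementary ABS is itself obtained by a single good exchange,'' but never identify the exchange. This is the crux. The candidate second exchange is \emph{not} the same for all good first exchanges $(0^A_i, 1^B_j)$: in case $n_1 > m_1+1$, the paper shows you must use the exchange of $0^A_{m_1+1}$ and $1^A_{m_1}$ if $i$ falls ``before'' a threshold index $i_d$ in the path $Q$, and the exchange of $0^A_{i-1}$ and $1^B_j$ if it falls ``after'' (Proposition~\ref{PropOfDivide}); determining that threshold and showing one of the two choices always works requires the whole apparatus of $T_i$, the invariant $\gamma$, the paths $P,Q$, and Propositions~\ref{PropOfPiandd}--\ref{PropOfd'd}. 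Your proposal acknowledges ``the principal difficulty will lie in the case analysis'' but does not supply an argument that a valid second exchange exists in each case, nor that it has the required split-cycle property (this is where Propositions~\ref{PropOfm1+1m1} and \ref{PropOfm_2} come in: one needs $0^A_{m_1+1}<1^A_{m_1}$ and $0^B_{m_2+1}<1^B_{m_2}$ in $S^-$ before the candidate exchanges even make sense).

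Two smaller points. First, the assertion ``a direct lattice-point computation shows $\xi\prec\xi'$'' is ill-posed: $\xi$ and $\xi'$ have different total height (e.g.\ $h_1+h_2$ vs.\ $(h_1-f-g)+h_2$), so they are not comparable under $\prec$. The correct numerical constraint is $f n_1 - g m_1 = 1$ (resp.\ $g m_2 - f n_2 = 1$), which is exactly the area-one condition and in the paper is extracted from \cite[Lemma~5.6]{HarashitaCon} applied to the adjacent pair $1^A_{m_1}\,0^A_{m_1+1}$ or $0^A_{i-1}\,0^A_i$. Second, the verification that the complementary cycle $\Psi$ is not just any specialization but specifically a \emph{generic} one $N^-_{\xi'}$ (i.e.\ $\ell(\Psi') = \ell(\Psi)+1$) requires the explicit reverse-exchange argument of Proposition~\ref{PropOfSaturated}; writing ``matching its structure against Definition~\ref{DefOfC'D'}'' does not supply that.
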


For the above statement, we will determine the diagram of Newton polygons
in Proposition~\ref{ThmOfCases} dividing the cases into the conditions of $\xi$.
We fix notations as follows:
Let $\xi = (m_1, n_1) + (m_2, n_2)$ be a Newton polygon 
satisfying that $\lambda_2 < 1/2 < \lambda_1$.
Set $h_1 = m_1 + n_1$ and $h_2 = m_2 + n_2$.
For the ABS $S = A \oplus B$ associated to the ${\rm DM_1}$ 
$N_\xi = N_{m_1, n_1} \oplus N_{m_2, n_2}$,
let $S^-$ denote the ABS obtained by a good exchange by $0^A_i$ and $1^B_j$,
and let $N_\xi^-$ be the ${\rm DM_1}$ corresponding to $S^-$.
Recall that by the small modification by $0^A_i$ and $1^B_j$,
we obtain sets 
$\{\mathcal A^{(n)}\}_{n = 0, \dots, a}$ and $\{\mathcal B^{(n)}\}_{n = 0, \dots, b}$,
where $a$ (resp. $b$) is the smallest non-negative integer satisfying that
$\mathcal A^{(a)} = \emptyset$ (resp. $\mathcal B^{(b)} = \emptyset$).
Note that
if $a$ and $b$ are positive, then
$\pi^a(0^A_i) = 1^A_{m_1}$ and $\pi^b(1^B_j) = 0^B_{m_2+1}$.
For the ABS $S = (\tilde S, \delta, \pi_0)$,
the generic specialization $S^-$ obtained by exchanging  $0^A_i$ and $1^B_j$
is constructed by ABS's $\{(S^{(n)}, \delta, \pi)\}_{n = 0, \dots, a + b}$.
Since we can use the duality of ${\rm DM_1}$'s
for $S^- \in \mathcal H_3(S)$,
it suffices to see the case that $S^-$ belongs to $\mathcal H_1(S)$.
Hence we assume that $m_1 < i \leq n_1 \text{ and }1 \leq j \leq n_2$.

Theorem~\ref{ThmOfzetaxi} is obtained by 
applying Proposition~\ref{PropOfSpe} inductively.
In Proposition~\ref{ThmOfCases}, we concretely give 
an operation to obtain $N_\xi^{--}$ 
satisfying the equality \eqref{EqOfInduction}.

\begin{proposition}\label{ThmOfCases}
Let
$\xi = (m_1, n_1) + (m_2, n_2)$ be a Newton polygon satisfying 
$0 < \lambda_2 < 1/2 < \lambda_1 < 1$.
Let $N^-_{\xi}$ be any generic specialization of $N_\xi$.  
Let $S^-$ be the ABS associated to $N_\xi^-$.
\begin{enumerate}
\item[(I)] If $n_1 > m_1 + 1$,
then we obtain the equality \eqref{EqOfInduction}
for $N_\xi^{--}$ corresponding to $S^{--}$,
where $S^{--}$ is either of the following:
\begin{itemize}
\item[(a)] $S^{--}$ is the specialization obtained by 
exchanging $0^A_{m_1+1}$ and $1^A_{m_1}$ for $S^-$, or
\item[(b)] $S^{--}$ is the specialization obtained by 
exchanging $0^A_{i-1}$ and $1^B_j$ for $S^-$.
\end{itemize}
For these cases,
the Newton polygon $\xi'$ of \eqref{EqOfInduction} 
is of the form $\xi' = (m_1-f, n_1-g) + (m_2, n_2)$.
\item[(II)] If $n_1 = m_1 + 1$,
then we obtain the equality \eqref{EqOfInduction}
for $N_\xi^{--}$ corresponding to $S^{--}$,
where $S^{--}$ is any of the following:
\begin{itemize}
\item[(c)] $S^{--}$ is the specialization obtained by 
exchanging $0^B_{m_2+1}$ and $1^B_{m_2}$ for $S^-$,
\item[(d)] $S^{--}$ is the specialization obtained by 
exchanging $0^A_i$ and $1^B_{j+1}$ for $S^-$,
\item[(e)] $S^{--}$ is the specialization obtained by 
exchanging $0^A_{h_1}$ and $1^B_{n_2+1}$ for $S^-$.
\end{itemize} 
For (c) and (d), the Newton polygon $\xi'$ 
of \eqref{EqOfInduction} 
is of the form 
$\xi' = (m_1, n_1) + (m_2-f, n_2-g)$,
and for the case (e), we have $\xi' = (m_1-1, n_1-1) + (m_2, n_2)$.
\end{enumerate}
\end{proposition}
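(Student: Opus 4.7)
The plan is to verify the proposition case by case, leveraging the explicit combinatorial description of good exchanges developed in Section~\ref{CombSpe} together with the classification in Theorem~\ref{ThmOfCentralClassification}. For a fixed generic specialization $N_\xi^-$ corresponding to an ABS $S^-$ obtained from $S = A \oplus B$ by exchanging $0^A_i$ and $1^B_j$, the strategy is to identify a second good exchange on $S^-$ whose result $S^{--}$ splits as $S^-_{\xi'} \oplus S_\rho$, where $S_\rho$ is the admissible ABS of a simple ${\rm DM}_1$ $N_\rho$ and $S^-_{\xi'}$ is a generic specialization of $N_{\xi'}$.

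First, I would pin down the structure of $S^-$ in detail: the sequence of ordered sets $S^{(n)}$ produced by the algorithms of Definition~\ref{DefOfTAn} and Definition~\ref{DefOfTBn} determines explicitly which elements of $\tilde A \cup \tilde B$ have been shuffled and to which positions. By Proposition~\ref{PropOfNumbers}, the displaced blocks form intervals of consecutive labels, so one has direct control over their relative positions in $S^-$. With this picture in hand, I would then check, in each sub-case (a)--(e), that the indicated pair forms an admissible small modification in $S^-$ and satisfies the good-exchange criterion of Theorem~\ref{ThmOfCentralClassification}, i.e.\ the new sequences $\mathcal A^{(n)}$, $\mathcal B^{(n)}$ for the second modification contain no forbidden inverse image.

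Once a candidate second exchange is shown to be good, the next step is to exhibit the splitting $S^{--} = S^-_{\xi'} \oplus S_\rho$. This amounts to identifying a $\pi$-stable subset of $S^{--}$ whose elements, sorted by binary expansion, reproduce the pattern of a simple ${\rm DM}_1$ $N_\rho$ for a unique coprime pair $\rho = (f,g)$; the complementary subset is then an admissible ABS from whose shape one reads off $\xi'$ in the forms stated in (a)--(e). That the complement is genuinely a generic specialization of $N_{\xi'}$ is checked by a length computation, and the area condition on $\rho$ follows automatically: since $\ell(S^{--}) = \ell(S) - 2 = \ell(S_{\xi'}) + \ell(S_\rho) - 2$ and the length of a minimal two-segment ABS with slopes separating $1/2$ equals $m_2 n_1 - m_1 n_2$, the lattice region bounded by $\xi$, $\xi'$ and $\rho$ has area one.

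The main obstacle is the correct matching between a given $N_\xi^-$ (parameterised by the pair $(i,j)$ subject to the goodness constraints of Definition~\ref{DefOfC'D'}) and the sub-case from the proposition that produces a clean split. Heuristically, case (I)(a) will apply when $a > 0$, so that $1^A_{m_1} = \pi^a(0^A_i)$ has been displaced and now sits just to the right of $0^A_{m_1+1}$ in $S^-$; case (I)(b) will apply when $a = 0$ (forced by $i = n_1$), leaving an adjacent pair $0^A_{i-1}, 1^B_j$ available in $S^-$; and the three sub-cases in (II) are distinguished by the interplay between $a$, $b$ and the degeneration forced by the constraint $n_1 = m_1 + 1$. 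Carrying out these verifications, and confirming the specific forms of $\xi'$ and $\rho$ asserted in each branch, is a delicate but routine case analysis using the inductive descriptions of $\mathcal A^{(n)}$ and $\mathcal B^{(n)}$ provided by Proposition~\ref{PropOfT} and Proposition~\ref{PropOfInducB}.
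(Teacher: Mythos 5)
Your overall strategy — find a second good exchange on $S^-$ that produces a direct-sum decomposition $S^{--} = S^-_{\xi'}\oplus S_\rho$ with $S_\rho$ the ABS of a simple ${\rm DM}_1$, then identify $\rho$ and $\xi'$ — is indeed what the paper does. But your heuristic for matching a given generic specialization to the sub-case (a) versus (b) is wrong, and that is precisely where the real work lies.

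You claim that (I)(a) applies whenever $a>0$ (i.e.\ whenever $\mathcal A^{(0)}\neq\emptyset$), on the grounds that $1^A_{m_1}=\pi^a(0^A_i)$ then ``sits just to the right of $0^A_{m_1+1}$'' in $S^-$. This is false: Proposition~\ref{PropOfm1+1m1} only gives $0^A_{m_1+1} < 1^A_{m_1}$ in $S^-$, not adjacency, and in general there can be elements of $Q$ between them (elements belonging to the $\Psi$-component), in which case the exchange of $0^A_{m_1+1}$ and $1^A_{m_1}$ does \emph{not} produce the desired splitting. The paper's proof handles this via Proposition~\ref{PropOfSaturated}, which gives a sufficient condition for (a) or (b) to yield a clean split, and then the genuine content of the argument is Proposition~\ref{PropOfDivide}: ordering the elements of $C$ along the path $Q$, it determines a threshold index $d$ (defined via the quantity $\gamma = |T_{i_1}|$ in Proposition~\ref{PropOfC0}) so that (a) applies exactly for $0^A_{i_x}$ with $x\le d$ and (b) applies exactly for $x>d$. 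Establishing this dichotomy requires the whole chain of auxiliary results about $T_i$, the auxiliary index $d'$ of Notation~\ref{DefOfd'}, and Propositions~\ref{PropOfIffi-1j}, \ref{PropOfPiandd}, \ref{PropOfd'L}, \ref{PropOfd'd} — none of which appears in your plan. The same missing structure recurs in case~(II) with the sets $D_1,D_2$, the threshold $e$, and Proposition~\ref{PropOfD1D2}.

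A secondary gap: your claim that the area condition ``follows automatically'' from $\ell(S^{--}) = \ell(S)-2$ is not a correct derivation, since the length of a direct sum of ABS's is not the sum of the lengths of the components (it includes cross-inversions). The paper instead identifies the $S_\rho$-component with the output of \cite[Lemma~5.6]{HarashitaCon} applied to an adjacent pair, yielding $fn_1 - gm_1 = 1$ (resp.\ $gm_2-fn_2=1$ in case~(II)); this linear relation, together with the explicit form of $\xi'$, is what pins down the unit area.
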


\subsection{Proof of Proposition~\ref{ThmOfCases} (I)}
\label{PfOfFormerThmOfCases}

Let us show Proposition~\ref{ThmOfCases} (I).
Assume $n_1 > m_1+1$.
By the construction of $S^-$
which is obtained by a good exchange of $0^A_i \in C'$ and $1^B_j \in D'$, 
we have $0^A_{i-1} < 1^B_j$ in this ABS;
see Definition~\ref{DefOfC'D'} for the definition of sets $C'$ and $D'$.
In fact, it is clear that $0^A_{i-1} < 1^B_j$ in $S^{(0)}$.
Moreover, if $1^B_j < 0^A_{i-1}$ is true in $S^-$,
then there exists a natural number $n$ with $n < a$ such that
$\pi^n(0^A_i) = 0^A_{i-1}$.
Then $\mathcal A^{(n-1)}$ contains the inverse image of $1^B_j$.
This contradicts with $\ell(S^-) = \ell(S) - 1$.
Here, to treat the case (a) of Proposition~\ref{ThmOfCases}, 
in Proposition~\ref{PropOfm1+1m1}, we will see that $0^A_{m_1+1} < 1^A_{m_1}$ holds in $S^-$.
The following notation is useful for showing some properties of ABS's.

\begin{notation}
Let $(\tilde T, \delta, \pi)$ be an ABS.
For an element $t$ of $\tilde T$,
we often express a subset 
$\{\pi^m(t) \mid 0 \leq m \leq n \}$ of $\tilde T$ as
$$t \rightarrow \pi(t) \rightarrow \cdots 
\rightarrow \pi^n(t),$$
and we call this diagram a {\it path}.
\end{notation}

\begin{proposition}\label{PropOfm1+1m1}
We have $0^A_{m_1+1} < 1^A_{m_1}$
in the ABS $S^-$ obtained by a good exchange of 
$0^A_i \in C$ and $1^B_j \in D'$.
Moreover, there exists no non-negative integer $n$
such that $\pi^n(0^A_i) = 0^A_{m_1+1}$ with $n \leq a$.
\end{proposition}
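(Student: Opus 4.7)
My plan is to establish the two assertions in reverse order: the ``moreover'' part first, then deduce the ordering claim by positional bookkeeping.

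For the moreover part, I would translate the claim into the cyclic arithmetic of $\pi$ on $\tilde A$. Since $0^A_i \in C$ entails a good exchange and $a \geq 1$, the preamble of Section~\ref{ConstGoodSpe} guarantees $\alpha_a = 1^A_{m_1}$; this forces the path $\alpha_0, \ldots, \alpha_a$ to remain in $\tilde A$ (otherwise the orbit would exit to $\tilde B$, cycle through $\tilde B$, and re-enter at $0^A_i$, never reaching $1^A_{m_1}$). On $\tilde A$, the map $\pi$ is the cyclic shift $t_r \mapsto t_{r - m_1 \bmod h_1}$; writing $e$ for the multiplicative inverse of $m_1$ modulo $h_1$, the equation $\alpha_n = 0^A_{m_1+1}$ becomes $n \equiv a - e \pmod{h_1}$, so the moreover claim reduces to $a < e$. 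To prove $a < e$ I argue by contradiction via Theorem~\ref{ThmOfCentralClassification}. Assuming $a \geq e$, I track $0^A_{i+1} \in \mathcal A^{(0)}$ (which exists since $i \leq n_1 - 1$ for $0^A_i \in C$): the identity $e m_1 \equiv 1 \pmod{h_1}$ gives $\pi^{e-1}(0^A_{i+1}) = 0^A_{i+m_1}$, and the indices of $\alpha_m$ and $\pi^m(0^A_{i+1})$ differ by $1$ modulo $h_1$, so their $\delta$-values can disagree only when the $\alpha$-orbit straddles the boundary at $m_1$ (occurring exactly at $m = a$) or at $h_1$ (occurring exactly at $m = a + 1$, with $a + 1 \leq h_1 - 1$ since $ie \not\equiv 0 \pmod{h_1}$). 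Both exceptional steps lie outside $\{0, \ldots, e-1\}$ when $a \geq e$, so all intermediate $\delta$-values match and Corollary~\ref{CoroOfT} places $0^A_{i+m_1} \in \mathcal A^{(e-1)}$; since $e - 1 \leq a - 1$, this contradicts the good exchange and forces $a < e$.

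With the moreover part in hand, the ordering claim follows by positional bookkeeping. By the moreover part, $0^A_{m_1+1}$ never coincides with any $\alpha_n$ for $n \leq a$, so it is untouched by all $\mathcal A$-phase moves; likewise $1^A_{m_1} = \alpha_a$ is moved only at step $a$. Hence in $S^{(a-1)}$ the two elements retain their original relative order $1^A_{m_1} < 0^A_{m_1+1}$ inherited from $S$. Applying Proposition~\ref{PropOfNumbers} at $n = a - 1$ with $\alpha_{a-1} = 0^A_{2m_1}$ yields $\mathcal A^{(a-1)} = \{0^A_{2m_1+1}, \ldots, 0^A_{2m_1 + u}\}$ for $u = |\mathcal A^{(a-1)}| \geq 1$, whose $\pi$-image is $\{0^A_{m_1+1}, \ldots, 0^A_{m_1+u}\}$ with $\pi(t_{\max}) = 0^A_{m_1+u}$. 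Since $u \geq 1$, moving $1^A_{m_1}$ past $\pi(t_{\max})$ at step $a$ necessarily slides it past $0^A_{m_1+1}$ as well, giving $0^A_{m_1+1} < 1^A_{m_1}$ in $S^{(a)}$. The subsequent $\mathcal B$-phase moves only permute elements of $\tilde B$ by Lemma~\ref{LemOfSubB}, so this inequality persists in $S^- = S^{(a + b)}$.

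The main obstacle is the $\delta$-matching verification in the second step: one must carefully identify the two boundary-crossing steps $m = a$ and $m = a + 1$, confirm that neither falls inside $\{0, \ldots, e-1\}$ when $a \geq e$, and check that the final step $m = e - 1$ itself produces matching $\delta$-values (which holds because $i + m_1 - 1$ and $i + m_1$ both exceed $m_1$ for $i \in \{n_1 - m_1 + 1, \ldots, n_1 - 1\}$). The remaining parts of the argument are structural bookkeeping once Proposition~\ref{PropOfNumbers} and the good-exchange criterion are available.
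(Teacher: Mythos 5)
Your proof is correct, and it takes a genuinely different route from the paper's. The paper proves the ordering claim first: it bounds $a\leq h_1-2$, compares binary expansions of $1^A_{m_1}$ and $0^A_{m_1+1}$ (which agree in the first $h_1-2$ digits), tracks the ``shadow'' orbit $0^A_{i+1}\to\pi(0^A_{i+1})\to\cdots$ via Corollary~\ref{CoroOfT} to land $0^A_{m_1+1}$ in $\pi(\mathcal A^{(a-1)})$, and then deduces the ``moreover'' claim from the ordering claim together with Proposition~\ref{PropOf1}. You instead prove the ``moreover'' claim first, translating the problem into congruences modulo $h_1$ via the multiplicative inverse $e$ of $m_1$, then identifying exactly where the $\delta$-values of $\alpha_m$ and $\pi^m(0^A_{i+1})$ can disagree (the two boundary-crossing steps $m=a$ and $m=a+1$), and deriving the contradiction $0^A_{i+m_1}\in\mathcal A^{(e-1)}$ when $a\geq e$. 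You then obtain the ordering claim by explicit positional bookkeeping using Proposition~\ref{PropOfNumbers} to identify $\mathcal A^{(a-1)}$ concretely, rather than via binary expansions. The two proofs share the key idea of tracking the neighbor $0^A_{i+1}$, but the organization, the arithmetic formalism, and the lemmas invoked are different; your version also yields $0^A_{m_1+1}\notin C$ as a transparent byproduct ($i=m_1+1$ forces $a=e$, contradicting $a<e$), whereas the paper postpones that to Lemma~\ref{LemOfFirst}. One small inaccuracy: the final persistence step cites Lemma~\ref{LemOfSubB}, but that lemma is about the sets $\mathcal B^{(n)}$, not the moved elements $\beta_n$; what you actually need is $\beta_n\in\tilde B$ for $1\leq n\leq b$, which holds for good exchanges because $I=\emptyset$ forces the $\pi$-orbit of $1^B_j$ to reach $0^B_{m_2+1}$ before it can exit $\tilde B$ at $0^B_{j+m_2}$, so the correct conclusion survives but should be justified by that orbit argument rather than by Lemma~\ref{LemOfSubB}.
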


\begin{proof}
By the condition $n_1 - m_1 > 1$,
we have $h_1 > 2$. 
First, to show the former statement,
let us see that
the non-negative integer $a$ is not greater than $h_1 - 2$.
For a good exchange,
the non-negative integer $a$ satisfies that
$\pi^{a}(0^A_i) = 1^A_{m_1}$.
We clearly have $a < h_1$.
If $a = h_1 - 1$,
we have then $i = h_1$.
This contradicts with the condition of the natural number $i$.
Hence we have $a \leq h_1 - 2$.

In the ABS $A$ which is associated to the simple ${\rm DM_1}$ $N_{m_1,n_1}$,
binary expansions of $1^A_{m_1}$ and $0^A_{m_1+1}$ 
are given by 
\begin{eqnarray}
b(1^A_{m_1}) &=& 0.b_1b_2\cdots b_{h_1-2}01\cdots,
\label{BinOf1}
\\
b(0^A_{m_1+1}) &=& 0.b_1b_2\cdots b_{h_1-2}10\cdots.
\label{BinOf0}
\end{eqnarray}
For elements $0^A_i$ and $0^A_{i+1}$ of $S^-$,
we have two paths as follows:
\begin{eqnarray}
0^A_i &\xrightarrow{\pi} \cdots \xrightarrow{\pi}& 1^A_{m_1}, \label{DiagOf1}\\
0^A_{i+1} &\xrightarrow{\pi} \cdots \xrightarrow{\pi}& 0^A_{m_1+1}. \label{DiagOf0}
\end{eqnarray}
It is clear that $0^A_{i+1}$ belongs to $\mathcal A^{(0)}$.
Moreover, the above binary expansions and Corollary~\ref{CoroOfT}
induce that
$\pi^n(0^A_{i+1})$ belongs to the set $\pi(\mathcal A^{(n-1)})$
for every natural number $n$ with $n \leq a$.
In particular, we apply this property for $n = a$,
and we have $0^A_{m_1+1} \in \pi(\mathcal A^{(a-1)})$.
By the construction of $S^{(a)}$,
we obtain $0^A_{m_1+1} < 1^A_{m_1}$ in $(S^{(a)}, \delta, \pi)$ and $S^-$.

Let us see the latter statement.
Suppose that there exists a non-negative integer $n$
with $n \leq a$ satisfying that $\pi^n(0^A_i) = 0^A_{m_1+1}$.
If this hypothesis leads $1^A_{m_1} < 0^A_{m_1+1}$,
then we have a contradiction with the former statement.
Suppose that $0^A_{m_1+1} < 1^A_{m_1}$ in $S^-$.
Then the set $\pi(\mathcal A^{(a-1)})$ contains the element $\pi^n(0^A_i)$.
It implies that the set $\mathcal A^{(a-1)}$ contains the element $\pi^{n-1}(0^A_i)$.
Proposition~\ref{PropOf1} concludes that this is a contradiction,
and hence $1^A_{m_1} < 0^A_{m_1+1}$ holds.
\end{proof}

In the same way, we obtain the following assertion
which is used in the proof of Proposition~\ref{ThmOfCases} (II).

\begin{proposition}\label{PropOfm_2}
For a good exchange of $0^A_i \in C'$ and $1^B_j \in D$, 
we have $0^B_{m_2+1} < 1^B_{m_2}$ in the ABS $S^-$.
Moreover, there exists no non-negative integer $n$
such that $\pi^n(1^B_j) = 1^B_{m_2}$
with $n \leq b$.
\end{proposition}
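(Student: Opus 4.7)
The plan is to mirror the proof of Proposition~\ref{PropOfm1+1m1} by interchanging the roles of $A$ and $B$, of ``$0$'' and ``$1$'', and by replacing the ``rightmost element of $\pi(\mathcal A^{(n-1)})$'' with the ``leftmost element of $\pi(\mathcal B^{(n-1)})$''; this reflects the fact that in Definition~\ref{DefOfTBn} the elements $\beta_n$ are pushed leftward, whereas the $\alpha_n$ in Definition~\ref{DefOfTAn} are pushed rightward.

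First, I would establish $b \leq h_2 - 2$. For a good exchange, $\pi^b(1^B_j) = 0^B_{m_2+1}$ and the $\pi_B$-orbit has length $h_2$, so $b < h_2$; if $b = h_2 - 1$, tracing the orbit backward would force $j = n_2 + 1$, contradicting $1 \leq j \leq n_2$. Note also that $1^B_j \in D$ forces $j \geq 2$, whence $n_2 \geq 2$ and $h_2 > 2$. Next, exactly as in \eqref{BinOf1}--\eqref{BinOf0}, the binary expansions of the two ``transition'' elements of $B$ satisfy
\begin{eqnarray*}
b(1^B_{m_2}) &=& 0.b_1 b_2 \cdots b_{h_2-2}\, 0\, 1 \cdots, \\
b(0^B_{m_2+1}) &=& 0.b_1 b_2 \cdots b_{h_2-2}\, 1\, 0 \cdots,
\end{eqnarray*}
agreeing on the first $h_2 - 2$ digits. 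Since $j \geq 2$, the element $1^B_{j-1}$ lies in $\mathcal B^{(0)}$, and I would consider the two parallel paths of length $b$
$$1^B_j \xrightarrow{\pi} \cdots \xrightarrow{\pi} 0^B_{m_2+1}, \qquad 1^B_{j-1} \xrightarrow{\pi} \cdots \xrightarrow{\pi} 1^B_{m_2}.$$
The above binary-expansion match gives $\delta(\pi^n(1^B_{j-1})) = \delta(\beta_n)$ for all $0 \leq n \leq b - 1$, so Proposition~\ref{PropOfInducB} yields $\pi^n(1^B_{j-1}) \in \mathcal B^{(n)}$ for every such $n$. Taking $n = b - 1$ gives $\pi^b(1^B_{j-1}) = 1^B_{m_2} \in \pi(\mathcal B^{(b-1)})$. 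The passage from $S^{(a+b-1)}$ to $S^{(a+b)} = S^-$ moves $\beta_b = 0^B_{m_2+1}$ to the left of $\pi(t_{\rm min})$, with $t_{\rm min}$ the minimum of $\mathcal B^{(b-1)}$, so every element of $\pi(\mathcal B^{(b-1)})$ lies to the right of $0^B_{m_2+1}$ in $S^-$, establishing $0^B_{m_2+1} < 1^B_{m_2}$.

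For the second assertion I would mirror the last paragraph of the proof of Proposition~\ref{PropOfm1+1m1}. Suppose $\pi^n(1^B_j) = 1^B_{m_2}$ for some $n \leq b$; we have $n \geq 1$ because $1^B_j \neq 1^B_{m_2}$ (as $j \leq n_2 < m_2$), and the case $n = b$ is impossible since $\pi^b(1^B_j) = 0^B_{m_2+1} \neq 1^B_{m_2}$. For $1 \leq n < b$, the first assertion gives $0^B_{m_2+1} < 1^B_{m_2}$ in $S^-$, hence $1^B_{m_2} \in \pi(\mathcal B^{(b-1)})$ by the construction above, so $\pi^{n-1}(1^B_j) \in \mathcal B^{(b-1)}$, contradicting Proposition~\ref{PropOf0}.

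The only delicate point is the parallel-path step: one must verify that $1^B_{j-1}$ remains in $\mathcal B^{(n)}$ for every $n$ up to $b - 1$, which requires the $\delta$-values along its orbit to coincide with those along the $\beta_n$-orbit throughout. The bound $b \leq h_2 - 2$, combined with the $(h_2 - 2)$-digit agreement of the binary expansions, guarantees that the two orbits stay ``in phase'' for the full duration, which is exactly what makes Proposition~\ref{PropOfInducB} applicable here.
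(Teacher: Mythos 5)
Your proposal correctly mirrors the argument of Proposition~\ref{PropOfm1+1m1}, which is precisely what the paper does: the paper's proof of Proposition~\ref{PropOfm_2} is simply the remark that it is ``obtained in the same way.'' Two small points worth flagging. First, the claim that $b = h_2 - 1$ would force $j = n_2 + 1$ is a computational slip: tracing back gives $1^B_j = \pi(0^B_{m_2+1}) = 1^B_1$, so $j = 1$, and that is what contradicts $1^B_j \in D$; the conclusion $b \leq h_2 - 2$ still holds. Second, when you invoke Proposition~\ref{PropOf0} you should check its hypothesis $\mathcal B^{(b-1)} \subset \tilde B$, which, unlike the analogous condition for $\mathcal A^{(n)}$, is not automatic from the definition; it does hold here because the exchange is good (see the remark following Definition~\ref{DefOfC'D'}), but this asymmetry is the one place where the mirror is not completely mechanical.
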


\begin{notation}
For the ABS $A$ associated to $N_{m_1,n_1}$,
we define sub-paths $P$ and $Q$ of $A$ by the following:
\begin{eqnarray*}
P &:& 1^A_{m_1} \rightarrow 0^A_{h_1} \rightarrow 
\cdots \rightarrow 0^A_{2m_1+1},\\
Q &:& 0^A_{m_1+1} \rightarrow 1^A_1 \rightarrow 
\cdots \rightarrow 0^A_{2m_1}.
\end{eqnarray*}
Clearly $A$ is a disjoint union of $P$ and $Q$ as sets.
\end{notation}

The above paths $P$ and $Q$ are useful.
For instance, in the case (a) of Proposition~\ref{ThmOfCases},
for the ABS $S_\rho = (\tilde S_\rho, \delta_\rho, \pi_\rho)$
which is associated to $N_\rho$ of Proposition~\ref{PropOfSpe},
the set $\tilde S_\rho$ consists of all elements of $P$.
Moreover, the path $Q$ has the following property:

\begin{lemma}\label{LemOfCQ}
The set $C$ is contained in $Q$.
\end{lemma}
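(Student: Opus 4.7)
Since $\tilde A=P\sqcup Q$ and $C=C'\setminus\{0^A_{n_1}\}$ with $0^A_{n_1}\in P$ (being the second $P$-element $\pi(0^A_{h_1})=0^A_{n_1}$), proving $C\subset Q$ reduces to showing that no $0^A_i\in P$ with $m_1<i\le n_1-1$ lies in $C'$. Fix such an $i$ and any $1^B_j$ with $1\le j\le n_2$, and form the small modification $\pi$ by $0^A_i$ and $1^B_j$. Because $0^A_i$ lies strictly past $0^A_{h_1}$ in the chain $P$, its predecessor there is the zero $\pi_0^{-1}(0^A_i)=0^A_{i+m_1}$, and $m_1<i+m_1\le n_1$ shows that it still lies in the middle-zero block of $S$.

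The plan is to prove $0^A_{i+m_1}\in\mathcal A^{(0)}$; Proposition~\ref{PropOfCentralClassification}~(i) will then force the exchange to fail for every admissible $j$, giving $0^A_i\notin C'$. I verify the four defining conditions of $\mathcal A^{(0)}$. Trivially $0^A_{i+m_1}\in S_A^{(0)}$ and $\delta(0^A_{i+m_1})=0$. For the inequality $0^A_{i+m_1}<0^A_i$ in $S^{(0)}$: after the swap the element $0^A_i$ occupies the former position of $1^B_j$, which by Lemma~\ref{LemOf1/2SepABS} lies strictly to the right of the middle-zero block $\{0^A_{m_1+1},\dots,0^A_{n_1}\}$, whereas $0^A_{i+m_1}$ stays in that block. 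For $\pi(0^A_i)<\pi(0^A_{i+m_1})$ in $S^{(0)}$: by construction of the small modification $\pi(0^A_{i+m_1})=1^B_j$, which in $S^{(0)}$ sits at the former position of $0^A_i$ inside the middle-zero block; on the other hand $\pi(0^A_i)=\pi_0(0^A_i)$ equals $0^A_{i-m_1}$ if $i>2m_1$ and $1^A_{i-m_1}$ if $m_1<i\le 2m_1$, and in either case this image lies strictly to the left of $0^A_i$ in the original $S$, hence strictly to the left of $1^B_j$ in $S^{(0)}$.

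The only mildly delicate step is this last comparison, which needs a case split on whether $i>2m_1$ together with careful bookkeeping of positions after the swap $0^A_i\leftrightarrow 1^B_j$; everything else is a direct application of the definitions in Section~\ref{CombSpe}, the explicit ordering of $S$ from Lemma~\ref{LemOf1/2SepABS}, and the genericity criterion in Proposition~\ref{PropOfCentralClassification}~(i).
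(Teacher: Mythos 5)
Your argument has a genuine gap at the step where you claim ``$m_1<i+m_1\le n_1$ shows that it still lies in the middle-zero block of $S$.'' You give no justification for $i\le n_1-m_1$, and it is simply false for elements of $P$ in general. For instance take $(m_1,n_1)=(3,8)$, $h_1=11$; then
$$P:\ 1^A_3\to 0^A_{11}\to 0^A_8\to 0^A_5\to 1^A_2\to 0^A_{10}\to 0^A_7,$$
so $0^A_7\in P$ with $m_1<7\le n_1-1$, but $i+m_1=10>8=n_1$. By Lemma~\ref{LemOf1/2SepABS} the element $0^A_{10}$ sits in the second zero block $\{0^A_{n_1+1},\dots,0^A_{h_1}\}$, which lies strictly to the right of every $1^B_l$ with $l\le n_2$; hence in $S^{(0)}$ one has $0^A_{10}>0^A_7$ (the latter now occupying the former slot of $1^B_j$), and $0^A_{10}\notin\mathcal A^{(0)}$. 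Your intended appeal to Proposition~\ref{PropOfCentralClassification}(i) at $n=0$ therefore does not rule out $0^A_7\in C'$. (Taking $(m_2,n_2)=(2,1)$ and $j=1$, one computes $\mathcal A^{(0)}=\{0^A_8\}$, $\mathcal A^{(1)}=\{0^A_5\}$, $\mathcal A^{(2)}=\{1^A_2\}$, $\mathcal A^{(3)}=\{0^A_{10}\}$: the offending membership only appears at $n=3$, so one would have to chase $0^A_{i+m_1}$ through higher $\mathcal A^{(n)}$, which is exactly the bookkeeping your short argument was meant to avoid.)

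The paper's proof sidesteps this and is uniform in $i$. If $0^A_i\in C\cap P$, then the $\pi$-orbit of $0^A_i$ (which agrees with the $\pi_0$-orbit inside $\tilde A$ until it exits $A$ at $0^A_{i+m_1}$) must traverse the tail of $P$ and then enter $Q$ at $0^A_{m_1+1}$ strictly before it reaches the initial element $1^A_{m_1}$ of $P$; since for a good exchange $\pi^a(0^A_i)=1^A_{m_1}$, this yields $\pi^n(0^A_i)=0^A_{m_1+1}$ for some $n<a$, directly contradicting the second assertion of Proposition~\ref{PropOfm1+1m1}. No analysis of $\mathcal A^{(0)}$ and no case split on the location of $0^A_{i+m_1}$ is needed. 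To repair your approach you would either have to restrict to $i\le n_1-m_1$ and supply a separate argument for $n_1-m_1<i\le n_1-1$, or replace the $\mathcal A^{(0)}$-membership check with the orbit argument just described.
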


\begin{proof}
Suppose that there exists an element $0^A_i$ of $C$ 
which belongs to $P$.
Then there exists a natural number $n$ such that 
$\pi^n(0^A_i) = 0^A_{m_1+1}$ with $n < a$.
This contradicts with the latter statement of Proposition~\ref{PropOfm1+1m1}.
\end{proof}

Here, let us consider the construction of the ABS
obtained by (a) or (b) of Proposition~\ref{ThmOfCases}.
The ABS $S^-$ obtained by
the small modification by $0^A_i$ and $1^B_j$
is described as the following diagram:
$$\xymatrix@=15pt{\bullet \ar[d] & \cdots\ar[l] & 
1^A_{m_1}\ar[l] & 0^A_{2m_1}\ar[l] & \cdots\ar[l] & 0^A_i\ar[l] & 
0^B_{j+m_2}\ar[l] & \cdots\ar[l] \\
\bullet \ar[r] & \cdots\ar[r] & 0^A_{2m_1+1}\ar[r] & 
0^A_{m_1+1}\ar[r] & 
\cdots\ar[r] & 0^A_{i+m_1}\ar[r] & 
1^B_j\ar[r] & \cdots\ar[u]_{\text{elements of }B}}$$
First, let us consider the case (a).
By constructing the small modification by $0^A_{m_1+1}$ and $1^A_{m_1}$, 
images of $0^A_{2m_1+1}$ and $0^A_{2m_1}$
are switched, and we obtain the ABS
which consists of two components as follows:
$$\xymatrix@=15pt{\bullet \ar[d]& \cdots\ar[l] & 
1^A_{m_1}\ar[l] & 0^A_{2m_1}\ar[d] \ar@{.>}[l]|{\times} & 
\cdots \ar[l] & 0^A_i\ar[l] & 
0^B_{j+m_2}\ar[l] & \cdots\ar[l] \\
\bullet \ar[r] & \cdots\ar[r] & 
0^A_{2m_1+1}\ar[u] \ar@{.>}[r]|{\times}& 
0^A_{m_1+1}\ar[r] & \cdots\ar[r] & 0^A_{i+m_1}\ar[r] & 
1^B_j\ar[r] & \cdots\ar[u]_{\text{elements of }B}}$$
The former component consists of all elements of $P$.
The ${\rm DM_1}$ corresponding to this component 
is described as $N_\rho$ with a Newton polygon $\rho = (f, g)$.
Since this component coincides with the component obtained from $A$ by
applying \cite[Lemma~5.6]{HarashitaCon} to 
the adjacent $1_{m_1}^A\ 0_{m_1+1}^A$,
we have $f n_1 - g m_1 = 1$.
Next, let us see the case (b).
By constructing the small modification by $0^A_{i-1}$ and $1^B_j$, 
images of $0^A_{i-1+m_1}$ and $0^A_{i+m_1}$
are switched, and we obtain the ABS
which consists of two components:
$$\xymatrix{\bullet \ar[d] & \cdots \ar[l] & 
0^A_{i-1}\ar[l] &
0^A_{i-1+m_1}\ar[d] \ar@{.>}[l]|{\times}&
\cdots\ar[l] & \bullet \ar[l] 
\\ \bullet \ar[r] & \cdots \ar[r] & 
0^A_{i+m_1}\ar[u] \ar@{.>}[r]|{\times} & 
1^B_j\ar[r]_{\ \ \ \ \ \ \ \ \ \ \ \ \ \ \ \ \, \text{elements of }B} & 
\cdots\ar[r] & 0^A_i\ar[u]}$$
The latter component contains elements $\pi^n(0^A_i)$
for all non-negative integers $n$ with $n \leq a$.
The former component corresponds to the ${\rm DM_1}$ $N_\rho$
with a Newton polygon $\rho = (f, g)$.
Since this component coincides with the component obtained from $A$ by
applying \cite[Lemma~5.6]{HarashitaCon} to 
the adjacent $0_{i-1}^A\ 0_{i}^A$,
we have $f n_1 - g m_1 = 1$.

Hence for the cases (a) and (b), the ABS $S^{--}$ has
two components, where one is associated to the ${\rm DM_1}$ $N_\rho$.
We will show that for the other component,
there exists a Newton polygon $\xi'$ such that
this component corresponds to the ${\rm DM_1}$ $N_{\xi'}^-$
satisfying \eqref{EqOfInduction}
of Proposition~\ref{ThmOfCases}.

\begin{definition}
We define $C_1$ (resp. $C_2$) to be the subset of $C'$ consisting of
elements $0^A_i$ satisfying that
for $N_{\xi}^-$ obtained by a small modification by $0^A_i$ and $1^B_j \in D'$,
we have the equality \eqref{EqOfInduction}
by the case (a) (resp. (b)) of Proposition~\ref{ThmOfCases}.
\end{definition}

We denote by $S''$ the ABS obtained by 
the small modification by $0^A_{m_1+1}$ and $1^A_{m_1}$
or the small modification by $0^A_{i-1}$ and $1^B_j$ for $S^-$.
By the above, we see that the ABS $S''$ consists of two components,
and a component of $S''$ is associated to $N_\rho$.
Let $\Psi$ be the other component of $S''$.
To show $C_1 \cup C_2 = C'$,
we give a condition that an element $t$ of $C'$
belongs to $C_1$ or $C_2$ in Proposition~\ref{PropOfSaturated}.
In the proof of this proposition, we give a method to
determine the structure of the ABS $\Psi$.

\begin{proposition}\label{PropOfSaturated}
Let $S^-$ be the generic specialization 
obtained by a small modification by $0^A_i$ and $1^B_j$.
If $\Psi$ contains no element $t$ satisfying that
$0^A_{m_1+1} < t < 1^A_{m_1}$ (resp. $0^A_{i-1} < t < 1^B_j$) in $S^-$,
then $0^A_i$ belongs to $C_1$ (resp. $C_2$).
\end{proposition}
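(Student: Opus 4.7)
My plan is to establish the proposition in parallel for cases (a) and (b); I will describe the argument for case (a), the argument for case (b) being entirely analogous once one replaces the swap site $(0^A_{m_1+1}, 1^A_{m_1})$ by $(0^A_{i-1}, 1^B_j)$. The claim to verify is that, under the hypothesis, the small modification of $S^-$ by $0^A_{m_1+1}$ and $1^A_{m_1}$ is a good exchange whose resulting full modification $S^{--}$ decomposes as $\Psi \oplus S_\rho$ with $\Psi$ corresponding to a generic specialization $N^-_{\xi'}$ of $N_{\xi'}$ for $\xi' = (m_1-f, n_1-g) + (m_2,n_2)$.

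First I would inspect how the small modification acts on the arrow diagram of $S^-$: as in the diagram preceding Definition~\ref{DefOfC'D'}, it swaps the images of $0^A_{2m_1+1}$ and $0^A_{2m_1}$, splitting the orbit structure of $\pi$ on $\tilde A$ into the path $P$ (which, by the discussion preceding this proposition and \cite[Lemma~5.6]{HarashitaCon}, produces $S_\rho$) and a second orbit $\Psi$ containing $\tilde B$ together with the remaining $A$-elements. Next I would show that the hypothesis---no $\Psi$-element strictly between $0^A_{m_1+1}$ and $1^A_{m_1}$ in $S^-$---forces the sets $\mathcal A^{(n)}$ and $\mathcal B^{(n)}$ arising in the construction of $S^{--}$ (per Definitions~\ref{DefOfTAn} and~\ref{DefOfTBn}) to be contained entirely in the $P$-orbit. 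Consequently, all reordering carried out in constructing $S^{--}$ occurs inside the $S_\rho$-piece, the ordering of $\Psi$ inherited from $S^-$ is preserved, and $S^{--}$ is literally the ordered direct sum $\Psi \oplus S_\rho$; admissibility of $\Psi$ then follows from Lemma~\ref{LemOftt'BinExp}(ii) applied to this inherited ordering.

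Finally I would identify $\Psi$ with $N^-_{\xi'}$. Via the natural bijection sending $\Psi$'s $A$-elements to the standard basis of $N_{m_1-f, n_1-g}$, the original pair $(0^A_i, 1^B_j)$ transforms into a pair in the $\xi'$-ABS. I would verify, using the criterion of Theorem~\ref{ThmOfCentralClassification}, that the $\xi'$-side $\mathcal A^{(n)}$ and $\mathcal B^{(n)}$ arise from the $\xi$-side ones by removing $P$-elements and thus never violate the criterion, so this pair is a good exchange in the $\xi'$-ABS. A direct length computation, showing that the swap passes $0^A_{2m_1+1}$ past exactly one element of opposite $\delta$-value, confirms $\ell(S^{--}) = \ell(S^-)-1$ and therefore $\ell(\Psi) = \ell(S_{\xi'})-1$. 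The main obstacle will be this last bookkeeping step---matching the ordering on $\Psi$ inherited from $S^-$ with the ordering generated by the direct-sum axioms applied to $N_{m_1-f,n_1-g} \oplus N_{m_2,n_2}$---since it requires comparing binary expansions of elements under $\pi|_\Psi$ with those defined intrinsically from the $\xi'$-ABS, and the hypothesis of the proposition is precisely the combinatorial condition that makes this comparison go through.
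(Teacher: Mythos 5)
Your decomposition of $S^{--}$ into $\Psi \oplus S_\rho$ and the observation that the hypothesis makes the ordering on $\Psi$ inherited from $S^-$ are both correct and match the paper's starting point. However, your argument then diverges from the paper's at the crucial identification step, and the divergence leaves a genuine gap. The paper does \emph{not} verify a good-exchange criterion on the $\xi'$-side; instead it applies a \emph{reverse} small modification $\chi$ by $1^B_j$ and $0^A_i$ (note the swapped order) \emph{to the component $\Psi$}, introduces analogues $\mathcal D^{(n)}$, $\mathcal E^{(n)}$ of the sets $\mathcal A^{(n)}$, $\mathcal B^{(n)}$, and uses the hypothesis to show $\mathcal D^{(n)} = \mathcal A^{(n)} \setminus \tilde S_\rho$ and $\mathcal E^{(n)} = \mathcal B^{(n)}$. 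That the full reverse modification of $\Psi$ terminates and yields the \emph{minimal} ABS of $N_{\xi'}$, together with the directly observed $\ell(\Psi') = \ell(\Psi) + 1$, is what shows $\Psi$ is a generic specialization of $N_{\xi'}$.

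Your forward approach has two specific problems. First, ``verifying the criterion of Theorem~\ref{ThmOfCentralClassification} on the $\xi'$-side'' only shows that \emph{some} exchange on $N_{\xi'}$ is good; it does not by itself show that the resulting specialization of $N_{\xi'}$ is $\Psi$. To close that loop you would have to match, step by step, the ordered set $\Psi$ (inherited from $S^-$) with the ordered set produced by the full modification of $N_{\xi'}$ — and this matching is precisely the content you acknowledge you have not carried out; the paper's reverse-modification device is the mechanism that accomplishes it. Second, the length bookkeeping ``$\ell(S^{--}) = \ell(S^-)-1$ and therefore $\ell(\Psi) = \ell(S_{\xi'})-1$'' is not a valid inference: $\ell(\Psi\oplus S_\rho)$ is not $\ell(\Psi)+\ell(S_\rho)$; there are cross-terms coming from pairs with one element in each component, and these depend on the interleaved ordering. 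The paper sidesteps this entirely by computing $\ell(\Psi')$ and $\ell(\Psi)$ intrinsically within $\Psi$. Finally, a minor point: Definitions~\ref{DefOfTAn} and~\ref{DefOfTBn} construct $\mathcal A^{(n)}$, $\mathcal B^{(n)}$ for the modification of the \emph{minimal} ABS $S$, not of $S^-$; what you would actually need is a new construction of such sets for a modification applied to $S^-$, which would have to be set up and shown to behave as you claim.
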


\begin{proof}
Let us see the case that we construct $S''$
by the small modification by $0^A_{m_1+1}$ and $1^A_{m_1}$.
The other case
($S''$ is constructed by the small modification by $0^A_{i-1}$ and $1^B_j$)
is shown by the same way.
For the ABS $S^- = (\tilde S', \delta, \pi)$,
we construct the small modification $\pi''$ by $0^A_{m_1+1}$ and $1^A_{m_1}$,
and we obtain the admissible ABS $S'' = (\tilde S'', \delta, \pi'')$
consisting of two components.
A component
corresponds to $N_{\rho}$ with $\rho = (f, g)$ satisfying $fn_1 - gm_1 = 1$.
Let $N$ be the ${\rm DM_1}$ associated to the other component $\Psi$.
To see that there exists a Newton polygon $\xi'$
such that $N = N_{\xi'}^-$,
we consider the small modification by $1^B_j$ and $0^A_i$ in $S''$.
Let $\chi$ be the small modification by $1^B_j$ and $0^A_i$,
and we obtain the map $\chi$ on $\tilde \Psi$.
For the ordered set $\{t_1 < \dots < t_{h'}\}$
of $\Psi$,
put $\Psi^{(0)} = \{t'_1 <' \dots <' t'_{h'}\}$,
where if $0^A_i$ and $1^B_j$ are $i'$-th element and $j'$-th element of $\tilde \Psi$
respectively,
then for $s = (i', j')$ transposition,
we set $t'_z = t_{s(z)}$.
We have the ABS $(\Psi^{(0)}, \delta, \chi)$.
Here, we define sets
\begin{eqnarray*}
\mathcal D^{(0)} &=& \{t \in \Psi^{(0)} \mid 0^A_i < t \text{ and } 
\chi(t) < \chi(0^A_i) \text{ in } \Psi^{(0)}
\text{ with } \delta(t) = 0\},\\
\mathcal E^{(0)} &=& \{t \in \Psi^{(0)} \mid t < 1^B_j \text{ and } 
\chi(1^B_j) < \chi(t) \text{ in } \Psi^{(0)}
\text{ with } \delta(t) = 1\},
\end{eqnarray*}
Put $\alpha_n = \chi^n(0^A_i)$ and $\beta_n = \chi^n(1^B_j)$
for all non-negative integers $n$.
For sets $\Psi^{(0)}, \dots, \Psi^{(n-1)}$ 
and sets $\mathcal D^{(0)}, \dots, \mathcal D^{(n-1)}$ with a natural number $n$, 
let $\Psi^{(n)} = \Psi^{(n-1)}$ as sets.  
We define the order on $\Psi^{(n)}$ to be
for $t < t'$ in $\Psi^{(n-1)}$,
we have $t > t'$ if and only if 
$\pi(t_{\rm min}) \leq t < \alpha_n$ in $\Psi^{(n-1)}$ and $t' = \alpha_n$,
where $t_{\text{min}}$ is the minimum element of $\mathcal D^{(n-1)}$.
In other words, in the ABS $\Psi^{(n-1)}$, 
we move the element $\alpha_n$
to the left of $\chi(t_{\text{min}})$ to construct $\Psi^{(n)}$.
We regard $\chi$ as a map on $\Psi^{(n)}$.
Then we obtain the ABS $(\Psi^{(n)}, \delta, \chi)$.
We define a set
$$\mathcal D^{(n)} = \{t \in \Psi^{(n)} \mid \alpha_n < t \text{ and } 
\chi(t) < \alpha_{n+1} \text{ in } \Psi^{(n)} \text{, with } \delta(t) = \delta(\alpha_n)\}.$$
By hypothesis, we have $\mathcal D^{(n)} = \mathcal A^{(n)} \setminus \tilde S_\rho$
for all $n$.
Here, $S_\rho = (\tilde S_\rho, \delta_\rho ,\pi_\rho'')$ is the ABS associated to $N_\rho$,
where $\delta_\rho$ (resp. $\pi''_\rho$) denotes 
the restriction of $\delta$ (resp. $\pi''$)
to $\tilde S_\rho$.
Hence there exists the smallest integer $a'$ such that
$\mathcal D^{(a')} = \emptyset$.
For sets $\Psi^{(a')}, \dots, \Psi^{(a'+n-1)}$ 
and sets $\mathcal E^{(0)}, \dots, \mathcal E^{(n-1)}$,
let $\Psi^{(a'+n)} = \Psi^{(a'+n-1)}$ as sets.
The ordering of $\Psi^{(a'+n)}$ is given so that
for $t < t'$ in $\Psi^{(a'+n-1)}$,
we have $t > t'$ if and only if 
$\beta_n < t' \leq \pi(t_{\rm max})$ in $\Psi^{(a'+n-1)}$ and $t = \beta_n$,
where $t_{\text{max}}$ is the maximum element of $\mathcal E^{(n-1)}$.
In other words, to obtain $\Psi^{(a'+n)}$,
we move the element $\beta_n$ to the right of $\chi(t_{\text{max}})$.
We regard $\chi$ as a map on $\Psi^{(a'+n)}$.
Thus we obtain the ABS $(\Psi^{(a' + n)}, \delta, \chi)$.
We define a set
$$\mathcal E^{(n)} = \{t \in \Psi^{(n)} \mid t < \beta_n \text{ and }
\beta_{n+1} < \chi(t) \text{ in } \Psi^{(a'+n)} \text{, with } \delta(t) = \delta(\beta_n)\}.$$
By hypothesis, we have $\mathcal E^{(n)} = \mathcal B^{(n)}$ for all $n$.
Hence there exists the smallest integer $b'$ such that
$\mathcal E^{(b')} = \emptyset$.
We obtain the admissible ABS $\Psi' = (\Psi^{(a'+b')}, \delta, \chi)$ 
which is associated to $N_{\xi'}$ with $\xi' = (m_1-f, n_1-g) + (m_2, n_2)$.
We immediately obtain 
$\ell(\Psi') = \ell(\Psi) + 1$.
Hence the ABS $\Psi$ corresponds to $N^-_{\xi'}$.
\end{proof}

\begin{example}\label{ExOfSpe}
Let us see an example of constructing 
$N_{\xi'}$ and $N_{\rho}$ from $N_{\xi}^-$.
Let $N_{\xi} = N_{2,7} \oplus N_{5,3}$,
and we construct the full modification by $0^A_6$ and $1^B_3$.
By Example~\ref{ExOfExchange},
we have $N_{\xi}^-$ as follows:\\
\\
$$N^-_{\xi} : \xymatrix@=3pt{1^A_1\ar@/_20pt/[rrrrrrrrrr] & 0^A_3\ar@/_20pt/[l] &
1^A_2\ar@/_20pt/[rrrrrrrrr] & 0^A_5\ar@/_20pt/[ll] & 
0^A_4\ar@/_20pt/[ll] &
1^B_3\ar@/_20pt/[rrrrrrr] & 
0^A_7\ar@/_20pt/[lll] &
1^B_1\ar@/_20pt/[rrrrrr] & 
1^B_2\ar@/_20pt/[rrrrrr] &
0^A_6\ar@/_20pt/[lllll] & 0^A_8\ar@/_20pt/[lllll] &
0^A_9\ar@/_20pt/[lllll] &0^B_6\ar@/_20pt/[lllll] &
1^B_4\ar@/_20pt/[rr] & 1^B_5\ar@/_20pt/[rr] &
0^B_7\ar@/_20pt/[lllllll] & 0^B_8\ar@/_20pt/[lllllll]}.$$
\\ \\
We construct the small modification by elements $0^A_3$ and $1^A_2$
for the ABS associated to $N_\xi^-$, 
and we obtain $N_{\xi}^{--}$ by the following diagram
which is decomposed into two cycles:
\\ \\
$$\xymatrix@=3pt{N_{\xi}^{--}: &1^A_1\ar@/_20pt/[rrrrrrr] &0^A_3\ar@/_20pt/[l]
&0^A_4\ar@/_20pt/[l] &1^B_3\ar@/_20pt/[rrrrr] &1^B_1\ar@/_20pt/[rrrrr]
&1^B_2\ar@/_20pt/[rrrrr]
&0^A_6\ar@/_20pt/[llll] &0^A_8\ar@/_20pt/[llll] &0^B_6\ar@/_20pt/[llll]
&1^B_4\ar@/_20pt/[rr] &1^B_5\ar@/_20pt/[rr] &0^B_7\ar@/_20pt/[llllll]
& 0^B_8\ar@/_20pt/[llllll] & \oplus
&1^A_2\ar@/_20pt/[rrr] &0^A_5\ar@/_20pt/[l] &0^A_7\ar@/_20pt/[l]
&0^A_9\ar@/_20pt/[l]}.$$
\\ \\
It is clear that the latter component is associated to 
the simple ${\rm DM_1}$ $N_{1,3}$.
Let us see that the former component $N$ is a specialization of a minimal ${\rm DM_1}$.
We construct the small modification by elements $1^B_3$ and $0^A_6$,
and we obtain the following diagram.\\
\\
$$\xymatrix@=5pt{1^A_1\ar@/_20pt/[rrrr] &0^A_3\ar@/_20pt/[l]
&0^A_4\ar@/_20pt/[l] &0^A_6\ar@/_20pt/[l] &0^A_8\ar@/_20pt/[l]
&\oplus &1^B_1\ar@/_30pt/[rrrr]|\times &1^B_2\ar@/_30pt/[rrrr]|\times
&1^B_3\ar@/_20pt/[r]|\circ &0^B_6\ar@/_20pt/[lll]
&1^B_4\ar@/_20pt/[rr] &1^B_5\ar@/_20pt/[rr] &0^B_7\ar@/_20pt/[lllll]
& 0^B_8\ar@/_20pt/[lllll]}.$$
Clearly the former summand is associated to $N_{1, 4}$.
For the notation of Proposition~\ref{PropOfSaturated},
we have $\mathcal D^{(0)} = \emptyset$ and $\mathcal E^{(0)} = \{1^B_1, 1^B_2\}$.
We move the element $0^B_6$ to 
between $1^B_5$ and $0^B_7$.
Thus we see that the latter summand is associated to the simple ${\rm DM_1}$ $N_{5, 3}$.
Therefore we see that
this diagram corresponds to $N_{1, 4} \oplus N_{5, 3}$,
and we have $N_\xi^{--} = N_{(1,4)+(5,3)}^- \oplus N_{1,3}$.
\end{example}

Here, for an element $t$ of $C$,
we give a condition of 
$t$ belongs to $C_2$ as follows:

\begin{proposition}\label{PropOfIffi-1j}
For a good exchange of  $0^A_i \in C$ and $1^B_j \in D'$,  
we have an element $t$ of $\tilde S'$ satisfying $0^A_{i-1} < t < 1^B_j$ in $S^-$
if and only if there exists a non-negative integer $n$ such that 
the set $\pi(\mathcal A^{(n)})$ has the maximum element $0^A_{i-1}$.
\end{proposition}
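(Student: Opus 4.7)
The plan is to track, step by step in the construction $S^{(0)} \to S^{(1)} \to \cdots \to S^{(a+b)} = S^-$, which elements get inserted into the gap that initially lies between $0^A_{i-1}$ and $1^B_j$. In $S^{(0)}$ the small modification places $1^B_j$ in the slot formerly occupied by $0^A_i$, so $0^A_{i-1}$ is immediately followed by $1^B_j$ and that gap is empty. Because each step in Definition~\ref{DefOfTAn} or Definition~\ref{DefOfTBn} relocates exactly one element, the relative order of any pair of non-moved elements is preserved between consecutive ABS's; in particular $0^A_{i-1} < 1^B_j$ throughout. Consequently any element $t$ witnessing $0^A_{i-1} < t < 1^B_j$ in $S^-$ must be the very element moved at some uniquely determined step.

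For the sufficiency, suppose such an $n$ exists. Lemma~\ref{LemOfUniA} together with the common $\delta$-value on $\mathcal A^{(n)}$ ensures that $\pi$ preserves the order on $\mathcal A^{(n)}$, so $\max \pi(\mathcal A^{(n)}) = \pi(\max \mathcal A^{(n)}) = 0^A_{i-1}$. Then by Definition~\ref{DefOfTAn} the transition $S^{(n)} \to S^{(n+1)}$ places $\alpha_{n+1}$ immediately to the right of $0^A_{i-1}$. For a good exchange the $\alpha$-path stays inside $\tilde A$ and the $\beta$-path inside $\tilde B$ (see the remarks preceding Definition~\ref{DefOfC'D'}); hence $\alpha_{n+1} \in \tilde A$, it is distinct from $0^A_{i-1}$ and $1^B_j$, it lies strictly in the gap in $S^{(n+1)}$, and it is never moved again in the subsequent $\beta$-phase, so it still witnesses the inequality in $S^-$.

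For the necessity, let $n^\star$ be the smallest index with some element strictly between $0^A_{i-1}$ and $1^B_j$ in $S^{(n^\star)}$; this exists since the gap is empty in $S^{(0)}$ and non-empty in $S^-$. The unique element placed into the gap at step $n^\star$ is the element moved during that step. If $n^\star \leq a$, it is $\alpha_{n^\star}$, placed immediately to the right of $\pi(t_{\max})$ with $t_{\max} = \max \mathcal A^{(n^\star - 1)}$ (Definition~\ref{DefOfTAn}); since the gap was empty in $S^{(n^\star - 1)}$ and the inserted element is required to lie in it, this forces $\pi(t_{\max}) = 0^A_{i-1}$. Setting $n := n^\star - 1$ yields the required index.

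The main obstacle, handled in the remaining case, is ruling out that $n^\star$ belongs to the $\beta$-phase. If $n^\star > a$, the moved element would be $\beta_{n^\star - a}$, placed immediately to the left of $\pi(t_{\min})$ with $t_{\min} = \min \mathcal B^{(n^\star - a - 1)}$ (Definition~\ref{DefOfTBn}); emptiness of the gap beforehand would force $\pi(t_{\min}) = 1^B_j$, hence $t_{\min} = \pi^{-1}(1^B_j) = \pi_0^{-1}(0^A_i)$ by Definition~\ref{DefOfExchanging}. However $\pi_0^{-1}(0^A_i) \in \tilde A$, whereas for a good exchange every $\mathcal B^{(k)}$ is contained in $\tilde B$; this contradiction closes the argument. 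The technical heart of the proof is the careful bookkeeping of positions across the two phases and the use of goodness to keep the $\alpha$- and $\beta$-paths disjointly in $\tilde A$ and $\tilde B$.
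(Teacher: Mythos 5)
Your proof is correct and follows essentially the same route as the paper's: both start from the observation that the gap is empty in $S^{(0)}$ and then identify the moved element $\alpha_{n+1}$ entering the gap precisely when $\max\pi(\mathcal A^{(n)})=0^A_{i-1}$. Your explicit ruling-out of the $\beta$-phase (via $\pi^{-1}(1^B_j)=\pi_0^{-1}(0^A_i)\in\tilde A$ against $\mathcal B^{(k)}\subseteq\tilde B$ for good exchanges) spells out a step the paper compresses into a single ``Hence'', so the two arguments match in substance.
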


\begin{proof}
In the ABS $(S^{(0)}, \delta, \pi)$,
clearly there exists no element $t$ satisfying $0^A_{i-1} < t < 1^B_j$.
Hence every element $t$ between $0^A_{i-1}$ and $1^B_j$
in $S^-$ is of the form $\pi^m(0^A_i)$ for a natural number $m$ with $m \leq a$.
Fix a natural number $n$.
By definitions of $\mathcal A^{(n)}$ and $S^{(n)}$,
there exists an element $t$ between $0^A_{i-1}$ and $1^B_j$ in $S^{(n+1)}$
if and only if $\pi(\mathcal A^{(n)})$ has the maximum element $0^A_{i-1}$.
Indeed, the element $t$ is obtained by $t = \pi^{n+1}(0^A_i)$.
%
\end{proof}


We consider the case that $C$ is not empty. 
We set an order on the set $C$ which plays an important role.

\begin{notation} \label{NotationOfCQ}
Put $\nu = |C|$.
For $x = 1, \dots, \nu$,
let $i_x$ be the natural number with $m_1 < i_x \leq h_1$ such that
$0^A_{i_x}$ is the element of $C$ appearing in the $x$-th in the path $Q$.
In the other words,
we put $C = \{0^A_{i_1}, 0^A_{i_2}, \dots, 0^A_{i_\nu}\}$,
where for elements 
$0^A_{i_x} = \pi^{l_x}_0(0^A_{m_1+1})$ and $0^A_{i_y} = \pi^{l_y}_0(0^A_{m_1+1})$ of $C$,
we have $x < y$ if and only if $l_x < l_y$.
The elements $0^A_{i_x}$, for $x = 1, \dots, \nu$, of $C$ appear in the path $Q$ as follows.
$$Q : 0^A_{m_1+1} \rightarrow \cdots \rightarrow 0^A_{i_1}
\rightarrow \cdots \rightarrow 0^A_{i_2} \rightarrow \cdots \rightarrow 0^A_{i_\nu}
\rightarrow \cdots \rightarrow 0^A_{2m_1}.$$
\end{notation}

Here, we give a characterization of the ``first'' element $0^A_{i_1}$
of $C$ in Lemma~\ref{LemOfFirst}.

\begin{lemma}\label{LemOfFirst}
If there exists the minimum number $y$ such that
the element $t = \pi^y_0(0^A_{m_1+1})$ of $Q$
satisfies $0^A_{m_1+1} < t < 0^A_{n_1}$ in $A$,
then $t = 0^A_{i_1}$.
\end{lemma}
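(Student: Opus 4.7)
The plan is to prove the lemma in two steps: first, show that every element of $C$ must be a ``strict middle'' $0^A$ (one with index strictly between $m_1+1$ and $n_1$), so that no element of $C$ can appear in $Q$ before $t$; second, exhibit an explicit $j$ for which the exchange of $t$ and $1^B_j$ is good, proving $t \in C$.

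For the first step, the case-(1) restriction $m_1 < i \leq n_1$ together with the identity $C' \setminus C = \{0^A_{n_1}\}$ forces any element of $C$ to have the form $0^A_k$ with $m_1+1 \leq k \leq n_1-1$. Lemma~\ref{LemOfCQ} further gives $C \subseteq Q$. To rule out $k = m_1+1$, I will compute $\mathcal{A}^{(0)}$ in $S^{(0)}$ directly: for $i = m_1+1$, the element $0^A_{2m_1+1}$ (which is $\pi_0^{-1}(0^A_{m_1+1})$ in $\tilde A$) always lies in $\mathcal{A}^{(0)}$ regardless of $j$, because after the swap $0^A_{m_1+1}$ sits at the old position of $1^B_j$ while $0^A_{2m_1+1}$ is unchanged, so the defining inequalities of $\mathcal{A}^{(0)}$ are verified by a direct position comparison. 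Since $0^A_{2m_1+1}$ is also the new $\pi$-preimage of $1^B_j$, Proposition~\ref{PropOfCentralClassification}~(i) rules out any good exchange at $i = m_1+1$, so $0^A_{m_1+1} \notin C'$. Hence $C$ consists only of strict middle 0's of $Q$, and by the minimality of $y$, no such element appears in $Q$ strictly before $t$.

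For the second step, I choose $j = 1$ and verify the two conditions of Proposition~\ref{PropOfCentralClassification}~(i). A position comparison in $S^{(0)}$ gives the explicit description $\mathcal{A}^{(0)} = \{0^A_{k+1}, 0^A_{k+2}, \dots, 0^A_{n_1}\}$, which is nonempty; so once the exchange is shown to be good, we conclude $t \in C$ (and not merely $t \in C'$). The $\pi_0$-predecessor of $t$ in the cycle is $0^A_{k+m_1}$, which lies in $Q$ because $t \neq 0^A_{m_1+1}$. Minimality of $y$ forces this predecessor to be non-middle, and since $0^A_{n_1} \in P$ we conclude $k + m_1 > n_1$; hence $0^A_{k+m_1} \notin \mathcal{A}^{(0)}$. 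For later $\mathcal{A}^{(n)}$, Corollary~\ref{CoroOfT} reduces the membership question to whether some element of $\mathcal{A}^{(0)}$ traces a $\pi$-trajectory reaching $0^A_{k+m_1}$ at step $n$ while maintaining $\delta$-matching with the trajectory of $0^A_k$; a bookkeeping argument parallel to the proof of Proposition~\ref{PropOfNumbers} and using Lemma~\ref{LemOfDecrease} shows this matching must fail before step $n$. Once the $\mathcal{A}$-condition is verified, the observation in the proof of Proposition~\ref{PropOfCentralClassification}~(i) yields $I = \emptyset$, and since $j = 1$ the set $\{1^B_1, \dots, 1^B_{j-1}\}$ is empty, forcing $\mathcal{B}^{(0)} = \emptyset$ and making the $\mathcal{B}$-condition trivial.

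The hard part will be the inductive verification that $0^A_{k+m_1}$ never enters $\mathcal{A}^{(n)}$ for any $n$. This is the technical heart and must use both the orbit combinatorics of the simple $\mathrm{DM}_1$ $A = N_{m_1, n_1}$ and the $\delta$-matching filter inherited from Corollary~\ref{CoroOfT}; the strict inequality $k + m_1 > n_1$ coming from minimality of $y$ is the geometric reason that $0^A_{k+m_1}$ remains ``outside'' the evolving consecutive interval that $\mathcal{A}^{(n)}$ forms.
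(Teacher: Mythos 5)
Your high-level plan (show no element of $C$ precedes $t$ in $Q$, then show $t\in C$ by exhibiting a good exchange with $j=1$) is a reasonable outline, but both halves have genuine problems.

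In Step 1, the claim that $0^A_{2m_1+1}\in\mathcal A^{(0)}$ for $i=m_1+1$ ``regardless of $j$'' is false. In case (1) one has $\mathcal A^{(0)}=\{0^A_{m_1+2},\dots,0^A_{n_1}\}$, and $0^A_{2m_1+1}$ belongs to this set only when $2m_1+1\le n_1$. If $2m_1+1>n_1$ (try $A=N_{7,12}$, where $Q$ does contain strict-middle $0$'s so the lemma's hypothesis is not vacuous), the element $0^A_{2m_1+1}$ sits in the second block $\{0^A_{n_1+1},\dots,0^A_{h_1}\}$, which in $S^{(0)}$ lies to the \emph{right} of the slot into which $0^A_{m_1+1}$ was moved, so the defining inequality $0^A_{2m_1+1}<0^A_{m_1+1}$ fails. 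The conclusion $0^A_{m_1+1}\notin C$ is nonetheless true, but in the paper it comes from the latter statement of Proposition~\ref{PropOfm1+1m1} applied with $n=0$, not from a computation in $\mathcal A^{(0)}$; in the example above, $0^A_{2m_1+1}$ only enters $\mathcal A^{(n)}$ at some $n>0$.

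In Step 2, the heart of the argument (that $0^A_{k+m_1}$ never enters $\mathcal A^{(n)}$) is left as a ``bookkeeping argument parallel to Proposition~\ref{PropOfNumbers} and using Lemma~\ref{LemOfDecrease}''. But both of those results are stated and proved \emph{only for good exchanges}, which is precisely what you are trying to establish for $(0^A_k,1^B_1)$; invoking them here is circular. You acknowledge this is ``the technical heart'' and do not close it. The paper's proof sidesteps the issue entirely: it first notes that $0^A_{m_1+1}$ can never lie in any $\mathcal A^{(n)}$, because $\pi(0^A_{m_1+1})=1^A_1$ is the minimum of the ordered set and membership would force $\alpha_{n+1}<1^A_1$. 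Then, assuming $0^A_{k+m_1}\in\mathcal A^{(n)}$, Corollary~\ref{CoroOfT} produces $t''\in\mathcal A^{(0)}$ whose $\pi$-trajectory stays in $\tilde A$ and reaches $0^A_{k+m_1}$; since minimality of $y$ shows the portion of $Q$ strictly between $0^A_{m_1+1}$ and $t$ misses $\mathcal A^{(0)}\subset\{0^A_{k+1},\dots,0^A_{n_1}\}$, that trajectory must pass through $0^A_{m_1+1}$, contradicting the opening observation. This path-chasing step is exactly what replaces the undone ``bookkeeping'' in your sketch, and it is not circular. So your proposal has both a concrete error and an unresolved gap at the crucial point.
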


\begin{proof}
First, let us see that for an small modification by $0^A_i$ and $1^B_j$,
there exists no non-negative integer $n$ such that 
$\mathcal A^{(n)}$ contains the element $0^A_{m_1+1}$.
If $\mathcal A^{(n)}$ contains $0^A_{m_1+1}$ for a non-negative integer $n$,
then $\pi^{n+1}(0^A_i) < 1^A_{1}$ holds.
This is a contradiction.

To see that $t$ belongs to $C$,
for the ABS $S$ of $N_\xi$,
we consider an small modification $\pi$ by $t$ and $1^B_j$ with $1^B_j \in D'$.
Let $S^{(n)}$ and $\mathcal A^{(n)}$ denote sets 
obtained by the small modification by $t$ and $1^B_j$.
Assume that for a non-negative integer $n$, 
the set $\mathcal A^{(n)}$ contains 
the inverse image $t'$ of $1^B_j$ for $\pi$.
In the ABS $A$, this $t'$ is the inverse image of $t$.
It is clear that $t'$ belongs to $Q$.
By Corollary~\ref{CoroOfT}, 
there exists an element $t''$ of $\mathcal A^{(0)}$ such that $\pi^n(t'') = t'$. 
By definition of $t$,
and since elements $1^A_{m_1}$ and $0^A_{n_1}$ belong to $P$,
elements $\tau^A_x$, with $\tau = 0$ or $1$, 
of the path $Q$ between $0^A_{m_1+1}$ and $t$
satisfies $x < m_1$ or $n_1 < x$.
It implies that these elements do not belong to 
$\mathcal A^{(0)} \subset \{0^A_{m_1+1}, \dots, 0^A_{n_1}\}$.
Hence $t''$ belongs to $P$.
Then the path from $t''$ to $t'$ through $0^A_{m_1+1}$,
i.e., there exists a natural number $m$ such that $\pi^m(t'') = 0^A_{m_1+1}$
with $m < a$.
This contradicts with the above property.
\end{proof}

We consider the case $C \neq \emptyset$.
Then we have the element $0^A_{i_1}$ of $C$.
To apply Proposition~\ref{PropOfSaturated},
it is important to study elements between $0^A_{m_1+1}$
and $1^A_{m_1}$ in $S^-$.
The following set given in Notation~\ref{NotOfTi} and the element of $C$ 
given in Proposition~\ref{PropOfC0} are useful
for studying the elements between $0^A_{m_1+1}$ and $1^A_{m_1}$ in $S^-$.

\begin{notation} \label{NotOfTi}
For an element $0^A_i$ of $C'$,
we often write 
$\mathcal A_i^{(n)}$ for sets $\mathcal A^{(n)}$ obtained by 
the small modification by $0^A_i$ and $1^B_j$ to avoid confusion.
Moreover, we often write $a_i$
for the smallest non-negative integer $a$ satisfying $\mathcal A_i^{(a)} = \emptyset$.
For sets $\{ \mathcal A_i^{(n)}\}_{n = 0, \dots, a_i}$ with $0^A_i \in C$,
we set $$T_i = \pi(\mathcal A_i^{(a_i-1)}).$$
This set consists of all elements $t$ satisfying
$0^A_{m_1+1} \leq t < 1^A_{m_1}$ in $S^-$.
\end{notation}

\begin{proposition}\label{PropOfC0}
Put $i = n_1 - \gamma$,
with $\gamma = |T_{i_1}|$.
Then $0^A_i$ belongs to $C$.
Moreover, $T_{i_1} = T_i$ holds. 
\end{proposition}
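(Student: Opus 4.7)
The plan is to first compute $T_{i_1}$ explicitly using Proposition~\ref{PropOfNumbers}, then identify a step $n^*$ in the evolution starting from $0^A_{i_1}$ at which $\mathcal{A}_{i_1}^{(n^*)}$ coincides with $\mathcal{A}_i^{(0)}$ for $i=n_1-\gamma$. Once matched, the subsequent evolutions coincide, yielding both $T_i=T_{i_1}$ and the goodness of the exchange at $0^A_i$.

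For Step 1, apply Proposition~\ref{PropOfNumbers} to $\mathcal{A}_{i_1}^{(a_{i_1}-1)}$: since $\pi^{a_{i_1}}(0^A_{i_1})=1^A_{m_1}$, we have $\alpha_{a_{i_1}-1}=\pi^{-1}(1^A_{m_1})=0^A_{2m_1}$, hence $\mathcal{A}_{i_1}^{(a_{i_1}-1)}=\{0^A_{2m_1+1},\ldots,0^A_{2m_1+\gamma}\}$ and $T_{i_1}=\pi(\mathcal{A}_{i_1}^{(a_{i_1}-1)})=\{0^A_{m_1+1},\ldots,0^A_{m_1+\gamma}\}$. The goodness of $0^A_{i_1}\in C$, via Theorem~\ref{ThmOfCentralClassification}, requires $0^A_{i_1+m_1}\notin\mathcal{A}_{i_1}^{(0)}=\{0^A_{i_1+1},\ldots,0^A_{n_1}\}$, forcing $i_1+m_1>n_1$. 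Therefore $\gamma\leq n_1-i_1<m_1$, giving $m_1<i=n_1-\gamma<n_1$ together with $i+m_1>n_1$, so that $0^A_{i+m_1}\notin\mathcal{A}_i^{(0)}=\{0^A_{i+1},\ldots,0^A_{n_1}\}$ holds immediately.

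For Step 2, I track the block evolution using Proposition~\ref{PropOfNumbers} and Lemma~\ref{LemOfDecrease}. The size $|\mathcal{A}_{i_1}^{(n)}|$ is non-increasing and reaches its final value $\gamma$ at some first step $n_{\mathrm{sh}}$; at this step Lemma~\ref{LemOfDecrease} yields $\mathcal{A}_{i_1}^{(n_{\mathrm{sh}})}=\{1^A_{m_1-\gamma+1},\ldots,1^A_{m_1}\}$, achieved via a $0\to 1$ transition. From this plateau the block evolves by pure $\pi$-rotation (no further shrinkage until the final step), and a direct computation of subsequent $\pi$-iterates produces a step $n^*$ satisfying $\mathcal{A}_{i_1}^{(n^*)}=\{0^A_{n_1-\gamma+1},\ldots,0^A_{n_1}\}$ and $\alpha_{n^*}=\pi^{n^*-n_{\mathrm{sh}}}(1^A_{m_1-\gamma})=0^A_{n_1-\gamma}=0^A_i$; this block coincides with $\mathcal{A}_i^{(0)}$.

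For Step 3, once the initial blocks and $\alpha$-values match, the subsequent $\pi$-iterates and attendant filterings proceed identically for both evolutions, giving $\mathcal{A}_i^{(n)}=\mathcal{A}_{i_1}^{(n^*+n)}$ for $0\leq n\leq a_i-1$; in particular, $T_i=\pi(\mathcal{A}_i^{(a_i-1)})=\pi(\mathcal{A}_{i_1}^{(a_{i_1}-1)})=T_{i_1}$ follows at once. For $0^A_i\in C$, the $B$-side condition is inherited from $1^B_j\in D'$, while the $A$-side condition $0^A_{i+m_1}\notin\mathcal{A}_i^{(n)}$ translates via the matching to $0^A_{i+m_1}\notin\mathcal{A}_{i_1}^{(n^*+n)}$, which I verify using the explicit block indices together with $\gamma<m_1$ and $i+m_1>n_1$. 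The main obstacle is the explicit identification of $n^*$: the plateau-phase rotations must be tracked through potential wrap-arounds modulo $h_1$, and one must check that the block indices realign exactly with those of $\mathcal{A}_i^{(0)}$---an analysis relying crucially on the constraint $\gamma<m_1$ and on the cyclic structure of $\pi_0$ on $A$ described in the decomposition $A=P\sqcup Q$.
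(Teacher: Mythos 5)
Your plan coincides with the paper's: find a step $m$ such that $\mathcal A_{i_1}^{(m)}=\mathcal A_i^{(0)}$ with $\alpha_m = 0^A_i$, and then conclude both goodness and $T_i=T_{i_1}$ from the tail coincidence. Steps~1 is sound, and your explicit derivation of $\gamma < m_1$ (from $i_1+m_1>n_1$, i.e., Theorem~\ref{ThmOfCentralClassification} applied at $n=0$) is a useful addition the paper leaves implicit.

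However, you flag the ``explicit identification of $n^*$'' as the main unresolved obstacle, worrying about wrap-arounds modulo $h_1$. This is where your proof is not finished, and the worry is in fact unfounded. Once the plateau $\mathcal A_{i_1}^{(n_{\rm sh})}=\{1^A_{m_1-\gamma+1},\dots,1^A_{m_1}\}$ is reached via Lemma~\ref{LemOfDecrease}, the paper simply computes two more $\pi$-iterates (no more are needed, and no wrap occurs). Using $\pi(1^A_r)=0^A_{r+n_1}$ and $\pi(0^A_r)=0^A_{r-m_1}$ for $r-m_1>m_1$, one gets $\mathcal A_{i_1}^{(n_{\rm sh}+1)}=\{0^A_{h_1-\gamma+1},\dots,0^A_{h_1}\}$ and then $\mathcal A_{i_1}^{(n_{\rm sh}+2)}=\{0^A_{n_1-\gamma+1},\dots,0^A_{n_1}\}$ with $\alpha_{n_{\rm sh}+2}=0^A_{n_1-\gamma}=0^A_i$; the index bounds stay legitimate precisely because $\gamma < n_1-m_1$ (which follows from $i_1>n_1-m_1$), and these three sets are nonempty (size $\gamma\geq 1$), so $n_{\rm sh}+2<a_{i_1}$. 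Thus $n^*=n_{\rm sh}+2$ is the paper's $m=m'+2$; there is no wrap-around to track and no appeal to the $P\sqcup Q$ decomposition is needed here. Also, for the goodness of $0^A_i$, you propose to verify $0^A_{i+m_1}\notin\mathcal A_{i_1}^{(n^*+n)}$ ``using the explicit block indices,'' whereas the paper's cleaner route is Proposition~\ref{PropOf1}: since $0^A_{i+m_1}=\pi^{n^*-1}(0^A_{i_1})$ and $n^*-1\leq n^*+n$, this element never lies in $\mathcal A_{i_1}^{(n^*+n)}$; and the $B$-side goodness is indeed inherited from $1^B_j\in D'$ once $I=\emptyset$ has been established from the $A$-side condition. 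Supplying the explicit two-step computation closes your gap and recovers the paper's proof.
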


\begin{proof}
Since $T_{i_x}$ consists of some elements of $\{0^A_{m_1+1}, \dots, 0^A_{n_1}\}$,
we have $|T_{i_x}| < n_1 - m_1$ for all $0^A_{i_x} \in C$.
Hence we have $m_1 < i < n_1$.
To show that $0^A_i$ belongs to $C$,
it suffices to see that 
there exists a natural number $m$
such that $\mathcal A_i^{(0)} = \mathcal A_{i_1}^{(m)}$.
In fact, if this statement is true, 
then $\mathcal A_i^{(n)} = \mathcal A_{i_1}^{(m+n)}$ holds
for all $n$ with $0 \leq n \leq a_i$.
Put $\alpha = \pi^{m-1}(0^A_{i_1})$.
Note that $\alpha$ is the inverse image of $0^A_i$
in the ABS's $(S^{(n)}, \delta, \pi)$,
where $\pi$ is the small modification by $0^A_i$ and $1^B_j$.
By Proposition~\ref{PropOf1}, 
sets $\mathcal A_{i_1}^{(m+n)}$ does not contain $\alpha$.
Hence sets $\mathcal A^{(n)}_i$ do not contain the inverse image of $0^A_i$
for all $n$, and this completes the proof.
Let us see that there exists such a number $m$.
By the definition of $\gamma$ and Lemma~\ref{LemOfDecrease}, 
there exists a natural number $m'$ such that
$\mathcal A_{i_1}^{(m')} = \{1^A_{m_1-\gamma+1}, \dots, 1^A_{m_1}\}$.
We have then $\mathcal A_{i_1}^{(m'+2)} = \{0^A_{n_1 - \gamma+1}, \dots, 0^A_{n_1}\}$,
and this set is equal to $\mathcal A_{i}^{(0)}$.
Therefore the number $m$ is obtained by $m = m'+2$.
The latter part $|T_{i_1}| = |T_i|$ follows from the former part.
\end{proof}

Let $d$ be a natural number satisfying $i_d = n_1 - \gamma$.
From now on, we fix notations of 
the non-negative integers $d$ and $\gamma$.
As can be seen in Proposition~\ref{PropOfDivide},
this natural number $d$ plays an important role 
to show the former part of Proposition~\ref{ThmOfCases}.

\begin{proposition}\label{PropOfDivide}
Let $0^A_{i_x}$ be an element of $C$. 
For the above natural number $d$, we have
\begin{itemize}
\item[(1)] If $x \leq d$, then 
$0^A_{i_x}$ belongs to $C_1$,
\item[(2)]If $x > d$, then 
$0^A_{i_x}$ belongs to $C_2$. 
\end{itemize}
\end{proposition}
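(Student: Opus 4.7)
The plan is to apply the sufficient criterion of Proposition~\ref{PropOfSaturated} in each of the two parts, together with the structural results of Section~\ref{CombSpe}, splitting according to whether $x \leq d$ or $x > d$.

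For Part~(1) with $x \leq d$, I would show that the set $T_{i_x}$, which by Proposition~\ref{PropOfNumbers} is a consecutive block $\{0^A_{m_1+1}, 0^A_{m_1+2}, \ldots, 0^A_{m_1+u}\}$ with $u = |T_{i_x}|$, satisfies $T_{i_x} \setminus \{0^A_{m_1+1}\} \subseteq P$. This implies that every element of $S^-$ strictly between $0^A_{m_1+1}$ and $1^A_{m_1}$ lies in $P$ and therefore in the $N_\rho$-component of $S''$ after the (a)-modification, so $\Psi$ contains no such element and Proposition~\ref{PropOfSaturated} yields $0^A_{i_x} \in C_1$. I would first extend Proposition~\ref{PropOfC0} to all $x \leq d$, proving $T_{i_x} = T_{i_1}$: since $0^A_{i_x}$ lies on the $\pi$-orbit from $0^A_{i_1}$ at a step $r_x \leq r_d = m'+2$ (using the notation of the proof of Proposition~\ref{PropOfC0}), an induction via Proposition~\ref{PropOfT} shows that $\{\mathcal A^{(n)}_{i_x}\}$ and the shifted tail $\{\mathcal A^{(r_x+n)}_{i_1}\}$ agree from their common terminal block onwards. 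The containment $T_{i_1} \setminus \{0^A_{m_1+1}\} \subseteq P$ is then verified by tracing the $\pi_0$-images of the block $\mathcal A^{(a_{i_1}-1)}_{i_1} = \{0^A_{2m_1+1}, \ldots, 0^A_{2m_1+\gamma}\}$: the element $\pi_0(0^A_{2m_1+k}) = 0^A_{m_1+k}$ for $k \geq 2$ lies on the $\pi_0$-orbit of $1^A_{m_1}$ before it reaches $0^A_{m_1+1}$, hence belongs to $P$.

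For Part~(2) with $x > d$, I would apply Proposition~\ref{PropOfSaturated} to the (b)-modification, for which, by Proposition~\ref{PropOfIffi-1j}, it suffices to prove that no $\pi(\mathcal A^{(n)}_{i_x})$ has maximum equal to $0^A_{i_x - 1}$. I would argue this by contradiction: if $\pi(\mathcal A^{(n)}_{i_x})$ had maximum $0^A_{i_x - 1}$ at some step $n$, the consecutive-block description of Proposition~\ref{PropOfNumbers} would extend $\mathcal A^{(n+1)}_{i_x}$ strictly past the starting position corresponding to $0^A_{i_x}$; combined with the fact that $x > d$ places $0^A_{i_x}$ strictly after $0^A_{i_d}$ on the $\pi$-orbit through $Q$, the evolution $\{\mathcal A^{(n)}_{i_x}\}$ would be forced to visit one of $0^A_{i_d}, 0^A_{i_{d+1}}, \ldots, 0^A_{i_{x-1}}$ as the image $\pi^{m}(0^A_{i_x})$ for some $m \leq a_{i_x}$, contradicting Proposition~\ref{PropOf1}. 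Thus no such $n$ exists and $0^A_{i_x} \in C_2$.

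The main obstacle I anticipate is the merger-of-evolutions step in Part~(1) for strictly intermediate values $1 < x < d$, since a priori $\mathcal A^{(0)}_{i_x}$ may properly contain $\mathcal A^{(r_x)}_{i_1}$ and one must verify that this initial discrepancy is absorbed without altering the terminal block $T_{i_x} = T_{i_1}$. I expect this to require a careful induction combining Proposition~\ref{PropOfT}, Lemma~\ref{LemOfDecrease} and Proposition~\ref{PropOfNumbers}, showing that the ``extra'' elements of $\mathcal A^{(0)}_{i_x}$ beyond the block $\mathcal A^{(r_x)}_{i_1}$ are filtered out by $\pi$ at exactly the rate predicted by the parallel $i_1$-evolution, so that the two sequences synchronize well before reaching the terminal step $a_{i_x}$.
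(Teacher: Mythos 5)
Your high-level strategy is the right one (reduce both parts to the sufficient criterion of Proposition~\ref{PropOfSaturated}, using $T_{i_x}$ in (1) and Proposition~\ref{PropOfIffi-1j} in (2)), but both parts have concrete gaps, and you misidentify where the difficulty lies.

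In Part~(1), the equality $T_{i_x} = T_{i_1}$ for $x \leq d$ that you flag as the "main obstacle" is not actually an obstacle --- it is already Proposition~\ref{PropOfPiandd}~(v), so there is no ``merger-of-evolutions'' step to worry about. The genuine difficulty, which you gloss over in one sentence, is proving $T_{i_1}\setminus\{0^A_{m_1+1}\}\subset P$. You assert that $0^A_{m_1+k}=\pi_0(0^A_{2m_1+k})$ for $2\le k\le\gamma$ ``lies on the $\pi_0$-orbit of $1^A_{m_1}$ before it reaches $0^A_{m_1+1}$,'' but that is precisely the content of the claim $0^A_{m_1+k}\in P$ being restated, not proven: a priori any given $0^A_{m_1+k}$ could sit on the $Q$-part of the orbit (compare, say, $m_1=3,\ n_1=10$, where $0^A_5,0^A_6\in Q$), so one must argue. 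The paper does this by writing $Q=\omega_1\cup\omega_2$ and showing separately that no element of $\omega_1\setminus\{0^A_{m_1+1}\}$ lies in $\{0^A_{m_1+1},\dots,0^A_{n_1}\}$ (Lemma~\ref{LemOfFirst}) and that no element of $\omega_2$ lies in $T_{i_1}$ (Proposition~\ref{PropOf1}); without this or an equivalent argument, the containment does not follow.

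In Part~(2), your contradiction does not go through. You want to conclude that $\{\mathcal A^{(n)}_{i_x}\}$ would have to hit one of $0^A_{i_d},\dots,0^A_{i_{x-1}}$, but those elements are $\pi$-\emph{preimages} of $0^A_{i_x}$ in $Q$, i.e.\ of the form $\pi^{-m}(0^A_{i_x})$ with $m>0$, whereas Proposition~\ref{PropOf1} only forbids $\mathcal A^{(n)}$ from containing \emph{forward} iterates $\pi^{m}(0^A_{i_x})$ with $m\le n$; going around the full cycle would place these preimages far beyond $a_{i_x}$, outside the range where Proposition~\ref{PropOf1} applies. The paper circumvents this by introducing the auxiliary index $d'$ of Notation~\ref{DefOfd'} and proving $d'\le d$ (Proposition~\ref{PropOfd'd}); the contradiction is then derived by constructing a path $O$ of maximum elements for $i_x$, showing $O\subset\Phi$ (the corresponding path for $i_d$), which would force $x\le d'$, contrary to $x>d\ge d'$. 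Some mechanism of this kind --- relating the $i_x$-evolution back to the reference $i_d$-evolution rather than to Proposition~\ref{PropOf1} --- is needed and is missing from your sketch.
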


Thanks to this proposition, we can show the case of $n_1 > m_1 + 1$
for Proposition~\ref{ThmOfCases}.

\begin{proof}[Proof of Proposition~\ref{ThmOfCases} (I)]
If Proposition~\ref{PropOfDivide} is true,
then the equality \eqref{EqOfInduction} of Proposition~\ref{ThmOfCases} holds 
for ${\rm DM_1}$'s $N^-_{\xi}$ which are obtained by small modifications by
elements $0^A_i$ and $1^B_j$ of 
$C = C' \setminus \{0^A_{n_1}\}$ and $D'$ respectively.
Let us see the case of $i = n_1$.
In this case, we have $\mathcal A^{(0)} = \emptyset$,
and there exists no element $t$ of $S^-$ satisfying 
$0^A_{i-1} < t < 1^B_j$ in $S^-$.
Hence we obtain desired $N_\xi^{--} = N_{\xi'}^- \oplus N_\rho$ by the case (b). 
\end{proof}

Let us show some properties of sets $T_i$,
the natural numbers $d$ and $\gamma$.
These properties are used for showing Proposition~\ref{PropOfDivide}.

\begin{proposition}\label{PropOfPiandd}
The following are true:
\begin{itemize}
\item[(i)]If $x < y$, then $T_{i_x} \subset T_{i_y}$ holds;
\item[(ii)]For all $n$ with $n < a_{i_d}$, we have $|\mathcal A_{i_d}^{(n)}| = \gamma$;
\item[(iii)]For all $x$ and $n$ with $n < a_{i_x}$, 
we have $|\mathcal A_{i_x}^{(n)}| \geq \gamma$;
\item[(iv)]$T_{i_d} \subsetneq T_{i_x}$ holds 
for all $x$ with $x >d$;
\item[(v)]$T_{i_x} = T_{i_d}$ is true if and only if 
$x \leq d$.
\end{itemize}
\end{proposition}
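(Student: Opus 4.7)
The plan is to prove the five assertions in the order (i)-(v), with the explicit block structure from Proposition~\ref{PropOfNumbers} and the resulting non-increasing size property from Proposition~\ref{PropOfT} as the main technical tools. As a preliminary, I would first observe that since $\pi^{a_i}(0^A_i) = 1^A_{m_1}$ by Proposition~\ref{PropOfEmptyA} and $0^A_{2m_1}$ is the unique $\pi$-preimage of $1^A_{m_1}$ in $\tilde A$, one has $\pi^{a_i-1}(0^A_i) = 0^A_{2m_1}$; Proposition~\ref{PropOfNumbers} then identifies $\mathcal{A}_i^{(a_i-1)} = \{0^A_{2m_1+1}, \dots, 0^A_{2m_1+u_i}\}$ and consequently $T_i = \{0^A_{m_1+1}, \dots, 0^A_{m_1+u_i}\}$, where $u_i := |\mathcal{A}_i^{(a_i-1)}|$. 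Moreover, the filtering description $\mathcal{A}_i^{(n+1)} \subset \pi(\mathcal{A}_i^{(n)})$ in Proposition~\ref{PropOfT} forces $|\mathcal{A}_i^{(n)}|$ to be non-increasing in $n$.

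For (i), given $x < y$ I would pick the minimal $k \geq 1$ with $\pi^k(0^A_{i_x}) = 0^A_{i_y}$; Proposition~\ref{PropOfNumbers} then makes $\mathcal{A}_{i_x}^{(k)}$ a consecutive block of $0$-elements of $\tilde A$ starting at $0^A_{i_y+1}$, hence contained in $\mathcal{A}_{i_y}^{(0)}$. Since $\pi^{k+m}(0^A_{i_x}) = \pi^m(0^A_{i_y})$, the $\delta$-filtering conditions of Corollary~\ref{CoroOfT} agree on both sides from step $k$ onwards, and induction on $m$ yields $\mathcal{A}_{i_x}^{(k+m)} \subset \mathcal{A}_{i_y}^{(m)}$ together with $a_{i_x} = k + a_{i_y}$; applying $\pi$ at $m = a_{i_y}-1$ gives $T_{i_x} \subset T_{i_y}$. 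For (ii), Proposition~\ref{PropOfC0} yields $T_{i_1} = T_{i_d}$, hence $u_{i_d} = \gamma$, and combined with $|\mathcal{A}_{i_d}^{(0)}| = n_1 - i_d = \gamma$, monotonicity forces every intermediate value to equal $\gamma$. For (iii), if $x \leq d$, the chain $T_{i_1} \subset T_{i_x} \subset T_{i_d} = T_{i_1}$ from (i) and Proposition~\ref{PropOfC0} gives $u_{i_x} = \gamma$; if $x > d$, (i) yields $u_{i_x} \geq \gamma$; in both cases monotonicity produces $|\mathcal{A}_{i_x}^{(n)}| \geq \gamma$.

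For (iv), I would argue by contradiction: if $T_{i_x} = T_{i_d}$ for some $x > d$, then $u_{i_x} = \gamma$, and because $0^A_{i_x}$ lies strictly after $0^A_{i_d}$ on $Q$, there exists $n \geq 1$ with $\pi^n(0^A_{i_d}) = 0^A_{i_x}$. Applying the inclusion from the proof of (i) to the pair $(i_d, i_x)$ at $m = 0$ gives $\gamma = |\mathcal{A}_{i_d}^{(n)}| \leq |\mathcal{A}_{i_x}^{(0)}| = n_1 - i_x$, so $i_x \leq n_1 - \gamma = i_d$; together with $x \neq d$ this forces $i_x < i_d$ and $|\mathcal{A}_{i_x}^{(0)}| > \gamma = u_{i_x}$. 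The ensuing strict drop in $|\mathcal{A}_{i_x}^{(m)}|$ is controlled by Lemma~\ref{LemOfDecrease}, and comparing it with the constant-size evolution of $\mathcal{A}_{i_d}^{(m)}$ guaranteed by (ii) via the containment $\mathcal{A}_{i_d}^{(n+m)} \subset \mathcal{A}_{i_x}^{(m)}$ yields a contradiction. Finally, (v) is obtained by combining the $x \leq d$ case of (iii) with the contrapositive of (iv). The main obstacle is (iv): upgrading the inclusion $T_{i_d} \subset T_{i_x}$ supplied by (i) to strict containment for $x > d$ requires a careful combinatorial argument tracking when and how the block sizes $|\mathcal{A}_{i_x}^{(m)}|$ strictly decrease (governed by Lemma~\ref{LemOfDecrease}) and showing the incompatibility with the rigid constant-size behaviour of $\mathcal{A}_{i_d}$ established in (ii).
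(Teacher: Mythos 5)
Your proofs of (i), (ii), (iii), and (v) are correct and follow essentially the same route as the paper: propagating containments along the $\pi$-orbit using the block description of Proposition~\ref{PropOfNumbers}, the monotonicity of block sizes from Proposition~\ref{PropOfT}, and Proposition~\ref{PropOfC0} to pin down $|\mathcal A_{i_d}^{(n)}| = \gamma$.

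For (iv), however, there is a genuine gap: the facts you assemble do not, by themselves, produce a contradiction. You establish (a) $|\mathcal A_{i_x}^{(0)}| > \gamma$ while $|T_{i_x}| = \gamma$, so the size strictly drops at some step; (b) $\mathcal A_{i_d}^{(n+m)} \subset \mathcal A_{i_x}^{(m)}$ with the inner sets of constant size $\gamma$; and (c) Lemma~\ref{LemOfDecrease} forces the maximum element to be $1^A_{m_1}$ right after a drop. But these are mutually consistent: the constant-size blocks of $\mathcal A_{i_d}$ fit inside the shrinking blocks of $\mathcal A_{i_x}$ without conflict, and at the last step $\mathcal A_{i_d}^{(a_{i_d}-1)} = \mathcal A_{i_x}^{(a_{i_x}-1)}$, which is exactly the hypothesis $T_{i_d} = T_{i_x}$ you are trying to refute. ``Comparing the drop with the constant-size evolution'' does not close the argument.

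The paper's proof uses an additional, crucial ingredient that your sketch omits: the cyclic structure of $\pi$ on $\tilde A$. At the minimal $u$ with $|\mathcal A_{i_x}^{(u)}| = \gamma$, Lemma~\ref{LemOfDecrease} together with Proposition~\ref{PropOfNumbers} gives $\mathcal A_{i_x}^{(u)} = \{1^A_{m_1-\gamma+1}, \dots, 1^A_{m_1}\}$ and hence $\pi^u(0^A_{i_x}) = 1^A_{m_1-\gamma}$. The key algebraic fact is then that $\pi(1^A_{m_1-\gamma}) = 0^A_{h_1-\gamma}$ and $\pi(0^A_{h_1-\gamma}) = 0^A_{n_1-\gamma} = 0^A_{i_d}$. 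Combining the $n$ steps from $0^A_{i_d}$ to $0^A_{i_x}$, the $u$ steps to $1^A_{m_1-\gamma}$, and these $2$ further steps back to $0^A_{i_d}$ yields a $\pi$-cycle of length $n + u + 2$, and one checks (because the entire path lies in the repetition-free path $Q$, so its length is strictly below $h_1$) that $0 < n + u + 2 < m_1 + n_1$. This contradicts the fact that the $\pi$-orbit on $\tilde A$ has length exactly $h_1 = m_1 + n_1$. That two-step return $\pi^2(1^A_{m_1-\gamma}) = 0^A_{i_d}$ is the missing idea; without it, the monotonicity and containment facts alone do not force strictness of the inclusion $T_{i_d} \subsetneq T_{i_x}$.
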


\begin{proof}
Let us see (i).
By assumption, there exists a natural number $n$ 
such that $\pi^n(0^A_{i_x}) = 0^A_{i_y}$ and $n < a_{i_x}$.
We have then $\mathcal A_{i_x}^{(n)} = \{0^A_{i_y+1}, \dots, 0^A_z\}$,
with $z \leq n_1$.
Hence $\mathcal A_{i_x}^{(n)} \subset \mathcal A_{i_y}^{(m+n)}$ holds
for all non-negative integers $m$,
and it induces the desired relation.

It is clear that $|\mathcal A^{(n)}| \geq |\mathcal A^{(n+1)}|$
for all $n$ and all elements $0^A_i$ of $C$.
By the definition of $i_d$ and Proposition~\ref{PropOfC0}, 
we have $|\mathcal A_{i_d}^{(0)}| = |\mathcal A_{i_d}^{(a-1)}| = \gamma$
for $a = a_{i_d}$.
Thus (ii) holds.

By (i) and the definition of $\gamma$,
we have $|T_{i_x}| \geq \gamma$ for all $x$.
Moreover, it is clear that $|\mathcal A_{i_x}^{(n)}| \geq |T_{i_x}|$
for all $n$ and $x$.
Hence we see (iii).

To see (iv), we fix a natural number $x$ satisfying $x > d$.
It suffices to see that $|T_{i_x}|$ is greater than $\gamma$.
Suppose that $|T_{i_x}| = \gamma$.
Then there exists the minimum number $u$ such that
$|\mathcal A_{i_x}^{(u)}| = \gamma$.
We have $\mathcal A_{i_x}^{(u)} = 
\{1^A_{m_1-\gamma +1}, \dots, 1^A_{m_1}\}$
and $\pi^u(0^A_{i_x}) = 1^A_{m_1 - \gamma}$.
Then we have a sub-path of $Q$:
$$0^A_{i_d} \rightarrow \cdots \rightarrow 0^A_{i_x} \rightarrow
\cdots \rightarrow 1^A_{m_1-\gamma} \rightarrow 0^A_{h_1-\gamma}
\rightarrow 0^A_{n_1-\gamma}$$
which implies that there exists a natural number $m$ such that 
$\pi^m(0^A_{i_d}) = 0^A_{i_d}$
for $m < m_1 + n_1$.
This is a contradiction.

The statement (v) follows from
Proposition~\ref{PropOfC0}, (i) and (iv).
\end{proof}



The natural number $d'$ introduced in Notation~\ref{DefOfd'}
has a relation with the natural number $d$.
We give the proof of Proposition~\ref{PropOfDivide}
using this relation.

\begin{notation} \label{DefOfd'}
For the admissible ABS $S$ of $N_\xi$
and the set $C$,
we define a set $\mathcal L$ by
$$\mathcal L = \{x \in \bar C \mid 
\text{For a non-negative integer } n = n(x),
\text{ the maximum element of } \mathcal A_{i_d}^{(n)} \text{ is } 0^A_{i_x-1} \},$$
where $\bar C = \{x \in \mathbb N \mid 1 \leq x \leq |C|\}$.
Let $d'$ denote the maximum element of $\mathcal L$.
If $\mathcal L$ is empty, then we set $d' = 0$.
\end{notation}

\begin{proposition} \label{PropOfd'L}
For the set $\mathcal L$ and the natural number $d'$ of Notation~\ref{DefOfd'},
if $\mathcal L$ is not empty, then $\mathcal L = \{1, 2, \dots, d'\}$.
\end{proposition}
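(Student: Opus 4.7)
The plan is to characterize $\mathcal L$ by tracking the maximum element $M_n := \max \mathcal A_{i_d}^{(n)}$ as $n$ ranges over $[0, a_{i_d}-1]$, and to show that the set of $x$ for which $M_n = 0^A_{i_x-1}$ occurs forms an initial segment of $\bar C$. First I would observe that by Proposition~\ref{PropOfPiandd}(ii) the cardinality $|\mathcal A_{i_d}^{(n)}| = \gamma$ is constant for $n < a_{i_d}$, so $d_A(n)=0$, and combined with Proposition~\ref{PropOfT} this forces $\mathcal A_{i_d}^{(n+1)} = \pi(\mathcal A_{i_d}^{(n)})$; iterating gives $\mathcal A_{i_d}^{(n)} = \pi^n\bigl(\{0^A_{n_1-\gamma+1}, \dots, 0^A_{n_1}\}\bigr)$ for every $n<a_{i_d}$.

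Next, by Proposition~\ref{PropOfNumbers} each $\mathcal A_{i_d}^{(n)}$ is a block $\{\tau^A_{c+1},\dots,\tau^A_{c+\gamma}\}$ of $\gamma$ consecutive $A$-indexed elements sharing a common $\delta$-value, and by Lemma~\ref{LemOftt'BinExp}(i) the ordering of these same-$\delta$ elements in $S^{(n)}$ coincides with $A$-index order, so $M_n = \tau^A_{c+\gamma}$. In particular, $M_n = 0^A_{i_x-1}$ forces the block to be all $0$'s, namely $\mathcal A_{i_d}^{(n)} = \{0^A_{i_x-\gamma},\dots,0^A_{i_x-1}\}$, which via Step~1 is equivalent to the arithmetic condition that $\pi^n$ translates the initial index block $\{n_1-\gamma+1,\dots,n_1\}$ onto $\{i_x-\gamma,\dots,i_x-1\}$ while remaining in the ``all-$0$'s region'' $(m_1, h_1]$; since $\gcd(m_1,h_1)=1$ this picks out a unique $n=n(x)$ modulo $h_1$.

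Finally, given the witness $n(d')$ for $d'\in\mathcal L$, I would exhibit $n(x)$ for each $1\leq x<d'$ as follows. By Notation~\ref{NotationOfCQ} the elements $0^A_{i_1},0^A_{i_2},\dots$ are listed in the order they appear along $Q$, so there are positive integers $k_1<k_2<\cdots<k_{d'}$ with $\pi^{k_x}(0^A_{m_1+1})=0^A_{i_x}$; translating by the relation in Step~2, $n(x)$ and $n(x')$ differ by $k_{x'}-k_x$, hence $n(1)<n(2)<\cdots<n(d')$. It remains to verify that each intermediate $n(x)$ lies in an all-$0$'s phase and satisfies $n(x)<a_{i_d}$; this follows from $n(x)\leq n(d')<a_{i_d}$ together with the fact that the trajectory between two all-$0$'s witnesses $n(x)$ and $n(x+1)$ stays inside the ``all-$0$'s region'' whenever the intermediate indices $\tau^A_c$ never all fall into $[1,m_1]$, which is ensured by Lemma~\ref{LemOfCQ} ($C\subset Q$) and the minimality of $0^A_{i_1}$ in Lemma~\ref{LemOfFirst}. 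The main obstacle is this last verification: ruling out that the block transitions to an all-$1$'s phase between consecutive witnesses $n(x)$ and $n(x+1)$, which requires a careful case analysis of how $\pi$-iterations interact with the partition $A = P \sqcup Q$ and with the positions of the $i_x$ along $Q$. Once this is established, both inclusions $\{1,\dots,d'\}\subset\mathcal L\subset\{1,\dots,d'\}$ follow, the second by definition of $d'$ as $\max\mathcal L$.
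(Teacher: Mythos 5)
Your proposal follows the same route as the paper: track the maximum $M_n$ of $\mathcal A_{i_d}^{(n)}$ using the constant-cardinality observation (so $\mathcal A_{i_d}^{(n+1)} = \pi(\mathcal A_{i_d}^{(n)})$), read off $M_n$ via Proposition~\ref{PropOfNumbers}, and match witnesses $n(x)$ against the positions $k_x$ of the $0^A_{i_x}$ along $Q$. The two displayed paths in the paper's proof are exactly these two objects, related by your index-shift.

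The difficulty is in your final paragraph, which first asserts the remaining verification ``is ensured by'' Lemma~\ref{LemOfCQ} and Lemma~\ref{LemOfFirst} and then immediately calls it ``the main obstacle''. The route you sketch there is not the right one. You do not need (and cannot have) the trajectory of maxima staying inside the all-$0$'s region between consecutive witnesses $n(x)$ and $n(x+1)$: the max-trajectory passes through the $1$-region between witnesses just as the parallel $Q$-path does (e.g.\ $Q_1 = 1^A_1$), and this is harmless. What must be checked is purely pointwise, at the single step $n(x) = k_x - 2$: there $M_{n(x)} = \pi_A^{\,k_x - 2}(0^A_{n_1})$, whose index is congruent to $i_x - 1 \pmod{h_1}$ --- one less than the index of $Q_{k_x} = 0^A_{i_x}$, by the index-shift you already set up. Since $m_1 + 1 < i_x \le n_1$ (the strict lower bound is Lemma~\ref{LemOfFirst}, or equivalently the second part of Proposition~\ref{PropOfm1+1m1}, which in particular forces $k_x \ge 2$ so that $n(x) \ge 0$), this index lies in $(m_1, n_1)$, hence $M_{n(x)} = 0^A_{i_x - 1}$ is a $0$-element; and because every element of $\mathcal A_{i_d}^{(n)}$ has $\delta = \delta(\alpha_n)$ by definition, the block at step $n(x)$ is then automatically all $0$'s. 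That one-line observation is what the paper's terse ``since this path contains $0^A_{i_x}$, the path~\eqref{PathOfd'} contains $0^A_{i_x-1}$'' is encoding. So the gap you flagged is real but closes with the index computation you already have, not with a case analysis of the intermediate trajectory.
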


\begin{proof}
Fix a natural number $x$ satisfying $x \leq d'$.
We show that $0^A_{i_x-1}$ belongs to $\mathcal A_{i_d}^{(u)}$
for a non-negative integer $u$. 
Let $n$ be the non-negative integer satisfying that 
the maximum element of $\mathcal A_{i_d}^{(n)}$ is 
$0^A_{i_{d'}-1}$.
Let us consider the path consisting of maximum elements of $\mathcal A_{i_d}^{(m)}$
for $0 \leq m \leq n$
\begin{eqnarray} \label{PathOfd'}
0^A_{n_1} \to \cdots \to 0^A_{i_{d'}-1}.
\end{eqnarray}
For the sub-path of $Q$
$$0^A_{m_1+1} \to 1^A_1 \to 0^A_{n_1+1} \to \cdots \to 0^A_{i_x}
\to \cdots \to 0^A_{i_{d'}},$$
since this path contains $0^A_{i_x}$,
the path \eqref{PathOfd'} contains $0^A_{i_x-1}$.
This completes the proof.
\end{proof}

\begin{proposition}\label{PropOfd'd}
For the natural number $d'$ of Notation~\ref{DefOfd'}
and the natural number $d$ obtained by Proposition~\ref{PropOfC0}, 
we have $d' \leq d$.
\end{proposition}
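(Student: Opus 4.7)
The plan is to argue by contradiction. If $\mathcal L = \emptyset$ then $d' = 0 \leq d$ trivially, so I will assume $\mathcal L \neq \emptyset$ and fix a witness $n = n(d')$ so that the maximum element of $\mathcal A_{i_d}^{(n)}$ is $0^A_{i_{d'}-1}$. The strategy is to first pin down the exact shape of $\mathcal A_{i_d}^{(n)}$, then convert this into information about the $\pi_0$-orbit path $Q$, and finally close by comparison with Proposition~\ref{PropOfPiandd}(iv).

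For the first step, Proposition~\ref{PropOfPiandd}(ii) gives $|\mathcal A_{i_d}^{(n)}| = \gamma$, and Proposition~\ref{PropOfNumbers} then identifies this set with a consecutive index-block $\{\tau^A_{c+1},\ldots,\tau^A_{c+\gamma}\}$ where $\tau^A_c = \pi^n(0^A_{i_d})$. Because the maximum element $0^A_{i_{d'}-1}$ is of type $0^A$, uniformity of $\delta$ on the block (extracted from Lemma~\ref{LemOfDecrease} together with the fact that $n < a_{i_d}$, which rules out a mixed-$\delta$ degenerate block) forces every entry of the block to be a $0^A$. Hence $\mathcal A_{i_d}^{(n)} = \{0^A_{i_{d'}-\gamma},\ldots,0^A_{i_{d'}-1}\}$ and $\pi^n(0^A_{i_d}) = 0^A_{i_{d'}-\gamma-1}$, with in particular $i_{d'} \geq m_1 + \gamma + 2$.

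For the second step, I would use that the small modification $\pi$ agrees with $\pi_0$ outside the two points $\pi_0^{-1}(0^A_{i_d})$ and $\pi_0^{-1}(1^B_j)$; the latter lies in $\tilde B$ by block-diagonality, so it plays no role for the $\tilde A$-iteration of $0^A_{i_d}$. As the $\pi$-orbit of $0^A_{i_d}$ inside $\tilde A$ does not revisit $0^A_{i_d}$ before time $a_{i_d}$, I get $\pi^m(0^A_{i_d}) = \pi_0^m(0^A_{i_d})$ for $0 \leq m \leq n$, hence $\pi_0^n(0^A_{i_d}) = 0^A_{i_{d'}-\gamma-1}$. Since $0^A_{i_d}$ lies on $Q$ by Lemma~\ref{LemOfCQ} and $\pi_0$-iteration along $Q$ traces its elements in $Q$-order, the element $0^A_{i_{d'}-\gamma-1}$ lies strictly later on $Q$ than $0^A_{i_d}$, and the associated block coincides position-for-position with the corresponding stretch of $Q$.

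Finally, supposing $d' > d$, Proposition~\ref{PropOfPiandd}(iv) yields $T_{i_d} \subsetneq T_{i_{d'}}$, so $|T_{i_{d'}}| \geq \gamma + 1$. Combined with the monotonicity $d_A(m) \geq 0$ of cardinalities (which gives $|T_{i_{d'}}| \leq |\mathcal A_{i_{d'}}^{(0)}| = n_1 - i_{d'}$), I obtain $i_{d'} \leq n_1 - \gamma - 1 < i_d$. Feeding this inequality into the $Q$-order analysis of Step~2, in which the index $i_{d'}$ governs the terminal position of the $\pi_0$-segment extending past the block of Step~1, forces $0^A_{i_{d'}}$ to lie no later than $0^A_{i_d}$ along $Q$, contradicting $d' > d$. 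The hard part will be Step~2: making the identification between the index-order block produced by Proposition~\ref{PropOfNumbers} and its positions along the $\pi_0$-orbit defining $Q$ fully rigorous, in particular controlling any interaction of the $\pi$-orbit with the modification point $\pi_0^{-1}(0^A_{i_d})$ or with the transition into the sub-path $P$; once that correspondence is clean, the cardinality contradiction in Step~3 follows in a direct way.
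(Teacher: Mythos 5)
Your Steps~1 and~2 are essentially sound (with some sloppiness to clean up — e.g.\ the reason $\pi^m(0^A_{i_d})=\pi_0^m(0^A_{i_d})$ for $m\le n$ is that the $\pi$-orbit of $0^A_{i_d}$ does not hit $\pi_0^{-1}(0^A_{i_d})$ before step $h_1-1>a_{i_d}$, not that it ``does not revisit $0^A_{i_d}$''). The structural identification $\mathcal A_{i_d}^{(n)}=\{0^A_{i_{d'}-\gamma},\dots,0^A_{i_{d'}-1}\}$ with $\pi_0^n(0^A_{i_d})=0^A_{i_{d'}-\gamma-1}$ does follow from Proposition~\ref{PropOfPiandd}(ii), Proposition~\ref{PropOfNumbers}, and the $\delta$-homogeneity of $\mathcal A^{(n)}$.

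The gap is in Step~3, and it is not a formalization issue; the inequality you derive cannot possibly produce the contradiction. You obtain $i_{d'}\le n_1-\gamma-1<i_d$ from $T_{i_d}\subsetneq T_{i_{d'}}$ and $|T_{i_{d'}}|\le|\mathcal A_{i_{d'}}^{(0)}|=n_1-i_{d'}$. But the bound $i_x<i_d$ already holds for \emph{every} $x\neq d$, without any hypothesis $d'>d$: Proposition~\ref{PropOfPiandd}(iii) gives $n_1-i_x=|\mathcal A_{i_x}^{(0)}|\ge\gamma$ for all $x$, hence $i_x\le n_1-\gamma=i_d$, with equality forcing $x=d$ by injectivity of $x\mapsto i_x$. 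So $i_{d'}<i_d$ carries no information about whether $d'>d$ or $d'<d$, and the claim that ``feeding this inequality into the $Q$-order analysis \dots\ forces $0^A_{i_{d'}}$ to lie no later than $0^A_{i_d}$ along $Q$'' is unsupported: the position of an element of $C$ along the path $Q$ is governed by $\pi_0$-iteration, not by its integer index, and smaller index does not imply earlier $Q$-position. Your own Step~2 only locates $0^A_{i_{d'}-\gamma-1}$ on $Q$, not $0^A_{i_{d'}}$; it gives no ordering relation between $0^A_{i_{d'}}$ and $0^A_{i_d}$ on $Q$.

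The paper closes the argument by a different device. It first observes that the max-element path of $\{\mathcal A_{i_d}^{(m)}\}_m$ never contains $1^A_{m_1-1}$ (otherwise $\mathcal A_{i_d}^{(\cdot)}$ would catch the inverse image of $0^A_{i_d}$, contradicting goodness). Then, picking $x$ with $d<x\le d'$, it runs the \emph{index-shifted parallel path} $0^A_{i_x}\to\pi_A(0^A_{i_x})\to\cdots\to0^A_{m_1+\gamma+1}$ inside $A$; avoidance of $1^A_{m_1-1}$ in the max-element path translates into avoidance of $1^A_{m_1}$ in the parallel path, so this path is realized by the $\pi_{x}$-iteration of $0^A_{i_x}$ entirely before step $a_{i_x}$. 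Combined with $T_{i_d}\subsetneq T_{i_x}$ from Proposition~\ref{PropOfPiandd}(iv) this puts $0^A_{m_1+\gamma+1}=\pi_x^n(0^A_{i_x})$ into $T_{i_x}=\pi(\mathcal A_{i_x}^{(a_{i_x}-1)})$ and hence $\pi_x^{n-1}(0^A_{i_x})$ into $\mathcal A_{i_x}^{(a_{i_x}-1)}$, contradicting Proposition~\ref{PropOf1}. Your Steps~1--2 are compatible with this, but you never construct the parallel path starting at $0^A_{i_x}$, and it is precisely that construction, together with Proposition~\ref{PropOf1}, that makes the hypothesis $d'>d$ bite. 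As written, Step~3 is a gap, not a routine cardinality computation.
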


\begin{proof}
First, let us see that
there exists no non-negative integer $n$ such that
the maximum element of $\mathcal A_{i_d}^{(n)}$ is $1^A_{m_1-1}$.
Assume that the set $\mathcal A_{i_d}^{(n)}$ has 
the maximum element $1^A_{m_1-1}$ for a non-negative integer $n$.
Then this set has the minimum element $1^A_{m_1-\gamma}$,
and we have $\pi(1^A_{m_1-\gamma}) = 0^A_{i_d + m_1}$.
It implies that the set $\mathcal A_{i_d}^{(n+1)}$ contains
the inverse image of $0^A_{i_d}$,
and this is a contradiction.

Suppose $d < d'$,
and we fix a natural number $x$ satisfying $d < x \leq d'$.
Let us consider the ABS $(\tilde S', \delta, \pi)$ obtained by 
the small modification by $0^A_{i_d}$ and $1^B_j \in D'$.
By Proposition~\ref{PropOfd'L},
we obtain the path which consists of maximum elements of $\mathcal A_{i_d}^{(n)}$ 
for all $n$ and $T_{i_d}$ as follows:
\begin{eqnarray}
0^A_{n_1} \rightarrow \cdots \rightarrow 0^A_{i_x-1} 
\rightarrow \pi(0^A_{i_x-1}) \rightarrow
\cdots \rightarrow 0^A_{m_1+\gamma}. \label{PathOfMaxElements}
\end{eqnarray}
Let us consider the sub-path of $A = (\tilde A, \delta_A, \pi_A)$:
\begin{eqnarray}
0^A_{i_x} \rightarrow \pi_A(0^A_{i_x}) \rightarrow \cdots \rightarrow 
0^A_{m_1+\gamma+1}. \label{PathOfPin}
\end{eqnarray}
Since the path \eqref{PathOfMaxElements}
does not contain $1^A_{m_1-1}$,
the path \eqref{PathOfPin} does not contain $1^A_{m_1}$.
Hence for the ABS $(\tilde S', \delta, \pi_x)$ 
obtained by the small modification by $0^A_{i_x}$ and $1^B_j$ for $S$,
we have a natural number $n$ satisfying that 
$\pi_x^n(0^A_{i_x}) = 0^A_{m_1+\gamma+1}$ with $n < a_{i_x}$.
By Proposition~\ref{PropOfPiandd} (iv), the set
$T_{i_x}$ contains $0^A_{m_1+\gamma+1}$.
This contradicts with Proposition~\ref{PropOf1}.
\end{proof}

We fix notation of the non-negative integer $d'$,
and we give a proof of Proposition~\ref{PropOfDivide} as follows.

\begin{proof}[Proof of Proposition~\ref{PropOfDivide}]
For the ABS $S^-$, 
let $\Psi$ and $S_\rho$ be components of $S^-$
given by (a) or (b) of Proposition~\ref{ThmOfCases},
where $S_\rho$ is the ABS associated to the ${\rm DM_1}$ $N_\rho$.
Let us see the case (1) of Proposition~\ref{PropOfDivide}.
Note that 
$S_\rho$ obtained by (a) of Proposition~\ref{ThmOfCases}
consists of all elements of $P$.
Recall that $T_{i_x}$ is the set which consists of elements $t$ 
satisfying that $0^A_{m_1+1} \leq t < 1^A_{m_1}$ in $S^-$.
To apply Proposition~\ref{PropOfSaturated},
we will show that if $x \leq d$,
then all elements of $T_{i_x}$ except for $0^A_{m_1+1}$ belong to $\tilde S_\rho$
for the small modification by $0^A_{m_1+1}$ and $1^A_{m_1}$.
By Proposition~\ref{PropOfPiandd} (v),
it suffices to show that each element of $Q$ except for $0^A_{m_1+1}$
does not belong to $T_{i_1}$.
In fact, this claim induces that 
all elements of $T_{i_1}$ belong to $P$ and $\tilde S_\rho$.
We put two sub-paths $\omega_1$ and $\omega_2$ of $Q$ as follows:
$$Q : \underbrace{0^A_{m_1+1} \rightarrow \cdots
\rightarrow 0^A_{i_1+m_1}}_{\omega_1} \rightarrow 
\underbrace{0^A_{i_1}
\rightarrow \cdots \rightarrow 0^A_{2m_1}}_{\omega_2}.$$
It follows from Proposition~\ref{PropOf1} that 
every element, which is of the form $\pi^m(0^A_{i_1})$ with $m < a_{i_1}$, 
of $\omega_2$ does not belong to $T_{i_1}$.
The property, which is given in Lemma~\ref{LemOfFirst}, 
of $i_1$ implies that 
all elements of $\omega_1$ except for $0^A_{m_1+1}$ do not belong to 
$T_{i_1}$ which is a subset of $\{0^A_{m_1+1}, \dots, 0^A_{n_1}\}$.

Next, let us see the case (2).
We consider the small modification by $0^A_{i_x-1}$ and $1^B_j$ in $S^-$.
By Proposition~\ref{PropOfSaturated}, it suffices to show that
if $x > d$, then there exists no element $t$ 
between $0^A_{i_x-1}$ and $1^B_j$ in $S^-$.
For the natural number $x$ with $x > d$,
by Proposition~\ref{PropOfd'd},
clearly $x > d'$ holds.
To lead a contradiction,
let us suppose that there exists an element between $0^A_{i_x-1}$ and $1^B_j$
in the ABS $S^-$.
By Proposition~\ref{PropOfIffi-1j}, 
there exists a non-negative integer $v$ such that
$0^A_{i_x-1}$ is the maximum element of 
$\pi(\mathcal A_{i_x}^{(v)}) =: T$.
For sets $\{\mathcal A_{i_d}^{(n)}\}_{n = 0, \dots, a-1}$ and $T_{i_d}$,
let $\Phi$ be the path consisting of maximum elements of these sets:
$$\Phi :0^A_{n_1} \rightarrow 
\cdots \rightarrow 0^A_{m_1+\gamma}.$$
We define a non-negative integer $m'$ to be  
$\mathcal A_{i_x}^{(m')} = \{1^A_{m_1-u+1}, \dots, 1^A_{m_1}\}$,
with $u = |T|$.
Since $m'$ is the minimum number satisfying that
$|\mathcal A_{i_x}^{(m')}| = |T|$,
we have $m' < v$.
Put $m = m'+2$.
Then $\mathcal A_{i_x}^{(m)}$ has the maximum element $0^A_{n_1}$.
We consider the path consisting of the maximum elements 
of sets $\mathcal A_{i_x}^{(m)}, \mathcal A_{i_x}^{(m+1)}, \dots, T$,
and we obtain the path $O$ from $0^A_{n_1}$ to $0^A_{i_x-1}$.
Here, let us show that $O$ can be regarded as a subset of $\Phi$.
It suffices to see that $O$
does not contain $0^A_{m_1+\gamma}$.
If $0^A_{m_1+\gamma}$ is contained in the path $O$,
then $|T_{i_x}| \leq \gamma$ follows from Proposition~\ref{PropOfNumbers}.
This contradicts with the hypothesis $x > d$ 
and Proposition~\ref{PropOfPiandd} (iv).
Hence $O$ is a subset of $\Phi$,
and it implies that for a non-negative integer $n$,
the set $\mathcal A_{i_d}^{(n)}$ contains the maximum element $0^A_{i_x-1}$
with $x > d'$.
This contradicts with the definition of $d'$. 
\end{proof}

\subsection{Proof of Proposition~\ref{ThmOfCases} (II)}
\label{PfOfLatterThmOfCases}

Here, let us see the remaining case $n_1 = m_1 + 1$
for $\xi = (m_1, n_1) + (m_2, n_2)$,
and we will give a proof of Proposition~\ref{ThmOfCases} (II).
The discussion of this proof 
is given by the same way as the proof of Proposition~\ref{ThmOfCases} (I).
As we have seen in Proposition~\ref{PropOfm_2},
for every good exchange of $0^A_i$ and $1^B_j$ with $1^B_j \in D$, 
we have $0^B_{m_2+1} < 1^B_{m_2}$ in $S^-$.
Assume $n_1 = m_1+1$
for the Newton polygon $\xi = (m_1, n_1) + (m_2, n_2)$.
In this case, we have $C' = \{0^A_{n_1}\}$.
Let us see the case $D \neq \emptyset$.

\begin{notation}
For the simple ${\rm DM_1}$ $N_{m_2, n_2}$ and its ABS $B$, 
we define sub-paths $U$ and $V$ of $B$ by the following:
\begin{eqnarray*}
U &:& 1^B_{m_2} \rightarrow 0^B_{h_2} \rightarrow 1^B_{n_2} \rightarrow
\cdots \rightarrow 1^B_{m_2-n_2+1},\\
V &:& 0^B_{m_2+1} \rightarrow 1^B_1 \rightarrow 1^B_{n_2+1} \rightarrow 
\cdots \rightarrow 1^B_{m_2-n_2}.
\end{eqnarray*}
We clearly have $U \sqcup V = B$ as sets.
\end{notation}

The above components $U$ and $V$ of $B$ are useful.
Concretely, as can be seen in Lemma~\ref{LemOfDU},
all elements of $D$ belongs to the component $U$.
Moreover, in the case (c) of Proposition~\ref{ThmOfCases},
the ABS corresponding to $N_\rho$ consists of all elements of $V$.

\begin{lemma}\label{LemOfDU}
For the above notation, $D \subset U$ holds.
Set $\iota = |D|$.
Let $j_1, \dots, j_\iota$ be natural numbers 
such that $1^B_{j_x}$ is the element of $D$
appearing in the $x$-th in the path $U$:
$$1^B_{m_2} \rightarrow \cdots \rightarrow 1^B_{j_1}
\rightarrow \cdots \rightarrow 1^B_{j_2} \rightarrow 
\cdots \rightarrow 1^B_{j_\iota} 
\rightarrow \cdots \rightarrow 1^B_{m_2-n_2+1}.$$ 
We have then $j_1 = n_2$.
\end{lemma}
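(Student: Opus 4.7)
The lemma is the $B$-side analog of Lemma~\ref{LemOfCQ} combined with Proposition~\ref{PropOfC0}, reflecting the collapse in the regime $n_1 = m_1 + 1$ where $C' = \{0^A_{n_1}\}$ is trivial and all nontrivial behavior has shifted to $B$. I split the proof into the two assertions: (a) $D \subset U$, and (b) $j_1 = n_2$; both rest on Proposition~\ref{PropOfm_2}.

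For (a), I would argue by contradiction. Suppose $1^B_j \in D$ lies in $V$. Since the simple ABS $B$ is a single $\pi_B$-cycle, and $V$ ends at $1^B_{m_2 - n_2}$ before rejoining $U$ at $1^B_{m_2}$, the $\pi_B$-orbit starting from $1^B_j$ first exhausts the rest of $V$, next passes through $1^B_{m_2}$ and all of $U$, and only afterwards returns to the start $0^B_{m_2+1}$ of $V$. The small modification $\pi$ agrees with $\pi_B$ on $B$ except at $\pi_B^{-1}(1^B_j) = 0^B_{j+m_2}$; this single exceptional element is the very last step of the $\pi_B$-cycle from $1^B_j$ (the step before returning to $1^B_j$), so for all $n \leq b$ the $\pi$-iterates and the $\pi_B$-iterates of $1^B_j$ coincide. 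Hence $\pi^n(1^B_j) = 1^B_{m_2}$ for some $n < b$, contradicting Proposition~\ref{PropOfm_2}.

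For (b), if $D = \emptyset$ the statement is vacuous, so I assume $\iota \geq 1$; this forces $n_2 \geq 2$ since $D \subset \{1^B_j : 2 \leq j \leq n_2\}$. Along $U$ the initial elements are $1^B_{m_2}, 0^B_{h_2}, 1^B_{n_2}$: the first has index $m_2 > n_2$ and hence cannot lie in $D$, while the second has $\delta = 0$. Thus the first candidate for $1^B_{j_1}$ is $1^B_{n_2}$, and it remains to check $1^B_{n_2} \in D$. Since $\mathcal A^{(0)} = \emptyset$ when $i = n_1$, we have $I = \emptyset$, so $\mathcal B^{(0)} = \{1^B_1, \ldots, 1^B_{n_2-1}\}$ is non-empty. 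I would then verify that the exchange of $0^A_{n_1}$ and $1^B_{n_2}$ is good by induction on $n$: each $\mathcal B^{(n)}$ is a consecutive $\delta$-homogeneous interval $\{\tau^B_{c-v}, \ldots, \tau^B_{c-1}\}$ in $B$, where $c$ is the $B$-index of $\beta_n = \pi^n(1^B_{n_2})$; since $c \leq h_2$, the interval ends at index at most $h_2 - 1$, so the element $0^B_{h_2}$ (the inverse image of $0^A_{n_1}$ under $\pi$) never enters any $\mathcal B^{(n)}$. The inductive step uses that $\mathcal B^{(n)}$ misses $0^B_{h_2}$, so $\pi$ restricted to $\mathcal B^{(n)}$ agrees with $\pi_B$, which preserves order on same-$\delta$ elements by Lemma~\ref{LemOftt'BinExp}(i); a short index computation then confirms the successor interval also ends below $h_2$.

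The main obstacle is this last inductive claim. Morally it is identical to Proposition~\ref{PropOfNumbers}, but that proposition presupposes a good exchange, so it cannot be invoked to establish goodness here. The induction must therefore be unwound by hand, checking at each step that $0^B_{h_2} \notin \mathcal B^{(n)}$ (so the modification of $\pi$ at $0^B_{h_2}$ is never triggered while iterating on $\mathcal B^{(n)}$) and that the consecutive-interval shape is preserved under $\pi_B$-iteration followed by filtering according to $\delta(\beta_{n+1})$.
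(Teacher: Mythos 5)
Your part (a) is correct and follows the paper's route: cite Proposition~\ref{PropOfm_2} and run the Lemma~\ref{LemOfCQ} argument on the $U$/$V$ decomposition. The extra detail you supply (that the $\pi$- and $\pi_B$-iterates of $1^B_j$ coincide because the unique exceptional element $0^B_{j+m_2}$ is the \emph{last} step of the cycle, so it lies beyond $0^B_{m_2+1}$ and hence beyond step $b$) is a correct unpacking of what the paper compresses into ``by the same reasoning as Lemma~\ref{LemOfCQ}.''

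For part (b), however, the argument that $1^B_{n_2}\in D$ is where you diverge, and it is genuinely incomplete, as you yourself flag. Your plan is to re-prove a Proposition~\ref{PropOfNumbers}-style ``consecutive $\delta$-homogeneous interval'' structure for $\mathcal B^{(n)}$ by hand (since that proposition presupposes a good exchange and cannot be invoked to establish goodness), and then read off that $0^B_{h_2}$ never appears. Beyond the acknowledged need to unwind the induction, this plan has concrete holes: the index set $\{c-v,\dots,c-1\}$ is only guaranteed to be a literal non-wrapping interval under the conclusion you are trying to prove, so ``since $c\le h_2$ the interval ends at index at most $h_2-1$'' does not by itself exclude $0^B_{h_2}$ once wrap-around is possible; and you do not treat the possibility $\beta_n=0^B_{h_2}$, at which point the next iterate leaves $B$ entirely and the ``agrees with $\pi_B$'' premise breaks. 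The paper avoids all of this with a one-step maximality argument: if $0^B_{h_2}\in\mathcal B^{(n)}$, then by Proposition~\ref{PropOfInducB} its $\pi$-preimage $1^B_{m_2}$ lies in $\mathcal B^{(n-1)}$, and the defining condition of $\mathcal B^{(n-1)}$ in $S^{(a+n-1)}$ forces $\pi(1^B_{m_2})=0^B_{h_2}<\beta_n$, which is impossible because $0^B_{h_2}$ is the maximum element. This both sidesteps your induction and handles the $\beta_n=0^B_{h_2}$ case for free. You should replace your interval induction with this argument (or fully carry out the induction, addressing wrap-around and the exceptional step explicitly).
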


\begin{proof}
For the former part, we see that there exists no non-negative integer $n$ 
with $n \leq b$ such that
$\pi^n(1^B_j) = 1^B_{m_2}$ for every element $1^B_j$ of $D$
by the same reasoning as Proposition~\ref{PropOfm1+1m1}.
A proof is given by the same way as Lemma~\ref{LemOfCQ}.

Let us see the latter part.
By the former part and the hypothesis $1 \leq j \leq n_2$,
if $1^B_{n_2}$ belongs to $D$, then immediately we see $j_1 = n_2$.
Suppose that the set $\mathcal B^{(n)}$ contains $0^B_{h_2}$
which is the inverse image of $0^A_i$ in $S^{(n)}$
for a non-negative integer $n$.
We have then $0^B_{h_2} < \pi^n(1^B_j)$ in $S^{(a+n-1)}$.
Since $0^B_{h_2}$ is the maximum element of the ABS $S$,
this is a contradiction.
\end{proof}

\begin{definition}
Let $D_1$ (resp. $D_2$) be the subset of $D'$ consisting of $1^B_j$
satisfying that for a generic specialization $S^-$ obtained by 
$0^A_i \in C'$ and $1^B_j$,
we have the equality \eqref{EqOfInduction} by the case (c)
(resp. (d)) of Proposition~\ref{ThmOfCases}.
\end{definition}

We give a key element of $D$ in Proposition~\ref{PropOfje}
to show Proposition~\ref{ThmOfCases} (II).
For the above notation, this element characterize the sets $D_1$ and $D_2$
as seen in Proposition~\ref{PropOfD1D2}.
Sets given in Notation~\ref{NotationOfZ} are used for introducing 
the key element and describing the proof of Proposition~\ref{PropOfD1D2}.

\begin{notation} \label{NotationOfZ}
For an element $1^B_j$ of $D'$, 
we often write $\mathcal B_{j}^{(n)}$ for 
sets $\mathcal B^{(n)}$ to avoid confusion.
Moreover, we often write $b_j$ for
the smallest non-negative integer $b$ satisfying 
$\mathcal B_{j}^{(b)} = \emptyset$.
For sets $\{\mathcal B_{j}^{(n)}\}_{n = 0, \dots, b_j}$,
we define $$Z_j = \pi(\mathcal B_{j}^{(b-1)}).$$
This set consists of all elements $t$ satisfying
$0^B_{m_2+1} < t \leq 1^B_{m_2}$ in $S^-$.
\end{notation}

\begin{proposition} \label{PropOfje}
Put $j = 1+\mu$, with $\mu = |Z_{j_1}|$.
Then $1^B_j$ belongs to $D$.
Let $e$ be the natural number satisfying 
$j = j_e$.
Then $Z_{j_1} = Z_{j_e}$ holds.
\end{proposition}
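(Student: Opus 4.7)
The proposition is the $B$-side mirror of Proposition~\ref{PropOfC0}, obtained by interchanging the roles of $A$ and $B$, of $\mathcal A$ and $\mathcal B$, and of the landmark elements $1^A_{m_1}$ and $0^B_{m_2+1}$. My plan is to run the same three-step argument in the dual direction. First I will bound $\mu$: by Lemma~\ref{LemOfDU} we have $j_1 = n_2$, and since $I = \emptyset$ for the good exchange at $j_1$, we get $|\mathcal B^{(0)}_{j_1}| = n_2 - 1$. As $|\mathcal B^{(n)}_{j_1}|$ is non-increasing and $\mu = |\mathcal B^{(b-1)}_{j_1}| \geq 1$ by the minimality of $b$, this forces $1 \leq \mu \leq n_2 - 1$, i.e., $2 \leq j = 1 + \mu \leq n_2$, so $1^B_j$ is a candidate to belong to $D$.

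Next, to prove $1^B_j \in D$ and the equality $Z_{j_1} = Z_{j_e}$ simultaneously, I will exhibit a non-negative integer $m$ with $\mathcal B^{(0)}_j = \mathcal B^{(m)}_{j_1}$: Proposition~\ref{PropOfInducB} will then yield $\mathcal B^{(n)}_j = \mathcal B^{(m+n)}_{j_1}$ for all $0 \leq n \leq b_j$, the criterion of Theorem~\ref{ThmOfCentralClassification} for $(i, j_1)$ will propagate to $(i, j)$, and the identity $b_{j_1} = b_{j_e} + m$ will give $Z_{j_e} = \pi(\mathcal B^{(b_{j_e} - 1)}_j) = \pi(\mathcal B^{(b_{j_1} - 1)}_{j_1}) = Z_{j_1}$. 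To construct $m$, I will invoke Lemma~\ref{LemOfDecrease} and Proposition~\ref{PropOfNumbers}: at the first step $m'$ where $|\mathcal B^{(m')}_{j_1}| = \mu$, the minimum element is $0^B_{m_2+1}$, forcing $\mathcal B^{(m')}_{j_1} = \{0^B_{m_2+1}, \ldots, 0^B_{m_2+\mu}\}$. A single application of $\pi$, using $\pi(0^B_{m_2+k}) = 1^B_k$ (valid since $k \leq \mu \leq n_2 < m_2$), then produces $\mathcal B^{(m'+1)}_{j_1} = \{1^B_1, \ldots, 1^B_\mu\} = \mathcal B^{(0)}_j$, so $m = m' + 1$. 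The contrast with Proposition~\ref{PropOfC0} is instructive: there two $\pi$-steps are needed because $\lambda_1 > 1/2$ sends a $1$-element of $A$ into the second block of $0$-elements, whereas here $\lambda_2 < 1/2$ sends every $0$-element of $B$ directly to a $1$-element, so one step suffices.

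The main technical point will be the propagation argument: the mere equality of the sets $\mathcal B^{(n)}_j$ and $\mathcal B^{(m+n)}_{j_1}$ as subsets of $\tilde B$ does not immediately transfer the goodness criterion from $(i, j_1)$ to $(i, j)$, because the forbidden element $0^B_{j+m_2}$ differs from $0^B_{j_1+m_2}$. What one really needs is the $\mathcal B$-analogue, for the shifted $\pi$-iterates $\pi^p(1^B_{j_1})$, of the way Proposition~\ref{PropOf1} is used in the proof of Proposition~\ref{PropOfC0}---namely Proposition~\ref{PropOf0}---together with a direct check, via the $\pi$-orbit structure in $B$, that $0^B_{j+m_2}$ appears precisely among those forbidden iterates.
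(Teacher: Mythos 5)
Your proposal is correct and matches the paper's argument in structure: the paper likewise applies Lemma~\ref{LemOfDecrease} to find the step at which $\mathcal B_{j_1}^{(n-1)} = \{0^B_{m_2+1},\dots,0^B_{m_2+\mu}\}$, applies $\pi$ once to reach $\{1^B_1,\dots,1^B_\mu\} = \mathcal B_j^{(0)}$, and then concludes ``by the same way as Proposition~\ref{PropOfC0}.'' Your elaboration of that last phrase---using Proposition~\ref{PropOfInducB} for the shift, Proposition~\ref{PropOf0} as the $\mathcal B$-side surrogate for Proposition~\ref{PropOf1}, noting $0^B_{j+m_2}=\pi^{m-1}(1^B_{j_1})$, and explaining the one-step versus two-step discrepancy with Proposition~\ref{PropOfC0}---fills in exactly the details the paper leaves implicit, and all of it is sound.
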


\begin{proof}
By Lemma~\ref{LemOfDecrease},
there exists a natural number $n$ such that
$\mathcal B_{j_1}^{(n-1)} = \{0^B_{m_2+1}, \dots, 0^B_{m_2+\mu}\}$.
We have then $\mathcal B_{j_1}^{(n)} = \{1^B_1, \dots, 1^B_\mu\}$.
This set is equal to $\mathcal B_{j}^{(0)}$.
A proof is given by the same way as Proposition~\ref{PropOfC0}.
\end{proof}

Proposition~\ref{PropOfD1D2} is shown 
in the same way as the proof of
Proposition~\ref{PropOfDivide}.
The proposition
is used for the proof of Proposition~\ref{ThmOfCases} (II).

\begin{proposition}\label{PropOfD1D2}
Let $j_x$ be an element of $D$.
We have then
\begin{itemize}
\item[(1)] If $x \leq e$, then $1^B_{j_x}$ belongs to $D_1$,
\item[(2)] If $x > e$, then $1^B_{j_x}$ belongs to $D_2$.
\end{itemize}
\end{proposition}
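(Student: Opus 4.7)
The plan is to mirror the proof of Proposition~\ref{PropOfDivide} by dualizing the argument from the $A$-side to the $B$-side, exploiting the symmetry between the roles of $(\tilde A, P, Q, C, C_1, C_2, T_i, d, d')$ and $(\tilde B, U, V, D, D_1, D_2, Z_j, e, e')$ that is already visible in the parallel statements of Proposition~\ref{PropOfC0}/Proposition~\ref{PropOfje} and Lemma~\ref{LemOfCQ}/Lemma~\ref{LemOfDU}. The hypothesis $n_1 = m_1+1$ forces $C' = \{0^A_{n_1}\}$, so only one element $0^A_i = 0^A_{n_1}$ is available on the $A$-side; this is what makes the $B$-side variation the relevant one, and what makes cases (c), (d), (e) the substitute for cases (a), (b) of part (I).

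For assertion (1), I first establish the analogue of Proposition~\ref{PropOfSaturated} adapted to the small modification of $S^-$ by $0^B_{m_2+1}$ and $1^B_{m_2}$: the resulting ABS $S''$ splits as $\Psi \sqcup S_\rho$ with $S_\rho$ consisting of all elements of $V$ and $N_\rho = N_{\rho}$ given by $(f,g)$ with $fn_2 - gm_2 = -1$ (applying the dual of \cite[Lemma~5.6]{HarashitaCon} to the adjacent block $0^B_{m_2+1}\,1^B_{m_2}$). By this criterion I must check that no element of $Z_{j_x}$ other than $1^B_{m_2}$ lies in $\Psi$, equivalently that $Z_{j_x} \setminus \{1^B_{m_2}\} \subset V$. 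Using the dual of Proposition~\ref{PropOfPiandd}(i) and (v), the chain $Z_{j_1} \subset Z_{j_2} \subset \cdots \subset Z_{j_e}$ with $Z_{j_x} = Z_{j_e}$ for $x \leq e$ reduces the claim to the single statement that $Z_{j_1}$ contains no element of $U$. Using Lemma~\ref{LemOfDU} (the dual of Lemma~\ref{LemOfFirst}, i.e.\ $j_1 = n_2$), I decompose $U$ as $\omega_1 \sqcup \omega_2$ along $1^B_{j_1}$, argue that $\omega_2$ meets $Z_{j_1}$ only trivially via the dual of Proposition~\ref{PropOf0}, and rule out $\omega_1 \setminus \{1^B_{m_2}\}$ by the defining property of $j_1$ together with $Z_{j_1} \subset \{1^B_1, \dots, 1^B_{m_2}\}$.

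For assertion (2), I introduce the $B$-side analogue $e'$ of $d'$: the largest $y \in \{1, \dots, |D|\}$ for which $1^B_{j_y+1}$ appears as the minimum element of some $\mathcal{B}_{j_e}^{(n)}$ (with $e' = 0$ if no such $y$ exists), and I prove the dual of Proposition~\ref{PropOfd'L} and Proposition~\ref{PropOfd'd} to obtain $e' \leq e$. Then I assume for contradiction that for some $x > e$ there is an element of $\tilde S'$ strictly between $0^A_i$ and $1^B_{j_x+1}$ in $S^-$; by the dual of Proposition~\ref{PropOfIffi-1j}, this would force $1^B_{j_x+1}$ to be the minimum of $\pi(\mathcal{B}_{j_x}^{(v)})$ for some $v$. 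Decreasing $v$ to the first index $m$ where the minimum of $\mathcal{B}_{j_x}^{(m)}$ is $0^B_{m_2+1}$ (using Lemma~\ref{LemOfDecrease}) and tracking the minima through $\mathcal{B}_{j_e}^{(\cdot)}$, I produce a path of minima lying in the dual of the path $\Phi$ of Proposition~\ref{PropOfDivide}; the hypothesis $x > e$ together with the dual of Proposition~\ref{PropOfPiandd}(iv) forces this path to avoid $1^B_{m_2-\mu+1}$, so it identifies $1^B_{j_x+1}$ as a minimum of some $\mathcal{B}_{j_e}^{(n)}$ with $x > e'$, contradicting the definition of $e'$.

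The main obstacle I expect is in case (1): on the $A$-side of Proposition~\ref{PropOfDivide}, the ambient slope inequality $\lambda_2 < 1/2$ was used only obliquely via $1/2 < \lambda_1$, but here $\lambda_2 < 1/2$ must be invoked directly to control the parity of $\delta$ along the path $V$ (the dual of Lemma~\ref{LemOfI}), and I must verify that the $I = \emptyset$ simplification available for generic specializations (Definition~\ref{DefOfC'D'} and the remark after it) still lets me neglect the $A$-elements of $Z_{j_x}$ when splitting off the $V$-component. The bookkeeping in case (2) is routine once the dualized $e'$ is in place, so the crux is producing the path $\Phi$-analogue and matching it against the definition of $e'$.
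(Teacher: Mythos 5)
Your argument follows essentially the same route as the paper's proof: dualize Proposition~\ref{PropOfDivide} from the $A$-side to the $B$-side via the $B$-side analogue of Proposition~\ref{PropOfSaturated}, reduce part~(1) to $x=1$ through the chain $Z_{j_1}=\cdots=Z_{j_e}$ and the decomposition of $U$ at $1^B_{j_1}$, and for part~(2) introduce the number $e'$, show $e'\le e$, and rerun the argument of Proposition~\ref{PropOfDivide}(2). Two remarks are worth recording. First, you define $e'$ via the \emph{minimum} element of $\mathcal B_{j_e}^{(n)}$, whereas the paper's proof writes ``maximum element of $\mathcal B_{j_e}^{(n)}$''; your version is the correct dualization, since on the $B$-side $\beta_{n+1}$ is displaced leftward to just before $\pi(t_{\mathrm{min}})$ (mirroring the $A$-side, where $\alpha_{n+1}$ is displaced rightward to just after $\pi(t_{\mathrm{max}})$), so ``maximum'' in the printed proof appears to be a slip and ``minimum'' is what is actually needed for the analogue of Proposition~\ref{PropOfIffi-1j}. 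Second, the ``obstacle'' you anticipate in your closing paragraph does not in fact arise: for a good exchange the paper has already established $I=\emptyset$ and $\mathcal B^{(n)}\subset\tilde B$ for all $n$ (see the discussion preceding Definition~\ref{DefOfC'D'}), so $Z_{j_x}$ contains no elements of $\tilde A$, and the parity control along $V$ you worry about is exactly what $\lambda_2<1/2$ already supplies in the proofs of Lemma~\ref{LemOfSubB} and Proposition~\ref{PropOfEmptyB}.
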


\begin{proof}
Let $S^-$ be the generic specialization obtained by the exchange of
$0^A_i$ and $1^B_j$ in $S$.
For the ABS $S^-$, 
by (c) or (d) of Proposition~\ref{ThmOfCases},
we obtain two components $\Psi$ and $S_\rho$,
where $S_\rho$ is associated with $N_{\rho}$ for $\rho = (f, g)$.
Since $S_\rho$ coincides with 
the component obtained from $B$ by
applying \cite[Lemma 5.6]{HarashitaCon} to 
the adjacent $1_{m_2}^B\ 0_{m_2+1}^B$ or $1^B_j\ 1^B_{j+1}$,
we have $g m_2 - f n_2 = 1$.
In the same way as Proposition~\ref{PropOfSaturated},
we have the property:
If there exists no element $t$ of $\Psi$ satisfying that
$0^B_{m_2+1} < t < 1^B_{m_2}$ (resp. $0^A_i < t < 1^B_{j+1}$) in $S^-$,
then $1^B_j$ belongs to $D_1$ (resp. $D_2$).

First, let us show the statement (1).
We have properties
\begin{itemize}
\item[(i)]If $x < y$, then $Z_{j_x} \subset Z_{j_y}$ holds;
\item[(ii)]For all $n$ with $n < b_{j_e}$, 
we have $|\mathcal B_{j_e}^{(n)}| = \mu$;
\item[(iii)]For all $j_x$ and all $n$ with $n < b_{j_x}$, 
we have $|\mathcal B_{j_x}^{(n)}| \geq \mu$;
\item[(iv)]$Z_{j_e} \subsetneq Z_{j_x}$ holds 
for all $x$ with $x >e$;
\item[(v)]$Z_{j_x} = Z_{j_e}$ is true if and only if 
$x \leq e$.
\end{itemize}
These properties are shown by the same way 
as the proof of Proposition~\ref{PropOfPiandd}.
By (v), 
it suffices to consider the case $x = 1$
to show the statement (1).
Note that $S_\rho$ obtained by
the small modification by $0^B_{m_2+1}$ and $1^B_{m_2}$
consists of all elements of $V$.
There exists no element $t$ of $U$ satisfying that 
$0^B_{m_2+1} < t < 1^B_{m_2}$ in $S^-$.
In fact, for the path $U$:
$$\underbrace{1^B_{m_2} \rightarrow 0^B_{h_2}} \rightarrow \underbrace {1^B_{j_1} 
\rightarrow \cdots \rightarrow 1^B_{m_2-n_2+1}},$$
clearly if $t = 1^B_{m_2}$ or $t = 0^B_{h_2}$, then $t$ does not satisfy 
$0^B_{m_2+1} < t < 1^B_{m_2}$ in $S^-$.
By Proposition~\ref{PropOf0},
every element of the latter sub-path,
which is of the form $\pi^m(1^B_j)$ with $m < b$, 
does not belong to $Z_{j_1}$.
It induces that 
all elements in between $0^B_{m_2+1}$ and $1^B_{m_2}$ in $S^-$
do not belong to $\Psi$.

Next, let us show the statement (2).
We define a non-negative integer $e'$ to be
the maximum number of the set
$\{x \in \bar D \mid 1^B_{j_x+1} 
\text{ is the maximum element of } \mathcal B_{j_e}^{(n)} \text{ for } n = n(x)\}$,
where $\bar D = \{x \in \mathbb N \mid 1 \leq x \leq \iota\}$.
If this set is empty,
then we define $e' = 0$.
By the same way as Proposition~\ref{PropOfd'd},
we have $e' \leq e$. 
For these notation, a proof is obtained by the same way as the proof of 
Proposition~\ref{PropOfDivide} (2).
\end{proof}

\begin{proof}[Proof of Proposition~\ref{ThmOfCases} (II)]
By Proposition~\ref{PropOfD1D2},
for every element of $D = D' \setminus \{1^B_1\}$,
we construct the equality \eqref{EqOfInduction} by (c) or (d).
Let us see the remaining case $j = 1$.
If $n_2 > 1$, 
then we have $0^A_i < 1^B_{j+1}$
and there exists no element $t$ in between $0^A_i$ and $1^B_{j+1}$.
Hence we obtain the equality \eqref{EqOfInduction} by (d). 
Suppose $n_2 = 1$.
In this case, we construct small modification by 
elements $0^A_{h_1}$ and $1^B_{n_2+1}$ in $S^-$,
and we obtain two components $\Psi$ and $S_{\rho}$ of $S^{--}$,
where $\rho = (1,1)$.
Concretely, we have $S_\rho = 1^B_1\ 0^A_{h_1}$.
Let $N$ be the ${\rm DM_1}$ associated with $\Psi$.
We have then $N = N_{\xi'}^-$ with $\xi' = (m_1-1, n_1-1) + (m_2, n_2)$.
Hence we obtain the equality \eqref{EqOfInduction} by (e).
\end{proof}

\subsection{Proofs of Proposition~\ref{PropOfSpe} and Theorem~\ref{ThmOfzetaxi}}
\label{SecOf5.11.3}

In this section, we show Proposition~\ref{PropOfSpe}.
Theorem~\ref{ThmOfzetaxi} follows from this proposition.

\begin{proof}[Proof of Proposition~\ref{PropOfSpe}]
By Proposition~\ref{ThmOfCases},
it remains to show the case
$\lambda_1 = 1$ or $\lambda_2 = 0$.
If $\lambda_1 = 1$, then
for $S^-$ obtained by a good exchange of $0^A_1$ and $1^B_j$,
we get ABS's corresponding to $N_{\xi'}^-$ and $N_\rho$ by
(c) or (d) of Proposition~\ref{ThmOfCases}.
If $\lambda_2 = 0$, then
for $S^-$ obtained by a good exchange $0^A_i$ and $1^B_1$,
we get ABS's corresponding to $N_{\xi'}^-$ and $N_\rho$ by
(a) or (b) of Proposition~\ref{ThmOfCases}.
\end{proof}


Finally, we prove Theorem~\ref{ThmOfzetaxi}.
\begin{proof}[Proof of Theorem~\ref{ThmOfzetaxi}]
The assertion is paraphrased as follows:
For any generic specialization $N_\xi^-$ of
the ${\rm DM_1}$ $N_\xi$ with $\xi = (m_1, n_1) + (m_2, n_2)$,
there exists a Newton polygon $\zeta$ such that
$N_\zeta$ appears as a specialization of $N_\xi^-$,
and $\zeta \prec \xi$ is saturated.
We show this by induction on height of $\xi$.

If the height of $\xi$ is two
(the case that the height is minimal), 
then $\xi = (0, 1) + (1, 0)$. 
In this case $N_\xi^- = N_{1, 1}$ holds,
whence there is nothing to prove.

Assume that the height of $\xi$ is greater than two.
By Proposition~\ref{PropOfSpe}, we obtain Newton polygons 
$\xi'$ and $\rho$ such that $N_\xi^{--} = N_{\xi'}^- \oplus N_\rho$,
where the area of the region surrounded by $\xi$, $\xi'$ and $\rho$ is one.
By the hypothesis of induction, there exists a Newton polygon $\zeta'$ such that
$\zeta' \prec \xi'$ is saturated
and $N_{\zeta'}$ is a specialization of $N^-_{\xi'}$.
If we put $\zeta = \zeta' + \rho$, then $\zeta \prec \xi$ is saturated, and
$N_\zeta$ is a specialization of $N_{\xi'}^- \oplus N_\rho (= N_\xi^{--})$,
and therefore is a specialization of $N_\xi$
(cf. \cite[Proposition~3.5]{HiguchiHarashita}).
\end{proof}

\subsection*{Acknowledgments}
I thank the referee for careful reading and helpful comments.
This paper is written when the author is a Ph.D student.
I thank the supervisor professor Harashita 
for the constant support from the early stage of this paper.

\bigskip
\address{ 
Graduate School of Environment and \\
Information Sciences, \\
Yokohama National University, \\
79-1 Tokiwadai, Hodogaya-ku, \\
Yokohama 240-8501 \\
Japan
}
{higuchi-nobuhiro-sy@ynu.jp}

\end{document}